\documentclass[11pt]{article}

\usepackage[utf8]{inputenc}

\usepackage{scrpage2}
\usepackage{amsmath}
\usepackage{amsfonts,amssymb,amsthm}
\usepackage{mathtools}
\usepackage{enumerate}
\usepackage{nicefrac}
\usepackage{graphicx}
\usepackage{esint}
\usepackage{caption}
\usepackage{stmaryrd}
\usepackage[labelformat = brace, position=top]{subcaption}
\captionsetup[subfigure]{position=t}

\usepackage{tikz}
\usetikzlibrary{calc,intersections,through,backgrounds,patterns,arrows.meta}

\usepackage[top=1.0in, bottom=1.0in, left=1.0in,
right=1.0in]{geometry}

\usepackage[normalem]{ulem}
\usepackage{cite}

\numberwithin{equation}{section} 



\newcounter{statement}
\numberwithin{statement}{section}

\newtheorem{Def}[statement]{Definition}

\newtheorem{thm}[statement]{Theorem}
\newtheorem{lemma}[statement]{Lemma}
\newtheorem{cor}[statement]{Corollary}
\newtheorem{prop}[statement]{Proposition}

\theoremstyle{remark}
\newtheorem{remark}[statement]{Remark}

\newcommand{\R}{\mathbb R} 				
\newcommand{\N}{\mathbb N}
\newcommand{\Z}{\mathbb Z}
\newcommand{\ball}[2]{{B_{#2}\left(#1\right)}}				
\newcommand{\intd}{\, \mathrm{d}} 				
\newcommand{\Leb}{\mathcal L}				
\newcommand{\Sph}{{\mathbb S}}				
\newcommand{\Hd}{\mathcal H}				

\newcommand{\osc}{\operatorname{osc}}


\newcommand{\stcomp}[1]{{#1}^{\mathsf{c}}}		


\newcommand{\eps}{\varepsilon}

\newcommand{\interior}[1]{{\kern0pt#1}^{\mathrm{o}}	
}
\newcommand{\arcsinh}{\operatorname{arsinh}}



\def\XXint#1#2#3{{\setbox0=\hbox{$#1{#2#3}{\int}$ }
\vcenter{\hbox{$#2#3$ }}\kern-.58\wd0}}




\title{A nonlocal isoperimetric problem with dipolar repulsion}
\author{Cyrill B.\ Muratov\footnote{Department of Mathematical
    Sciences, New Jersey Institute of Technology, Newark, New Jersey
    07102, USA.  Please use {muratov@njit.edu} for correspondence.
} \and Thilo M. Simon\footnotemark[1]}

\begin{document}
\maketitle
\begin{abstract}
  We study a geometric variational problem for sets in the plane in
  which the perimeter and a regularized dipolar interaction compete
  under a mass constraint.  In contrast to previously studied nonlocal
  isoperimetric problems, here the nonlocal term asymptotically
  localizes and contributes to the perimeter term to leading order.
  We establish existence of generalized minimizers for all values of
  the dipolar strength, mass and regularization cutoff and give
  conditions for existence of classical minimizers.  For subcritical
  dipolar strengths we prove that the limiting functional is a
  renormalized perimeter and that for small cutoff lengths all
  mass-constrained minimizers are disks.  For critical dipolar
  strength, we identify the next-order $\Gamma$-limit when sending the
  cutoff length to zero and prove that with a slight modification of
  the dipolar kernel there exist masses for which classical minimizers
  are not disks.
\end{abstract}

\section{Introduction}

Understanding the emergence of spatial order from basic constitutive
interactions is one of the most important problems in the natural
sciences. For example, why do the ions of Na$^+$ and Cl$^-$ organize
themselves into an alternating pattern arranged into a cubic crystal
when precipitating from a supersaturated aqueous solution, something
that can be readily observed in a simple tabletop experiment? Our
present physical understanding is that this process is driven by the
competition of repulsive electrostatic interactions between the like
ions, attractive interactions between the opposite ions, and a
hard-core repulsion at short distances, to minimize the total
interaction energy (both quantum mechanical and thermal effects are
also present, but are believed to be of secondary importance). In
these terms, the fundamental problem was concisely articulated in 1967
by Uhlenbeck \cite[p. 581]{uhlenbeck1967summarizing}: {\em ``The basic
  difficulty lies perhaps in the fact that one does not really
  understand the existence of regular solids from the molecular
  forces.''} For ionic crystals in the much simpler periodic setting,
the question goes back much further \cite{born21} and was resolved
only very recently \cite{betermin18}.

The above question becomes even more complicated for problems
involving many-body effects. Perhaps the best known example is that of
the spatial arrangement of protons and neutrons in nuclear matter,
which is relevant both to the shape of ordinary atomic nuclei
\cite{heisenbergA2kerne} and the exotic phases of matter in the crust
of neutron stars \cite{pethick95}. The same problem is also ubiquitous
in various hard and soft condensed matter systems in which mesoscopic
phases form as a result of competing attractive and repulsive
interactions operating on different scales
\cite{seul95,andelman09,m:pre02,muthukumar97,hubert}. The earliest
model that captures the competition of short-range attractive forces
and long-range repulsive forces in the case of the atomic nuclei was
conceived in 1929 by Gamow \cite{gamow30} and further refined by
Heisenberg \cite{heisenbergA2kerne} and von Weizs\"acker
\cite{weizsacker35}. It is now known as the {\em liquid drop model} of
the atomic nucleus. In this model, the nucleus is treated as a drop of
incompressible liquid held together by surface tension and subject to
Coulombic repulsion by the uniformly distributed positive charge of
the protons. For dense nuclear matter, the same model is considered
within a large periodic box and is known to produce multiple
morphologies referred to as nuclear ``pasta'' phases \cite{pethick95}.

Mathematically, the liquid drop model belongs to a class of geometric
variational problems in which one minimizes energies of the form (for
a non-technical overview, see \cite{cmt:nams17})
\begin{align}
  \label{Egam}
  E(\Omega) := P(\Omega) + \frac12 \int_\Omega \int_\Omega G(x - y) \,
  \intd x  \intd y,
\end{align}
among measurable sets $\Omega \subset \mathbb R^n$ subject to a mass
constraint $|\Omega| = m$ for $m>0$. Here $|\Omega|$ denotes the
$n$-dimensional Lebesgue measure of the set $\Omega$, $P(\Omega)$ is
the perimeter of $\Omega$, which is a suitable generalization of the
surface measure (for precise definitions, see below), and $G(x)$ is
some ``repulsive'' kernel.  In the case of the liquid drop model
in the whole space, one chooses the Newtonian potential
$G(x) := \frac{1}{4 \pi |x|}$ and minimizes over all
$\Omega \subset \mathbb R^3$. In a periodic setting, one should
instead consider $\Omega$ to be a subset of a large three-dimensional
torus, and $G$ should be the periodic Green's function of the
Laplacian (with uniform neutralizing background)
\cite{kmn:cmp16}. Note that with different spatial dimensionalities
and different choices of repulsive kernels the model is also relevant
to a number of other physical situations
\cite{m:pre02,m:cmp10,km:cpam13,km:cpam14}. In fact, the case of
dipolar repulsion considered in this paper falls within the above
framework as well.

In the mathematical literature, nonlocal isoperimetric problems in
which perimeter competes with a nonlocal repulsive term seem to have
been largely unnoticed until quite recently, with a notable exception
of a paper by Otto on the dynamics of labyrinthine pattern formation
in ferrofluids \cite{otto98}, and a paper by Rigot dealing mostly with
regularity of minimizers of \eqref{Egam} \cite{rigot00}. Gamow's
liquid drop model caught the attention of mathematicians after
reappearing as the leading order asymptotic problem in the studies of
the Ohta-Kawasaki energy by Choksi and Peletier
\cite{choksi10,choksi11}.  Since then the problem has enjoyed a
considerable attention. In the following, we review some of the
results obtained so far (naturally, the list of references below is
not meant to be comprehensive).

The first study of the problem associated with \eqref{Egam}, in which
$\Omega \subset \mathbb R^n$ and $G(x) = \frac{1}{|x|^\alpha}$ is a
Riesz kernel with $\alpha \in (0,n)$ was carried out by Kn\"upfer and
Muratov \cite{km:cpam13, km:cpam14}. This setting includes the
classical Gamow's model, for which $n = 3$ and $\alpha = 1$.  For a
range of parameters covering the latter, their results establish
existence and radial symmetry of minimizers for sufficiently small
masses, and non-existence for sufficiently large masses. Radial
symmetry for small masses was also independently established by Julin
in $\R^n$ for all $n\geq 3$ in the case of
$G(x) = \frac{1}{|x|^{n-2}}$ \cite{julin14}, and by Bonacini and
Cristoferi for a range of Riesz kernels \cite{bonacini14}.  In bounded
domains with a particular choice of boundary conditions, Cicalese and
Spadaro proved that minimizers are close to balls in the vanishing
mass limit, but cannot be exactly spherical unless the original domain
is a ball \cite{cicalese13}.  A further generalization to all Riesz
kernels and nonlocal perimeters is due to Figalli, Fusco, Maggi,
Millot and Morini \cite{figalli15} in the spirit of the currently
developing theory of nonlocal minimal surfaces (see, for example,
\cite{caffarelli10}).  Non-existence of minimizers for large masses
for Gamow's model was also independently established by Lu and Otto
\cite{lu14}, and Frank, Killip and Nam provided an explicit estimate
for the mass beyond which minimizers do not exist \cite{frank16}.  It
is currently an open problem whether the minimizers are balls whenever
they exist in the case of Gamow's model.

On the other hand, minimizers always exist in the periodic setting.
Alberti, Choksi and Otto \cite{alberti09} proved uniform distribution
of mass, which is a popular first step for approaching the question of
periodicity of minimizers. In various low volume fraction regimes,
Choksi and Peletier \cite{choksi10} and Kn\"upfer, Muratov and Novaga
\cite{kmn:cmp16} proved that the minimizers consist of small isolated
droplets, whose shape is asymptotically determined by solutions of the
whole space problem for certain masses (as already mentioned, they are
presently unknown, although conjectured to be balls; see also
\cite{frank15}).  Similar results are also available for Gamow's model
with screening and for the Ohta-Kawasaki energy, which can be
understood as a diffuse interface approximation to Gamow's liquid drop
model \cite{choksi11,m:cmp10,gms:arma13,gms:arma14}.  In particular,
it is proved that in two dimensions the droplets become almost
circular.  In other regimes, Sternberg and Topaloglu identified
stripes as the global minimizers in the case of the two-dimension
torus \cite{sternberg11}, and Morini and Sternberg showed that such
patterns also turn out to be minimizers in thin domains
\cite{morini14}.  Another class of (anisotropic) nonlocal
isoperimetric problems in dimensions $n \geq 2$ in which minimizers
are one-dimensional and periodic is given by Goldman and Runa \cite
{goldman16}, and by Daneri and Runa \cite{daneri18}.

In this paper, we study a nonlocal isoperimetric problem described by
\eqref{Egam} in two space dimensions, $n = 2$, in which the kernel $G$
is of {\em dipolar} type, i.e., $G(x) \simeq \frac{1}{|x|^3}$. This is
the interaction experienced at large distances by spins lying on the
plane and oriented perpendicularly to it. Just like the classical
Gamow's model, the model under consideration is relevant to the
multidomain patterns observed in perpendicularly magnetized thin film
ferromagnets \cite{hubert}, as well as ferroelectric films
\cite{strukov}, Langmuir monolayers \cite{andelman87} and ferrofluid
films subject to a strong perpendicular applied field
\cite{rosensweig,jackson94} (for an experimental realization involving
a magnetic garnet film, see Figure
\ref{fig:bubbles_labyrinths}). Note, however, that setting
$G(x) = \frac{1}{|x|^3}$ would result in an ill-defined problem
because of the divergence of the integral defining the nonlocal
contribution at short scales. Furthermore, redefining the energy in
the spirit of nonlocal minimal surfaces \cite{caffarelli10} to reduce
the integral to that over $\Omega$ and $\Omega^c$, up to an additive
constant, would not help, either, since the singularity of the
kernel is still too strong. Therefore, a genuine regularization at
short scale $\delta > 0$ is necessary to make sense of the energy in
\eqref{Egam} with this kind of kernel. This is a novel feature of the
considered nonlocal isoperimetric problem compared to those studied
previously.

\begin{figure}
	\centering
     \subcaptionbox{\label{fig:bubbles_labyrinth_1}}{
	  \centering
	  \includegraphics[height=4cm]{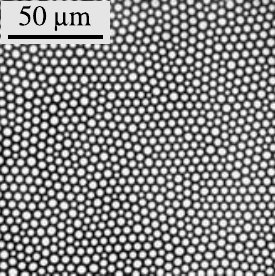}
     }
     \subcaptionbox{\label{fig:bubbles_labyrinth_2}}{
	  \centering
	  \includegraphics[height=4cm]{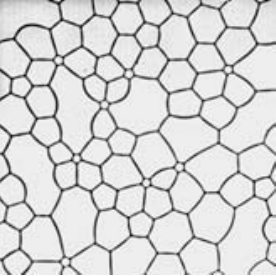}
     }
     \subcaptionbox{\label{fig:bubbles_labyrinth_3}}{
	  \centering
	  \includegraphics[height=4cm]{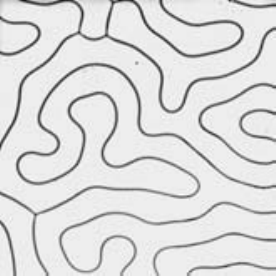}
     }
     \caption{Domain patterns in a magnetic garnet film with strong
       perpendicular anisotropy undergoing magnetization reversal
       driven by the applied magnetic field : a) (approximately)
       periodic arrangement of bubble domains; b) a network of
       interconnected stripes; c) a labyrinthine pattern. Reproduced
       from \cite[Figure 5.118]{hubert}.  }
        \label{fig:bubbles_labyrinths}
\end{figure}

More specifically, we study the following nonlocal isoperimetric
problem, as proposed by Kent-Dobias and Bernoff
\cite{kent-dobias15}. We wish to minimize
\begin{align}
  \label{Ekent0}
  E(\Omega) := \alpha P(\Omega) + \frac{\beta}{2} \int_\Omega
  \int_\Omega \frac{g_\delta(|x - y|)}{|x - y|^3} \intd x \intd y, 
\end{align}
among all finite perimeter sets $\Omega \subset \mathcal{A}_m$ of
fixed mass $m>0$, i.e., with
\begin{align}
	\label{admissible}
  \mathcal{A}_m :=\{\Omega \subset \R^2: P(\Omega) < \infty, \
  |\Omega| = m \}. 
\end{align}
Here, $\alpha, \beta > 0$ are fixed parameters, $P(\Omega)$ is the
perimeter of a measurable set $\Omega$ in the sense of De Giorgi
\cite{ambrosio}:
\begin{align}
  \label{P}
  P(\Omega) :=  \sup \left\{ \int_\Omega \nabla \cdot \phi \,
  \intd x:
  \, \phi \in C^1_c(\mathbb R^2;\mathbb R^2), \ |\phi|
  \leq 1 \right\},
\end{align}
and $g_\delta(r) := g(r / \delta)$ is a cutoff function at scale
$\delta > 0$ that makes the integral well-defined. The specific choice
is not essential, so for simplicity we work with
$g(r) := \chi_{(1,\infty)} (r)$ throughout the rest of the paper. Note
that by a rescaling we can always choose $\alpha = 1$.

We are interested in the regime in which $\delta$ is much smaller than
the characteristic length scale of minimizers, expressing the physical
condition that the regularization happens on the atomic scale, which
is much smaller than the scale of the observed patterns. Ultimately,
we wish to send the parameter $\delta$ to zero to obtain results that
are insensitive to the short-scale cutoff. As we show below, for a
meaningful limit as $\delta \to 0$ to exist at fixed value of $m > 0$
it is necessary to renormalize the strength $\beta$ of the dipolar
interaction. We set $\beta := \lambda / |\log \delta|$ for some
$\lambda > 0$ and restrict ourselves to the case
$\delta < \frac{1}{2}$.  Then the nonlocal term can be rewritten, so
that, up to an additive constant depending only on $m$, $\lambda$ and
$\delta$, and with $\alpha = 1$ the energy in \eqref{Ekent0} is equal
to
\begin{align}\label{energy}
  E_{\lambda,\delta}(\Omega) := P(\Omega) -  \frac{\lambda}{4|\log
  \delta|} \int_{\R^2} \int_{\R^2} |\chi_\Omega(x+z) -
  \chi_\Omega (x) |^2 \, \frac{g_\delta(|z|)}{|z|^3} \intd z \intd x, 
\end{align}
where $\chi_{\Omega}$ is the characteristic function of the set
$\Omega$.

First, we will make sure to demonstrate that minimizers always exist
in the generalized sense of consisting of finitely many components
that are ``infinitely far apart'' from each other.  In the subcritical
regime, $\lambda <1$, we then prove that the $\Gamma$-limit of the
energy is given by $(1-\lambda) P(\Omega)$, i.e., the nonlocal term
localizes to leading order and renormalizes the perimeter as
$\delta \to 0$. Moreover, we prove in Theorem
\ref{thm:characterization_minimizers} that the minimizers of
$E_{\lambda,\delta}$ exist also in the classical sense and are in fact
disks for all $\delta \ll 1$.  The strategy is to make use of
sufficiently uniform regularity estimates for minimizers, first on the
level of density estimates and then in terms of curvature, as well as
stability of the disk with respect to perturbations of curvature.

On the other hand, it is easy to see that disks are no longer
classical minimizers for $\delta \ll1$ as soon as $\lambda >1$, as it
is more convenient to split a single disk into multiple components.
In fact, by proving that (the components of generalized) minimizers
cannot contain disks of radius larger than $r(\delta)>0$ with
$r(\delta) \to 0$ as $\delta \to 0$ we see that generalized minimizers
cannot be asymptotically well-behaved in this limit.  As such, a more
precise analysis likely requires further insight into the question
whether minimizers are large collections of disks or exhibit
stripe-like behavior (see Figure \ref{fig:bubbles_labyrinths}).

This naturally brings us to the critical case $\lambda=1$, where in
view of the above arguments a transition from classical, radial to
non-trivial minimizers occurs for $\delta \ll 1$.  In this case, we
compute the next-order $\Gamma$-limit as $\delta \to 0$ by observing
that the sequence $|\log\delta| E_{1,\delta}(\Omega)$ for a fixed set
$\Omega \subset \R^2$ of finite perimeter is monotone decreasing in
$\delta$.  Suitably integrating by parts allows us to represent the
limit in a closed form that in fact does not fall within the class of
problems given by \eqref{Ekent0}.  While we cannot directly address
the issue of minimizers for the limit, we are able to prove that after
modifying the functional by reducing the repulsion at infinity there
exist masses for which classical minimizers exist, but are not given
by disks.  The idea of the proof is to construct a long stripe with
large mass whose energy per mass is lower than the energy per mass of
disks, ruling out the optimality of any collection of disks.

The paper is organized as follows: In Section \ref{sec:main_results},
we give the precise statements of our main results. In Section
\ref{sec:general_properties}, we give various representations of the
energies and their rescaling behaviors.  Section \ref{sec:existence}
is dedicated to proving existence of generalized minimizers together
with control over the number of their components.  We also already
start to look into the regularity properties of minimizers on a
qualitative level.  Section \ref{sec:subcritical} contains a thorough
discussion of the subcritical case $\lambda < 1$, culminating in the
proof of Theorem \ref{thm:characterization_minimizers}.  The critical
case $\lambda=1$, and in particular the existence of non-radial
minimizers (Theorem \ref{thm:non-spherical_minimizers}) for the
modified problem is dealt with in Section \ref{sec:critical}. Finally,
Proposition \ref{prop:supercritical} characterizing the behavior of
generalized minimizers as $\delta \to 0$ in the supercritical case
$\lambda > 1$ is presented in Section \ref{sec:supercr-case-lambda}.

\section{Main results}\label{sec:main_results}
Our first result identifies the minimizers of $E_{\lambda,\delta}$
over $\mathcal{A}_m$ in the subcritical regime of small $\delta$ and
$0<\lambda <1$ as disks.  The significance of the condition
$\lambda <1$ is that even though the nonlocal term renormalizes the
perimeter in the limit $\delta \to 0$, we still retain control over
the perimeter as is evident by the $L^1$-$\Gamma$-limit of
$E_{\lambda,\delta}$ for $\delta \to 0$ being given by
$(1-\lambda) P(\Omega)$, see Proposition
\ref{prop:subcritical_gamma_convergence}.  This allows us to obtain
density estimates for $\Omega$ and the reduced boundary
$\partial^\ast \Omega$ (for the definition, see, e.g., \cite[Chapter
15]{maggi}) that are uniform in $\delta$.  As the localization also
turns out to take place in the Euler-Lagrange equation of
$E_{\lambda,\delta}$, we can control the curvature of minimizers,
which together with a stability property of disks allows us to
conclude that the minimizers are disks.

\begin{thm}\label{thm:characterization_minimizers}
  There exist universal constants $\sigma_1, \sigma_2>0$ with the
  following properties: Let $0<\lambda<1$ and $0 < \delta < \frac12$.
  Under the condition
  \begin{align}
    \label{eq:sigma1}
    \frac{\lambda}{(1 - \lambda) |\log \delta|} \leq \sigma_1   
  \end{align}
  there
  exists a minimizer of $E_{\lambda,\delta}$ over
  $\mathcal{A}_\pi$. Furthermore, if
  \begin{align}
    \label{eq:sigma2}
    \frac{\lambda}{(1 - \lambda)^5 |\log \delta|} \leq \sigma_2  
  \end{align}
  then the unit disk $\ball{0}{1}$ is the unique minimizer of
  $E_{\lambda,\delta}$ over $\mathcal{A}_\pi$, up to translations.
\end{thm}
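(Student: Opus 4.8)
The plan is to treat the two conditions separately: \eqref{eq:sigma1} only needs to rule out splitting of the generalized minimizer, while \eqref{eq:sigma2} is needed for the rigidity. For the first assertion, by Section~\ref{sec:existence} there is a generalized minimizer $\Omega=\Omega^{(1)}\sqcup\cdots\sqcup\Omega^{(k)}$ with $k$ finite, the $\Omega^{(i)}$ mutually ``infinitely far apart'', each $\Omega^{(i)}$ minimizing $E_{\lambda,\delta}$ over $\mathcal A_{m_i}$, and $\sum_i m_i=\pi$. I would first record two elementary inequalities. (i) From the integral-geometric bound $\int_{\mathbb S^1}|E\triangle(E-\rho\omega)|\,\mathrm d\omega\le 4\rho\,P(E)$ together with the isoperimetric inequality one gets $N_{\lambda,\delta}(E)\le \lambda P(E)\,(1+c_0/|\log\delta|)$ for every $E\in\mathcal A_m$ with $0<m\le\pi$ and $c_0$ universal; choosing $\sigma_1$ universal and small enough, \eqref{eq:sigma1} then forces $E_{\lambda,\delta}(E)\ge\tfrac12(1-\lambda)P(E)>0$. (ii) From the rescaling identities of Section~\ref{sec:general_properties}, $N_{\lambda,\delta}(tE)=\tfrac{\lambda t}{4|\log\delta|}\Phi_{\delta/t}(E)$ with $\eta\mapsto\Phi_\eta$ nonincreasing, so $E_{\lambda,\delta}(tE)\le t\,E_{\lambda,\delta}(E)$ for all $t\ge1$, strictly if $t>1$ and $P(E)>0$. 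Now assume $k\ge2$, pick $i$ minimizing $E_{\lambda,\delta}(\Omega^{(i)})/m_i$, fix $j\ne i$, and set $t=\sqrt{(m_i+m_j)/m_i}>1$. Then, using (i), (ii) and $\sqrt x\le x$ for $x\ge1$,
\begin{align*}
  E_{\lambda,\delta}(t\Omega^{(i)}) &< t\,E_{\lambda,\delta}(\Omega^{(i)}) \le \frac{m_i+m_j}{m_i}\,E_{\lambda,\delta}(\Omega^{(i)}) \\
  &= E_{\lambda,\delta}(\Omega^{(i)}) + \frac{m_j}{m_i}\,E_{\lambda,\delta}(\Omega^{(i)}) \le E_{\lambda,\delta}(\Omega^{(i)}) + E_{\lambda,\delta}(\Omega^{(j)}),
\end{align*}
so replacing $\Omega^{(i)},\Omega^{(j)}$ by the single set $t\Omega^{(i)}$ (placed far from the rest) strictly lowers the total energy, contradicting minimality. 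Hence $k=1$ and $\Omega\in\mathcal A_\pi$ is a classical minimizer.

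For the second assertion, note $(1-\lambda)^5\le1-\lambda$, so \eqref{eq:sigma2} implies \eqref{eq:sigma1} and a classical minimizer $\Omega$ of mass $\pi$ exists; I would show $\Omega=\ball{0}{1}$ up to translation. The bound (i) shows $E_{\lambda,\delta}$ is comparable to $(1-\lambda)P$ and, more precisely, that $\Omega$ is an almost-minimizer of the perimeter with parameters controlled uniformly in $\delta$ under \eqref{eq:sigma1}: comparing $\Omega$ with $\Omega\cup\ball{x}{r}$ and $\Omega\setminus\ball{x}{r}$ (plus a small dilation restoring the mass), the variation of $N_{\lambda,\delta}$ is estimated through $\bigl||E\triangle(E-z)|-|E'\triangle(E'-z)|\bigr|\le2|E\triangle E'|$ for $|z|\gtrsim r$ and an integral-geometric bound for $|z|\lesssim r$, and the $\lambda$-fraction of a perimeter term that appears is absorbed because $\lambda<1$. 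This gives the usual volume and perimeter density estimates, uniform in $\delta$, so $\partial^\ast\Omega=\partial\Omega$; combined with the strict super-additivity of $E_{\lambda,\delta}$ under disjoint unions at finite distance (which rules out several connected components and interior holes, again via the scaling (ii)), $\Omega$ is a bounded, connected, simply connected set whose boundary is a single $C^{1,\gamma}$ Jordan curve with estimates depending only on $\lambda$ and $|\log\delta|$.

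Next I would use the volume-constrained Euler--Lagrange equation: on $\partial\Omega$ the scalar curvature $\kappa$ satisfies $\kappa(x)-\tfrac{\lambda}{|\log\delta|}\psi_\delta(x)=\mu$ with $\psi_\delta(x)=\int_{\R^2}(1-2\chi_\Omega(y))\,\tfrac{g_\delta(|x-y|)}{|x-y|^3}\,\mathrm dy$ and a Lagrange multiplier $\mu>0$. The $\delta^{-1}$-singularity of $\psi_\delta$ cancels on $\partial\Omega$ to leading order (its half-plane value vanishes), and the remainder \emph{localizes}: up to lower-order terms $\tfrac{\lambda}{|\log\delta|}\psi_\delta(x)=\lambda\,\kappa(x)+(\text{l.o.t.})$, so the equation reads $(1-\lambda)\kappa(x)=\mu+(\text{l.o.t.})$; bootstrapping with the $C^{1,\gamma}$ bounds upgrades the regularity and makes $\|\kappa-\mu/(1-\lambda)\|_{C^0(\partial\Omega)}$ and $|\mu/(1-\lambda)-1|$ small, so $\Omega$ is a normal graph $\{(1+u(\theta))(\cos\theta,\sin\theta)\}$ over $\partial\ball{0}{1}$ with $\|u\|_{C^2}$ small. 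I would finish by expanding $E_{\lambda,\delta}(\ball{0}{1}+u)-E_{\lambda,\delta}(\ball{0}{1})=\tfrac12(\delta^2P[u]-\delta^2N_{\lambda,\delta}[u])+(\text{cubic remainder})$, where on the $L^2$-complement of the constants and the $\pm1$ Fourier modes $\delta^2P[u]=\sum_{|m|\ge2}(m^2-1)|\hat u_m|^2\ge 3\|u\|_{L^2}^2$, while $\delta^2N_{\lambda,\delta}[u]\le\lambda\,\delta^2P[u]+\tfrac{\lambda}{|\log\delta|}\sum_m c_m|\hat u_m|^2$ with $c_m$ of polynomial growth. Truncating the series at a frequency $K$, controlling the tail via the $C^2$-bound on $u$, and balancing the two errors forces a threshold of the form \eqref{eq:sigma2}; hence $\delta^2E_{\lambda,\delta}$ is positive definite modulo translations, $\ball{0}{1}$ is a strict local minimizer in $C^{1,\gamma}$, and since $\Omega$ lies in this neighborhood and $E_{\lambda,\delta}(\Omega)\le E_{\lambda,\delta}(\ball{0}{1})$ we get $\Omega=\ball{0}{1}$ up to translation, with uniqueness following likewise.

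The ingredients taken from earlier sections (generalized minimizers, rescalings, the $\Gamma$-limit) and the first and second variations of the perimeter are routine. The crux is to make the nonlocal contributions to the Euler--Lagrange equation and to the second variation genuinely perturbative \emph{with explicit dependence on $\lambda$ and $|\log\delta|$}: since $|z|^{-3}$ is non-integrable at the origin, the ``localization'' of $\psi_\delta$ and of $\delta^2N_{\lambda,\delta}$ rests on delicate cancellations and the resulting estimates degrade at high Fourier modes / short scales, so one must first bootstrap enough uniform-in-$\delta$ regularity of $\partial\Omega$ to cut those off, and then track how the deviation of $\mu$ from $1-\lambda$, the high-frequency truncation, and the cubic remainder each cost powers of $(1-\lambda)$. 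This accounting is what produces the exponent $5$ in \eqref{eq:sigma2}, and it is where I expect the bulk of the technical work to lie.
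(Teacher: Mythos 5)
Your existence argument is correct and is a genuinely different (and cleaner) route than the paper's. The paper proves existence via concentration--compactness on minimizing sequences (Lemma \ref{lem:existence_minimizers_small_mass} and Corollary \ref{cor:exist_subcr}), showing that positivity of the infimum rules out splitting; you instead take the generalized minimizer from Section \ref{sec:existence} as given and merge two components by the dilation $t=\sqrt{(m_i+m_j)/m_i}$, using exactly the two facts the paper also relies on: $E_{\lambda,\delta}\geq \tfrac12(1-\lambda)P>0$ under \eqref{eq:sigma1} (your bound (i) is the content of Proposition \ref{prop:a_priori_lower_bound} combined with the isoperimetric inequality) and the strict sublinearity $E_{\lambda,\delta}(t\Omega)<tE_{\lambda,\delta}(\Omega)$ for $t>1$ (inequality \eqref{energy_drops}). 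This buys a shorter path to ``the generalized minimizer is classical'', at the price of taking the existence of generalized minimizers as input.

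For the uniqueness part your skeleton coincides with the paper's up to and including the localization of the Euler--Lagrange equation, but the endgame is different and, as written, contains the real gaps. First, you pass too quickly from the $\delta$-dependent $C^{1,\gamma}$ regularity to ``$\Omega$ is a normal graph over $\Sph^1$ with $\|u\|_{C^2}$ small'': the localization estimate for the potential is only useful at the scale $r$ at which $\partial\Omega$ is uniformly a graph, and a priori that scale depends on $\Omega$ and $\delta$. The paper needs the full improvement-of-flatness iteration of Propositions \ref{prop:improvement_of_flatness} and \ref{prop:convexity} (alternating curvature bounds with the non-optimality criterion of Lemma \ref{lem:non-optimality}) to push $r$ up to the uniform scale $\sim(1-\lambda)$; you acknowledge the need for a bootstrap but supply no mechanism, and this is where most of the $(1-\lambda)$-powers in \eqref{eq:sigma2} are actually spent. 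Second, your final step replaces the paper's first-variation argument --- showing that the nonlocal remainder $R(x)$ is itself controlled by $\osc\kappa$ once $\partial\Omega$ is $C^1$-close to a circle (Lemma \ref{lemma:convex_sets}), whence $\osc\kappa\leq \frac{C\lambda}{(1-\lambda)^4|\log\delta|}\osc\kappa$ and so $\osc\kappa=0$ --- by a second-variation/local-minimality argument. That route is viable in principle (it is the standard one for the liquid drop model), but everything that makes it work here is deferred: the uniform-in-$\delta$ Fourier localization of $\delta^2 N_{\lambda,\delta}$ with explicit constants, the high-frequency truncation, and above all the control of the cubic remainder on a neighborhood \emph{quantitatively guaranteed to contain the minimizer}. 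Moreover, since the theorem asserts the specific threshold $(1-\lambda)^5$, your accounting must actually land at exponent $5$ or better; you only conjecture that it does. As it stands, the second half is a plausible program rather than a proof.
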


\begin{remark}
  Note that we formulated the theorem for mass $m=\pi$ in order to
  have a somewhat clean criterion for the smallness of $\delta$ in
  terms of $\lambda$.  However, the rescaling given in Lemma
  \ref{lem:rescaling} allows to obtain a similar result for all masses
  by replacing $\delta$ with $\sqrt{\frac{\pi}{m}}\delta$ and
  $\lambda$ with
  $\frac{\left|\log\left(\sqrt{\frac{\pi}{m}}\delta \right)
    \right|}{|\log\delta|} \lambda$. Also note that when $m = \pi$ and
  $\lambda$ approaches 1 for a fixed value of $\delta$, condition
  \eqref{eq:sigma1} may still be satisfied, while condition
  \eqref{eq:sigma2} fails, indicating the possibility of existence of
  non-radial minimizers. Whether or not the latter indeed happens for
  minimizers of $E_{\lambda,\delta}$ over $\mathcal A_\pi$ is an open
  problem.
\end{remark}

As the first-order $\Gamma$-limit in the critical case $\lambda = 1$
degenerates, we have to compute a higher-order $\Gamma$-limit to
obtain a helpful limiting description.  It turns out that the
appropriate sequence to analyze is $|\log\delta| E_{1,\delta}$, as
suggested, for example, by the estimate of Proposition
\ref{prop:a_priori_lower_bound}. The $L^1$-$\Gamma$-limit is then
given by
\begin{align}\label{representation_closed}
  \begin{split}
    E_{1,0}(\Omega) := -P(\Omega) & + \frac12 \int_{\partial^* \Omega}
    \int_{\left(\Omega\Delta H_-(y)\right)\cap\ball{y}{1}
    }\left|\nu(y) \cdot \frac{x-y}{|x-y|^3}\right| \intd x \intd
    \Hd^1(y)\\
    & \quad + \frac12 \int_{\partial^* \Omega} \int_{\Omega \setminus
      \ball{y}{1} }\nu(y) \cdot \frac{y-x}{|y-x|^3} \intd x \intd
    \Hd^1(y),
  \end{split}
\end{align}
for $\Omega\in \mathcal{A}_m$, where $\nu(x)$ is the outer unit normal
of $\Omega$ for $x\in \partial^* \Omega$ and
$H_-(y):= \{(x-y)\cdot \nu(y) < 0\}$, as determined by the following
theorem.  A sketch indicating the various domains of integration in
\eqref{representation_closed} can be found in Figure
\ref{fig:domains_of_integration}.

\begin{thm}\label{thm:critical_convergence}
  For every $m > 0$, the $\Gamma$-limit of
  $|\log \delta| E_{1,\delta}$ restricted to $\mathcal{A}_m$, with
  respect to the $L^1$-topology as $\delta \to 0$, is given by
  $E_{1,0}$ in the following sense:
  
  \begin{enumerate}[(i)]
  \item (Lower bound) Let
    $\Omega,\, \Omega_{\delta_n} \in \mathcal{A}_m$ such that
    $|\Omega_{\delta_n} \Delta \Omega| \to 0$ and $\delta_n \to 0$ as
    $n \to \infty$.  Then it holds that
 	 	\begin{align}
                  E_{1,0}(\Omega) \leq \liminf_{n \to \infty}
                  |\log\delta_n| E_{1,\delta_n}(\Omega_{\delta_n}). 
  		\end{align}
              \item (Upper bound) For every set
                $\Omega \in \mathcal{A}_m$, a recovery sequence is
                given by the constant sequence, i.e., it holds that
  		\begin{align}
                  E_{1,0}(\Omega) = \lim_{\delta \to 0} |\log\delta|
                  E_{1,\delta}(\Omega) \in \R \cup \{+ \infty\}. 
  		\end{align}
  		In particular, the $\Gamma$-limit coincides with the
                pointwise limit.
              \item (Compactness) For every sequence
                $\Omega_{\delta_n} \in \mathcal A_m$ such that
                $\delta_n \to 0$ as $n\to \infty$ and
        \begin{align}
          \limsup_{n \to \infty} |\log\delta_n|
          E_{1,\delta_n}(\Omega_{\delta_n}) < \infty,
        \end{align}
        that in addition satisfies
        $\Omega_{\delta_n} \subset \ball{0}{R}$ for some $R > 0$,
        there exists a subsequence (not relabeled) and
        $\Omega\in \mathcal{A}_m$ such that
        $|\Omega_{\delta_n} \Delta \Omega| \to 0$.
  \end{enumerate}
      \end{thm}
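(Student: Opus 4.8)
The plan is to base everything on a monotonicity property of the rescaled energy. First one rewrites the nonlocal term: since $|\chi_\Omega(x+z)-\chi_\Omega(x)|^2$ is $\{0,1\}$‑valued, Fubini gives
\begin{align*}
  \int_{\R^2}\int_{\R^2}|\chi_\Omega(x+z)-\chi_\Omega(x)|^2\,\frac{g_\delta(|z|)}{|z|^3}\intd z\intd x
  =\int_{\{|z|>\delta\}}|\Omega\Delta(\Omega-z)|\,|z|^{-3}\intd z .
\end{align*}
Passing to polar coordinates $z=r\omega$ and setting $f_\Omega(r):=\int_{\Sph^1}|\Omega\Delta(\Omega-r\omega)|\intd\Hd^1(\omega)$, the elementary $L^1$‑translation bound $\|\chi_\Omega(\cdot+z)-\chi_\Omega\|_{L^1}\le\int_{\partial^*\Omega}|\nu\cdot z|\intd\Hd^1$ together with $\int_{\Sph^1}|\nu\cdot\omega|\intd\Hd^1(\omega)=4$ shows $f_\Omega(r)/r\le 4P(\Omega)$ for all $r>0$, with $f_\Omega(r)/r\to 4P(\Omega)$ as $r\to0^+$ by the differentiation theorem for sets of finite perimeter. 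Using $\int_\delta^1 r^{-1}\intd r=|\log\delta|$ for $\delta<1$, adding and subtracting $4P(\Omega)$ inside $\int_\delta^1 f_\Omega(r)r^{-2}\intd r$ makes the leading $|\log\delta|P(\Omega)$ cancel exactly, and one obtains, for every finite‑perimeter $\Omega$,
\begin{align*}
  |\log\delta|\,E_{1,\delta}(\Omega)=\tfrac14\int_\delta^1\Big(4P(\Omega)-\tfrac{f_\Omega(r)}{r}\Big)\tfrac{\intd r}{r}
  -\tfrac14\int_{\{|z|>1\}}|\Omega\Delta(\Omega-z)|\,|z|^{-3}\intd z ,
\end{align*}
whose first integrand is nonnegative and whose second summand is a finite, $\delta$‑independent constant. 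Hence $\delta\mapsto|\log\delta|E_{1,\delta}(\Omega)$ is nonincreasing on $(0,\tfrac12)$, so by monotone convergence the pointwise limit $\lim_{\delta\to0}|\log\delta|E_{1,\delta}(\Omega)\in\R\cup\{+\infty\}$ exists and equals $\sup_{0<\delta<1/2}|\log\delta|E_{1,\delta}(\Omega)$.

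For the lower bound (i) I would combine this with the $L^1$‑lower semicontinuity of $|\log\delta|E_{1,\delta}$ at fixed $\delta$: the perimeter is $L^1$‑lsc, and for fixed $\delta>0$ the map $\Omega\mapsto\int_{\{|z|>\delta\}}|\Omega\Delta(\Omega-z)|\,|z|^{-3}\intd z$ is $L^1$‑continuous on $\mathcal A_m$ by dominated convergence (bound $|\Omega\Delta(\Omega-z)|\le 2m$ and note $|z|^{-3}\in L^1(\{|z|>\delta\})$). Given $\Omega_{\delta_n}\to\Omega$ in $L^1$ with $\delta_n\to0$, for each fixed $\delta$ and all large $n$ monotonicity gives $|\log\delta_n|E_{1,\delta_n}(\Omega_{\delta_n})\ge|\log\delta|E_{1,\delta}(\Omega_{\delta_n})$, whence $\liminf_n|\log\delta_n|E_{1,\delta_n}(\Omega_{\delta_n})\ge|\log\delta|E_{1,\delta}(\Omega)$ by lsc; taking the supremum over $\delta$ yields the claim, once $E_{1,0}$ is identified with the pointwise limit. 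Assertion (ii) then follows: the constant sequence is admissible and realizes the pointwise limit as its $\limsup$, which matches the $\liminf$ from (i), so the $\Gamma$‑limit coincides with the pointwise limit.

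The crux is therefore the closed‑form identity $\lim_{\delta\to0}|\log\delta|E_{1,\delta}(\Omega)=E_{1,0}(\Omega)$, obtained by integration by parts. For the far part $-\tfrac14\int_{\{|z|>1\}}|\Omega\Delta(\Omega-z)|\,|z|^{-3}\intd z$ I would write $|\Omega\Delta(\Omega-z)|=2|\Omega|-2|\Omega\cap(\Omega-z)|$ and substitute $w=x+z$ to reduce it to $-\pi|\Omega|+\tfrac12\int_\Omega\int_{\Omega\setminus\ball{x}{1}}|w-x|^{-3}\intd w\intd x$; applying the divergence theorem to $|w-x|^{-3}=\Delta_w|w-x|^{-1}$ on $\Omega\setminus\ball{x}{1}$ (the pole $w=x$ being excluded by $\ball{x}{1}$), followed by Fubini, turns this into the boundary integral over $\Omega\setminus\ball{y}{1}$ appearing in \eqref{representation_closed} plus the term $\tfrac12\int_{\Sph^1}|\Omega\cap(\Omega-\omega)|\intd\Hd^1(\omega)$, which absorbs the $-\pi|\Omega|$ up to a scale‑one boundary remainder that recombines with the near part. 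For the near part $\tfrac14\int_0^1\big(4P(\Omega)-f_\Omega(r)/r\big)\tfrac{\intd r}{r}$, the point is that the extracted $|\log\delta|$‑divergence is precisely $|\log\delta|P(\Omega)$ — which, after cancellation, is the source of the term $-P(\Omega)$ — while the finite remainder is captured by comparing, at $\Hd^1$‑a.e.\ $y\in\partial^*\Omega$, the set $\Omega$ near $y$ with its tangent half‑plane $H_-(y)$: replacing $\stcomp\Omega$ by $\stcomp{H_-(y)}$ in the relevant integrand produces the discrepancy integral $\int_{(\Omega\Delta H_-(y))\cap\ball{y}{1}}\big|\nu(y)\cdot\tfrac{x-y}{|x-y|^3}\big|\intd x$, the absolute value arising because $\nu(y)\cdot\tfrac{x-y}{|x-y|^3}$ has a fixed sign on each of $H_-(y)$ and $\stcomp{H_-(y)}$, whereas the half‑plane reference itself contributes exactly the cancelling logarithm per unit length of $\partial^*\Omega$. \emph{The main obstacle is precisely this near‑field analysis}: making the half‑plane comparison rigorous for an arbitrary finite‑perimeter set — the needed slicing/blow‑up estimates, the measurability in $y$ of the weighted discrepancy integrals, the dichotomy that they are either finite or consistently $+\infty$, and the interchange of the $\delta\to0$ limit with the integrations; the far part, by contrast, reduces to routine Fubini and Gauss–Green.

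Finally, for compactness (iii): if $\limsup_n|\log\delta_n|E_{1,\delta_n}(\Omega_{\delta_n})\le C$ and $\Omega_{\delta_n}\subset\ball{0}{R}$, then in the representation above one has $\tfrac14\int_{\{|z|>1\}}|\Omega_{\delta_n}\Delta(\Omega_{\delta_n}-z)|\,|z|^{-3}\intd z\le\tfrac14\cdot 2m\cdot 2\pi=\pi m$, while the nonnegative first integral may be restricted to $r\in(\tfrac12,1)\subset(\delta_n,1)$, on which $f_{\Omega_{\delta_n}}(r)/r\le 4\pi m/\tfrac12=8\pi m$; hence $\tfrac14\big(4P(\Omega_{\delta_n})-8\pi m\big)\log 2\le C+\pi m$, i.e.\ $P(\Omega_{\delta_n})\le 2\pi m+\tfrac{C+\pi m}{\log 2}$ uniformly (the case $P(\Omega_{\delta_n})<2\pi m$ being trivial). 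Together with $|\Omega_{\delta_n}|=m$ and $\Omega_{\delta_n}\subset\ball{0}{R}$, the functions $\chi_{\Omega_{\delta_n}}$ are bounded in $BV(\R^2)$ with uniformly compact support, so by the compact embedding of $BV$ into $L^1$ on bounded sets a subsequence converges in $L^1$ to $\chi_\Omega$ with $|\Omega|=m$ and $P(\Omega)<\infty$, i.e.\ $\Omega\in\mathcal A_m$.
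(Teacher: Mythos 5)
Your monotonicity identity
\begin{align*}
  |\log\delta|\,E_{1,\delta}(\Omega)=\tfrac14\int_\delta^1\Bigl(4P(\Omega)-\tfrac{f_\Omega(r)}{r}\Bigr)\tfrac{\intd r}{r}
  -\tfrac14\int_{\{|z|>1\}}|\Omega\Delta(\Omega-z)|\,|z|^{-3}\intd z
\end{align*}
is correct and is exactly the mechanism the paper uses (Step~1 of Proposition \ref{prop:a_priori_lower_bound} is the same translation bound $f_\Omega(r)\le 4rP(\Omega)$). Your parts (i) and (ii) then coincide with the paper's argument: monotonicity plus $L^1$-lower semicontinuity of $|\log\tilde\delta|E_{1,\tilde\delta}$ at fixed $\tilde\delta$, followed by a supremum over $\tilde\delta$. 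Your compactness argument is a bit more elementary than the paper's (which invokes the lower bound $P\log(cP/m)$ and superlinearity of $x\log x$), but it is correct as written.

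The genuine gap is the identification of the pointwise limit with the closed-form expression \eqref{representation_closed}, which you explicitly flag as ``the main obstacle'' and do not resolve. This is not a peripheral technicality: without it you have only proved $\Gamma$-convergence to an abstract monotone limit $\sup_{\delta}|\log\delta|E_{1,\delta}$, whereas the theorem asserts convergence to the specific functional $E_{1,0}$, and that formula is what all of Section \ref{sec:critical} (Proposition \ref{prop:representations_limit}, Theorem \ref{thm:non-spherical_minimizers}) is built on. The worries you list — slicing estimates for arbitrary finite-perimeter sets, measurability in $y$, interchange of limits — are real if one tries to compare $\Omega$ with $H_-(y)$ directly in a double boundary integral. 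The paper sidesteps them by integrating by parts in only \emph{one} variable: Lemma \ref{lem:energy_on_boundary} (equation \eqref{fubini?}) rewrites the nonlocal term as $\int_{\partial^*\Omega}\int_\Omega\nu(y)\cdot\nabla_x\Phi_\delta(|y-x|)\intd x\intd\Hd^1(y)$, keeping a bulk integral over $\Omega$ in $x$. One then inserts the exact identity $-\int_{H_-(y)\cap(\ball{y}{1}\setminus\ball{y}{\delta})}\nu(y)\cdot\frac{x-y}{|x-y|^3}\intd x=2|\log\delta|$, which cancels the logarithmic divergence pointwise in $y$ and leaves three terms: an $O(\delta)$-scale term handled by De Giorgi's blow-up theorem together with dominated convergence (producing the $-P(\Omega)$), a nonnegative integrand over $(\Omega\Delta H_-(y))\cap\ball{y}{1}$ handled by monotone convergence, and the far-field term over $\Omega\setminus\ball{y}{1}$. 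Measurability and the finite-or-$+\infty$ dichotomy come for free from nonnegativity and Fubini--Tonelli. If you want to complete your proof along your own lines, you should adopt this one-sided integration by parts rather than the symmetric half-plane comparison you sketch, since the latter is precisely where your argument stalls.
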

	
	\begin{figure}
		\centering
		\begin{tikzpicture}[scale=1.3]
				\begin{scope}
						\clip (-2,1.4) -- (2,1.4) -- (2,-1.6)-- (-2,-1.6);
						\fill[opacity=.25,color=lightgray] (-2,0)--(2,0) --(2,-1.6)-- (-2,-1.6);
						\begin{scope}
							\clip plot [smooth,thick,tension=1] coordinates {(-10,-5)(-3.5,-5)(-3,0)(0,0)(3,0)(3.7,7)(10,-5)(-10,-5)};
							\draw[pattern=north west lines] (-2,1) -- (2,1) -- (2.2,-2)-- (-2,-2);
							\begin{scope}
							 	\clip circle [radius=1.3];
							 	\fill[white] (-2,1) -- (2,1) -- (2.2,-2)-- (-2,-2);
							 	\fill[opacity=.25,color=lightgray] (-2,0)--(2,0) --(2,-1.6)-- (-2,-1.6);
							\end{scope}
						\end{scope}
						\begin{scope}
							\clip plot [smooth,thick,tension=1] coordinates {(-10,-5)(-3.5,-5)(-3,0)(0,0)(3,0)(3.7,7)(10,-5)(-10,-5)};
							\clip circle [radius=1.3];
							\draw[pattern=north east lines] (-2,1) -- (2,1) -- (2.2,-2)-- (-2,-2);
							\begin{scope}
							 	\clip (0,0) circle [radius=1.3];
							 	\clip (-1.3,0)--(1.3,0) -- (1.3,-1.3) -- (-1.3,-1.3) -- cycle;
							 	\fill[white] (-2,1) -- (2,1) -- (2.2,-2)-- (-2,-2);
							 	\fill[opacity=.25,color=lightgray] (-2,0)--(2,0) --(2,-1.6)-- (-2,-1.6);
							\end{scope}
						\end{scope}
						\begin{scope}
							\clip (0,0) circle [radius=1.3];
							 \clip (-1.3,0)--(1.3,0) -- (1.3,-1.3) -- (-1.3,-1.3) -- cycle;
							 \draw[pattern=north east lines] (-2,1) -- (2,1) -- (2.2,-2)-- (-2,-2);
							 \begin{scope}
                                                           \clip plot
                                                           [smooth,thick,tension=1]
                                                           coordinates
                                                           {(-10,-5)(-3.5,-5)(-3,0)(0,0)(3,0)(3.7,7)(10,-5)(-10,-5)};
                                                           \fill[white]
                                                           (-2,1) --
                                                           (2,1) --
                                                           (2.2,-2)--
                                                           (-2,-2);
                                                           \fill[opacity=.25,color=lightgray]
                                                           (-2,0)--(2,0)
                                                           --(2,-1.6)--
                                                           (-2,-1.6);
							 \end{scope}
						\end{scope}
						\begin{scope}
							\draw (0,0) circle [radius=1.3];

						\end{scope}
						
						\draw[thick] plot
                                                [smooth,thick,tension=1]
                                                coordinates
                                                {(-10,-5)(-3.5,-5)(-3,0)(0,0)(3,0)(3.7,7)(10,-5)(-10,-5)};
                                                \draw[{<[length=1.5mm,width=1.5mm]}-{>[length=1.5mm,width=1.5mm]}]
                                                (0,0) --
                                                node[anchor=east]{$1$}
                                                (310:1.3);
                                                \draw[-{Latex[width=1.5mm]}]
                                                (0,0) -- (0,.6)
                                                node[anchor=west]{$\nu(y)$};
					\end{scope}
					
					\draw[thick] (-2,0)--(2,0);
                                        \node[inner
                                        sep=1pt,fill=white] at
                                        (2,-.25) {$H_-(y)$};
                                        \node[fill=white] at
                                        (-1.6,-1.3) {$\Omega$};
                                        \fill[black] (0,0)
                                        circle[radius=1.5pt]node[anchor=south
                                        east]{$y$};
		\end{tikzpicture}
		\caption{\label{fig:domains_of_integration} Sketch
                  indicating the domains of integration around
                  $y \in \partial^\ast \Omega$ in the limiting energy
                  $E_{1,0}(\Omega)$. The first integral term in
                  \eqref{representation_closed} integrates over the
                  dashed region inside the indicated circle, while the
                  second integral term integrates over the dashed
                  region outside that circle. The half-plane $H_-(y)$
                  is shown in gray.}
	\end{figure}
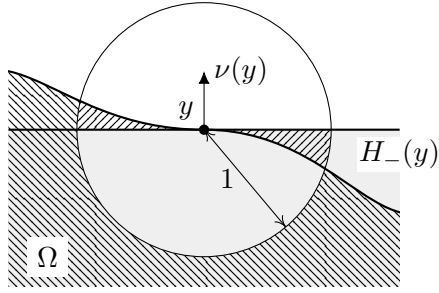

      The limiting functional has previously been investigated
      numerically by Bernoff and Kent-Dobias \cite{kent-dobias15} on
      the basis of a formal calculation. We are also able to
      rigorously justify their representation and extend it to sets
      with a $C^2$-boundary.

\begin{prop}\label{prop:representations_limit}
  If $\Omega \subset \R^2$ is a set of finite perimeter with a
  $C^2$-boundary, which is decomposed into positively oriented
    closed Jordan curves $\gamma_i:[0,P_i] \to \R^2$ of length $P_i$
  parametrized by arc length for $1\leq i \leq N$, with some
  $N \in\N$, then it holds that
  \begin{align}\label{representation_bernoff}
    \begin{split}
    E_{1,0}(\Omega)
    & = - \sum_{i=1}^N P_i \left[ \log\left(\frac{P_i}{2} \right)+ 2 
      \right] + \frac{1}{2} \sum_{i=1}^N  \int_0^{P_i}
      \int_{-\frac{P_i}{2}}^{\frac{P_i}{2}} \left( 
      \frac{1}{|s|} - \frac{\dot\gamma^\perp_i(t+s)\cdot
      \dot\gamma^\perp_i(t) }{|\gamma_i(t+s) - 
      \gamma_i(t)|} \right) \intd s \intd t\\
    & \quad - \sum_{i=1}^{N-1}
      {\sum_{j=i+1}^N} \int_{0}^{P_i}\int_{0}^{P_j}
      \frac{\dot\gamma^\perp_j(s)\cdot\dot
      \gamma^\perp_i(t)}{|\gamma_j(s)-\gamma_i(t)|} 
      \intd s 
      \intd t.
    \end{split}      
  \end{align}
\end{prop}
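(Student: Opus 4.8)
\section*{Proof proposal for Proposition~\ref{prop:representations_limit}}

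The plan is to rewrite $E_{1,0}(\Omega)$ as defined in \eqref{representation_closed} as a single renormalised double boundary integral over $\partial^{\ast}\Omega$, and only then parametrise by arc length. The key tool is the divergence theorem applied to the inner (area) integrals, via the identity
\[
  \nu(y)\cdot\frac{x-y}{|x-y|^{3}}\;=\;-\,\Div_{x}\!\Big(\frac{\nu(y)}{|x-y|}\Big)\qquad(x\neq y),
\]
which holds because $\nu(y)$ is constant in $x$. For fixed $y\in\partial\Omega$ I would apply this on $\Omega\cap\ball{y}{1}$, on $H_{-}(y)\cap\ball{y}{1}$ and on $\Omega\setminus\ball{y}{1}$; for the first two the integrand is non-integrable at $x=y$, so I would first excise $\ball{y}{\varepsilon}$ and let $\varepsilon\to0$. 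The boundary of each truncated domain splits into a portion of $\partial\Omega$ (resp.\ of the flat line $\partial H_{-}(y)$), contributing $\int\frac{\nu(y)\cdot\nu(x)}{|x-y|}\intd\Hd^{1}(x)$; a portion of $\partial\ball{y}{1}$; and a portion of $\partial\ball{y}{\varepsilon}$. Since $\partial\Omega\in C^{2}$, $\partial\ball{y}{\varepsilon}\cap\Omega$ differs from the half–circle $\partial\ball{y}{\varepsilon}\cap H_{-}(y)$ only by an arc of length $O(\varepsilon^{2})$ on which the integrand is $O(\varepsilon^{-1})$, so the $\partial\ball{y}{\varepsilon}$ contributions of the $\Omega$– and $H_{-}(y)$–terms cancel in the limit, while the flat part produces the divergence $\int_{\partial H_{-}(y)\cap(\ball{y}{1}\setminus\ball{y}{\varepsilon})}\frac{\intd\Hd^{1}(x)}{|x-y|}=-2\log\varepsilon$ together with a universal constant from $\partial\ball{y}{1}$. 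Crucially, the $\partial\ball{y}{1}$ contribution coming from $\Omega$ in the first double integral is exactly the opposite of the one coming from $\Omega$ in the second, so they cancel when the two are summed: this is why the cut-off at distance $1$ disappears, the ``near'' and the ``far'' piece recombining. The outcome is a representation of the form
\[
  E_{1,0}(\Omega)=-2P(\Omega)-\tfrac12\int_{\partial^{\ast}\Omega}\Phi_{\Omega}(y)\intd\Hd^{1}(y),\qquad
  \Phi_{\Omega}(y):=\lim_{\varepsilon\to0}\Big(\int_{\partial^{\ast}\Omega\setminus\ball{y}{\varepsilon}}\frac{\nu(y)\cdot\nu(x)}{|x-y|}\intd\Hd^{1}(x)+2\log\varepsilon\Big),
\]
where $\Phi_{\Omega}(y)$ is the renormalised dipolar potential generated at $y$ by $\partial^{\ast}\Omega$.

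Next I would insert the arc-length parametrisations $\partial^{\ast}\Omega=\bigcup_i\gamma_i([0,P_i])$ and split $\Phi_{\Omega}(\gamma_i(t))$ into the self–contribution of $\gamma_i$ and the cross–contributions of the $\gamma_j$, $j\neq i$. The cross–parts are nonsingular (the $\gamma_j$ being disjoint $C^{2}$ Jordan curves), and after symmetrising in $(i,j)$ and using $\nu\circ\gamma_i=\dot\gamma_i^{\perp}$ they give precisely the last double sum in \eqref{representation_bernoff}. For the self–part I would replace the Euclidean cut-off $|x-\gamma_i(t)|>\varepsilon$ by the arc-length cut-off $|s|>\varepsilon$, the error being $o(1)$ because $|\gamma_i(t+s)-\gamma_i(t)|=|s|\big(1+O(s^{2})\big)$ for a $C^{2}$ curve, and then write
\[
  \int_{\varepsilon<|s|<P_i/2}\frac{\dot\gamma_i^{\perp}(t+s)\cdot\dot\gamma_i^{\perp}(t)}{|\gamma_i(t+s)-\gamma_i(t)|}\intd s
  =\int_{-P_i/2}^{P_i/2}\!\Big(\frac{\dot\gamma_i^{\perp}(t+s)\cdot\dot\gamma_i^{\perp}(t)}{|\gamma_i(t+s)-\gamma_i(t)|}-\frac{1}{|s|}\Big)\intd s+2\log\tfrac{P_i}{2}-2\log\varepsilon+o(1).
\]
This cancels the $2\log\varepsilon$ in $\Phi_{\Omega}$, produces the renormalised integrand $\tfrac1{|s|}-\tfrac{\dot\gamma_i^{\perp}(t+s)\cdot\dot\gamma_i^{\perp}(t)}{|\gamma_i(t+s)-\gamma_i(t)|}$ and the term $-\sum_i P_i\log\frac{P_i}{2}$; together with the $-2P(\Omega)=-2\sum_iP_i$ from the first step this produces the bracket $-\sum_i P_i\big[\log\frac{P_i}{2}+2\big]$, and \eqref{representation_bernoff} follows. (Incidentally this also makes rigorous, and slightly extends, the formal computation of \cite{kent-dobias15}.)

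The main obstacle is the bookkeeping of the singular contributions in the first step: one must verify that after excising $\ball{y}{\varepsilon}$ all divergent and all $O(1)$ boundary terms combine into exactly the advertised $2\log\varepsilon$ and the advertised universal constants, which hinges on the $C^{2}$ regularity both to compare $\partial\ball{y}{\varepsilon}\cap\Omega$ with $\partial\ball{y}{\varepsilon}\cap H_{-}(y)$ and to justify the Euclidean–versus–arclength swap in the renormalisation. A secondary technical point is the legitimacy of the integrations by parts and of Fubini on $\partial^{\ast}\Omega\times\partial^{\ast}\Omega$ and on $\partial^{\ast}\Omega\times\partial\ball{y}{1}$ once the diagonal has been renormalised; this again follows from $\partial\Omega\in C^{2}$, which makes $\Phi_{\Omega}$ bounded and continuous and the cross-kernels bounded.
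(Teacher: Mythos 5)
Your proposal is correct in outline and reaches \eqref{representation_bernoff}, but its first half takes a genuinely different route from the paper's. The paper never integrates by parts in the limiting functional: it combines the boundary representation \eqref{energy_on_boundary} of Lemma \ref{lem:energy_on_boundary} for the \emph{regularized} energy with the fact from Theorem \ref{thm:critical_convergence}(ii) that $E_{1,0}$ is the pointwise limit of $|\log\delta|E_{1,\delta}$; the only extra computation is that the part of $\Phi_\delta$ supported in $\{|x-y|<\delta\}$ contributes $-2P(\Omega)+o(1)$, after which the $\delta$-cutoff in $\nu(x)\cdot\nu(y)/|x-y|$ plays exactly the role of your $\varepsilon$-excision (this is \eqref{representation_hand}). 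You instead re-derive the same renormalized double boundary integral directly from the closed form \eqref{representation_closed} via the divergence theorem. That is legitimate and self-contained (it does not re-use the $\Gamma$-convergence statement), at the price of tracking the singular boundary contributions on $\partial\ball{y}{\varepsilon}$ and $\partial\ball{y}{1}$ by hand; your error estimates for the $\partial\ball{y}{\varepsilon}$ arcs and for the Euclidean-versus-arclength cutoff swap are the right ones for a $C^2$ boundary. The second halves of the two arguments (arc-length renormalization, $\int_{\varepsilon<|s|<P_i/2}|s|^{-1}\intd s=2\log(P_i/2)-2\log\varepsilon$, symmetrization of the cross terms) are essentially identical.

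One concrete point needs care. The cancellation of the $\partial\ball{y}{1}$ contributions that your first step hinges on holds only if the far-field kernel in \eqref{representation_closed} is read as $\nu(y)\cdot\tfrac{x-y}{|x-y|^3}$, i.e., with the sign opposite to the one literally printed there (and in \eqref{computation}); with the printed kernel $\nu(y)\cdot\tfrac{y-x}{|y-x|^3}$ the two $\partial\ball{y}{1}$ terms carry the \emph{same} sign and the far-field double boundary integral emerges with a plus sign, contradicting \eqref{representation_hand}. The unit disk settles which sign is correct: a direct computation of $\lim_{\delta\to 0}|\log\delta|E_{1,\delta}(\ball{0}{1})$ (cf.\ Step 1 of the proof of Theorem \ref{thm:non-spherical_minimizers} with $l=\infty$) gives $-2\pi\log 4<-2\pi$, whereas for $\Omega=\ball{0}{1}$ both integral terms of \eqref{representation_closed} as printed are nonnegative, so the printed formula would give a value $\geq -2\pi$. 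Your bookkeeping is therefore the consistent one, but you should state explicitly which sign convention for the far-field term you start from, since as written your derivation does not literally follow from the displayed \eqref{representation_closed}.
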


When trying to prove a compactness result for the whole space
corresponding to Theorem \ref{thm:critical_convergence}, one has to
deal with the issue that the functionals $E_{\lambda,\delta}$ have no
a priori compactness due to their translational symmetry. Already for
fixed parameters this issue manifests itself in minimizing sequences
potentially breaking up into multiple pieces which the repulsive
nonlocal term then pushes infinitely far apart from each other.  In
particular, we can in general only hope to prove that minimizers exist
in a generalized sense:

\begin{Def}[{\cite[Definition
    4.3]{kmn:cmp16}}]\label{def:generalized_minimizers}
  For $m > 0$, consider a functional
  $E : \mathcal{A}_m \to \R\cup \{+ \infty\}$.  We say that a
  generalized minimizer of $E$ over $\mathcal A_m$ is a minimizer of
  the functional
	\begin{align}
          \mathcal{S}E\left((\Omega_i)_{i \in \N}\right) =
          \sum_{i=1}^\infty E(\Omega_i) 
	\end{align}
	defined on the domain
	\begin{align}
          \mathcal{SA}_m := \left\{ (\Omega_i)_{i \in \N}: |\Omega_i|
          >0 \text{ for at most finitely many } i \in \N,
          P(\Omega_i) < \infty, \sum_{i=1}^\infty |\Omega_i| = m
          \right\}. 
	\end{align}
      \end{Def}

      The main point of this definition is finiteness of the number of
      pieces, which relies on almost-minimality of minimizing
      sequences and is not true for arbitrary sequences of sets with
      uniformly bounded perimeters.  The existence of generalized
      minimizers for all parameters $\lambda, \delta, m>0$ is the main
      content of Section \ref{sec:existence}.

Coming back to augmenting Theorem \ref{thm:critical_convergence} with
a compactness statement, we see that we have the same issue of
sequences of sets breaking up into potentially infinitely many parts
in the limit $\delta \to 0$.  Instead of dealing with the
technicalities of formulating a framework that can handle this issue,
we choose to only analyze the compactness properties of minimizers of
$E_{1,\delta}$, which in any case is the main consequence one would
like to extract from such a statement.

\begin{prop}\label{prop:asymptotic_compactness}
  For $m > 0$, generalized minimizers of $E_{1,\delta}$ over
  $\mathcal{A}_m$ are compact in the sense that their number of
  components is uniformly bounded as $\delta \to 0$, and each
  component is compact in $L^1$ after translation.  Additionally, the
  limiting collection of sets is a generalized minimizer of $E_{1,0}$.
\end{prop}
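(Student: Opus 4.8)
The plan is to reduce the statement to a componentwise analysis and then feed it into the $\Gamma$-convergence result of Theorem~\ref{thm:critical_convergence}. Let $(\Omega_i^\delta)_i\in\mathcal{SA}_m$ be a generalized minimizer of $E_{1,\delta}$; since $|\log\delta|>0$ it also minimizes $|\log\delta|E_{1,\delta}$, and by the additivity of $\mathcal S E_{1,\delta}$ together with the repulsive sign of the cross term in \eqref{energy} each nonempty $\Omega_i^\delta$ is itself a minimizer of $E_{1,\delta}$ over $\mathcal A_{|\Omega_i^\delta|}$. First I would record two uniform energy bounds. Testing $\mathcal S E_{1,\delta}$ against the single disk $B$ of mass $m$ — which has $C^2$ boundary, hence finite $E_{1,0}(B)$ by Proposition~\ref{prop:representations_limit} — and using the pointwise convergence of Theorem~\ref{thm:critical_convergence}(ii) gives
\begin{align*}
  \limsup_{\delta\to0}\,|\log\delta|\,\mathcal S E_{1,\delta}\big((\Omega_i^\delta)_i\big)\ \le\ \lim_{\delta\to0}\,|\log\delta|\,E_{1,\delta}(B)\ =\ E_{1,0}(B)\ <\ \infty .
\end{align*}
Conversely, Proposition~\ref{prop:a_priori_lower_bound} supplies, for small $\delta$, a lower bound on $|\log\delta|E_{1,\delta}(\Omega)$ that, combined with the isoperimetric inequality, controls $P(\Omega)$ by $|\log\delta|E_{1,\delta}(\Omega)$ and $|\Omega|$; applied to each component and summed, it shows that \emph{once} the number of components is bounded, their perimeters are bounded uniformly in small $\delta$.

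The crux is therefore the uniform bound on the number $N_\delta$ of components. Finiteness of $N_\delta$ for each fixed $\delta$ is part of Section~\ref{sec:existence}; I would upgrade it to a $\delta$-independent bound by exploiting that a component $\Omega_i^\delta$ of small mass $\mu$ is effectively \emph{subcritical}: by Lemma~\ref{lem:rescaling}, $\sqrt{\pi/\mu}\,\Omega_i^\delta$ minimizes $E_{\lambda',\delta'}$ over $\mathcal A_\pi$ with $\lambda'=1-\tfrac12\log(\pi/\mu)/|\log\delta|<1$ and $\delta'=\sqrt{\pi/\mu}\,\delta$, and the subcritical lower bound underlying Proposition~\ref{prop:subcritical_gamma_convergence} then forces $|\log\delta|E_{1,\delta}(\Omega_i^\delta)\ge c\,\sqrt\mu\,\log(\pi/\mu)>0$ for $\mu$ below a universal threshold, with $c$ universal. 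Combined with optimality — deleting a component of very small mass $\mu$ and absorbing that mass into a thin collar around another component (or, if every other component is also small, comparing the whole configuration against $B$) changes $\mathcal S E_{1,\delta}$ by $O(\mu)-c\sqrt\mu\log(\pi/\mu)<0$ — this excludes components of mass below a $\delta$-independent constant $m_*(m)>0$, so $N_\delta\le\max(1,m/m_*(m))$. Along any $\delta_n\to0$ I then pass to a subsequence with $N_{\delta_n}\equiv N$, convergent component masses, and uniformly bounded perimeters.

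Next I would localize each component. Since $g_\delta(|z|)/|z|^3>0$ for $|z|>\delta$, splitting a component into two pieces of positive measure at mutual distance larger than $\delta$ strictly lowers the nonlocal part of $\mathcal S E_{1,\delta}$; hence each $\Omega_i^\delta$ is indecomposable up to null sets, and as a planar indecomposable set of finite perimeter it has diameter at most half its perimeter, so $\operatorname{diam}(\Omega_i^\delta)\le C(m)$ and after a translation $\tau_i^\delta$ we may take $\Omega_i^\delta\subset\ball{0}{R}$ for a fixed $R=R(m)$. Applying Theorem~\ref{thm:critical_convergence}(iii) componentwise — using translation invariance of $E_{1,\delta}$, and absorbing the slightly varying masses into vanishing rescalings — produces, along a further subsequence, sets $\Omega_i\subset\ball{0}{R}$ with $|\tau_i^{\delta_n}\Omega_i^{\delta_n}\,\Delta\,\Omega_i|\to0$; as the masses converge and $\sum_i|\Omega_i^{\delta_n}|=m$, the collection $(\Omega_i)_i$ lies in $\mathcal{SA}_m$, so each component is indeed precompact in $L^1$ after translation.

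To conclude, the lower bound Theorem~\ref{thm:critical_convergence}(i), applied to the finitely many components and using superadditivity of $\liminf$, gives $\mathcal S E_{1,0}((\Omega_i)_i)\le\liminf_{n\to\infty}|\log\delta_n|\,\mathcal S E_{1,\delta_n}((\Omega_i^{\delta_n})_i)$, while for any competitor $(\tilde\Omega_j)_j\in\mathcal{SA}_m$ minimality and Theorem~\ref{thm:critical_convergence}(ii) applied termwise yield $\limsup_{n\to\infty}|\log\delta_n|\,\mathcal S E_{1,\delta_n}((\Omega_i^{\delta_n})_i)\le\lim_{n\to\infty}|\log\delta_n|\,\mathcal S E_{1,\delta_n}((\tilde\Omega_j)_j)=\mathcal S E_{1,0}((\tilde\Omega_j)_j)$; hence $(\Omega_i)_i$ is a generalized minimizer of $E_{1,0}$. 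The hard part will be the uniform bound on the number of components: the fixed-$\delta$ existence theory does not by itself rule out $N_\delta\to\infty$, so one must extract from Proposition~\ref{prop:a_priori_lower_bound} and the subcritical estimates — reached via the rescaling of Lemma~\ref{lem:rescaling} — a $\delta$-uniform penalization of small components, sharp enough that the $O(\mu)$ cost of absorbing a would-be small component elsewhere beats its $O(\sqrt\mu\,\log(\pi/\mu))$ self-energy; a secondary subtlety is the diameter bound, where the strict gain from splitting is available only beyond the cutoff scale $\delta$.
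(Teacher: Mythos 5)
Your proposal is correct and follows essentially the same route as the paper: the $\delta$-uniform bound on the number of components comes from the coercivity estimate of Proposition \ref{prop:a_priori_lower_bound} combined with the isoperimetric inequality (exactly how the paper shows $m_0(1,\delta,\infty)$ stays bounded away from zero, so that Proposition \ref{prop:existence_of_minimizers_intermediate_masses} applies uniformly), the perimeter bound from comparison with a disk, localization from connectedness plus the diameter--perimeter bound, and the limiting minimality from the $\Gamma$-liminf together with the pointwise recovery sequence. The only cosmetic difference is that you re-derive the mass lower bound on components via an explicit delete-and-absorb competitor, whereas the paper simply invokes the already-established fact that every component of a generalized minimizer has mass at least $m_0(\lambda,\delta,l)$, and you use indecomposability in place of Lemma \ref{lem:connectedness}.
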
	

Finally, we turn to investigating the properties of minimizers of
$E_{\delta,1}$ for $\delta \ll 1$. We note that we currently do not
know  what they are for $m > 0$ fixed
  and all $\delta > 0$ sufficiently small. In particular, they may or
may not be disks, depending on the values of $m$, provided that
$\delta \ll 1$. Results of experiments such as the images shown in
Figure \ref{fig:bubbles_labyrinths} suggest that in some regimes the
minimizers may indeed be disks, while in other regimes the situation
may be more complex. Our ansatz-based computations, however, indicate
that disks are always preferred over stripe-like constructions that
are tractable analytically, both at relatively low mass and in the
limit of large masses. One could hope that numerically optimized
stripes such as the ones obtained by Bernoff and Kent-Dobias
\cite{kent-dobias15} could have lower energy than disks, but this does
not seem to be the case.  Also a linear stability analysis is
unhelpful, as instability of a single disk only occurs at masses at
which two disks of equal mass have strictly lower energy. In other
words, it seems that splitting a domain into finitely many disks may
always decrease energy, thus making stripes always energetically
disadvantageous. In fact, one might be tempted to argue that the
persistence of balls as generalized minimizers is a universal feature
of systems described by \eqref{Egam}.

In the following we demonstrate that the above view is too simplistic
and that the conjecture about the universal optimality of balls in the
context of \eqref{Egam} is false. Indeed, we will show that under
appropriate conditions stripes produce better candidates for
generalized minimizers than collections of disks in $\R^2$. In order
to tilt the favor towards stripes, for $l > 0$ we consider the energy
\begin{align}\label{energy_modified}
  F_{\lambda,\delta,l}(\Omega) := P(\Omega) -
  \frac{\lambda}{4|\log\delta|} \int_{\R^2} \int_{\R^2}
  |\chi_\Omega(x+z) - \chi_\Omega (x) |^2 K_{\delta,l}(|z|) \intd z
  \intd x,
\end{align}
where
\begin{align}\label{kernel}
  K_{\delta,l}(r):=\frac{g_\delta(r)}{r^3} - \frac{r^2 - 2
  l^2}{(r^2+l^2)^{5/2}}. 
\end{align}
This is precisely the energy of two identical, but oppositely oriented
dipolar patches lying in the parallel planes separated by distance
$l$. Such a model is, for example, relevant to synthetic
antiferromagnets, in which the antiparallel alignment of spins in
adjacent layers is favored by antiferromagnetic exchange coupling
through a spacer layer (see, e.g., \cite{moser02}). Heuristically, the
kernel behaves as in a dipolar layer on scales $r \lesssim l$, while
on large scales the kernel decays faster, thus reducing the long-range
repulsion in the far field.  Notice that we recover the original
energy in the limit $l\to \infty$.

From the point of view of Theorem \ref{thm:critical_convergence}, the
modification is merely a continuous perturbation:
\begin{lemma}\label{lem:modified_compactness}
  For all $l>0$ and $m>0$, the $L^1$-$\Gamma$-limit of
  $F_{1,\delta,l}$ restricted to $\mathcal{A}_m$ is given, as
    $\delta \to 0$, by
\begin{align}\label{limit_modified}
  F_{1,0,l}\left(\Omega\right)  := E_{0,1}\left(\Omega \right) +
  \frac{1}{4} \int_{\R^2} \int_{\R^2} |\chi_{\Omega}(x+z) -
  \chi_{\Omega} (x) |^2 \frac{|z|^2 - 2 l^2}{(|z|^2+l^2)^{5/2}} \intd
  z \intd x
\end{align}
for $\Omega \in \mathcal{A}_m$.  Also for $F_{1,\delta,l}$ the number
of components of generalized minimizers over $\mathcal{A}_m$ is
uniformly bounded in $\delta$, and generalized minimizers of
$F_{1,\delta,l}$ converge to generalized minimizers of $F_{1,0,l}$ in
the same sense as in Proposition \ref{prop:asymptotic_compactness}.
\end{lemma}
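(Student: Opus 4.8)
The plan is to treat the rescaled modified energy $|\log\delta|\,F_{1,\delta,l}$ as a continuous, $\delta$-independent perturbation of the rescaled critical energy $|\log\delta|\,E_{1,\delta}$ from Theorem~\ref{thm:critical_convergence}. Indeed, with $\lambda = 1$ the kernel decomposition \eqref{kernel} yields the exact identity
\begin{align}
  |\log\delta|\,F_{1,\delta,l}(\Omega) = |\log\delta|\,E_{1,\delta}(\Omega) + G_l(\Omega), \qquad G_l(\Omega) := \frac14 \int_{\R^2}\!\int_{\R^2} |\chi_\Omega(x+z)-\chi_\Omega(x)|^2\,\frac{|z|^2-2l^2}{(|z|^2+l^2)^{5/2}} \intd z \intd x ,
\end{align}
so that $F_{1,0,l} = E_{1,0} + G_l$ in the notation of \eqref{limit_modified}. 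Granted that $G_l$ is real-valued and $L^1$-continuous on $\mathcal{A}_m$, the $\Gamma$-convergence is then immediate from Theorem~\ref{thm:critical_convergence} and the standard fact that adding a continuous functional commutes with $\Gamma$-limits: the lower bound follows from Theorem~\ref{thm:critical_convergence}(i) together with $G_l(\Omega_{\delta_n}) \to G_l(\Omega)$, while, $G_l$ carrying no $\delta$-dependence, the constant sequence from Theorem~\ref{thm:critical_convergence}(ii) is still a recovery sequence and the $\Gamma$-limit still coincides with the pointwise limit.

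First I would check the properties of $G_l$. The kernel $k(z) := \frac{|z|^2-2l^2}{(|z|^2+l^2)^{5/2}}$ has no singularity at the origin — this is where $l > 0$ is used — and decays like $|z|^{-3}$ as $|z| \to \infty$, hence $k \in L^1(\R^2) \cap L^\infty(\R^2)$. Since $\chi_\Omega$ is $\{0,1\}$-valued, $|\chi_\Omega(x+z)-\chi_\Omega(x)|^2 = |\chi_\Omega(x+z)-\chi_\Omega(x)|$, so integrating in $x$ and using Fubini gives $|G_l(\Omega)| \le \tfrac12\|k\|_{L^1(\R^2)}\,|\Omega|$, i.e. $G_l$ is bounded, hence finite, on $\mathcal{A}_m$. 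For continuity, the reverse triangle inequality bounds the difference of the integrands for $\Omega$ and $\Omega'$ by $|\chi_\Omega(x+z)-\chi_{\Omega'}(x+z)| + |\chi_\Omega(x)-\chi_{\Omega'}(x)|$; integrating this against $|k(z)|$ in $z$ and over $x$, using translation invariance of Lebesgue measure, yields $|G_l(\Omega) - G_l(\Omega')| \le \tfrac12\|k\|_{L^1(\R^2)}\,|\Omega\Delta\Omega'|$, which is the claimed $L^1$-continuity.

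For the statements about generalized minimizers I would rerun the arguments of Section~\ref{sec:existence} and of Proposition~\ref{prop:asymptotic_compactness}. Combining the bounds above with the elementary estimate $\|\chi_\Omega(\cdot+z)-\chi_\Omega\|_{L^1(\R^2)} \le |z|\,P(\Omega)$ for finite-perimeter sets gives
\begin{align}
  \Bigl| \tfrac1{|\log\delta|}\,G_l(\Omega) \Bigr| \le \frac1{4|\log\delta|} \int_{\R^2} \min\bigl(2|\Omega|,\,|z|\,P(\Omega)\bigr)\,|k(z)| \intd z \le \frac{C_l}{|\log\delta|}\,\bigl(P(\Omega)+|\Omega|\bigr) ,
\end{align}
i.e. the extra term obeys precisely the type of $|\log\delta|^{-1}(\text{perimeter}+\text{mass})$ bound that the dipolar term in \eqref{energy} satisfies. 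Consequently the density and perimeter estimates, the bound on the number of components, and the $L^1$-precompactness of each component after translation carry over to $F_{1,\delta,l}$ with the same proofs; passing to a limiting collection of sets and invoking the $\Gamma$-convergence — the lower bound on the limiting collection and the constant-sequence recovery on an arbitrary competitor — identifies that limit as a generalized minimizer of $F_{1,0,l}$, exactly as in Proposition~\ref{prop:asymptotic_compactness}.

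The $\Gamma$-convergence itself is thus essentially automatic once one observes $k \in L^1(\R^2)$; the only real work — and the step I expect to be the main, if minor, obstacle — is the bookkeeping in the last paragraph, namely checking that every uniform-in-$\delta$ estimate invoked in Section~\ref{sec:existence} and in the proof of Proposition~\ref{prop:asymptotic_compactness} used the dipolar kernel only through a bound of the form displayed above, so that replacing $\frac{g_\delta(|z|)}{|z|^3}$ by $K_{\delta,l}(|z|)$ affects nothing but the constants.
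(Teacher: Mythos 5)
Your proposal is correct and follows essentially the same route as the paper: both treat the term $\frac14\int\int|\chi_\Omega(x+z)-\chi_\Omega(x)|^2\frac{|z|^2-2l^2}{(|z|^2+l^2)^{5/2}}\intd z\intd x$ as a $\delta$-independent, $L^1$-Lipschitz perturbation of $|\log\delta|E_{1,\delta}$ (using $k\in L^1\cap L^\infty$ for $l>0$), and then rerun the arguments of Section \ref{sec:existence} and Proposition \ref{prop:asymptotic_compactness} using the uniform bounds $|G_l(\Omega)|\leq C_l|\Omega|$ to control $m_0(1,\delta,l)$ and the perimeter uniformly in $\delta$. Your write-up supplies the details the paper leaves implicit, with no gaps of substance.
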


For this energy we can prove existence of non-radial minimizers.
In combination with Lemma \ref{lem:modified_compactness}, this
  represents the first instance, to our knowledge, in which whole
  space minimizers of a problem belonging to the class in \eqref{Egam}
  are not radially symmetric.

\begin{thm}\label{thm:non-spherical_minimizers}
  There exists a universal constant $c>0$ with the following
  property: Let $0 < l -\frac{2}{e^2}<c$. Then there exists
  $M=M(l)>0$ such that for all masses $m>M$ any generalized
  minimizer of $F_{1,0,l}$ over $\mathcal{A}_m$ has at least one
  component which is not a disk.  In particular, there exist
  masses $m>0$ at which classical minimizers exist and are non-radial.
\end{thm}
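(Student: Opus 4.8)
\noindent\emph{Proof proposal.} The plan is to convert the statement into a quantitative comparison between arbitrary collections of disks and a single long stripe‑shaped competitor. By Lemma~\ref{lem:modified_compactness} together with Definition~\ref{def:generalized_minimizers}, a generalized minimizer of $F_{1,0,l}$ over $\mathcal A_m$ exists and consists of \emph{finitely many} finite‑perimeter components. It therefore suffices to produce, for $l$ in a right‑neighbourhood of $2/e^2$ and every sufficiently large $m$, a set $\Omega\in\mathcal A_m$ with
\[
F_{1,0,l}(\Omega)\;<\;\inf\Big\{\textstyle\sum_i F_{1,0,l}(\Omega_i)\;:\;(\Omega_i)\in\mathcal{SA}_m,\ \text{every }\Omega_i\text{ a disk}\Big\}.
\]
Once this is done no all‑disk collection can be a minimizer, so every generalized minimizer has a component that is not a disk; and such a component, being necessarily a minimizer of $F_{1,0,l}$ over $\mathcal A_{m'}$ for its own mass $m'>0$ (otherwise one replaces it by a better competitor of the same mass and lowers $\mathcal S F_{1,0,l}$), is the asserted non‑radial classical minimizer.

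For the lower bound on disk collections I would evaluate $F_{1,0,l}$ on a single disk $B_R$. Proposition~\ref{prop:representations_limit} with $N=1$ reduces, for a circle, the self‑interaction integral in~\eqref{representation_bernoff} to an elementary one‑dimensional integral, giving the closed form $E_{1,0}(B_R)=-2\pi R\log(4R)$; and the extra term in~\eqref{limit_modified} evaluated on $B_R$ equals a single integral $I_l(R)$ obtained from the covariogram of $B_R$ (which is non‑negative, since the covariogram is monotone and the kernel in~\eqref{limit_modified} integrates to zero). Hence $F_{1,0,l}(B_R)/(\pi R^2)\to+\infty$ as $R\to0$ and stays bounded as $R\to\infty$, so $e_*(l):=\inf_{R>0}F_{1,0,l}(B_R)/(\pi R^2)$ is a well‑defined finite number, and by summing the pointwise bound $F_{1,0,l}(B_R)\ge e_*(l)\,\pi R^2$ we get, for every finite family of disks with $\sum_i\pi R_i^2=m$,
\[
\textstyle\sum_i F_{1,0,l}(B_{R_i})\;\ge\;e_*(l)\,m .
\]

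For the competitor I would take, for a width $w>0$ and $L\gg w$, the stadium $\Omega_{w,L}$ of width $w$ and straight length $L$ (a rectangle capped by two semicircles, so its boundary is $C^2$ and Proposition~\ref{prop:representations_limit} applies with $N=1$), with $L$ chosen so that $|\Omega_{w,L}|=m$. Inserting $\Omega_{w,L}$ into~\eqref{representation_bernoff} and the modification term, the perimeter contributes $-2L+O(w)$, the self‑interaction and modification integrals localise along the straight part and contribute $L\cdot\mathcal G(w,l)+O(1)$, where $\mathcal G(w,l)=\mathcal E(w)+\log\!\big(1+w^2/l^2\big)$ is the explicitly computable energy per unit length of the infinite stripe of width $w$ under $F_{1,0,l}$ (with $\mathcal E$ the corresponding $E_{1,0}$‑density, itself reducible to elementary integrals via~\eqref{representation_closed}), and the caps contribute only $O(1)$; the crucial point is that the logarithmically growing pieces of the nonlocal term are exactly cancelled by $-P$, so no $L\log L$ term survives. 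Thus $F_{1,0,l}(\Omega_{w,L})/m=\mathcal G(w,l)/w+O(1/m)$ as $m\to\infty$, and it remains to verify the strict inequality $e^{\mathrm{str}}(l):=\inf_{w>0}\mathcal G(w,l)/w<e_*(l)$ for $0<l-2/e^2<c$. This is a numerical inequality between two one‑variable‑optimised explicit expressions, monotone in $l$ through the factors $\log(1+w^2/l^2)$ and $(\rho^2+l^2)^{-3/2}$; the value $2/e^2$ is expected to enter it as the borderline value of $l$ at which the best disk attains vanishing per‑mass energy ($e_*=0$), and continuity of $l\mapsto e_*(l)$ and $l\mapsto e^{\mathrm{str}}(l)$ then yields a definite gap on a whole interval $(2/e^2,2/e^2+c)$, which fixes $c$. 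Given such $l$, one fixes $w$ with $\mathcal G(w,l)/w<e_*(l)$ and, for all $m>M(l)$ (the threshold absorbing the $O(1)$ cap and $O(1/m)$ errors), obtains $F_{1,0,l}(\Omega_{w,L})<e_*(l)\,m\le\mathcal S F_{1,0,l}(\text{any collection of disks})$, closing the argument as in the first paragraph.

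I expect the main obstacle to be the precise evaluation of the stripe energy with all constants tracked: carrying out the localisation in~\eqref{representation_bernoff}, controlling the passage from the infinite stripe to a finite stadium (the cap corrections, and above all the cancellation that rules out an $L\log L$ term which would otherwise dominate the comparison), and then establishing the numerical inequality $e^{\mathrm{str}}(l)<e_*(l)$ — in particular pinning down the threshold $2/e^2$ from the disk side. A secondary point is to confirm that $e_*(l)>-\infty$, so that the superadditive lower bound $\sum_i F_{1,0,l}(B_{R_i})\ge e_*(l)\,m$ for disk collections is meaningful.
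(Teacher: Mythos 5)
Your overall strategy is the same as the paper's: bound any collection of disks from below by $m$ times the infimal energy-per-mass of a single disk, and beat that bound with a long stripe-like competitor for large $m$ (the paper uses a rectangle $S_{a,m}$ and computes its energy directly from the $\Phi_\delta$-representation rather than a $C^2$ stadium, but that difference is cosmetic; the cancellation of the $L\log L$ terms that you flag does occur and is verified there). The reduction of the ``in particular'' clause to the main claim is also sound.

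The genuine gap is in your final step, the inequality $e^{\mathrm{str}}(l)<e_*(l)$, and the mechanism you propose for it cannot work. At $l=2/e^2$ one has $e_*(2/e^2)=e^{\mathrm{str}}(2/e^2)=0$: writing $a$ for the reciprocal radius (resp.\ width), both per-mass energies have the small-$a$ expansion $2a\bigl[\log(2/l)-2\bigr]+o(a)$, whose bracket vanishes exactly at $l=2/e^2$, and both are strictly positive for $a$ bounded away from $0$ and tend to $+\infty$ as $a\to\infty$; so both infima equal $0$ at the threshold, are not attained, and there is \emph{no gap at the endpoint for continuity to propagate}. What actually closes the argument in the paper is a second-order comparison: the disk satisfies $f_{disk}(a)=2a[\log(2/l)-2]+\tfrac{3l^2}{8}a^3\log a^{-1}+O(a^3)$ while the stripe satisfies $f_{stripe}(a)=2a[\log(2/l)-2]+\tfrac{l^2}{3}a^3+O(a^4)$; the extra factor $\log a^{-1}$ in the disk's correction is what makes disks strictly worse near the common optimum. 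To exploit this one must also know that the optimal disk parameter $a(l)$ tends to $0$ as $l\downarrow 2/e^2$ (so that the expansions are applicable at the minimizer), which the paper obtains from strict convexity of $f_{disk}$ --- itself a nontrivial estimate on combinations of complete elliptic integrals. Your proposal contains neither the identification of where the disk infimum is attained nor the logarithm-versus-power comparison of the correction terms, and the appeal to ``continuity'' and ``monotonicity in $l$'' does not substitute for them.
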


We conclude by briefly indicating that generalized minimizers in the
supercritical case need to exhibit strongly irregular behavior, in the
sense that their components cannot contain disks of $O(1)$-radii in
the limit $\delta\to 0$.  Note that such a statement cannot
distinguish between generalized minimizers consisting of a large
number of disks or thin, fingered structures.  The point is that for
$\lambda >1$ it is beneficial to increase the perimeter of sets as the
nonlocal term overcompensates the local contribution.  We exploit this
in the following proposition by cutting out disks from the minimizer
and placing them at infinity.  We expect the resulting relation
$r\geq C\delta^\frac{\lambda-1}{\lambda}$ between the radius $r$ of
the disks and $\delta$ to be sharp in terms of the exponent
$\frac{\lambda-1}{\lambda}$, as the rescaling properties of
$E_{\lambda,\delta}$, see Lemma \ref{lem:rescaling}, imply
$E_{\lambda,\delta} \left( \delta^\frac{\lambda-1}{\lambda}\Omega
\right) = \delta^\frac{\lambda - 1}{\lambda} E_{1,\delta^{1 /
    \lambda}}(\Omega)$ for all sets $\Omega \subset \R^2$ of finite
perimeter.

\begin{prop}\label{prop:supercritical}
  There exists a universal constant $C>0$ such that for all
  $\lambda > 1$ and all $\delta > 0$ sufficiently small depending only
  on $\lambda$ the following holds: If
  $|\ball{0}{r} \setminus \Omega| = 0$ for a set $\Omega \subset \R^2$
  of finite perimeter and some
  $r \geq C\delta^{\frac{\lambda-1}{\lambda}}$, then there exists a
  set $\widetilde \Omega \subset \R^2$ and $\tilde r > 0$ such that
	\begin{align*}
          |\Omega|
          & = \left|\widetilde \Omega\right| + | \ball{0}{\tilde r}|,\\
          E_{\lambda,\delta}(\Omega)
          & > E_{\lambda,\delta}\left(\widetilde
            \Omega \right) +
            E_{\lambda,\delta}
            (\ball{0}{ r}). 
\end{align*}
\end{prop}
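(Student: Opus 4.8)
The plan is to exploit the scaling relation $E_{\lambda,\delta}(\delta^{\frac{\lambda-1}{\lambda}}\Omega) = \delta^{\frac{\lambda-1}{\lambda}} E_{1,\delta^{1/\lambda}}(\Omega)$ together with a cut-and-paste argument. Concretely, I would first rescale so that the disk $\ball{0}{r}$ contained (up to null sets) in $\Omega$ becomes a disk of order-one radius for the critical energy, i.e.\ set $\Omega' := \delta^{-\frac{\lambda-1}{\lambda}}\Omega$, so that $\Omega'$ contains $\ball{0}{r'}$ with $r' = \delta^{-\frac{\lambda-1}{\lambda}} r \geq C$, and $E_{\lambda,\delta}(\Omega) = \delta^{\frac{\lambda-1}{\lambda}} E_{1,\delta'}(\Omega')$ with $\delta' := \delta^{1/\lambda} \to 0$. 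It then suffices to carry out the surgery at the level of $E_{1,\delta'}$ and a large disk, and to prove the strict inequality there; the claimed $\widetilde\Omega$ and $\tilde r$ are recovered by undoing the scaling, with $\tilde r := \delta^{\frac{\lambda-1}{\lambda}} r'' $ for the radius $r''$ produced below.

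The surgery itself: from $\Omega'$ remove the full disk $\ball{0}{r'}$, obtaining $\widetilde\Omega' := \Omega' \setminus \ball{0}{r'}$, and separately keep a disk $\ball{0}{r''}$ of the \emph{same area} $|\ball{0}{r'}| = \pi (r')^2$, i.e.\ $r'' = r'$ — but placed ``at infinity'', meaning the nonlocal cross-interaction between $\widetilde\Omega'$ and this disk is absent (this is the standard generalized-minimizer device, and at the level of the inequality we just need that adding a far-away component splits the energy additively). So the target inequality is
\begin{align}\label{eq:surgery_target}
  E_{1,\delta'}(\Omega') > E_{1,\delta'}(\widetilde\Omega') + E_{1,\delta'}(\ball{0}{r'}).
\end{align}
For the perimeter part, removing $\ball{0}{r'}$ from $\Omega'$ and adding back a separate disk of the same radius changes the perimeter by $P(\widetilde\Omega') + P(\ball{0}{r'}) - P(\Omega')$; since $\ball{0}{r'}\subset\Omega'$ with positive density, cutting along $\partial\ball{0}{r'}$ costs at most $P(\ball{0}{r'}) = 2\pi r'$ of extra perimeter in the worst case and can only help if $\partial^*\Omega'$ already runs along the sphere, so the perimeter defect is bounded above by $2\pi r'$ (and is actually $\leq 4\pi r'$ accounting for both the new disk boundary and the hole boundary — an $O(r')$ term). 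For the nonlocal part, which enters with a $-\frac{\lambda}{4|\log\delta'|} = -\frac{1}{4|\log\delta'|}$ prefactor, I want to show that the interaction \emph{lost} by separating the disk from the rest, plus the self-interaction structure, produces a gain that beats the $O(r')$ perimeter loss. The key computation is a lower bound on the nonlocal self-energy of a disk of radius $r'$: using the representation in \eqref{energy}, for $\Omega = \ball{0}{r'}$ one has $\int\int |\chi_\Omega(x+z)-\chi_\Omega(x)|^2 \frac{g_{\delta'}(|z|)}{|z|^3}\,dz\,dx \gtrsim r' |\log(r'/\delta')| = r'(\log r' + |\log\delta'|)$ for $\delta' \leq r' / 2$, which is essentially the perimeter-localization estimate with the logarithmically divergent constant. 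Hence $E_{1,\delta'}(\ball{0}{r'}) = 2\pi r' - \frac{1}{4|\log\delta'|}(\text{something} \gtrsim r'(|\log\delta'| + \log r')) \leq 2\pi r' - c r' + O(r' \log r' / |\log\delta'|)$ — but this by itself does not give the sign; the real gain comes from the \emph{cross} term between $\ball{0}{r'}$ and $\widetilde\Omega'$ which is now removed. I would estimate that removed cross-interaction from below by noting it involves pairs $(x,x+z)$ with $x\in\ball{0}{r'}$, $x+z\in\widetilde\Omega'$, $|z| > \delta'$, and since $\ball{0}{r'}\subset\Omega'$ the original configuration had no boundary contribution inside $\ball{0}{r'-\delta'}$, so removing the disk \emph{creates} a defect concentrated on an annulus of width $\delta'$ near $\partial\ball{0}{r'}$, contributing $\gtrsim r'|\log\delta'|$ to the (negative) nonlocal term after division by $|\log\delta'|$ — i.e.\ an $O(r')$ gain with a computable constant. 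Balancing: the net change in $E_{1,\delta'}$ is $\leq C_1 r' - c_2 r'$ with $c_2 > C_1$ once the nonlocal constant is pinned down, which forces the choice of the universal constant $C$ in $r \geq C\delta^{\frac{\lambda-1}{\lambda}}$ (equivalently $r' \geq C$) to be large enough that lower-order $\log r'/|\log\delta'|$ corrections are absorbed; here one uses $|\log\delta'| = \frac{1}{\lambda}|\log\delta| \to \infty$.

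The main obstacle I anticipate is making the nonlocal gain \emph{strictly} dominate the perimeter loss with explicit, $\delta$-independent constants — in particular, getting a clean two-sided control of the nonlocal term for the disk and for the removed cross-interaction, uniformly as $\delta' = \delta^{1/\lambda}\to 0$. The cleanest route is probably to avoid cross-interaction estimates entirely and instead argue purely with subadditivity: write $E_{1,\delta'}(\Omega') \geq E_{1,\delta'}(\widetilde\Omega') + (\text{contribution localized in } \ball{0}{r'})$, where the localized contribution is $2\pi r' - \frac{1}{4|\log\delta'|}\int_{\ball{0}{r'}}\int_{\R^2}|\chi_{\Omega'}(x+z)-\chi_{\Omega'}(x)|^2 \frac{g_{\delta'}(|z|)}{|z|^3}\,dz\,dx$, bound this $\frac{1}{|\log\delta'|}$-weighted integral \emph{below} by the analogous integral for $\ball{0}{r'}$ itself (using $\chi_{\Omega'} = 1$ on $\ball{0}{r'}$ and monotonicity/positivity of the kernel to throw away the exterior contribution of $\Omega'$ outside the disk — this requires care since $|\chi_{\Omega'}(x+z)-\chi_{\Omega'}(x)|^2$ for $x\in\ball{0}{r'}$ depends on $\Omega'$ near $x+z$, which may be larger or smaller than for the bare disk), and thereby reduce \eqref{eq:surgery_target} to the single inequality $2\pi r' > \frac{1}{4|\log\delta'|}\,I(\ball{0}{r'}) + E_{1,\delta'}(\ball{0}{r'}) = 4\pi r' - \frac{1}{2|\log\delta'|}I(\ball{0}{r'})$, i.e.\ $I(\ball{0}{r'}) > 4\pi r' |\log\delta'|$, which is exactly the sharp perimeter-localization lower bound for the nonlocal term of a disk once $r'$ (hence $C$) is large enough. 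Pinning down that sharp constant — ideally via an explicit computation of $\int\int |\chi_{\ball{0}{R}}(x+z) - \chi_{\ball{0}{R}}(x)|^2 \frac{g_{\delta'}(|z|)}{|z|^3}\,dz\,dx$ in polar coordinates, or by comparison with a half-plane — is the technical heart of the argument, and is where I would spend most of the effort.
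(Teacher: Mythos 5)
Your overall strategy coincides with the paper's: cut a disk out of the interior of $\Omega$, exile it to infinity, and show that the (negative, for $\lambda>1$) self-energy of the exiled disk beats the perimeter created by the cut. The preliminary rescaling to $E_{1,\delta^{1/\lambda}}$ is harmless but unnecessary; the paper works directly with $E_{\lambda,\delta}$ and the exact combinatorial identity
\begin{align*}
E_{\lambda,\delta}\bigl(\Omega\setminus\ball{0}{\tilde r}\bigr)+E_{\lambda,\delta}(\ball{0}{\tilde r})
= E_{\lambda,\delta}(\Omega)+2\left(E_{\lambda,\delta}(\ball{0}{\tilde r})+\frac{\lambda}{2|\log\delta|}\int_{\ball{0}{\tilde r}}\int_{\stcomp{\Omega}}\frac{g_\delta(|x-y|)}{|x-y|^3}\intd y\intd x\right).
\end{align*}
However, there is a genuine gap in your execution: you remove the disk at the \emph{full} radius $r$, whereas the paper cuts at $\tilde r=r/2$. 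This is not a cosmetic choice. The only cost in the bracketed term is the cross-interaction $\int_{\ball{0}{\tilde r}}\int_{\stcomp{\Omega}}$, and with $\tilde r=r/2$ the hypothesis $\ball{0}{r}\subset\Omega$ forces $\dist(\ball{0}{\tilde r},\stcomp{\Omega})\geq\tilde r$, whence this term is $O(\tilde r/|\log\delta|)$ --- strictly lower order than the gain. If instead you cut at the full radius, $\stcomp{\Omega}$ may touch $\partial\ball{0}{r}$, and the cross term can be as large as $\frac{\lambda}{2|\log\delta|}\int_{\ball{0}{r}}\int_{\stcomp{B}_r(0)}\frac{g_\delta}{|x-y|^3}\approx\pi\lambda r$, which exactly cancels the nonlocal self-energy gain of the exiled disk: in the near-degenerate case $\Omega\approx\ball{0}{r}$ the net change of your surgery is $+4\pi r>0$, so the inequality fails. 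Your proposed remedy ("throw away the exterior contribution") addresses monotonicity of the disk's self-energy, not this cross-interaction cost, and so does not close the gap.

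The second problem is the bookkeeping in your final reduction. Since $E_{1,\delta'}(\ball{0}{r'})=2\pi r'-\frac{1}{4|\log\delta'|}I(\ball{0}{r'})$, one has $\frac{1}{4|\log\delta'|}I(\ball{0}{r'})+E_{1,\delta'}(\ball{0}{r'})=2\pi r'$, not $4\pi r'-\frac{1}{2|\log\delta'|}I(\ball{0}{r'})$; and your target inequality $I(\ball{0}{r'})>4\pi r'|\log\delta'|$ holds trivially for every admissible radius (the localization estimate gives $I\approx 8\pi r'|\log\delta'|$), so it cannot encode the threshold $r\gtrsim\delta^{(\lambda-1)/\lambda}$, which the paper expects to be sharp. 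The correct quantity to make negative is $E_{\lambda,\delta}(\ball{0}{\tilde r})+\frac{\lambda}{2|\log\delta|}\int_{\ball{0}{\tilde r}}\int_{\stcomp{\Omega}}K$, and after the estimates $E_{\lambda,\delta}(\ball{0}{\tilde r})\leq\tilde r\bigl(2\pi(1-\lambda)+\frac{C\lambda}{|\log\delta|}-\frac{2\pi\lambda\log\tilde r}{|\log\delta|}\bigr)$ and (cross term) $\leq\frac{\lambda\pi^2\tilde r}{|\log\delta|}$ its sign is governed by $\frac{\lambda-1}{\lambda}+\frac{\log\tilde r-C}{|\log\delta|}$; this is precisely where the condition $r\geq C\delta^{(\lambda-1)/\lambda}$ enters. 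Also note that, contrary to your middle paragraph, exiling the disk does not "lose" any $\Omega$--$\stcomp{\Omega}$ interaction: it \emph{creates} new interfacial measure (the disk against its own complement, and $\widetilde\Omega$ against the hole), and because the nonlocal term carries a minus sign this creation is the entire gain.
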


\noindent {\em Notation:} Within proofs, the symbols ``$c$'' and
``$C$'' denote universal constants that may change from line to line.
Furthermore, for $(a,b) \in \R^2$ we define $(a,b)^\perp:= (-b,a)$ to
be its counter-clockwise rotation by 90 degrees.

\section{General considerations}\label{sec:general_properties}
\subsection{Various representations of $E_{\lambda,\delta}$ and
  $F_{\lambda,\delta,l}$.}

Depending on the specific situation, it is helpful to write the energy
in one way or another.  For example, the nonlocal term can be
interpreted as a kind of nonlocal perimeter via straightforward
combinatorics
\begin{align}\label{energy_as_nonlocal_perimeter}
  \int_{\R^2} \int_{\R^2} |\chi_\Omega(x+z) - \chi_\Omega (x) |^2
  K_{\delta,l}(|z|) \intd z \intd x = 2 \int_{\Omega}
  \int_{\stcomp{\Omega}}K_{\delta,l}(|x-y|)\intd y \intd x. 
	\end{align}
In terms of the potential
\begin{align}\label{potential_definition}
  v_{\delta,l}(x) :=  \frac{\lambda}{2 |\log \delta |}\int_\Omega
  K_{\delta,l}(|x-y|) \intd y, 
\end{align}
we can also represent it as
\begin{align}\label{energy_expanded}
  F_{\lambda,\delta,l}(\Omega) = P(\Omega) +  \int_{\Omega} v(x)
  \intd x - \frac{\pi \lambda}{ \delta |\log \delta |}  m, 
\end{align}
because we have
$\int_{\R^2} \frac{|z|^2 - 2 l^2}{(|z|^2+l^2)^{5/2}} \intd z = 0$.

For explicit calculations it is helpful to rewrite both the energy and
the potential in terms of boundary integrals, using integration by
parts.  To this end, we define $\Phi_\delta :\R \to \R$ solving
\begin{align}\label{Phi_definition}
  \Delta_z \Phi_\delta(|z|) = \frac{g_\delta(|z|)}{|z|^3} \text{
  subject to }\Phi_\delta(r) \to 0 \text{ as } r\to 0. 
\end{align}
Writing the Laplacian in polar coordinates allows to straightforwardly
determine the solution to be
\begin{align}\label{Phi_explicit}
	\Phi_\delta(r) =
	\begin{cases}
          \frac{1}{r} & \text{ if } r\geq \delta\\
          \frac{1}{\delta}\left(1 - \log\left(\frac{r}{\delta}\right)
          \right) & \text{ if } r< \delta.
	\end{cases}
\end{align}
Similarly, we define $\Phi_{\delta,l}: \R\to \R$ as the unique
solution of
\begin{align}\label{Phi_l_definition}
  \Delta_z \Phi_{\delta,l}(|z|) = K_{\delta,l}(|z|) \text{ subject to
  }\Phi_\delta(r) \to 0 \text{ as } r\to 0, 
\end{align}
which gives
\begin{align}\label{Phi_l_explicit}
	\Phi_{\delta,l}(r) = \Phi_{\delta}(r) - \frac{1}{\sqrt{r^2+ l^2}}.
\end{align}
  
\begin{lemma}\label{lem:energy_on_boundary}
  For $\delta>0$, $m > 0$, $0<l \leq \infty$,
  $\Omega \in \mathcal A_m$ and $x\in \Omega$ the potential
  $v_{\delta,l}(x)$ can be written as
	\begin{align}\label{potential_boundary_formulation}
          v_{\delta,l}(x) =  \frac{\lambda}{2
          |\log\delta|}\int_{\partial^* \Omega} \nabla \Phi_{\delta,l}
          (y-x) \cdot \nu (y) \intd \Hd^1(y) + \frac{\pi
          \lambda}{\delta  |\log \delta |}  m, 
	\end{align}
	where $\nu$ is the outward-pointing unit normal.

	If additionally the set $\Omega$ is such that there exists
        $C_\Omega > 0$ such that
	\begin{align}\label{mild_regularity}
          \sup_{r>0} \frac{\Hd^1(\partial^* \Omega \cap
          \ball{x}{r})}{r} < C_\Omega 
	\end{align}
        for $\Hd^1$-almost every $x\in \partial^* \Omega$, then the
        energy can be represented as
	\begin{align}\label{energy_on_boundary}
          F_{\lambda,\delta,l}(\Omega) = P(\Omega) -
          \frac{\lambda}{2|\log \delta|} \int_{\partial^* \Omega}
          \int_{\partial^* \Omega} \nu(x) \cdot \nu(y)
          \Phi_{\delta,l}(|x-y|) \intd \Hd^1(y) \intd \Hd^1(x). 
	\end{align}
\end{lemma}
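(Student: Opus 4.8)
The plan is to prove Lemma \ref{lem:energy_on_boundary} in two stages: first establish the boundary representation \eqref{potential_boundary_formulation} for the potential by applying the divergence theorem, and then substitute this into the volume formula \eqref{energy_expanded} for the energy to obtain \eqref{energy_on_boundary}.

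\medskip

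\noindent\textbf{Step 1: Boundary formula for the potential.} Starting from the definition \eqref{potential_definition}, I would write $K_{\delta,l}(|x-y|) = \Delta_y \Phi_{\delta,l}(|y-x|)$ using \eqref{Phi_l_definition}, so that $v_{\delta,l}(x) = \frac{\lambda}{2|\log\delta|} \int_\Omega \Delta_y \Phi_{\delta,l}(|y-x|)\intd y$. The issue is that $\Phi_{\delta,l}$ (through $\Phi_\delta$) has only a mild logarithmic singularity at the origin — from \eqref{Phi_explicit}, $\Phi_\delta(r) = \frac{1}{\delta}(1 - \log(r/\delta))$ for $r<\delta$, which is locally integrable together with $\nabla\Phi_\delta \sim \frac{1}{\delta r}$, but $\Delta_z\Phi_\delta$ itself only makes sense as the $L^1_{loc}$ function $g_\delta/|z|^3 = 0$ near the origin plus no distributional mass at the origin (the singularity is subcritical). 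So the divergence theorem $\int_\Omega \Delta_y \Phi_{\delta,l}(|y-x|)\intd y = \int_{\partial^*\Omega} \nabla\Phi_{\delta,l}(y-x)\cdot\nu(y)\intd\Hd^1(y)$ applies on a finite perimeter set after excising a small ball $\ball{x}{\eps}$ around the (possible) singularity at $y=x$ and letting $\eps\to 0$. I would check that the flux through $\partial\ball{x}{\eps}$ vanishes: $\int_{\partial\ball{x}{\eps}} |\nabla\Phi_{\delta,l}| \intd\Hd^1 \lesssim \eps \cdot \frac{1}{\delta\eps} \to 0$. This accounts for the ``$\int_{\partial^*\Omega}$'' term; the extra constant $\frac{\pi\lambda}{\delta|\log\delta|} m$ must come from unwinding the convention: comparing \eqref{potential_definition} with $K_{\delta,l} = g_\delta/r^3 - \frac{r^2-2l^2}{(r^2+l^2)^{5/2}}$ and using $\int_{\R^2}\frac{|z|^2-2l^2}{(|z|^2+l^2)^{5/2}}\intd z = 0$, I suspect the constant is just a bookkeeping artifact matching the normalization already used in \eqref{energy_expanded} — I would track it carefully by noting $\int_\Omega K_{\delta,l}(|x-y|)\intd y$ versus the divergence-theorem output differ by a term $\frac{2\pi}{\delta} m$ coming from how the distributional Laplacian at short range is normalized. (Concretely, one checks the identity on a disk and fixes the constant there.)

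\medskip

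\noindent\textbf{Step 2: From volume to double boundary integral.} Given \eqref{potential_boundary_formulation}, plug it into \eqref{energy_expanded}:
\begin{align*}
F_{\lambda,\delta,l}(\Omega) = P(\Omega) + \int_\Omega v_{\delta,l}(x)\intd x - \frac{\pi\lambda}{\delta|\log\delta|}m,
\end{align*}
and the constant $\frac{\pi\lambda}{\delta|\log\delta|}m$ from \eqref{potential_boundary_formulation} (integrated over $\Omega$, giving $\frac{\pi\lambda}{\delta|\log\delta|}m^2$... ) cancels against the last term. Wait — more carefully, $\int_\Omega$ of the constant gives $\frac{\pi\lambda}{\delta|\log\delta|}m \cdot |\Omega| = \frac{\pi\lambda}{\delta|\log\delta|}m^2$, so this does not immediately cancel the linear term; hence in fact \eqref{energy_expanded}'s constant must be read with the right power of $m$, or the cancellation happens differently. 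I would recheck the bookkeeping here — this is exactly the kind of spot where the two additive constants are designed to match. After the cancellation, what remains is
\begin{align*}
P(\Omega) - \frac{\lambda}{2|\log\delta|}\int_\Omega \int_{\partial^*\Omega}\nabla\Phi_{\delta,l}(y-x)\cdot\nu(y)\intd\Hd^1(y)\intd x,
\end{align*}
(up to sign, since $\nabla\Phi_{\delta,l}(y-x) = -\nabla_x\Phi_{\delta,l}(y-x)$), and I apply the divergence theorem once more in the $x$-variable, now on $\Omega$, to convert $\int_\Omega \nabla_x \Phi_{\delta,l}(y-x)\intd x$ into $-\int_{\partial^*\Omega}\Phi_{\delta,l}(|x-y|)\nu(x)\intd\Hd^1(x)$. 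This requires interchanging the order of integration (Fubini, legitimate since $\Phi_{\delta,l}$ and $\nabla\Phi_{\delta,l}$ are locally integrable and $\Omega$ is bounded — though for unbounded $\Omega$ one notes $\Phi_{\delta,l}(r)\sim\frac{1}{r} - \frac{1}{r}\sim l^2/r^3$ decays fast enough at infinity) and again justifying the divergence theorem near the diagonal $x=y$ by an $\eps$-ball excision argument, where the mild regularity hypothesis \eqref{mild_regularity} is used: it guarantees $\Hd^1(\partial^*\Omega\cap\ball{y}{\eps})\leq C_\Omega\eps$, so the boundary contribution from the excised ball is $\lesssim C_\Omega\eps\cdot\sup_{\ball{y}{\eps}}|\Phi_{\delta,l}| \lesssim C_\Omega\eps\cdot\frac{|\log\eps|}{\delta}\to 0$, and similarly controls the Fubini double integral. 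Collecting signs gives \eqref{energy_on_boundary}.

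\medskip

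\noindent\textbf{Main obstacle.} The genuinely delicate points are (a) getting all the additive constants and their $m$-dependence to reconcile between \eqref{energy_expanded}, \eqref{potential_definition} and \eqref{potential_boundary_formulation} — this is pure bookkeeping but easy to get wrong, and the resolution is to verify the potential identity on the disk $\ball{0}{R}$ where everything is explicit; and (b) rigorously justifying the two applications of the divergence theorem on a mere finite-perimeter set with a (logarithmically singular) kernel on the diagonal, which is where \eqref{mild_regularity} enters and why it is hypothesized only for the second (double-integral) representation and not needed for the first. I expect (b) to be the real work: one must combine the Gauss–Green theorem for sets of finite perimeter (\cite{ambrosio,maggi}) with a dominated-convergence passage $\eps\to 0$, uniformly controlling both the flux through small spheres $\partial\ball{x}{\eps}$ (using $|\nabla\Phi_{\delta,l}(z)| \lesssim \frac{1}{\delta|z|}$ for $|z|\le\delta$ and $|\nabla\Phi_{\delta,l}(z)|\lesssim\frac{1}{|z|^2}$ for $|z|\ge\delta$) and the reduced-boundary mass near the diagonal (using \eqref{mild_regularity}), and then invoking Fubini on $\partial^*\Omega\times\partial^*\Omega$ with the $\Hd^1\otimes\Hd^1$-integrability of $\Phi_{\delta,l}(|x-y|)$ that \eqref{mild_regularity} provides.
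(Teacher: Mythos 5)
Your overall architecture (two integrations by parts, Fubini in between, with \eqref{mild_regularity} entering only for the second, double-boundary-integral step) is the same as the paper's, and your Step 2 is essentially correct. But Step 1 contains a genuine error that is load-bearing. You claim the flux through the excised sphere vanishes because $\int_{\partial\ball{x}{\eps}}|\nabla\Phi_{\delta,l}|\intd\Hd^1\lesssim \eps\cdot\frac{1}{\delta\eps}$ — but $\eps\cdot\frac{1}{\delta\eps}=\frac{1}{\delta}$, which does \emph{not} tend to zero. Indeed, by \eqref{Phi_explicit} one has $\Phi_\delta(r)=\frac1\delta(1-\log(r/\delta))$ for $r<\delta$, and $-\frac1\delta\log r$ is (a multiple of) the two-dimensional fundamental solution: $\Delta\Phi_{\delta,l}=K_{\delta,l}-\frac{2\pi}{\delta}\delta_0$ distributionally, so your assertion that there is ``no distributional mass at the origin'' is false. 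The exact flux through $\partial\ball{x}{\eps}$ is $2\pi\eps\,\Phi'_{\delta,l}(\eps)\to-\frac{2\pi}{\delta}$, and after multiplying by $\frac{\lambda}{2|\log\delta|}$ this is precisely the additive constant $\frac{\pi\lambda}{\delta|\log\delta|}$ in \eqref{potential_boundary_formulation} that you are unable to locate. In other words, the two loose ends you leave open — the (allegedly vanishing) small-sphere flux and the (mysterious) additive constant — are the same object, and your proof as written would output the boundary integral \emph{without} the constant, which is wrong. (You are right to be suspicious of the factor $m$ in \eqref{potential_boundary_formulation}: for the cancellation against \eqref{energy_expanded} in Step 2 to work the constant must be $\frac{\pi\lambda}{\delta|\log\delta|}$ per point, i.e.\ the $m$ there is a typo in the statement; the correct source of the constant is $\int_{\R^2}K_{\delta,l}(|z|)\intd z=\frac{2\pi}{\delta}$, equivalently the point mass above.)

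The paper sidesteps the singularity differently: it integrates by parts over $\stcomp{\Omega}\cap(\ball{x}{R}\setminus\ball{x}{r})$ rather than over $\Omega$. Since a.e.\ $x\in\Omega$ is a point of $\Leb^2$-density one, one can choose radii $r_n\to0$ along which $\Hd^1(\stcomp{\Omega}\cap\partial\ball{x}{r_n})/r_n\to0$, so the small-sphere flux genuinely vanishes there; the constant then appears cleanly when converting $\int_{\stcomp\Omega}K_{\delta,l}$ back to $\int_{\Omega}K_{\delta,l}$ via $\int_{\R^2}K_{\delta,l}=\frac{2\pi}{\delta}$. If you want to keep your route of integrating over $\Omega$ directly, you must instead compute the limiting inner flux exactly (using that $x$ has density one, so $\Hd^1(\Omega^{(1)}\cap\partial\ball{x}{\eps})=2\pi\eps(1+o(1))$ along a subsequence) and carry the resulting $-\frac{2\pi}{\delta}$ to the other side; either fix is fine, but as written the step is incorrect.
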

	
\begin{proof}
  We only need to rewrite the nonlocal term.  By integrability of the
  kernel, for every $x \in \Omega$ we get
  \begin{align}
    \begin{split}
      \int_{\stcomp{\Omega}} K_{\delta,l}(|x-y|) \intd y & =
      \lim_{r\to0} \lim_{R\to \infty}\int_{\stcomp{\Omega}\cap
        (\ball{x}{R}\setminus \ball{x}{r})}\Delta_y
      \Phi_{\delta,l}(|y-x|)
      \intd y \\
      & = \lim_{r\to0} \lim_{R\to \infty} \int_{\partial^\ast
        \left(\stcomp{\Omega}\cap (\ball{x}{R}\setminus
          \ball{x}{r})\right)} \tilde \nu(y) \cdot \nabla_y
      \Phi_{\delta,l}(|y-x|) \intd \Hd^1(y) ,
    \end{split}
  \end{align}
  where $\tilde \nu$ is the outward unit normal to the set of finite
  perimeter $\stcomp{\Omega}\cap (\ball{x}{R}\setminus \ball{x}{r})$,
  see for example \cite[Lemma 15.12]{maggi}.  The same lemma
  furthermore implies that for almost all $r,R>0$ we have the equality
  of measures
	\begin{align}\label{cut-off}
		\begin{split}
                  \tilde \nu(y) \Hd^1|_{\partial^* 
                    \left(\stcomp{\Omega}\cap (\ball{x}{R}\setminus
                      \ball{x}{r})\right)}(y) & = - \nu(y)
                  \Hd^1|_{\partial^*\Omega \cap
                    (\ball{x}{R}\setminus \ball{x}{r})}(y) \\
                  & \quad + \frac{y-x}{|y-x|}\Hd^1|_{\stcomp{\Omega}
                    \cap \partial \ball{x}{R}}(y) \\
                  & \quad -\frac{y-x}{|y-x|}\Hd^1|_{\stcomp{\Omega}
                    \cap \partial \ball{x}{r}}(y) .
		\end{split}
	\end{align}
	
	We first remove the second term.  As $\Phi'_{\delta,l}(R) $
        decays at infinity like $\frac{1}{R^2}$, we have
	\begin{align}
		\begin{split}
                  \lim_{R\to \infty} \left| \int_{\stcomp{\Omega} \cap
                      \partial \ball{x}{R}} \frac{y-x}{|y-x|}\cdot
                    \nabla_y \Phi_{\delta,l}(|y-x|) \intd \Hd^1(y)
                  \right| \leq \lim_{R\to \infty}
                  \frac{C_{\delta,l}}{R} = 0.
		\end{split}
	\end{align}
	Similarly, for every $r>0$ we have
	\begin{align}
		\begin{split}
		& \quad \lim_{R\to \infty} \int_{\partial^\ast
                    {\Omega}\cap (\ball{x}{R}\setminus
                      \ball{x}{r})} \nu(y) \cdot
                  \nabla_y \Phi_{\delta,l}(|y-x|) \intd \Hd^1(y)\\
           &= \int_{\partial^\ast
                    \Omega\setminus
                      \ball{x}{r}} \nu(y) \cdot
                  \nabla_y \Phi_{\delta,l}(|y-x|) \intd \Hd^1(y).
                \end{split}
	\end{align}
	
	Next, we argue that for almost all $x\in \Omega$ the third
        term vanishes in the limit $r \to 0$ along a suitable
        sequence.  To this end, note that $\Phi'_{\delta,l}(r) $
        blows up at the origin as $\frac{1}{r}$.  As
	\begin{align}
          \left|\Omega \setminus \Omega^1 \right| = 0, \qquad
          \Omega^1 := \left\{x\in \R^2:\lim_{r\to 0} \frac{|\Omega
          \cap \ball{x}{r}|}{|\ball{x}{r}|} =1 \right\}, 
	\end{align}
	i.e., the set $\Omega$ is up to a set of measure zero given by
        its points of $\Leb^2$-density one, we only have to prove that
	\begin{align}\label{choose_subsequence}
		\begin{split}
                  \lim_{r\to 0} \left( \operatorname{ess\
                      inf}_{0<\tilde r < r}
                    \frac{\Hd^1(\stcomp{\Omega} \cap \partial
                      \ball{x}{\tilde r})}{\tilde r} \right) = 0.
		\end{split}
	\end{align}
	Indeed, otherwise there would exist $\eps >0$ and $r>0$ such
        that 
	\begin{align}
		\begin{split}
                  \Hd^1(\stcomp{\Omega} \cap \partial \ball{x}{\tilde
                    r})\geq \eps \tilde r
		\end{split}
	\end{align}
	for almost all $0<\tilde r< r$.  Integrating over $0<\tilde
        r<\bar r$ for all $0<\bar r <r$ then gives
	\begin{align}
          |\stcomp{\Omega} \cap \ball{x}{\bar r} | \geq \frac{\eps}{2}
          \bar r^2, 
	\end{align}
	and thus $x$ cannot be a point of $\Leb^2$-density one of
        $\Omega$.

	In order to carry out the remaining limit, we note that
        $\lim_{r\to 0} \frac{\Hd^1|_{\partial^*
            \Omega}(\ball{x}{r})}{r}=0$ for $\Hd^1$-almost all
        $x\in \Omega_1 \subset \R^2 \setminus \partial^* \Omega$ due
        to \cite[Corollary 6.5]{maggi}. As a result, we get
	\begin{align}
          \lim_{n\to \infty}\int_{\partial^\ast
          \Omega\setminus
          \ball{x}{r_n}} \nu(y) \cdot
          \nabla_y \Phi_{\delta,l}(|y-x|) \intd \Hd^1(y) = \int_{\partial^\ast
          \Omega} \nu(y) \cdot
          \nabla_y \Phi_{\delta,l}(|y-x|) \intd \Hd^1(y)
	\end{align}
	for almost all $x\in \Omega$, where $r_n$ is a sequence chosen
        due to equation \eqref{choose_subsequence}.  Therefore, for
        almost all $x\in \Omega$ we obtain
	\begin{align}
          \int_{\stcomp{\Omega}} K_{\delta,l}(|x-y|) \intd y
          = - \int_{\partial^* \Omega} \nu(y) \cdot \nabla_y
          \Phi_{\delta,l}(|y-x|) \intd \Hd^1(y).
	\end{align}
	
	Consequently, after adding and subtracting the appropriate
        integral over $\stcomp{\Omega}$ in definition
        \eqref{potential_definition} we get
	\begin{align}
		\begin{split}
          v_{\delta,l}(x) &  =  \frac{\pi \lambda}{ \delta
                            |\log \delta |} m - \frac{\lambda}{2
                            |\log \delta |}\int_{\stcomp{\Omega}}
                            K_{\delta,l}(|x-y|)\intd y\\ 
                          &  =  \frac{\pi \lambda}{ \delta
                            |\log \delta |} m  +  \frac{\lambda}{2
                            |\log \delta |}\int_{\partial \Omega}
                            \nu(y) \cdot \nabla_y
                            \Phi_{\delta,l}(|y-x|) \intd \Hd^1(y).                           
       \end{split}                  
	\end{align}

	Going back to the original double integral we see that
	\begin{align}\label{fubini?}
		\begin{split}
                  \int_{\Omega}
                  \int_{\stcomp{\Omega}}K_{\delta,l}(|x-y|) \intd y
                  \intd x & =\int_{\Omega} \int_{\partial^*
                    \Omega} \nu(y) \cdot(- \nabla_y)
                  \Phi_{\delta,l}(|y-x|) \intd
                  \Hd^1(y) \intd x\\
                  & = \int_{\partial^* \Omega} \int_{\Omega}
                  \nu(y) \cdot \nabla_x \Phi_{\delta,l}(|y-x|)\intd x
                  \intd \Hd^1(y).
		\end{split}
	\end{align}
	Here we were able to apply Fubini's theorem, since the
        integrand on the right hand side satisfies
	\begin{align}
          |\nu(y) \cdot(- \nabla_y) \Phi_{\delta,l}(|y-x|)| \leq
          \frac{C_{\delta,l}}{|x-y|},
	\end{align}
	due to the explicit representation \eqref{Phi_l_explicit}, and
        thus is integrable over $\Omega \times \partial^* \Omega$
        w.r.t.\ the product measure $\Leb^2 \otimes \Hd^1$.
	
        Using a similar choice of small scale cut-offs as in
        \eqref{cut-off} with $x$ and $y$ interchanged, we get
	\begin{align}
		\begin{split}
          & \quad 	\int_{\Omega} \nu(y) \cdot \nabla_x
            \Phi_{\delta,l}(|y-x|)\intd x\\ 
          & = \lim_{\eps \to 0}  \int_{\Omega\setminus \ball{y}{\eps}}
            \nu(y) \cdot \nabla_x \Phi_{\delta,l}(|y-x|)\intd x  \\ 
          & =   \lim_{\eps \to 0} \int_{\partial^*(\Omega\setminus
            \ball{y}{\eps})} \nu(y) \cdot \tilde \nu(x)
            \Phi_{\delta,l}(|y-x|)\intd \Hd^1(x)  \\ 
          & =\lim_{\eps \to 0}   \int_{\partial^* \Omega \setminus
            \ball{y}{\eps}} \nu(y) \cdot \nu(x)
            \Phi_{\delta,l}(|y-x|)\intd \Hd^1(x),
       \end{split}
	\end{align}
	where in the last step we used the fact that $\eps
        \Phi_{\delta,l}(\eps) \to 0$. 
	
	Under the mild regularity assumption \eqref{mild_regularity}
        we can cover $\ball{y}{1}$ by a collection of overlapping
        dyadic annuli $\ball{y}{2^{-k+1}}\setminus\ball{y}{2^{-k-1}}$
        for $k\in \N$, and for $\Hd^1$-almost all
        $y \in \partial^* \Omega$ obtain
	\begin{align}
          \int_{\partial^* \Omega} |\log(|x-y|)| \intd \Hd^1(x) \leq
          C_\Omega \sum_{k\in \N} 2^{-k+1} |\log(2^{-k-1})| \leq  C_\Omega
          \sum_{k\in \N} k \cdot 2^{-k} < \infty.  
	\end{align}
	Via integrability, we obtain both
	\begin{align}\label{double_boundary_integral}
          \int_{\Omega} \nu(y) \cdot \nabla_x
          \Phi_{\delta,l}(|y-x|)\intd x = \int_{\partial^* \Omega}
          \nu(y) \cdot \nu(x) \Phi_{\delta,l}(|y-x|)\intd \Hd^1(x), 
	\end{align}
	and the desired statement \eqref{energy_on_boundary}.
\end{proof}

\subsection{Rescaling}
We first record the behavior of the energy under rescalings.
Especially the inequality \eqref{energy_drops} below turns out to be
of central importance in proving that generalized minimizers exist, as
well as in controlling the number of pieces.

\begin{lemma}\label{lem:rescaling} 
  Let $\alpha > 0$, $m > 0$, $\Omega \in \mathcal A_m$ and
  $\widetilde \Omega := \alpha^{-1} \Omega$. Then we have
  $\widetilde \Omega \in \mathcal A_{\widetilde m}$ and
	\begin{align}
          \alpha  F_{\tilde \lambda, \tilde \delta,\tilde l}(\widetilde
          \Omega) = F_{\lambda,\delta,l}(\Omega),
	\end{align}
	where $\alpha^2 \widetilde m = m$, $\alpha \tilde \delta =
        \delta$, $\alpha \tilde l=l$ and
        $\frac{\tilde \lambda}{\big|\log\tilde \delta \big|} =
        \frac{\lambda}{|\log\delta|}$.  If $\alpha<1$, then we also
        have 
	\begin{align}\label{energy_drops}
          \alpha \left(F_{\lambda, \delta,l}(\widetilde \Omega) - \frac{6
          \pi \widetilde m \lambda }{5 |\log\delta| l} \right)<
          F_{\lambda,\delta,l}(\Omega) - \frac{6 \pi m \lambda}{5
          |\log\delta| l}. 
	\end{align}
	For the functional $F_{1,0,l}$ it holds that
        \begin{align}
          \label{F10lresc}
          F_{1,0,l}(\Omega) = \alpha \left( F_{1,0,\tilde
          l}(\widetilde \Omega) - P(\widetilde \Omega) \log
          \alpha \right).
	\end{align}
\end{lemma}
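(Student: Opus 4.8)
The proof rests on the scaling identity $K_{\delta,l}(\alpha r)=\alpha^{-3}K_{\delta/\alpha,\,l/\alpha}(r)$ for the kernel, which is immediate from \eqref{kernel}: $g_\delta(\alpha r)=\chi_{\{\alpha r>\delta\}}=g_{\delta/\alpha}(r)$, and the subtracted term is $(-3)$‑homogeneous with $l$ carrying the dimension of a length.

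\emph{Scaling identity $\alpha F_{\tilde\lambda,\tilde\delta,\tilde l}(\widetilde\Omega)=F_{\lambda,\delta,l}(\Omega)$.} Writing $\Omega=\alpha\widetilde\Omega$ in \eqref{energy_modified} and substituting $x=\alpha x'$, $z=\alpha z'$, the two area elements produce a factor $\alpha^{4}$ and the kernel a factor $\alpha^{-3}$, so the nonlocal integral is multiplied by $\alpha$; together with $P(\Omega)=\alpha P(\widetilde\Omega)$ this gives $F_{\lambda,\delta,l}(\Omega)=\alpha\big(P(\widetilde\Omega)-\tfrac{\lambda}{4|\log\delta|}\int_{\R^2}\int_{\R^2}|\chi_{\widetilde\Omega}(x'+z')-\chi_{\widetilde\Omega}(x')|^2K_{\tilde\delta,\tilde l}(|z'|)\intd z'\intd x'\big)$. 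The hypothesis $\tilde\lambda/|\log\tilde\delta|=\lambda/|\log\delta|$ turns the prefactor into $\tilde\lambda/(4|\log\tilde\delta|)$, which is exactly $\alpha F_{\tilde\lambda,\tilde\delta,\tilde l}(\widetilde\Omega)$; the mass relation $\alpha^{2}\widetilde m=m$ is immediate.

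\emph{The drop inequality \eqref{energy_drops}.} Put $s=\alpha^{-1}>1$, $m=|\Omega|$, and let $\psi(s):=\tfrac1sF_{\lambda,\delta,l}(s\Omega)-\tfrac{6\pi\lambda sm}{5|\log\delta| l}$. Using $m=\alpha^{2}\widetilde m$ one checks $\psi(\alpha^{-1})=\alpha\big(F_{\lambda,\delta,l}(\widetilde\Omega)-\tfrac{6\pi\widetilde m\lambda}{5|\log\delta| l}\big)$ and $\psi(1)=F_{\lambda,\delta,l}(\Omega)-\tfrac{6\pi m\lambda}{5|\log\delta| l}$, so \eqref{energy_drops} is precisely $\psi(\alpha^{-1})<\psi(1)$ and it suffices to prove $\psi$ strictly decreasing on $(0,\infty)$. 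By \eqref{energy_as_nonlocal_perimeter} and the change of variables above (now with the set fixed), $\tfrac1sF_{\lambda,\delta,l}(s\Omega)=P(\Omega)-\tfrac{\lambda}{2|\log\delta|}\int_\Omega\int_{\stcomp{\Omega}}K_{\delta/s,\,l/s}(|x-y|)\intd y\intd x$, hence $\psi(s)=P(\Omega)-\tfrac{\lambda}{2|\log\delta|}B(s)$ with $B(s):=\int_\Omega\int_{\stcomp{\Omega}}K_{\delta/s,\,l/s}(|x-y|)\intd y\intd x+\tfrac{12\pi sm}{5l}$; it is enough that $B$ be strictly increasing. Split $K_{\delta/s,l/s}(r)=\tfrac{\chi_{\{r>\delta/s\}}}{r^{3}}-f_{l/s}(r)$ with $f_a(r):=\tfrac{r^{2}-2a^{2}}{(r^{2}+a^{2})^{5/2}}$. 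The first piece is non‑decreasing in $s$ for each $r>0$, hence so is its (finite) double integral over $\Omega\times\stcomp{\Omega}$. For the second, $\int_{\R^2}f_a(|z|)\intd z=0$ for every $a$ gives $\int_{\R^2}\partial_sf_{l/s}(|z|)\intd z=0$, so
\[
\frac{d}{ds}\Big(-\int_\Omega\int_{\stcomp{\Omega}}f_{l/s}(|x-y|)\intd y\intd x\Big)\ \ge\ -m\int_{\R^2}\big(\partial_sf_{l/s}(|z|)\big)_{+}\intd z\ =\ -\frac m2\int_{\R^2}\big|\partial_sf_{l/s}(|z|)\big|\intd z ,
\]
and an explicit one–dimensional computation gives $\int_{\R^2}|\partial_sf_{l/s}(|z|)|\intd z=\tfrac{8\pi}{(5/3)^{5/2}\,l}$. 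Since $(5/3)^{5/2}>5/3$ we have $\tfrac{4}{(5/3)^{5/2}}<\tfrac{12}{5}$, so $B'(s)\ge\tfrac{12\pi m}{5l}-\tfrac{4\pi m}{(5/3)^{5/2}l}>0$ (the discarded $g$‑term only helps), and as $B$ is absolutely continuous this yields the strict monotonicity. The real work in this step is the evaluation of $\int_{\R^2}|\partial_sf_{l/s}|$ and the verification that the constant $\tfrac{6\pi}{5}$, which is essentially sharp, beats it; once the $g$‑contribution is recognised as monotone, the differentiation‑under‑the‑integral argument is routine.

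\emph{The identity \eqref{F10lresc}.} Apply the first step with $\lambda=1$: $F_{1,\delta,l}(\Omega)=\alpha F_{\tilde\lambda,\tilde\delta,\tilde l}(\widetilde\Omega)$ with $\tilde\delta=\delta/\alpha$, $\tilde l=l/\alpha$, $\tilde\lambda=|\log\tilde\delta|/|\log\delta|$. Since $\tilde\lambda/|\log\tilde\delta|=1/|\log\delta|$, comparing the nonlocal terms of $F_{\tilde\lambda,\tilde\delta,\tilde l}$ and $F_{1,\tilde\delta,\tilde l}$ at $\widetilde\Omega$ gives $F_{\tilde\lambda,\tilde\delta,\tilde l}(\widetilde\Omega)=P(\widetilde\Omega)\big(1-\tfrac{|\log\tilde\delta|}{|\log\delta|}\big)+\tfrac{|\log\tilde\delta|}{|\log\delta|}F_{1,\tilde\delta,\tilde l}(\widetilde\Omega)$. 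For $\delta$ (hence $\tilde\delta$) small, $|\log\delta|=|\log\tilde\delta|-\log\alpha$, so $1-\tfrac{|\log\tilde\delta|}{|\log\delta|}=-\tfrac{\log\alpha}{|\log\delta|}$; multiplying by $|\log\delta|$,
\[
|\log\delta|\,F_{1,\delta,l}(\Omega)=\alpha\big(|\log\tilde\delta|\,F_{1,\tilde\delta,\tilde l}(\widetilde\Omega)-P(\widetilde\Omega)\log\alpha\big).
\]
Letting $\delta\to0$ and using Lemma \ref{lem:modified_compactness} — specifically that (as in Theorem \ref{thm:critical_convergence}(ii)) the constant sequence is a recovery sequence, so $|\log\delta|F_{1,\delta,l}(\Omega)\to F_{1,0,l}(\Omega)$ and likewise with $\widetilde\Omega,\tilde l$ — yields \eqref{F10lresc}, with both sides finite or $+\infty$ simultaneously. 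Alternatively, for $\Omega$ with $C^{2}$‑boundary one may read \eqref{F10lresc} directly off \eqref{representation_bernoff}: under $\Omega\mapsto\alpha\Omega$ each $P_i\mapsto\alpha P_i$, the double integrals scale by $\alpha$ after $t\mapsto\alpha t,\ s\mapsto\alpha s$, the term $-\sum_iP_i[\log(P_i/2)+2]$ acquires the extra contribution $-\alpha(\sum_iP_i)\log\alpha=-\alpha P(\Omega)\log\alpha$, and the $l$‑dependent integral in \eqref{limit_modified} scales by $\alpha$ with $l\mapsto l/\alpha$.
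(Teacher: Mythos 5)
Your proof is correct. The scaling identity and the limit identity \eqref{F10lresc} follow essentially the paper's route (change of variables, then the substitution $|\log\delta|=|\log\tilde\delta|-\log\alpha$ and passage to the pointwise limit), but your treatment of the strict inequality \eqref{energy_drops} is genuinely different. The paper splits the kernel algebraically as $\frac{|z|^2-2l^2}{(|z|^2+l^2)^{5/2}}=-\frac{\frac45|z|^2+2l^2}{(|z|^2+l^2)^{5/2}}+\frac{\frac95|z|^2}{(|z|^2+l^2)^{5/2}}$, moves the second piece onto $\Omega\times\Omega$ (which is exactly where the correction $\frac{6\pi m\lambda}{5|\log\delta|l}$ comes from), and checks that each of the three resulting kernels is \emph{pointwise} monotone under $(\delta,l)\mapsto(\alpha\delta,\alpha l)$ — the coefficient $\frac45$ is tuned so that $\partial_a\big[\tfrac{\frac45 r^2+2a^2}{(r^2+a^2)^{5/2}}\big]=\tfrac{-6a^3}{(r^2+a^2)^{7/2}}\le0$. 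You instead keep the non-monotone part $f_{l/s}$ intact, differentiate in the scale parameter, use $\int_{\R^2}f_a=0$ to convert the positive part of $\partial_sf_{l/s}$ into half its $L^1$-norm, and verify by an explicit computation (which I checked: $\int_{\R^2}|\partial_sf_{l/s}(|z|)|\intd z=\frac{24\pi}{5l}(3/5)^{3/2}=\frac{8\pi}{(5/3)^{5/2}l}$, and $\frac{12}{5}>\frac{4}{(5/3)^{5/2}}\approx1.115$) that the linear correction dominates. Your method is more brute-force but avoids having to guess the decomposition, and it incidentally shows the constant $\frac{6\pi}{5}$ could be taken smaller — so your aside that it is ``essentially sharp'' is not right, though immaterial. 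Two cosmetic points: strictness does not actually need absolute continuity of $B$, only the decomposition of $B$ into a non-decreasing $g$-part plus a $C^1$ part with derivative bounded below by a positive constant, which is what you in fact have; and the justification for differentiating under the integral (a dominating function for $\partial_sf_{l/s}$ locally uniformly in $s$) deserves one explicit sentence.
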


\begin{proof}
	We compute
	\begin{align}
		\begin{split}
                  \alpha F_{\tilde \lambda, \tilde \delta, \tilde
                    l}(\widetilde \Omega) & = P(\Omega) -
                  \alpha\frac{\tilde \lambda}{2|\log\tilde \delta|}
                  \int_{\alpha^{-1} \Omega} \int_{\stcomp{\alpha^{-1}
                      \Omega}} \left(\frac{g_{\tilde
                        \delta}(|x-y|)}{|x-y|^3} - \frac{|x-y|^2 -2
                      \tilde l^2}{(|x-y|^2+\tilde
                      l^2)^{5/2}}\right) \intd y \intd x\\
                  & = P(\Omega) -\frac{\tilde \lambda}{2|\log\tilde
                    \delta|} \int_{\Omega} \int_{\stcomp{ \Omega}}
                  \left(\frac{g_{\alpha\tilde \delta}(|x-y|)}{|x-y|^3}
                    - \frac{|x-y|^2 -2 (\alpha\tilde
                      l)^2}{(|x-y|^2+(\alpha \tilde
                      l)^2)^{5/2}}\right) \intd y \intd x,
		\end{split}
	\end{align}
	which is equivalent to first statement.
  For the second part, note that by decomposing
  \begin{align}
    \frac{|z|^2 - 2l^2}{(|z|^2+l^2)^{5/2}} = -\frac{\frac{4}{5}|z|^2 +
    2l^2}{(|z|^2+l^2)^{5/2}} + \frac{\frac{9}{5}|z|^2
    }{(|z|^2+l^2)^{5/2}} 
  	\end{align}
  	and subtracting
        $m \int_{\R^2} \frac{\frac{9}{5}|z|^2
        }{(|z|^2+l^2)^{5/2}}\intd z =\frac{6\pi m \lambda}{5
          |\log\delta| l} $ from the energy we obtain
  \begin{alignat}{3}
    & \quad F_{\lambda,\delta,l}(\Omega) - \frac{6\pi m \lambda}{5
      |\log\delta|
      l} && \notag\\
    & = P(\Omega) - \frac{\lambda}{2|\log\delta|}\Bigg(\int_\Omega
    \int_{\stcomp{\Omega}} \Bigg( \frac{g_\delta(|x-y|)}{|x-y|^3} &&+
    \frac{\frac{4}{5}|x-y|^2 + 2 l^2}{(|x-y|^2
      + l^2)^\frac{5}{2}} \Bigg) \intd y \intd x \\
    &&&+ \int_\Omega \int_{\Omega} \frac{\frac{9}{5}|x-y|^2}{(|x-y|^2
      + l^2)^\frac{5}{2}} \intd y \intd x \Bigg).\notag
	\end{alignat}
	A similar argument gives
	 \begin{alignat}{3}
           & \quad \alpha\left(F_{\lambda,\delta,l}\left(\widetilde
               \Omega \right) - \frac{6\pi \widetilde m \lambda}{5
               |\log\delta|
               l}  \right) && \notag \\
           & = P(\Omega) -
           \frac{\lambda}{2|\log\delta|}\Bigg(\int_\Omega
           \int_{\stcomp{\Omega}}
           \Bigg(\frac{g_{\alpha\delta}(|x-y|)}{|x-y|^3} && +
           \frac{\frac{4}{5}|x-y|^2 + 2 (\alpha l)^2}{(|x-y|^2
             + (\alpha l)^2)^\frac{5}{2}}\Bigg)\intd y \intd x \\
           &&& + \int_\Omega \int_{\Omega}
           \frac{\frac{9}{5}|x-y|^2}{(|x-y|^2 + (\alpha
             l)^2)^\frac{5}{2}} \intd y \intd x \Bigg),\notag
	\end{alignat}
	
	Therefore we have
	\begin{align}\label{modulus_of_continuity_rescaling}
		\begin{split}
                  & \quad F_{\lambda,\delta,l}(\Omega) - \frac{6 \pi m
                    \lambda}{5 |\log\delta| l} - \alpha
                  \left(F_{\lambda, \delta,l}(\widetilde \Omega) -
                    \frac{6\pi \widetilde m \lambda}{5|\log\delta| l}
                  \right)\\
                  & = \frac{\lambda}{2|\log \delta|} \int_{\Omega}
                  \int_{\stcomp{\Omega}} \left(
                    \frac{g_{\alpha\delta}(|x-y|)}{|x-y|^3}-
                    \frac{g_\delta(|x-y|)}{|x-y|^3} \right)
                  \intd y \intd x \\
                  & \quad \quad + \frac{\lambda}{2|\log \delta|}
                  \int_{\Omega} \int_{\stcomp{\Omega}} \left(
                    \frac{\frac{4}{5}|x-y|^2 + 2 \alpha^2l^2}{(|x-y|^2
                      +\alpha^2
                      l^2)^\frac{5}{2}}-\frac{\frac{4}{5}|x-y|^2 + 2
                      l^2}{(|x-y|^2 + l^2)^\frac{5}{2}} \right) \intd
                  y \intd x
                  \\
                  & \quad \quad + \frac{\lambda}{2|\log \delta|}
                  \int_{\Omega} \int_{\Omega} \left(
                    \frac{\frac{9}{5}|x-y|^2}{(|x-y|^2 +
                      \alpha^2l^2)^\frac{3}{2}
                    }-\frac{\frac{9}{5}|x-y|^2}{(|x-y|^2+l^2)^\frac{3}{2}}
                  \right)
                  \intd y \intd x \\
                  & > 0,
       \end{split}
	\end{align}		
	by inspection of the various monotonicities in $\alpha<1$.
	
	As we only require the rescaling for the critical
        $\Gamma$-limit $E_{1,0}$ after the proof of Theorem
        \ref{thm:critical_convergence}, we may as well use the
        characterization as the pointwise limit to see that
	\begin{align}
          E_{1,0}(\Omega)
          & = \lim_{\delta \to 0} |\log\delta|
            E_{1,\delta}(\Omega)  = \alpha
            \lim_{\delta \to 0} \left( |\log\delta| P\left
            (\widetilde \Omega \right) -
            \frac{1}{2}\int_{\widetilde
            \Omega}\int_{\stcomp{\widetilde \Omega}} 
            \frac{g_{\delta / \alpha}(|x-y|)}{|x-y|^3} 
            \intd y \intd x \right) \notag\\  
          & \overset{\delta = \alpha \tilde \delta}{=} \alpha
            \lim_{\tilde \delta \to 0} \left( |\log \tilde \delta|
            P\left (\widetilde \Omega \right) -
            \frac{1}{2}\int_{\widetilde
            \Omega}\int_{\stcomp{\widetilde \Omega}}\frac{g_{\tilde 
            \delta}(|x-y|)}{|x-y|^3} \intd y \intd x \right) - \alpha
            P\left (\widetilde \Omega \right)  \log
            \alpha  \\ 
          & = \alpha E_{1,0}\left(\widetilde \Omega \right) - \alpha
            P\left 
            (\widetilde \Omega \right) 
            \log \alpha. \notag
	\end{align}
	The modification due to $l<\infty$ transforms this into
        \eqref{F10lresc}.
\end{proof}

We can also control how much the energy increases if we have
$\alpha >1$.  However, we postpone the proof to Section
\ref{sec:subcritical} as it is a straightforward adaptation of the
proof of Proposition \ref{prop:a_priori_lower_bound}.  As we only need
this statement for $l=\infty$ we directly formulate it for
$E_{\lambda,\delta}$.

\begin{lemma}\label{lem:rescaling2}
  As before, consider $ \alpha \widetilde \Omega = \Omega$, but now
  let $\alpha >1$.  Then we have
	\begin{align}
          \alpha E_{\lambda, \delta}(\widetilde \Omega) -
          E_{\lambda,\delta}(\Omega) \leq \frac{\lambda
          \,\log\alpha}{|\log\delta|}P(\Omega). 
	\end{align}
\end{lemma}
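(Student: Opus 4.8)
The plan is to exploit the same decomposition of the nonlocal term that appeared in the first part of Lemma \ref{lem:rescaling}, now specialized to $l = \infty$ so that only the cutoff kernel $g_\delta(|z|)/|z|^3$ is present. Writing $\Omega = \alpha \widetilde\Omega$ with $\alpha > 1$ and using the nonlocal-perimeter form \eqref{energy_as_nonlocal_perimeter}, we have
\begin{align}
  \alpha E_{\lambda,\delta}(\widetilde\Omega) - E_{\lambda,\delta}(\Omega)
  = \frac{\lambda}{2|\log\delta|}
  \int_\Omega \int_{\stcomp\Omega}
  \left( \frac{g_\delta(|x-y|)}{|x-y|^3} - \frac{g_{\alpha\delta}(|x-y|)}{|x-y|^3} \right)
  \intd y \intd x,
\end{align}
since the perimeter term scales exactly as $\alpha P(\widetilde\Omega) = P(\Omega)$ and the change of variables $x \mapsto \alpha x$, $y\mapsto \alpha y$ turns the $\widetilde\Omega$-integral into an $\Omega$-integral with $\delta$ replaced by $\alpha\delta$. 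The integrand in the last display is nonnegative because $\alpha > 1$ forces $g_{\alpha\delta} \le g_\delta$ pointwise, so the difference of energies is genuinely an increase, and it remains to bound it from above.

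The key step is to estimate this double integral by the boundary-integral machinery set up in Lemma \ref{lem:energy_on_boundary}: using the potential $\Phi_\delta$ from \eqref{Phi_definition}--\eqref{Phi_explicit} and its analogue $\Phi_{\alpha\delta}$, one integrates by parts twice to rewrite
\begin{align}
  \int_\Omega \int_{\stcomp\Omega}
  \left( \frac{g_\delta(|x-y|)}{|x-y|^3} - \frac{g_{\alpha\delta}(|x-y|)}{|x-y|^3} \right)
  \intd y \intd x
  = - \int_{\partial^*\Omega} \int_{\partial^*\Omega}
  \nu(x)\cdot\nu(y)\, \big(\Phi_\delta - \Phi_{\alpha\delta}\big)(|x-y|)
  \intd \Hd^1(y) \intd \Hd^1(x),
\end{align}
(with the small-scale and large-scale cutoff arguments exactly as in the proof of Lemma \ref{lem:energy_on_boundary}, noting that $\Phi_\delta - \Phi_{\alpha\delta}$ is supported in $\ball{0}{\alpha\delta}$ and vanishes at the origin). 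From the explicit formula \eqref{Phi_explicit}, $\Phi_\delta - \Phi_{\alpha\delta}$ is bounded and of size $O(\delta^{-1})$ on its support, so $|\nu(x)\cdot\nu(y)(\Phi_\delta-\Phi_{\alpha\delta})(|x-y|)| \le C\delta^{-1}\chi_{\{|x-y|<\alpha\delta\}}$; integrating in $y$ over $\partial^*\Omega$ and using that the double integral is dominated by $P(\Omega)$ times $\sup_x \Hd^1(\partial^*\Omega\cap\ball{x}{\alpha\delta})/\delta$ would give a bound of the form $C\,P(\Omega)$ but \emph{without} the crucial $\log\alpha/|\log\delta|$ factor.

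To get the sharp constant $\frac{\lambda\log\alpha}{|\log\delta|}P(\Omega)$ one must instead go back to the double integral directly and bound it \emph{before} integrating by parts, by comparison with a one-dimensional computation: for a half-plane the inner integral $\int_{\stcomp\Omega}(g_\delta - g_{\alpha\delta})(|x-y|)|x-y|^{-3}\intd y$ over the complement evaluates, after passing to polar coordinates around $x$, to a constant multiple of $\int_{\delta}^{\alpha\delta} r^{-2}\intd r \cdot (\text{angular factor})$, which is $O(\delta^{-1}(1 - 1/\alpha))$ — still not logarithmic. The logarithm must therefore come from the \emph{outer} integration, i.e.\ from how far into $\Omega$ one must integrate: a point $x \in \Omega$ at distance $t$ from $\partial\Omega$ only sees the kernel difference if $t < \alpha\delta$, and the contribution from the annular region $\{t < |x-y| < \alpha\delta\}$ behaves like $\int_t^{\alpha\delta} r^{-2}\intd r$, so that $\int_0^{\alpha\delta}\!\!\int_t^{\alpha\delta} r^{-2}\,\intd r\,\intd t = \int_0^{\alpha\delta} (t^{-1} - (\alpha\delta)^{-1})\intd t$ diverges logarithmically and is regularized precisely by the \emph{lower} cutoff $r > \delta$ on the first kernel, yielding $\int_\delta^{\alpha\delta} \log(\alpha\delta/t)\,t^{-1}\intd t + \cdots = \tfrac12(\log\alpha)^2 + \log\alpha$, and after the correct bookkeeping a leading term $\frac{\log\alpha}{1} \cdot \frac{1}{|\log\delta|} \cdot \frac{\lambda}{2}\cdot 2 P(\Omega)$. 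I expect the main obstacle to be organizing this last calculation cleanly — keeping track of which piece of $\Phi_\delta$ (the $1/r$ part versus the logarithmic core on $r < \delta$) produces the $\log\alpha$, and in the process handling the geometry of $\partial^*\Omega$ via the coarea formula rather than the half-plane model. Since the lemma is only needed as a technical input and a constant worse than optimal would still suffice for the intended application, the cleanest route may in fact be to adapt verbatim the proof of Proposition \ref{prop:a_priori_lower_bound} (as the authors announce), which presumably carries out exactly this $\log$-accounting; I would present the argument as: reduce to the kernel-difference double integral, invoke the representation-and-estimate of Proposition \ref{prop:a_priori_lower_bound} with $\delta$ and $\alpha\delta$ in place of the two scales there, and read off the stated bound.
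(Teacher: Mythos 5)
Your final recipe is exactly the paper's proof: the energy difference equals $\frac{\lambda}{4|\log\delta|}$ times the quadratic form with kernel $\bigl(g_\delta(|z|)-g_{\alpha\delta}(|z|)\bigr)/|z|^3=\chi_{\{\delta<|z|\le\alpha\delta\}}/|z|^3$, and Step~1 of Proposition~\ref{prop:a_priori_lower_bound} with cutoff $R=\alpha\delta$ bounds that form by $4P(\Omega)\int_\delta^{\alpha\delta}s^{-1}\intd s=4P(\Omega)\log\alpha$, which is the claim. The exploratory middle of your write-up is unnecessary, and the $\tfrac12(\log\alpha)^2$ appearing there is spurious: the logarithm arises simply because the $x$-integration produces a factor $P(\Omega)\,|z_1|$, and $\int_{\{\delta<|z|\le\alpha\delta\}}|z_1|\,|z|^{-3}\intd z=4\log\alpha$.
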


\section{Existence of classical and generalized minimizers of
  $E_{\lambda,\delta}$ and
  $F_{\lambda,\delta,l}$}\label{sec:existence}

The main purpose of this section is to prove existence of generalized
minimizers, together with some control over how many components a
generalized minimizer has.  This is encoded in the function
$f_{\lambda,\delta,l} : \R^+ \to \R$ defined as
\begin{align}\label{def:f} f_{\lambda,\delta,l}(m) :=
  \inf_{\mathcal{A}_m} F_{\lambda,\delta,l} - \frac{ 6\pi
  \lambda}{5|\log\delta| l}m.
\end{align}
The crucial observation is that any component of a generalized
minimizer (that is not classical) must have at least a certain amount
of mass and thus can have at most a quantitatively controlled number
of components.

\begin{prop}\label{prop:existence_of_minimizers_intermediate_masses}
  Let $\lambda > 0$, $m >0$, $0<\delta < \frac{1}{2}$ and
  $0<l \leq \infty$. For
  $\mathcal I^+ := \{m>0 : f_{\lambda,\delta,l}(m)>0\}$ we have
  \begin{enumerate}[(i)]
  \item $\mathcal I^+= (0, m_0(\lambda,\delta,l))$ for some
    $0<m_0(\lambda,\delta,l) <\infty$.
  \item $f_{\lambda,\delta,l}(m_0(\lambda,\delta,l))=0$.
  \item $f_{\lambda,\delta,l}(m)<f_{\lambda,\delta,l}(\widetilde m)$
    for all $m>\widetilde m \geq m_0(\lambda,\delta,l)$.
  \end{enumerate}
  Furthermore, for all $0 < \delta < \frac{1}{2}$ generalized
  minimizers at mass $m$ exist.  They can have at most
  $\max\left(\left\lceil\frac{m}{m_0(\lambda,\delta,l)} \right\rceil
    -1,1\right)$ components and in particular they are classical for
  masses $0<m \leq 2m_0(\lambda, \delta,l)$.
\end{prop}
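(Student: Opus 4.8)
The plan is to work throughout with the shifted energy $\widetilde F(\Omega):=F_{\lambda,\delta,l}(\Omega)-\tfrac{6\pi\lambda}{5|\log\delta|l}|\Omega|$, so that $f_{\lambda,\delta,l}(m)=\inf_{\mathcal A_m}\widetilde F$; since placing the components of a configuration pairwise infinitely far apart makes the cross-interaction in the formula analogous to \eqref{energy_as_nonlocal_perimeter} vanish, one also has $\inf_{\mathcal{SA}_m}\mathcal S\widetilde F=f_{\lambda,\delta,l}(m)$, and the same observation shows $f:=f_{\lambda,\delta,l}$ is subadditive, $f(m_1+m_2)\le f(m_1)+f(m_2)$. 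First I would record the qualitative behaviour of $f$: the isoperimetric inequality together with the crude bound $|\text{nonlocal term}|\le C_{\lambda,\delta,l}|\Omega|$ (coming from $\|K_{\delta,l}\|_{L^1(\R^2)}<\infty$) gives $\widetilde F(\Omega)\ge 2\sqrt{\pi|\Omega|}-C'_{\lambda,\delta,l}|\Omega|$, so $f(m)>0$ for small $m$; testing with a disk $\ball{0}{R}$ of mass $\pi R^2$, whose nonlocal contribution grows at least like $\tfrac{\lambda}{|\log\delta|}R\log(R/\delta)$ and hence (together with the subtracted term when $l<\infty$) eventually exceeds the perimeter $2\pi R$ for every $\lambda>0$, gives $f(m)\to-\infty$ as $m\to\infty$; and taking the infimum over $\Omega\in\mathcal A_m$ in \eqref{energy_drops} (as $\Omega$ ranges over $\mathcal A_m$, the dilate $\alpha^{-1}\Omega$ ranges over $\mathcal A_{\alpha^{-2}m}$) shows $m\mapsto f(m)/\sqrt m$ is non-increasing. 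Hence $m_0:=\sup\{m>0:f(m)>0\}\in(0,\infty)$, with $f>0$ on $(0,m_0)$ and $f\le 0$ on $(m_0,\infty)$; moreover, combining this monotonicity with $\sqrt a+\sqrt b>\sqrt{a+b}$ yields the \emph{strict} subadditivity $f(a+b)<f(a)+f(b)$ whenever $a,b>0$ and $a+b\le m_0$.

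Next I would prove that $f$ is continuous. Near-minimizers at a given mass have uniformly bounded perimeter (by the lower bound above), so $f(m')$ can be compared with $f(m)$ by dilating a near-minimizer at mass $m$ by the factor $\sqrt{m'/m}$ and using Lemma \ref{lem:rescaling}; the perimeter changes by $O(|\sqrt{m'/m}-1|)$ and the nonlocal term by a quantity tending to $0$ as $m'\to m$, uniformly over such near-minimizers --- the one delicate point being the jump of $K_{\delta,l}$ at $r=\delta$, which under the dilation is transported across a set of pairs $(x,y)\in\Omega\times\Omega^c$ of $(\Leb^2\otimes\Leb^2)$-measure $O(m\,|m'/m-1|)$ and thus contributes negligibly. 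Continuity of $f$ then forces $f(m_0)=0$ and $\mathcal I^+=(0,m_0)$, which is (i) and (ii).

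For existence of generalized minimizers and the bound on the number of components, it suffices (by $\inf_{\mathcal{SA}_m}\mathcal S\widetilde F=f(m)$) to take a minimizing sequence $\Omega_k\in\mathcal A_m$ of single sets; these have uniformly bounded perimeter, so, after passing to a subsequence and applying suitable translations, the concentration--compactness decomposition for sets of bounded perimeter (as in \cite{kmn:cmp16}) yields pairwise ``infinitely distant'' limit components $\Omega^{(j)}$ with $\sum_j|\Omega^{(j)}|=m$ --- no mass being lost, since a set of bounded perimeter cannot carry a non-vanishing amount of mass in arbitrarily small pieces. Lower semicontinuity of the perimeter, convergence of the nonlocal term, and vanishing of the cross-interactions give $f(m)=\lim_k\widetilde F(\Omega_k)\ge\sum_j\widetilde F(\Omega^{(j)})\ge\sum_j f(|\Omega^{(j)}|)\ge f(m)$ by (countable) subadditivity, so all of these are equalities; in particular $\widetilde F(\Omega^{(j)})=f(|\Omega^{(j)}|)$ for every $j$. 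I then claim that, unless the decomposition consists of a single component, every component has mass $>m_0$: if $m\le m_0$ this is immediate from $f(m)=\sum_j f(|\Omega^{(j)}|)$ and strict subadditivity, while if $m>m_0$ and some component had mass $\le m_0$, I would collapse all components of mass $\le m_0$ into one component $\Omega^{(j_1)}$ of mass $>m_0$ (which exists; the alternative, that all components have mass exactly $m_0$, is eliminated by absorbing one such component into another via \eqref{energy_drops}) --- that is, replace $\Omega^{(j_1)}$ by the dilate $\alpha^{-1}\Omega^{(j_1)}$ with $\alpha^{-2}|\Omega^{(j_1)}|=|\Omega^{(j_1)}|+\sum_{|\Omega^{(j)}|\le m_0}|\Omega^{(j)}|$; by \eqref{energy_drops} and $f(|\Omega^{(j_1)}|)\le 0$ this produces a configuration with at most $m/m_0$ (hence finitely many) components and energy strictly below $f(m)=\inf_{\mathcal{SA}_m}\mathcal S\widetilde F$, a contradiction. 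Therefore the decomposition is either a classical minimizer or a genuine generalized minimizer with $N<m/m_0$, i.e.\ $N\le\lceil m/m_0\rceil-1$, components; and for $m\le 2m_0$ the latter would force $m>2m_0$, so only the former occurs and the minimizer is classical.

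Finally, (iii) follows by dilating a generalized minimizer at mass $m_0$ (which has $\mathcal S\widetilde F=f(m_0)=0$) by a factor $\alpha^{-1}>1$: applying \eqref{energy_drops} componentwise gives a configuration at mass $\alpha^{-2}m_0>m_0$ with $\mathcal S\widetilde F<\alpha^{-1}\cdot 0=0$, so $f(m)<0$ for every $m>m_0$; then for $m>\widetilde m\ge m_0$ one has $f(m)\le\sqrt{m/\widetilde m}\,f(\widetilde m)<f(\widetilde m)$ (using $\sqrt{m/\widetilde m}>1$ and $f(\widetilde m)<0$ when $\widetilde m>m_0$, and $f(m)<0=f(m_0)$ when $\widetilde m=m_0$). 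The main obstacle is the existence/compactness step: making the bubble decomposition of a minimizing sequence rigorous with all mass accounted for, and above all ruling out infinitely many components --- this last point is precisely where \eqref{energy_drops} is indispensable, since it lets every sub-$m_0$ piece be absorbed into a larger one at a strict energy gain, thereby both forcing finiteness and fixing the per-component mass threshold at $m_0$. By comparison the remaining steps are routine once \eqref{energy_drops} and the continuity of $f$ are in hand.
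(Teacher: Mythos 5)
Your proof is correct, and it takes a genuinely different route from the paper in the existence step. The paper runs Lions' concentration--compactness trichotomy separately at each mass level (Lemma \ref{lem:existence_minimizers_small_mass} and Steps 2 and 4 of its proof of the proposition), rules out vanishing by the relative isoperimetric inequality on unit squares, handles dichotomy via $f(m)\ge f(\theta m)+f((1-\theta)m)$ together with the threshold bounds $\theta m,(1-\theta)m>m_0$, and then constructs generalized minimizers by induction on $\lceil m/m_0\rceil$. You instead apply a single bubble decomposition of the minimizing sequence into countably many mutually diverging pieces (legitimately citable from \cite{kmn:cmp16}), force equality in the chain $f(m)\ge\sum_j\widetilde F(\Omega^{(j)})\ge\sum_j f(|\Omega^{(j)}|)\ge f(m)$, and obtain finiteness from the absorption argument based on \eqref{energy_drops}; this is more modular and produces the per-component mass threshold $m_0$ in one stroke, but it outsources to the decomposition lemma exactly the compactness analysis the paper does by hand --- your ``no mass lost'' assertion is the paper's vanishing step, and the inequality $\liminf_k\widetilde F(\Omega_k)\ge\sum_j\widetilde F(\Omega^{(j)})$ still needs the positivity of $K_{\delta,l}$ at distances larger than $\delta$ to discard cross-terms, the $L^1$-continuity \eqref{continuity_of_nonlocal_term} on each piece, and the countable subadditivity of $f$ (which, as you note, follows from finite subadditivity, continuity, and positivity of $f$ near $0$, since the tail masses eventually lie in $\mathcal I^+$). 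In the absorption step you should also record the one-line observation that if no component has mass $>m_0$ then $\sum_j f(m_j)=f(m)\le0$ with every term nonnegative forces every $m_j=m_0$, so your dichotomy ``some component exceeds $m_0$, or all equal $m_0$'' is exhaustive. For (i)--(iii) you organize everything around the non-increasing map $m\mapsto f(m)/\sqrt m$ obtained by taking infima in \eqref{energy_drops}, whereas the paper tests with rescalings of an actual minimizer at mass $m_0$ (estimate \eqref{observation}); the two are equivalent, your version having the advantage of giving the interval structure and $f\le0$ on $(m_0,\infty)$ before any minimizer is known to exist, with the strict inequalities recovered, as in the paper, only once the minimizer at $m_0$ is in hand. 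The remaining ingredients --- continuity of $f$, positivity for small mass, and $m_0<\infty$ --- coincide with Lemmas \ref{lem:very_rough_a_priori_bound} and \ref{lem:continuity_of_infimum}; note only that the paper establishes $m_0<\infty$ by the scaling argument showing $\alpha\widetilde F(\ball{0}{\alpha^{-1}})\to-\infty$ rather than by your direct large-$R$ asymptotics of the disk energy, though both are valid (for $l<\infty$ the subtracted term $-\tfrac{6\pi\lambda}{5|\log\delta|l}m$ alone already dominates, and for $l=\infty$ the $R\log(R/\delta)$ growth of the dipolar self-interaction does the job).
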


Before we can embark on proving existence of (generalized) minimizers,
we first need a number of observations: We first give a rough bound of
the perimeter in terms of the energy.  It is sufficient for the
purposes of existence of minimizers, but we will do better in
Proposition \ref{prop:a_priori_lower_bound} below for asymptotic
statements.  The second observation is continuity of
$f_{\lambda,\delta,l}$. Note that throughout this section all the
constants $\lambda > 0$, $m > 0$, $0 < \delta < \frac12$ and
$0 < l \leq \infty$ are fixed, unless stated otherwise.

\begin{lemma}\label{lem:very_rough_a_priori_bound}
  For $\Omega \in \mathcal A_m$, we have
	\begin{align}\label{perimeter_bound}
          F_{\lambda,\delta,l}(\Omega) \geq P(\Omega) -
          \left(\frac{1}{\delta} + \frac{2}{3 l}\right) \frac{\pi 
          \lambda}{ |\log \delta|}  m 
	\end{align}
	and, in particular, $F_{\lambda,\delta,l}(\Omega) < \infty$
        implies $P(\Omega) < \infty$.  It also holds that
	\begin{align}\label{positivity_small_masses}
          \inf_{\mathcal{A}_m} F_{\lambda,\delta,l}- \frac{6 \pi m
          \lambda}{5 l |\log\delta|} >0 \text{ for } 0 < m <
          C(\delta,\lambda, l), 
	\end{align}
	for some $C(\delta, \lambda, l)>0$.
\end{lemma}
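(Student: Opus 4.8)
The plan is to prove the lower bound \eqref{perimeter_bound} by bounding the nonlocal term from above pointwise, and then deduce \eqref{positivity_small_masses} from the isoperimetric inequality. First I would work from the representation \eqref{energy_as_nonlocal_perimeter}, which writes the nonlocal term as $2\int_\Omega\int_{\stcomp\Omega} K_{\delta,l}(|x-y|)\intd y\intd x$. For fixed $x\in\Omega$ the inner integral is bounded above by $\int_{\R^2}\big(K_{\delta,l}(|z|)\big)_+\intd z$, i.e. by the integral of the positive part of the kernel, since we are integrating over a subset of $\R^2$. So the task reduces to estimating $\int_{\R^2}\big(\tfrac{g_\delta(|z|)}{|z|^3}-\tfrac{|z|^2-2l^2}{(|z|^2+l^2)^{5/2}}\big)_+\intd z$. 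The first piece $\tfrac{g_\delta(|z|)}{|z|^3}=\chi_{\{|z|>\delta\}}|z|^{-3}$ is nonnegative and integrates (in $\R^2$) to $2\pi\int_\delta^\infty r^{-3}\cdot r\intd r=2\pi/\delta$. The second piece $-\tfrac{|z|^2-2l^2}{(|z|^2+l^2)^{5/2}}$ is positive exactly when $|z|^2<2l^2$; its positive part integrates to $2\pi\int_0^{\sqrt2 l}\tfrac{2l^2-r^2}{(r^2+l^2)^{5/2}}r\intd r$, which is an elementary integral I would compute (substituting $u=r^2+l^2$) and bound by $\tfrac{2\pi}{3l}\cdot\tfrac{\text{const}}{\ }$—comparing with the $\tfrac23$ factor appearing in \eqref{perimeter_bound}, the clean bound should be $\le \tfrac{4\pi}{3l}$, and subadditivity of the positive part $(a+b)_+\le a_++b_+$ gives $\int(K_{\delta,l})_+\le \tfrac{2\pi}\delta+\tfrac{4\pi}{3l}$. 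Plugging back, the nonlocal term is at most $2m\big(\tfrac{2\pi}\delta+\tfrac{4\pi}{3l}\big)$, and after multiplying by $\tfrac{\lambda}{4|\log\delta|}$ one obtains exactly $\big(\tfrac1\delta+\tfrac2{3l}\big)\tfrac{\pi\lambda}{|\log\delta|}m$, yielding \eqref{perimeter_bound}. The claim that finiteness of the energy forces finite perimeter is then immediate since the subtracted term is a finite constant. (One should double-check the numerical constant on the $l^{-1}$ term against the exact integral; the factor $\tfrac23$ versus $\tfrac45$ discrepancy between this lemma and \eqref{def:f} suggests the bound here is deliberately crude, so any constant of the right order suffices and I would simply carry through whatever the elementary integral gives, noting $\le\tfrac{4\pi}{3l}$.)

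For \eqref{positivity_small_masses} I would combine \eqref{perimeter_bound} with the planar isoperimetric inequality $P(\Omega)\ge 2\sqrt{\pi m}$ for any $\Omega\in\mathcal A_m$, together with the fact that $\tfrac{6\pi m\lambda}{5 l|\log\delta|}\le \tfrac{2\pi m\lambda}{l|\log\delta|}$ is dominated by the same type of term already subtracted. Concretely, for every $\Omega\in\mathcal A_m$,
\begin{align*}
  F_{\lambda,\delta,l}(\Omega)-\frac{6\pi m\lambda}{5 l|\log\delta|}
  \;\ge\; 2\sqrt{\pi m}-\Big(\frac1\delta+\frac2{3l}\Big)\frac{\pi\lambda}{|\log\delta|}m-\frac{6\pi m\lambda}{5 l|\log\delta|}
  \;=\; 2\sqrt{\pi m}-C_0(\delta,\lambda,l)\,m,
\end{align*}
where $C_0(\delta,\lambda,l):=\big(\tfrac1\delta+\tfrac2{3l}+\tfrac6{5l}\big)\tfrac{\pi\lambda}{|\log\delta|}$ is a finite positive constant (with the obvious convention that the $l^{-1}$ terms drop when $l=\infty$). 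Since $2\sqrt{\pi m}-C_0 m>0$ precisely for $0<m<4\pi/C_0^2$, taking $C(\delta,\lambda,l):=4\pi/C_0(\delta,\lambda,l)^2$ gives \eqref{positivity_small_masses} after taking the infimum over $\mathcal A_m$.

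I do not expect a genuine obstacle here: the lemma is a soft a priori bound. The only mild care points are (i) getting the elementary integral $\int_0^{\sqrt2 l}\tfrac{2l^2-r^2}{(r^2+l^2)^{5/2}}r\intd r$ right and matching it to the stated constant $\tfrac23$ (and handling the degenerate case $l=\infty$, where the second kernel piece is absent and all $l^{-1}$ contributions simply vanish, recovering the pure-dipole bound $F_{\lambda,\delta}\ge P(\Omega)-\tfrac{\pi\lambda}{\delta|\log\delta|}m$ consistent with \eqref{energy_expanded}), and (ii) justifying the interchange implicit in $\int_\Omega(\int_{\stcomp\Omega}K_{\delta,l})\intd x\le m\int_{\R^2}(K_{\delta,l})_+$, which is just Tonelli applied to the nonnegative function $(K_{\delta,l}(|x-y|))_+$ on $\Omega\times\R^2$ together with $|\Omega|=m$.
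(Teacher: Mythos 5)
Your proposal is correct and follows essentially the same route as the paper: bound the kernel $K_{\delta,l}$ from above by $\frac{g_\delta(r)}{r^3}+\frac{2l^2}{(r^2+l^2)^{5/2}}$ (your positive-part bookkeeping reduces to exactly this after you dominate $(2l^2-r^2)_+$ by $2l^2$), integrate in $z$ to get $\frac{2\pi}{\delta}+\frac{4\pi}{3l}$, and then combine \eqref{perimeter_bound} with the isoperimetric inequality $P(\Omega)\ge 2\sqrt{\pi m}$ to get positivity for small $m$. The constants and the handling of $l=\infty$ all check out.
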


\begin{proof}
  The first statement is a straightforward consequence of the
  observation 
	\begin{align}
		\begin{split}
                  F_{\lambda,\delta,l}(\Omega) & \geq P(\Omega) -
                  \frac{\lambda}{2|\log\delta|}\int_{\Omega}
                  \int_{\stcomp{\Omega}} \left(
                    \frac{g_\delta(|x-y|)}{|x-y|^3} +
                    \frac{2l^2}{(|x-y|^2+l^2)^\frac{5}{2}} \right)
                  \intd y \intd x\\
                  & \geq P(\Omega) - \left(\frac{1}{\delta} +
                    \frac{2}{3 l}\right) \frac{\pi \lambda}{ |\log
                    \delta|} m.
      \end{split}
	\end{align}
	The second statement follows by combining the isoperimetric
        inequality with the above estimate to get
	\begin{align}
          F_{\lambda,\delta,l}(\Omega) -\frac{ 6\pi m
          \lambda}{5  l |\log\delta|} \geq \sqrt{4 \pi m} -
          \left(\frac{1}{\delta} + \frac{2}{l}\right)
          \frac{\pi \lambda}{ |\log \delta|} m >0 
	\end{align}
	for masses $0 < m < C(\delta, \lambda, l)$.
\end{proof}

\begin{lemma}\label{lem:continuity_of_infimum}
  The function $f_{\lambda,\delta,l}(m)$ is continuous.
\end{lemma}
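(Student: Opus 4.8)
The plan is to prove continuity of $f_{\lambda,\delta,l}$ by establishing a local modulus of continuity estimate at every fixed mass $m_0 > 0$, using the rescaling identity from Lemma \ref{lem:rescaling} to compare the infimum at mass $m_0$ with the infima at nearby masses. The key point is that $f_{\lambda,\delta,l}(m) = \inf_{\mathcal{A}_m} F_{\lambda,\delta,l} - \frac{6\pi\lambda}{5|\log\delta|l} m$, and the subtracted linear term is obviously continuous in $m$, so it suffices to show that $m \mapsto \inf_{\mathcal{A}_m} F_{\lambda,\delta,l}$ is continuous, or equivalently that $g(m) := f_{\lambda,\delta,l}(m) + \frac{6\pi\lambda}{5|\log\delta|l}m$ is continuous.

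First I would set up the comparison. Given $\Omega \in \mathcal{A}_m$ and a target mass $\widetilde{m}$, the natural competitor is the rescaled set $\widetilde{\Omega} = \alpha^{-1}\Omega \in \mathcal{A}_{\widetilde m}$ with $\alpha = \sqrt{m/\widetilde m}$. However, the clean rescaling identity $\alpha F_{\tilde\lambda, \tilde\delta, \tilde l}(\widetilde\Omega) = F_{\lambda, \delta, l}(\Omega)$ from Lemma \ref{lem:rescaling} changes the parameters $\delta, l, \lambda$ as well, which is not what we want here — we need to keep $\lambda, \delta, l$ fixed and only change $m$. So instead I would use the finer estimate \eqref{modulus_of_continuity_rescaling} (and its analogue for $\alpha > 1$, or simply note symmetry of the roles of $m$ and $\widetilde m$): the difference $F_{\lambda,\delta,l}(\Omega) - \frac{6\pi m\lambda}{5|\log\delta|l} - \alpha\bigl(F_{\lambda,\delta,l}(\widetilde\Omega) - \frac{6\pi\widetilde m\lambda}{5|\log\delta|l}\bigr)$ was already computed there as an explicit sum of three double integrals, each controlled by the difference of the relevant kernels evaluated at $\alpha^2 l^2$ versus $l^2$ (and $\alpha\delta$ versus $\delta$). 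These kernel differences are bounded by $C(\delta, l)|\alpha - 1|$ times the integrand of a convergent integral, uniformly for $\alpha$ in a fixed compact interval around $1$; combined with the perimeter bound of Lemma \ref{lem:very_rough_a_priori_bound} and the isoperimetric inequality to control $|\Omega|, P(\Omega)$ along a near-minimizing sequence, this yields $|g(m) - \alpha g(\widetilde m)| \leq C(\delta, l, \lambda, m_0)|\alpha - 1| \cdot (1 + \inf_{\mathcal A_{m_0}} F_{\lambda,\delta,l})$ for $m, \widetilde m$ near $m_0$. Since $\alpha \to 1$ as $\widetilde m \to m$, and $g$ is bounded near $m_0$ (again by Lemma \ref{lem:very_rough_a_priori_bound} together with an upper bound coming from testing with a single disk of the right mass), this gives $|g(m) - g(\widetilde m)| \to 0$.

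The one technical point to be careful about: \eqref{modulus_of_continuity_rescaling} is stated for $\alpha < 1$; for $\alpha > 1$ one uses the symmetric statement obtained by swapping $\Omega \leftrightarrow \widetilde\Omega$, which is licit because the identity there is an exact algebraic identity before the inequality is invoked, so the sign of each integral difference is controlled by the monotonicity of the kernels in the length parameter — and crucially, the \emph{absolute value} of each such difference is what enters the modulus-of-continuity bound, so the direction of the inequality is irrelevant. Taking the infimum over $\Omega \in \mathcal{A}_m$ on both sides (and noting that every element of $\mathcal{A}_{\widetilde m}$ arises as $\alpha^{-1}\Omega$ for a unique $\Omega \in \mathcal{A}_m$, so the infima genuinely correspond) then transfers the pointwise estimate to the infima. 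The main obstacle is purely bookkeeping: making sure the kernel-difference bounds are uniform in the relevant range of $\alpha$ and that the near-minimizing sequence at mass $m$ has perimeter bounded independently of how far out in the sequence we go, which follows from \eqref{perimeter_bound} once we know $\inf_{\mathcal A_m} F_{\lambda,\delta,l}$ is finite — and finiteness is immediate by testing with a disk.
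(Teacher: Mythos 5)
Your proposal is correct and follows essentially the same route as the paper: reduce to continuity of $m \mapsto \inf_{\mathcal A_m} F_{\lambda,\delta,l}$, compare with the rescaled competitor $\alpha^{-1}\Omega$ via the explicit kernel-difference identity behind \eqref{modulus_of_continuity_rescaling}, bound those differences by a quantity vanishing as $\alpha \to 1$ times the mass, use local boundedness of the infimum (testing with disks), and symmetrize in $m$ and $\widetilde m$. The paper phrases the argument with an $\eps$-almost-minimizer rather than taking infima on both sides, but this is an immaterial difference.
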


\begin{proof}
  We only have to prove
  continuity of the first term
  $\tilde f(m):= \inf_{\mathcal{A}_m} F_{\lambda,\delta,l}$ in the
  definition \eqref{def:f}.  Let $m, \widetilde m >0$ and let $\eps >0$.
  By Lemma \ref{lem:very_rough_a_priori_bound} we know
  $\tilde f(m) \in \R$.  Consequently, there exists a set of finite
  perimeter $\Omega$ with $|\Omega|= m$ such that
	\begin{align}
		F_{\lambda,\delta,l}(\Omega) - \tilde f(m)< \eps.
	\end{align}

	For $\alpha = \sqrt{\frac{m}{\widetilde m}}$ the rescaling
        $\widetilde \Omega := \alpha^{-1} \Omega$ satisfies
        $|\widetilde \Omega| = \widetilde m$. Using it as a competitor for
        $\tilde f( \widetilde m)$ then gives
     \begin{align}
     	\begin{split}
          \tilde f(\widetilde m) - \tilde f(m) & \leq F_{\lambda,
            \delta,l}(\widetilde \Omega) - \tilde f(m) \\
          & \leq F_{\lambda, \delta,l}(\widetilde \Omega) - \tilde f(m) +
          \alpha^{-1}\left(-F_{\lambda,\delta,l}(\Omega) + \tilde
            f(m) +
            \eps\right)\\
          & = F_{\lambda, \delta,l}(\widetilde \Omega) - \alpha^{-1}
          F_{\lambda,\delta,l}(\Omega) + (\alpha^{-1} -1) \tilde f(m)
          + \alpha^{-1} \eps.
       \end{split}
	\end{align}
	A similar calculation as for the estimate
        \eqref{modulus_of_continuity_rescaling} gives
	\begin{align}\label{one-sided_modulus}
          \tilde f(\widetilde m) - \tilde f (m)  \leq
          \frac{\pi\lambda m}{\alpha |\log \delta|} h(\alpha)  +
          (\alpha^{-1} -1)  \tilde f(m) + \alpha^{-1} \eps,
	\end{align}
	where
	\begin{align}
          h(\alpha):=\int_{0}^\infty \left\{
          \left|\frac{g_{\alpha\delta}(r)}{r^2} - 
          \frac{g_\delta(r)}{r^2} \right| + r\left|\frac{r^2
          - 2\alpha^2 l^2}{(r^2 + \alpha^2l^2)^\frac{5}{2} }-\frac{r^2
          - 2l^2}{(r^2+l^2)^\frac{5}{2}}  \right| \right\}  \intd r  ,
	\end{align}
	satisfying $g(\alpha) \to 0$ as $\alpha \to 1$.
	
	Taking the limit $\eps \to 0$ and inserting
        $\alpha= \sqrt{\frac{m}{\widetilde m}}$, we see that
	\begin{align}
          \tilde f(\widetilde m) - \tilde f (m)  \leq \frac{\pi\lambda
          \sqrt{m \widetilde m}}{|\log \delta|} \,
          h\left(\sqrt{\frac{m}{\widetilde m}} \, \right)  +
          \left(\sqrt{\frac{\widetilde m}{m}}  -1\right)
          \tilde f(m).
	\end{align}
	As $\tilde f $ is locally bounded from above by testing with
        disks, this estimate provides a locally uniform modulus of
        continuity after symmetrizing the expression in $m$ and
        $\widetilde m$.
\end{proof}

We are now in a position to prove existence of minimizers as long as
the infimal energy is non-negative.

\begin{lemma}\label{lem:existence_minimizers_small_mass}
  Let $m > 0$ be such that $f_{\lambda, \delta,l}(m) > 0$. Then every
  minimizing sequence over $\mathcal A_m$ is compact in $L^1$ after
  translation.  In particular, minimizers exist, and any generalized
  minimizer must be classical.
\end{lemma}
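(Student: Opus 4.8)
The plan is to establish the standard concentration-compactness trichotomy for a minimizing sequence $\Omega_k \in \mathcal A_m$ and rule out both dichotomy (splitting) and vanishing using the sign condition $f_{\lambda,\delta,l}(m) > 0$. First I would record that by Lemma \ref{lem:very_rough_a_priori_bound} a minimizing sequence has uniformly bounded perimeter, so after passing to a subsequence the measures $\chi_{\Omega_k}\,\Leb^2$ converge weakly-$*$ and one may apply the concentration function machinery: either (a) the mass concentrates, up to translation, in a single large ball uniformly in $k$ (compactness), (b) the sequence splits into (at least) two pieces carrying masses $m_1, m_2$ with $m_1 + m_2 = m$, $m_1, m_2 > 0$, that drift infinitely far apart, or (c) the mass spreads out so that $\sup_{y} |\Omega_k \cap \ball{y}{R}| \to 0$ for every fixed $R$.

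Next I would dispose of the bad alternatives. In the splitting case (b), because the kernel $K_{\delta,l}$ is integrable and decays at infinity (indeed $\Phi_{\delta,l}$ decays like $1/r$, cf.\ \eqref{Phi_l_explicit}), the cross interaction between the two far-apart pieces tends to zero, so the energy becomes asymptotically additive: $\liminf_k F_{\lambda,\delta,l}(\Omega_k) \geq \inf_{\mathcal A_{m_1}} F_{\lambda,\delta,l} + \inf_{\mathcal A_{m_2}} F_{\lambda,\delta,l}$. Subtracting the linear term $\frac{6\pi\lambda}{5|\log\delta|l}m$ and recalling the definition \eqref{def:f}, this reads $f_{\lambda,\delta,l}(m) \geq f_{\lambda,\delta,l}(m_1) + f_{\lambda,\delta,l}(m_2)$; but by Lemma \ref{lem:rescaling} (inequality \eqref{energy_drops} with $\alpha = \sqrt{m_i/m} < 1$, used together with testing the mass-$m$ problem by rescalings of near-optimal sets of mass $m_i$) one has the strict superadditivity $f_{\lambda,\delta,l}(m) < f_{\lambda,\delta,l}(m_1) + f_{\lambda,\delta,l}(m_2)$ whenever $f_{\lambda,\delta,l}(m) > 0$, a contradiction. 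The vanishing case (c) is even easier: if the mass spreads out, a covering argument combined with the isoperimetric inequality (or directly the bound \eqref{perimeter_bound}) forces $P(\Omega_k) \to \infty$, contradicting the minimizing property since $f_{\lambda,\delta,l}(m) > 0$ means the infimal energy, hence $\liminf_k P(\Omega_k)$, is finite. Hence compactness (a) holds.

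Finally, in case (a) the sequence $\chi_{\Omega_k}$ (after translation) lives in a fixed ball and has uniformly bounded BV norm, so by BV compactness a subsequence converges in $L^1$ to some $\chi_\Omega$ with $|\Omega| = m$ (the mass is preserved precisely because no mass escapes to infinity). Lower semicontinuity of the perimeter and continuity of the nonlocal term under $L^1$ convergence (the kernel is integrable, so the nonlocal term is $L^1$-continuous on sets inside a fixed ball) give $F_{\lambda,\delta,l}(\Omega) \leq \liminf_k F_{\lambda,\delta,l}(\Omega_k) = \inf_{\mathcal A_m} F_{\lambda,\delta,l}$, so $\Omega$ is a minimizer. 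The same argument applied to a minimizing sequence $(\Omega_i^{(k)})_{i}$ for $\mathcal S F_{\lambda,\delta,l}$ over $\mathcal{SA}_m$ shows any generalized minimizer with more than one nonempty component would, by the strict superadditivity above, be beaten by a single rescaled copy, so generalized minimizers must be classical.

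The main obstacle is the dichotomy step: one must verify carefully that the cross term in the nonlocal energy between two pieces separated by distance $d_k \to \infty$ genuinely vanishes (using integrability and decay of $K_{\delta,l}$, uniformly in the shapes as long as perimeters and masses stay bounded), and then combine this with the strict superadditivity coming from \eqref{energy_drops}. The rescaling trick in Lemma \ref{lem:rescaling} is tailored exactly to produce that strict inequality, so the argument hinges on correctly matching the subtracted linear correction $\frac{6\pi\lambda}{5|\log\delta|l}m$ in the definition of $f_{\lambda,\delta,l}$ with the one appearing in \eqref{energy_drops}.
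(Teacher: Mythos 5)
Your proposal is correct and follows essentially the same route as the paper: Lions' concentration--compactness trichotomy, ruling out vanishing via the covering/relative isoperimetric argument, ruling out splitting by combining asymptotic additivity of the energy for separated pieces with the strict superadditivity $f_{\lambda,\delta,l}(m_1)+f_{\lambda,\delta,l}(m_2)\geq(\sqrt{m_1/m}+\sqrt{m_2/m})\,f_{\lambda,\delta,l}(m)>f_{\lambda,\delta,l}(m)$ coming from \eqref{energy_drops}, and concluding by BV compactness plus $L^1$-continuity of the nonlocal term. The only detail where the paper is more careful is the dichotomy step: it does not send the pieces infinitely far apart but fixes a separation larger than $1>\delta$ (using a diameter bound on connected components) so that the cross term is simply nonnegative by repulsivity of $K_{\delta,l}$, and it explicitly controls the leftover mass $\Omega_n^{(3)}$ in the intermediate annulus via the isoperimetric inequality --- a point your idealized two-piece decomposition glosses over but which is routine to repair.
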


\begin{proof}
  The basic strategy is applying the concentration-compactness
  principle due to Lions \cite{lions84a}, see also Struwe
  \cite[Section 4.3]{struwe}.  We have to deal with three cases:
  compactness, vanishing and splitting.  Let $\Omega_n$ be a
  minimizing sequence. By approximation, we may suppose that
  $\Omega_n$ are smooth open sets, see \cite[Theorem 13.8]{maggi}.
  Lemma \ref{lem:very_rough_a_priori_bound} implies that
	\begin{align}\label{qualitative_bound_perimeter}
		M := \limsup_{n \to \infty} P(\Omega_n) < \infty.
	\end{align}
	
	\textit{Step 1: Compactness.}\vspace{1mm}\\
	In this case we know the following: Up to extracting a
        subsequence and translating the sets, for every $\eps>0$ there
        exists $R>0$ such that
        $| \Omega_n \cap \ball{0}{R}| \geq m- \eps$ for all $n\in \N$.
        Therefore, the sequence of measures $\chi_{\Omega_n} \Leb^2$
        is tight. Together with the bound on the perimeter and the
        corresponding compact embedding theorem for BV-functions
        \cite[Corollary 3.49]{ambrosio} this implies that there exists
        a subsequence (not relabeled) and a set
        $\Omega \in \mathcal A_m$ such that
        $|\Omega_n \Delta \Omega| \to 0$.  Furthermore, the perimeter
        is lower semi-continuous with respect to this topology.
        
        As a result, in order to see that $\Omega$ is a
        minimizer of $F_{\lambda, \delta,l}$ we only have to prove
        that the quadratic form
	\begin{align}
          V(f) := \int_{\R^2} \int_{\R^2} |f(x+z) - f(x) |^2
          \left(\frac{g_\delta(|z|)}{|z|^3} -
          \frac{|z|^2-2l^2}{(|z|^2+l^2)^\frac{5}{2}} \right)\intd z
          \intd x 
	\end{align}
	is continuous on the space $L^1 \cap L^\infty$ equipped with
        the $L^1$-topology.  Making use of the inequality
        $|f(x+z) - f(x) |^2 \leq 2\left( |f(x+z)|^2 + |f(x) |^2
          \right)$ we indeed see that
	\begin{align}\label{continuity_of_nonlocal_term}
          V(f) \leq C(\delta,l) \|f\|^2_{L^2} \leq
          C(\delta,l)  \|f\|_{L^\infty}\|f\|_{L^1} 
	\end{align}
	for some $C(\delta,l)>0$.
	
	\vspace{1mm}
        \textit{Step 2: Vanishing.}\vspace{1mm}\\
	In this case, we have for all $R>0$ that
	\begin{align}\label{vanishing}
          \lim_{n\to \infty} \left(\sup_{y\in \R^2} | \Omega_n \cap
          \ball{y}{R}|\right) = 0. 
	\end{align}
	By Lemma \ref{lem:very_rough_a_priori_bound} we have
	\begin{align}
          F_{\lambda,\delta,l}(\Omega_n) \geq P(\Omega_n) -
          \left(\frac{1}{\delta} + \frac{2}{3 l}\right)\frac{\pi
          \lambda}{|\log \delta|} m. 
	\end{align}	
	To argue that the perimeter is large, we decompose the plane
        into the squares $Q_k := k + [0,1)\times[0,1)$ for
        $k\in \Z^2$.  The relative isoperimetric inequality implies
        that there exists a constant $c>0$ such that
	\begin{align}
          P_{Q_k}(\Omega_n) \geq c \min\left(|\Omega_n \cap
          Q_k|^{\frac{1}{2}}, |\stcomp{\Omega_n} \cap
          Q_k|^{\frac{1}{2}} \right) 
	\end{align}
	for all $k\in \Z^2$ and $n\in \N$, where $P_{Q_k}(\Omega_n)$
        denotes the perimeter of $\Omega_n$ relative to $Q_k$.  The
        vanishing property \eqref{vanishing} implies that for large
        $n \in \N$ we have $|\Omega_n \cap Q_k| \leq \eps$ for all
        $k \in \Z^2$, which implies
	\begin{align}
		P_{Q_k}(\Omega_n) \geq c |\Omega_n \cap
          Q_k|^{\frac{1}{2}} \geq c |\Omega_n\cap Q_k|
          \eps^{-\frac{1}{2}}. 
	\end{align}
	
	Consequently, from the bound
        \eqref{qualitative_bound_perimeter} we get
	\begin{align}
          M \geq P(\Omega_n) \geq \sum_{k \in \Z^2} P_{Q_k}(\Omega_n) \geq
          c m \eps^{-\frac{1}{2}} > M
	\end{align}
	for $\eps$ small enough, which is a contradiction.
 
 \vspace{1mm}
        \textit{Step 3: Splitting.}\vspace{1mm}\\
        In this case, there exists
        $0 < \theta < 1$ such that for any $\eps>0$ there exists $R>0$
        and a sequence $x_n \in \R^2$ with the following property: For
        any $\widetilde R > R$ we have
	\begin{align}\label{almost_all_mass}
          \limsup_{n\to \infty} \left( \Big| |\Omega_n \cap \ball{x_n}{R}| -
          \theta m \Big| + \left| |\Omega_n \cap
          B^{\mathsf{c}}_{\widetilde R}(x_n)| - (1-\theta) m  \right| \right) \leq
          \eps. 
	\end{align}
	
	We choose
	\begin{align}\label{distance_between_components}
          \widetilde R > R + 2 \widetilde M + 1,
	\end{align}
	where
        $\widetilde M := \frac{1}{2}\left( \inf_{\mathcal{A}_m}
          F_{\lambda,\delta,l} + 1+ \left(\frac{1}{\delta} +
            \frac{2}{3 l}\right) \frac{\pi \lambda}{ |\log \delta|} m
        \right) $ will be shown to bound the diameter of each
        connected component.  Let $\Omega_n^{(1)}$ be the union of all
        connected components of $\Omega_n$ that intersect
        $\ball{x_n}{R}$ and let $\Omega_n^{(2)}$ be the union of all
        connected components intersecting
        $B^{\mathsf{c}}_{\widetilde R}(x_n)$, see Figure
        \ref{fig:splitting} for a sketch. Let the remainder be
        $\Omega_n^{(3)} := \Omega_n \setminus ( \Omega_n^{(1)} \cup
        \Omega_n^{(2)})$, and observe that
        $\limsup_{n\to \infty} |\Omega_n^{(3)}| \leq \eps$ as a result
        of estimate \eqref{almost_all_mass}.

         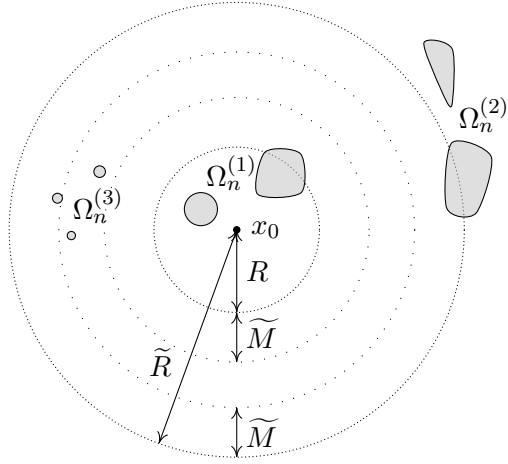
\begin{figure}
         	\centering
         	\begin{tikzpicture}[scale=1.1]
         		\node[circle,fill=black, inner sep=1pt,label=right:{$x_0$}] at (0,0) {};
         		\draw[densely dotted] (0,0) circle (1);
         		\draw[densely dotted] (0,0) circle (2.75);
         		\draw[loosely dotted] (0,0) circle (1.6);
         		\draw[loosely dotted] (0,0) circle (2.15);
         		\draw[{<[length=1.5mm,width=1.5mm]}-{>[length=1.5mm,width=1.5mm]}] (0,0) -- (250:2.75);
         		\node at (240:1.85) {$\widetilde R$};
         		\draw[{<[length=1.5mm,width=1.5mm]}-{>[length=1.5mm,width=1.5mm]}] (0,0) -- (270:1);
         		\node at (0.25,-.5) {$R$};
         		\draw[{<[length=1.5mm,width=1.5mm]}-{>[length=1.5mm,width=1.5mm]}] (270:1) --(270:1.6);
         		\node at (0.3,-1.25){$\widetilde M$};
         		\draw[{<[length=1.5mm,width=1.5mm]}-{>[length=1.5mm,width=1.5mm]}] (270:2.15) --(270:2.75);
         		\node at (0.3,-2.45){$\widetilde M$};
         		
				\fill[opacity=.5,color=lightgray] plot [smooth cycle] coordinates {(60:.5)(30:.9)(50:1.2)(70:1)};
         		\draw plot [smooth cycle] coordinates {(60:.5)(30:.9)(50:1.2)(70:1)};
         		
         		\fill[opacity=.5,color=lightgray] (150:.5) circle (.2);
         		\draw (150:.5) circle (.2);
         		
         		\fill[opacity=.5,color=lightgray] (182:2) circle (.05);
         		\draw  (182:2) circle (.05);
         		
         		\fill[opacity=.5,color=lightgray]  (157:1.8) circle (.07);
         		\draw  (157:1.8) circle (.07);
         		
         		\fill[opacity=.5,color=lightgray]  (170:2.2) circle (.06);
         		\draw  (170:2.2) circle (.06);
         		
         		\fill[opacity=.5,color=lightgray] plot [smooth cycle] coordinates {(6:2.55)(22:2.8)(16:3.2)(4:2.9)};
         		\draw plot [smooth cycle] coordinates {(6:2.55)(22:2.8)(16:3.2)(4:2.9)};
         		
         		\fill[opacity=.5,color=lightgray] plot [smooth cycle] coordinates {(35:3)(45:3.2)(40:3.4)(30:3)};
         		\draw plot [smooth cycle] coordinates {(35:3)(45:3.2)(40:3.4)(30:3)};
         		
         		\node at (95:.7){$\Omega_n^{(1)}$};
         		\node at (170:1.7) {$\Omega_n^{(3)}$};
         		\node at (25:3.3) {$\Omega_n^{(2)}$};
         	\end{tikzpicture}
         	\caption{\label{fig:splitting} Sketch of the
                  decomposition into the compact piece
                  $\Omega_n^{(1)}$, which contains all connected
                  components intersecting $\ball{x_0}{R}$, the piece
                  $\Omega_n^{(2)}$ consisting of all connected
                  components intersecting $\stcomp{B}_{\widetilde R}(x_0)$
                  and drifting off to infinity, and the vanishing
                  remainder $\Omega_n^{(3)}$.}
         \end{figure}
        
        As we chose a minimizing sequence consisting of smooth
          sets, it is easy to see that any connected component
        $\widetilde \Omega_n$ of $\Omega_n$ satisfies
        $\sup_{x,y\in \widetilde \Omega_n} |x-y| \leq \frac{1}{2}
        P(\widetilde \Omega_n) $ by noting that the supremum is
        achieved on $\partial \widetilde \Omega_n$. 
        Consequently, by Lemma \ref{lem:very_rough_a_priori_bound} we get
        \begin{align*}
        	\begin{split}
                  \sup_{x,y\in \widetilde \Omega_n} |x-y| \leq \frac{1}{2}
                  P(\widetilde \Omega_n) \leq \frac{1}{2}P(\Omega_n) \leq
                  \frac{1}{2} \left( F_{\lambda,\delta,l}(\Omega_n) +
                    \left(\frac{1}{\delta} + \frac{2}{3 l}\right)
                    \frac{\pi \lambda}{ |\log \delta|} m \right) \leq
                  \widetilde M
        	\end{split}
        \end{align*}
        for $n$ sufficiently large. Therefore, we get
        $ \Omega_n^{(1)} \subset \ball{x_n}{R + \widetilde M} $ and
        $ \Omega_n^{(2)} \subset B^{\mathsf{c}}_{\widetilde R - \widetilde
          M}(x_n)$. Thus the relation
        \eqref{distance_between_components} implies
        \begin{align}
          \inf_{x\in \Omega_n^{(1)},y \in \Omega_n^{(2)}} |x-y| \geq
          \inf_{x\in \Omega_n^{(1)},y \in \Omega_n^{(2)}} \big | |y| -
          |x| \big | \geq \widetilde R  - R  - 2\widetilde M > 1. 
        \end{align}
        In particular, $ \Omega_n^{(1)}$ and $ \Omega_n^{(2)}$ are
        disjoint.

        We now exploit the fact that the nonlocal term is repulsive at
        sufficiently large distances.  The representation
        \begin{align}
        	\begin{split}
                  V(\chi_{\Omega}) & = 2 \int_{\Omega}
                  \int_{\stcomp{\Omega}}
                  \left(\frac{g_\delta(|x-y|)}{|x-y|^3} -
                    \frac{|x-y|^2-2l^2}{(|x-y|^2+l^2)^\frac{5}{2}}
                  \right)\intd y \intd x
   	      \end{split}
        \end{align}
        and the observation
        \begin{align}\label{set_combinatorics}
         \begin{split}
          (\Omega_1 \cup \Omega_2) \times \stcomp{\left(\Omega_1 \cup
          \Omega_2\right)}
          &  =(\Omega_1 \cup \Omega_2) \times \left(\stcomp{\Omega_1}
            \cap \stcomp{\Omega_2}\right) \\ 
          & = \left(\Omega_1 \times \stcomp{\Omega_1} \setminus
            \Omega_1 \times \Omega_2\right) \cup \left(\Omega_2 \times
            \stcomp{\Omega_2} \setminus \Omega_2 \times
            \Omega_1\right) 
         \end{split}
        \end{align}
        for two disjoint sets $\Omega_1, \Omega_2 \subset \R^2$
        allow us to compute
	\begin{align}\label{nonlocal_term_repulsive}
		\begin{split}
                  & \quad   V(\chi_{\Omega_n^{(1)}}) +
                  V(\chi_{\Omega_n^{(2)}}) -  V(\chi_{\Omega_n^{(1)}}
                  + \chi_{\Omega_n^{(2)}}) \\ 
                  & = 4 \int_{\Omega_n^{(1)}} \int_{\Omega_n^{(2)}}
                  \left(\frac{g_\delta(|x-y|)}{|x-y|^3} -
                    \frac{|x-y|^2-2l^2}{(|x-y|^2+l^2)^\frac{5}{2}}
                  \right) \intd y \intd x.
   	     \end{split}
	\end{align}
	It is straightforward to see that the kernel in
          \eqref{nonlocal_term_repulsive} is positive for
        $|x-y| \geq \delta$, so that we get
	\begin{align}\label{repulsive}
          V(\chi_{\Omega_n^{(1)}}) + V(\chi_{\Omega_n^{(2)}}) -
          V(\chi_{\Omega_n^{(1)}} + \chi_{\Omega_n^{(2)}}) \geq 0 
	\end{align}
	as a result of
        $\inf_{x\in \Omega_n^{(1)},y \in \Omega_n^{(2)}} |x-y| \geq
        1>\delta$.  Choosing
        $\Omega_1 = \Omega_n^{(1)} \cup\Omega_n^{(2)}$ and
        $\Omega_2 = \Omega_n^{(3)}$ in equation
        \eqref{set_combinatorics} we see
	\begin{align}
		\begin{split}
                  & \quad V(\chi_{\Omega_n^{(1)}} +
                  \chi_{\Omega_n^{(2)}}) - V(\chi_{\Omega_n^{(1)}} +
                  \chi_{\Omega_n^{(2)}} +
                  \chi_{\Omega_n^{(3)}}) \\
                  & = 2 \int_{\Omega_n^{(1)} \cup\Omega_n^{(2)}}
                  \int_{\Omega_n^{(3)}} K_{\delta,l} \intd y \intd x -
                  \int_{\Omega_n^{(3)}}\int_{\stcomp{\Omega_n^{(3)}}}K_{\delta,l}
                  \intd y \intd x \\ 
                  & \geq - C_{\delta,l} |\Omega_n^{(3)}|
       \end{split}
	\end{align}
	for some $C_{\delta,l}>0$.
	
	Consequently, we obtain
	\begin{align}\label{splitting_in_energy}
          \inf_{\mathcal{A}_m} F_{\lambda,\delta,l} = \liminf_{n\to
          \infty} F_{\lambda,\delta,l}(\Omega_n) \geq \liminf_{n\to
          \infty} \left( F_{\lambda,\delta,l}(\Omega_n^{(1)}) +
          F_{\lambda,\delta,l}(\Omega_n^{(2)}) + P( \Omega_n^{(3)}) -
          C_{\delta,l} |\Omega_n^{(3)}|\right).  
	\end{align}
	Postprocessing this inequality by subtracting
        $\frac{6\pi \lambda}{5 |\log\delta| l}m$, we see 
	\begin{align}
		\begin{split}
                  f_{\lambda,\delta,l}(m) \geq \liminf_{n\to \infty}
                  \Big( F_{\lambda,\delta,l}(\Omega_n^{(1)}) -
                  \frac{6\pi \lambda}{5 |\log\delta|
                    l}|\Omega_n^{(1)}| +
                  F_{\lambda,\delta,l}(\Omega_n^{(2)} ) &
                  -\frac{6\pi \lambda}{5 |\log\delta|
                    l}|\Omega_n^{(2)}|\\ 
                  & + P( \Omega_n^{(3)}) -C_{\lambda,\delta,l}
                  |\Omega_n^{(3)}| \Big)\\ 
		\end{split}
	\end{align}
	for some $C_{\lambda,\delta,l}>0$.  Applying the isoperimetric
        inequality to $\Omega_n^{(3)}$, we see
	\begin{align}
          P( \Omega_n^{(3)}) -C_{\lambda,\delta,l} |\Omega_n^{(3)}|
          \geq 2\pi^\frac{1}{2}|\Omega_n^{(3)}|^\frac{1}{2}
          -C_{\lambda,\delta,l} |\Omega_n^{(3)}|  \geq 0 
	\end{align}
	provided $|\Omega_n^{(3)}| \leq \eps$ is small enough.
	As a result, we obtain
	\begin{align}\label{energy_splitting}
		\begin{split}
                  f_{\lambda,\delta,l}(m) \geq \liminf_{n\to \infty}
                  \Big( F_{\lambda,\delta,l}(\Omega_n^{(1)}) -
                  \frac{6\pi \lambda}{5 |\log\delta|
                    l}|\Omega_n^{(1)}| +
                  F_{\lambda,\delta,l}(\Omega_n^{(2)} ) & -\frac{6\pi
                    \lambda}{5 |\log\delta| l}|\Omega_n^{(2)}| \Big).
		\end{split}
	\end{align}	
 	This representation allows us to apply Lemma
        \ref{lem:rescaling} with
        $\alpha_{1,n} := \sqrt{\frac{|\Omega_n^{(1)}|}{m}}$ and
        $\Omega_n^{(1)}$, as well as
        $\alpha_{2,n} := \sqrt{\frac{|\Omega_n^{(2)}|}{m}}$ and
        $\Omega_n^{(2)}$.  The two resulting competitors for
        $f_{\lambda,\delta,l}(m)$ give us the estimate
	\begin{align}\label{postprocessing_1}
          f_{\lambda,\delta,l}(m) \geq \left(\alpha_{1,n}+
          \alpha_{2,n}\right)f_{\lambda,\delta,l}(m). 
	\end{align}
	
	We can now go to the limit $n \to \infty$ along some
        subsequence such that we have
        $\alpha_{1,n} \to \alpha_1(\eps)$ and
        $\alpha_{2,n} \to \alpha_2(\eps)$ with
	\begin{align}
          \left| \alpha_1^2(\eps) - \theta \right| + \left|
          \alpha^2_2(\eps) - (1-\theta) \right| \leq \frac{\eps}{m} 
	\end{align}
	due to the estimate \eqref{almost_all_mass}.
	Then estimate \eqref{postprocessing_1} turns into
	\begin{align}\label{break_for_generalized_minimizers}
          f_{\lambda,\delta,l}(m) \geq \left(\alpha_1(\eps)+
          \alpha_2(\eps)\right)f_{\lambda,\delta,l}(m). 
	\end{align}
	Therefore, taking the limit $\eps \to 0$ we see that
	\begin{align}\label{postprocessing_2}
          f_{\lambda,\delta,l}(m) \geq \left(\theta^\frac{1}{2} +
          (1-\theta)^\frac{1}{2} \right) f_{\lambda,\delta,l}(m), 
	\end{align}
	which in view of the positivity of $f_{\lambda,\delta,l}(m)$
        implies $\theta =0$ or $\theta=1$.  However, this is a
        contradiction to $0<\theta <1$, see the beginning of Step 3.
        This concludes the proof.
\end{proof}

We finally turn to prove existence of generalized minimizers:

\begin{proof}[Proof of Proposition
  \ref{prop:existence_of_minimizers_intermediate_masses}] 

  As in this proof $\lambda, \delta$ and $l$ are fixed, we drop them
  from the notation in $f_{\lambda,\delta,l}$, $F_{\lambda,\delta,l}$
  and $m_0(\lambda,\delta,l)$.  According to equation
  \eqref{positivity_small_masses} the set
  $\mathcal I^+ =\{m>0: f(m) >0\}$ contains a non-empty open
  interval with lower endpoint zero. Let $(0,m_0)$ with
  $m_0\in (0,\infty]$ be the largest such interval. Observe that
    by Lemma \ref{lem:existence_minimizers_small_mass} classical
    minimizers exist for all $m \in (0, m_0)$, so in the proof of
    existence of generalized minimizers we only need to consider the
    case $m \geq m_0$.

    \vspace{1mm}
    \textit{Step 1: We have $m_0< \infty$.}\vspace{1mm} \\
    We argue by providing an upper bound for the minimal energy, using
    disks as test configurations. Letting $\Omega := \ball{0}{1}$ and
    $\widetilde \Omega := \ball{0}{\alpha^{-1}}$, we only have to show
    that we can improve estimate
    \eqref{modulus_of_continuity_rescaling} to a lower bound that
    blows up in the limit $\alpha \to 0$.  Indeed, with $m = |\Omega|$
    and $\widetilde m = |\widetilde \Omega|$ we have
  	\begin{align}
		\begin{split}
                  & \quad F(\Omega) - \frac{6 \pi m \lambda}{5
                    |\log\delta| l} - \alpha \left(F(\widetilde \Omega) -
                    \frac{6\pi \widetilde m \lambda}{5|\log\delta| l}
                  \right)\\
                  & \geq \frac{\lambda}{2|\log \delta|} \int_{\Omega}
                  \int_{\stcomp{\Omega}} \left(
                    \frac{g_{\alpha\delta}(|x-y|)}{|x-y|^3}
                    -\frac{g_\delta(|x-y|)}{|x-y|^3} \right)
                  \intd y \intd x \\
                  & =\frac{\lambda}{2|\log \delta|} \int_{\ball{0}{1}}
                  \int_{\stcomp{B}_1(0)} \frac{\chi(\alpha\delta <
                    |x-y| \leq \delta)}{|x-y|^3} \intd y \intd x,
       \end{split} 
	\end{align}		
  	the right-hand side of which converges to
  	\begin{align}
          \frac{\lambda}{2|\log\delta|} \int_{\ball{0}{1}}
          \int_{\stcomp{B}_1(0)} \frac{\chi(|x-y| \leq
          \delta)}{|x-y|^3} \intd y \intd x = \infty. 
  	\end{align}

        \textit{Step 2: The functional $F$ has a minimizer at mass
          $m_0$ and all generalized minimizers are
          classical.}\vspace{1mm}\\  
        We may re-use large parts of the proof of Lemma
        \ref{lem:existence_minimizers_small_mass}.  The compactness
        and vanishing cases work exactly the same. We only have to
        rule out splitting, for which we follow the previous proof up
        to estimate \eqref{energy_splitting} and apply similar
        arguments as for estimates \eqref{postprocessing_1} and
        \eqref{postprocessing_2} to obtain
	\begin{align}
		\begin{split}
                 f(m_0) & \geq f(\theta m_0) + f((1-\theta) m_0)
		\end{split}
	\end{align}
	for some $0<\theta<1$.  By the choice of $m_0$ and Lemma
        \ref{lem:continuity_of_infimum} we have $f(m_0)=0$,
        $f(\theta m_0)>0$ and $ f((1-\theta) m_0) > 0$.  The obvious
        contradiction rules out the case of splitting, and concludes
        the proof of this step.

        \vspace{1mm} \textit{Step 3: Statements 1--3 of Proposition
          \ref{prop:existence_of_minimizers_intermediate_masses} are
          true.}\vspace{1mm}\\ 
        Let $\Omega$ be a minimizer of $F$ over $\mathcal A_{m_0}$.
        For $\alpha<1$, we test the infimum of $f(\widetilde m)$, where
        $\widetilde m := \alpha^{-2}m_0$, with the rescaling
        $\alpha^{-1} \Omega$ and apply the strict inequality of Lemma
        \ref{lem:rescaling} to get
  \begin{align}\label{observation}
    f(\widetilde m) < \alpha f(\widetilde m) < f( m_0) = 0
  \end{align}
  for all $\widetilde m > m_0$.  This clearly implies
  $\mathcal I^+ = (0,m_0)$.  A similar argument shows that $f$ is
  monotone decreasing on $(m_0,\infty)$.  All other listed statements
  have already been proven.
  
  \vspace{1mm}
  \textit{Step 4: Existence of minimizers for masses $m_0 < m \leq
      2 m_0$.}\vspace{1mm} \\
  We yet again re-use the proof of Lemma
  \ref{lem:existence_minimizers_small_mass}, which allows us to rule
  out vanishing, and of course implies existence of a minimizer in the
  compactness case.  However, we will not be able to rule out
  splitting, and instead provide a lower bound for the masses that may
  split off.
 
  As in Step 2 of this proof, we see that
  \begin{align}\label{splits}
    \begin{split}
      0> f(m) & \geq f(\theta m) + f((1-\theta)m)
    \end{split}
  \end{align}
  for some $0<\theta<1$.  If we had $\theta m \leq m_0$, then we would 
  have $f(\theta m)\geq 0$, which in turn would imply
  $ f((1-\theta)m) <0$.  Monotonicity of $f$ then would give
  \begin{align}
    f(m) & \geq  f((1-\theta)m)  >  f(m),
  \end{align}
  which is a clear contradiction.  Switching the roles of $\theta$ and
  $1-\theta$ in this argument we obtain
  \begin{align}\label{minimal_mass_1}
    \theta m >  m_0(\lambda,\delta,l),\\
    (1-\theta) m >  m_0(\lambda,\delta,l).\label{minimal_mass_2}
  \end{align}
	
  Consider two smooth competitors $\Omega_1$ for $f(\theta m)$ and
  $\Omega_2$ for $f((1-\theta)m)$.  We may use
  $\Omega := \Omega_1 \cup ( Re_1+ \Omega_2)$ for $R>0$ large enough
  such that $\Omega_1 \cap ( Re_1+ \Omega_2) = \emptyset$ as a
  competitor for $f(m)$.  As the nonlocal interaction between the two
  sets vanishes in the limit $R\to \infty$, we obtain
  $f(\theta m) + f((1-\theta) m ) \geq f(m)$.  Combining this with
  estimate \eqref{splits} we get
	\begin{align}\label{generalized_minimizers_exist}
		\begin{split}
                  f(m) & \geq f(\theta m) + f((1-\theta)m) \geq f(m),
		\end{split}
	\end{align}
        and the inequalities are, in fact, equalities.
        
        As for $m \leq 2 m_0$ any splitting would violate estimate
        \eqref{minimal_mass_1} or estimate \eqref{minimal_mass_2},
        classical minimizers exist and any generalized minimizer is
        classical.

 \vspace{1mm}
        \textit{Step 5: Existence of generalized minimizers for masses
          $m > 2 m_0$ and the bound for the number of their
          components}. \vspace{1mm}\\
        The insights from Step 4 allow us to set up an induction
        argument: For $k \in \N$ assume that generalized minimizers
        exist for all masses $m \leq (k+1) m_0$ and that they can have
        at most $k$ components.  Note that by the result in Step 4
        this assumption is true for $k = 1$. Let now
        $m \in ((k+1)m_0,(k+2)m_0]$.  Then equation
        \eqref{generalized_minimizers_exist} says that a generalized
        minimizer with mass $m$ is given by combining the generalized
        minimizers at masses $\theta m$ and $(1-\theta)m$ which exist
        because estimates \eqref{minimal_mass_1} and
        \eqref{minimal_mass_2} imply
	\begin{align}
		\theta m, (1-\theta)m < (k+1)m_0.
	\end{align}
	To see that the generalized minimizer has at most $k+1$
        components we note that the same proof as for estimates
        \eqref{minimal_mass_1} and \eqref{minimal_mass_2} implies that
        any component $\Omega_i$ must have $|\Omega_i| > m_0$.
        Therefore, any generalized minimizer at mass $m$ clearly has
        at most $k+1$ components.
	
	Furthermore, for masses $m > 2 m_0$ the condition
        $k m_0<m \leq (k+1)m_0$ implies $k < \frac{m}{m_0}$, which
        implies $k \leq \left\lceil\frac{m}{m_0} \right\rceil -1$.
        Taking into account the fact that minimizers must be classical
        for masses $m \leq m_0$ due to Lemma
        \ref{lem:existence_minimizers_small_mass} gives us the bound
    \begin{align}
    	\max\left(\left\lceil\frac{m}{m_0}
          \right\rceil -1,1\right)
    \end{align}
    on the number of components of generalized minimizers. 
\end{proof}

Next, we briefly discuss the regularity properties of generalized
minimizers of $F_{\lambda,\delta,l}$. Even though we only provide a
qualitative statement here, it is one of the crucial ingredients when
proving that minimizers are disks for $\lambda < 1$ and
$l=\infty$, as it will allow us to set up an iteration to enlarge the
scale at which the boundary of a minimizer is locally a graph with
respect to some coordinate direction.

\begin{prop}\label{prop:ELG_generalized_minimizers}
  Let $(\Omega_i)_{i\in \N}$ be a generalized minimizer of
  $F_{\lambda,\delta,l}$ over $\mathcal A_m$.  Then there exists a
  Lagrange multiplier $\mu \in \R$ such that for all $i \in \N$ with
  $ |\Omega_i| > 0$ the following holds:
  \begin{enumerate}[(i)]    
  \item After a possible redefinition on a set of zero Lebesgue
      measure, $\Omega_i$ is open and bounded.
    \item We have that $\Omega_i$ has a $C^{2,\alpha}$-boundary for
      all $\alpha \in (0, 1)$.
  \item For all $y \in \partial \Omega_i$ we have
    \begin{align}\label{ELG_generalized_minimizers}
      \kappa_i(y) + 2 v_{\delta,l,i}(y) - \mu = 0,
    \end{align}
    where $\kappa_i$ is the curvature of $\Omega_i$ (positive for
    convex sets) and $v_{\delta,l,i}$ is the potential of $\Omega_i$
    defined in equation \eqref{potential_definition}.
  \end{enumerate}
\end{prop}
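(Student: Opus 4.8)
The plan is to treat each component $\Omega_i$ with $|\Omega_i|>0$ of a generalized minimizer as a \emph{volume-constrained almost-minimizer} of the perimeter functional, and to extract the Euler--Lagrange equation from the first variation. Fix one such component $\Omega_i$, set $m_i:=|\Omega_i|>0$, and observe first that since the generalized minimizer minimizes $\mathcal{S}F_{\lambda,\delta,l}$, the configuration obtained by perturbing only $\Omega_i$ (and rescaling it slightly, or combining with a fixed far-away component, to restore the mass) cannot decrease the total energy; concretely, $\Omega_i$ minimizes $F_{\lambda,\delta,l}$ over $\mathcal A_{m_i}$. The key analytic point is that the nonlocal term is a smooth lower-order perturbation of the perimeter: writing $F_{\lambda,\delta,l}(\Omega)=P(\Omega)+\int_\Omega v_{\delta,l}(x)\intd x - \frac{\pi\lambda m}{\delta|\log\delta|}$ as in \eqref{energy_expanded}, and recalling from \eqref{continuity_of_nonlocal_term} that the quadratic form defining the nonlocal energy is bounded by $C(\delta,l)\|\chi_\Omega\|_{L^\infty}\|\chi_\Omega\|_{L^1}$, one sees that for any ball $\ball{x}{r}$ and any competitor $\Omega'$ with $\Omega'\Delta\Omega_i\Subset\ball{x}{r}$ one has
\begin{align*}
  P(\Omega_i)\leq P(\Omega')+C(\delta,l,m_i)\,|\Omega_i\Delta\Omega'|
  \leq P(\Omega')+C(\delta,l,m_i)\,r^2,
\end{align*}
after accounting for the small mass change by a rescaling whose perimeter cost is itself $O(|\Omega_i\Delta\Omega'|)$. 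Hence $\Omega_i$ is a $(\Lambda,r_0)$-minimizer of the perimeter (an ``almost minimizer'' with a volume term) in the sense of \cite[Chapter 21]{maggi}.

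The next step is to invoke the standard regularity theory for such almost-minimizers: after redefining $\Omega_i$ on a Lebesgue-null set it is open, its topological boundary coincides with $\supp|D\chi_{\Omega_i}|$, and $\partial\Omega_i$ is a $C^{1,\gamma}$ hypersurface for every $\gamma\in(0,1)$ (in the plane there is no singular set); boundedness follows since a finite-perimeter set with finite mass that is connected — or each of whose components is — cannot be unbounded once it has $C^1$ boundary and a uniform density lower bound, which the almost-minimality provides. This yields (i). For (ii) and (iii) one bootstraps: on a $C^{1,\gamma}$ boundary the potential $v_{\delta,l}$ is, by the boundary representation \eqref{potential_boundary_formulation} together with the explicit kernel $\nabla\Phi_{\delta,l}$ from \eqref{Phi_l_explicit} (which is bounded and Lipschitz away from the origin once $\delta>0$ is fixed, and the singular part integrates to a $C^{0,\gamma}$ function by a standard single-layer-potential estimate), of class $C^{0,\gamma}$ up to the boundary. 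Computing the first variation of $F_{\lambda,\delta,l}$ under a normal perturbation $X=\phi\nu$ with $\int_{\partial\Omega_i}\phi\intd\Hd^1=0$ (the mass-preserving condition) gives
\begin{align*}
  0=\int_{\partial\Omega_i}\bigl(\kappa_i(y)+2 v_{\delta,l,i}(y)\bigr)\phi(y)\intd\Hd^1(y),
\end{align*}
where the factor $2$ in front of $v_{\delta,l,i}$ arises because the nonlocal energy is quadratic in $\chi_{\Omega_i}$ (each of the two copies of $\Omega_i$ contributes). Removing the mean-zero constraint via a Lagrange multiplier $\mu$ produces \eqref{ELG_generalized_minimizers} in the weak sense, so $\kappa_i=\mu-2v_{\delta,l,i}\in C^{0,\gamma}$, whence $\partial\Omega_i\in C^{2,\gamma}$; since $\mu$ is the same for every component (it equals the common value of $\kappa_i+2v_{\delta,l,i}$ forced by comparing variations across different components), this gives (ii) and (iii). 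Finally, to see the equation holds at \emph{every} point rather than $\Hd^1$-a.e., one uses that both $\kappa_i$ and $v_{\delta,l,i}$ are now continuous on $\partial\Omega_i$.

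The main obstacle I anticipate is not the variational computation itself but making the almost-minimality \emph{clean} in the presence of the mass constraint and the infinitely-many-components bookkeeping: one must check that a localized competitor for a single component can be turned into an admissible competitor for the full collection $(\Omega_j)_{j}$ at no extra cost beyond an $O(r^2)$ term, which requires either transferring the excess mass to another component at bounded perimeter cost or rescaling $\Omega_i$ by a factor $1+O(r^2)$ and controlling the resulting change in $v_{\delta,l,i}$ via Lemma \ref{lem:energy_on_boundary}. A second, more technical point is the regularity of the single-layer-type potential $\int_{\partial\Omega_i}\nabla\Phi_{\delta,l}(y-x)\cdot\nu(y)\intd\Hd^1(y)$ near the boundary: because $\delta$ is fixed and positive the kernel $\nabla\Phi_{\delta,l}$ is actually bounded (its only growth, $r^{-1}$, is cut off at scale $\delta$), so this is genuinely easier than the classical Newtonian case, but one still needs the uniform density and perimeter-ratio bounds from almost-minimality to justify the dyadic-annulus estimate already used in the proof of Lemma \ref{lem:energy_on_boundary}. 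Everything else is the textbook regularity theory for $(\Lambda,r_0)$-minimizers.
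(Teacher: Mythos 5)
Your proposal is correct and follows essentially the same route as the paper: each nontrivial component is a classical minimizer at its own mass, volume-constrained almost-minimality (i.e.\ $(\Lambda,r_0)$-minimality after absorbing the mass constraint by rescaling) gives $C^{1,\alpha}$ regularity, openness and boundedness, the first variation under mass-preserving normal perturbations yields \eqref{ELG_generalized_minimizers} weakly, Schauder theory for the prescribed-curvature equation bootstraps to $C^{2,\alpha}$, and comparing complementary dilations of two components forces a common multiplier $\mu$. The one point to correct is your justification of the regularity of $v_{\delta,l,i}$: you pass through the single-layer representation \eqref{potential_boundary_formulation} and assert that $\nabla\Phi_{\delta,l}$ is bounded because its $r^{-1}$ growth is ``cut off at scale $\delta$''; in fact, by \eqref{Phi_explicit} one has $\Phi_\delta'(r)=-\frac{1}{\delta r}$ for $r<\delta$, so the kernel is unbounded at the origin and your argument must instead invoke the cancellation $\nu(y)\cdot\frac{y-x}{|y-x|}=O(|y-x|^\gamma)$ on a $C^{1,\gamma}$ boundary --- workable, but unnecessarily delicate. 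The paper avoids this entirely: Lipschitz continuity of $v_{\delta,l,i}$ on all of $\R^2$ follows directly from the volume representation \eqref{potential_definition}, using only that $K_{\delta,l}$ itself is bounded (the cutoff $g_\delta$ removes the $r^{-3}$ singularity), see \eqref{viLip}. Your closing concern about transferring excess mass is handled exactly as you anticipate, by rescaling competitors and invoking Lemmas \ref{lem:rescaling} and \ref{lem:rescaling2}.
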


\begin{proof}
  As the components of generalized minimizers clearly are classical
  minimizers, we may drop the index $i$ for the proof of regularity
  and the form of the Euler-Lagrange equation. We will only recall the
  dependence when proving that the Lagrange-multiplier is independent
  of $i$.
  
  In order to obtain $C^{1,\alpha}$-regularity for some
  $\alpha \in (0,1)$, one can directly apply \cite[Th\'eor\`eme 1.4.9
  or Th\'eor\`eme 5.1.3] {rigot00}, which yields $\alpha =
  \frac12$. One could alternatively prove that $\Omega$ is a
  $(\Lambda,r_0)$-minimizer of the perimeter functional according to
  \cite[Chapter 21]{maggi} for some $\Lambda, r_0>0$ and then apply
  the regularity theory presented therein to prove
  $C^{1,\alpha}$-regularity for all $\alpha \in (0, \frac12)$. The
  mass constraint can be dealt with by rescaling competitor sets to
  the appropriate mass and making use of Lemmas \ref{lem:rescaling}
  and \ref{lem:rescaling2}.
	
  The computation of the Euler-Lagrange equation for a single set
  $\Omega_i$:
  \begin{align}
    \kappa_i(y) + 2 v_i(y) - \mu_i = 0, \qquad y \in \partial \Omega_i,
  \end{align}
  with some $\mu_i \in \R$, is a standard exercise in view of
  Lipschitz continuity of $v_i$. For the latter, write
  \begin{align}
    \label{viLip}
    \begin{split}
      |v_i(x) - v_i(y)| & \leq {\lambda \over 2 |\log \delta|}
      \int_{\R^2} K_{\delta,l} (|z|) \left|
        \chi_{\Omega_i}(x + z) - \chi_{\Omega_i}(y + z) \right| \intd z \\
      & \leq {\lambda |x - y| \over 2 |\log \delta|} \int_{\R^2}
      K_{\delta,l} (|z|) \int_0^1 |\nabla
      \chi_{\Omega_i}(x t + (1 - t) y + z)| \intd t \intd z \\
      & = {\lambda P(\Omega_i) \| K_{\delta,l} \|_\infty \over 2 |\log
        \delta|} \, |x - y|,
    \end{split}
  \end{align}
  arguing by approximation.  To obtain higher regularity, we appeal to
  the regularity theory for the prescribed mean curvature equation,
  see for example \cite{giusti}.
        
  To see that the Lagrange multiplier does not depend on the
  component, let $i,j \in \N$ be such that
  $\Omega_i, \Omega_j \neq \emptyset$.  For $\eps \in \R$, dilating
  $\Omega_i$ with the factor $1+ \eps $ and $\Omega_j$ with the factor
  $1-\frac{m_i}{m_j}\eps $ gives a viable competitor
  $(\Omega_{\eps,i})_{i \in \N}$ for $(\Omega_i)_{i \in \N}$.  We
  therefore get
  \begin{align}
    0 & =\left. \frac{d}{d \eps } 
        E\left((\Omega_{\eps,i})_{i \in \N}\right) \right|_{\eps
        = 0} \notag \\ 
      & =  \int_{\partial \Omega_i} (x\cdot \nu_i)\left( \kappa_i(y) +
        2 v_i(y)\right) \intd \Hd^1(y) - \frac{m_i}{m_j}\int_{\partial
        \Omega_j} (x\cdot \nu_j) \left(\kappa_j(y) + 2 v_j(y) \right)\intd
        \Hd^1(y) \\ 
      & = 2 \mu_i m_i  - \frac{2 m_i}{m_j} \mu_j m_j, \notag
  \end{align}
  which immediately implies $\mu_i = \mu_j$.
\end{proof}

To conclude this section, we point out that in a large range of
parameters any minimizer has to be connected.  This is not clear
\emph{a priori} as the kernel $K_{\delta,l}$ is not necessarily
strictly positive.  In fact, if we have $\delta > \sqrt{2}l$, then it
is easy to see that $K_{\delta,l}(r) <0$ for $\sqrt{2}l < r < \delta$,
so that at certain distances the kernel is even attractive.

\begin{lemma}\label{lem:connectedness}
  If $\delta<\sqrt{2\pi m}$ in the case $l=\infty$,
  or $\delta < \sqrt{2} l$ in the case $l<\infty$, then the regular
  representatives constructed in Proposition
  \ref{prop:ELG_generalized_minimizers} of any minimizer $\Omega$ of
  $F_{\lambda,\delta,l}$ over $\mathcal A_m$ are connected.
\end{lemma}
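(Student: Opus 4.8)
The plan is to argue by contradiction: suppose $\Omega$ has at least two connected components, say $\Omega = \Omega' \cup \Omega''$ with $\Omega', \Omega''$ nonempty, open, bounded (using the regular representative from Proposition \ref{prop:ELG_generalized_minimizers}) and at positive distance from each other. The idea is to create a competitor of the same mass with strictly smaller energy by rigidly translating one component closer to the other until their boundaries touch, and to show that this \emph{decreases} the nonlocal interaction energy while leaving the perimeter and the mass unchanged. Writing $d(t)$ for the configuration in which $\Omega''$ is translated by $t e$ for a suitable unit vector $e$ chosen along the line joining the two components, the perimeter and the self-interaction of each component are invariant under translation, so the only $t$-dependent contribution is the cross term
\begin{align}
  I(t) := \int_{\Omega'} \int_{\Omega'' + te} K_{\delta,l}(|x-y|) \intd y \intd x.
\end{align}
It suffices to produce a value of $t$ for which the configuration is still admissible (components still disjoint, so that the mass is preserved and $F_{\lambda,\delta,l}$ of the union equals $F_{\lambda,\delta,l}(\Omega') + F_{\lambda,\delta,l}(\Omega'') - \frac{\lambda}{|\log\delta|} I(t)$ up to the constant) and for which $I(t)$ is strictly larger than its value in the original configuration, i.e. strictly less repulsive energy.

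The key point is the sign of $K_{\delta,l}$. I would first record that under the stated hypotheses the kernel $K_{\delta,l}(r)$ is nonnegative for all $r \geq \delta$ in the case $l<\infty$ provided $\delta < \sqrt{2} l$ — indeed $\frac{g_\delta(r)}{r^3} = \frac{1}{r^3} \geq 0$ for $r \geq \delta$ and $-\frac{r^2 - 2l^2}{(r^2+l^2)^{5/2}} \geq 0$ precisely when $r \leq \sqrt{2} l$, while for $r > \sqrt{2} l \geq \delta$ one checks $\frac{1}{r^3} - \frac{r^2-2l^2}{(r^2+l^2)^{5/2}} \geq 0$ by comparing $(r^2+l^2)^{5/2} \geq r^3(r^2 - 2l^2)$, which holds since the left side exceeds $r^5 > r^3(r^2-2l^2)$. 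In the case $l = \infty$ the kernel is just $\frac{1}{r^3} \chi_{r>\delta}$, nonnegative for $r>\delta$, and here the hypothesis $\delta < \sqrt{2\pi m}$ will be used differently, namely to guarantee that the two components can actually be brought within distance $\delta$ of one another after translation (their diameters, controlled via Lemma \ref{lem:very_rough_a_priori_bound}, plus the required overlap scale, stay compatible with the mass $m$). With $K_{\delta,l} \geq 0$ on $(\delta,\infty)$ and, under the hypothesis, $K_{\delta,l} \geq 0$ on all of $(0,\infty)$ in the case $l<\infty$ — since then $\delta < \sqrt{2}l$ forces the attractive window $(\sqrt 2 l, \delta)$ to be empty — the integrand is nonnegative, and $I(t)$ is monotone: as $t$ decreases and the two components approach, more pairs $(x,y)$ contribute at smaller separation where $K_{\delta,l}$ is larger, so $I(t)$ strictly increases once the components come within the support of the kernel. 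Hence translating $\Omega''$ so that $\dist(\Omega', \Omega''+te) < \delta$, while keeping the interiors disjoint (possible since the boundaries are $C^2$ and one can slide along and then around), yields $I(t) > I(\infty$-configuration$) \geq 0$ strictly, contradicting minimality.

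The main obstacle is the bookkeeping in the case $l = \infty$: there the kernel has zero support beyond $\delta$ only in the trivial sense that $\frac{1}{r^3}$ is supported on $(\delta,\infty)$, so the strict increase of $I(t)$ requires actually bringing the components within distance $\delta$, and one must verify this is geometrically possible while keeping them disjoint. This is where $\delta < \sqrt{2\pi m}$ enters: a single component of a minimizer has area at most $m$, hence by the isoperimetric inequality and Lemma \ref{lem:very_rough_a_priori_bound} its diameter is controlled, and two such components can be positioned with boundary-to-boundary distance in $(0,\delta)$ without their interiors meeting — for instance by sliding $\Omega''$ toward $\Omega'$ along a line until the closure distance drops below $\delta$ but before the open sets intersect, which is possible whenever $\delta$ is not so large as to force overlap, i.e. whenever $\delta$ is small relative to the length scale $\sqrt m$. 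Care must also be taken that after translation the union still has perimeter exactly $P(\Omega') + P(\Omega'')$ (true since the closures can be kept from touching) and mass exactly $m$; both follow from disjointness of the interiors. Once these geometric facts are in place, the sign of the kernel does all the work, and the strict inequality $F_{\lambda,\delta,l}(\text{competitor}) < F_{\lambda,\delta,l}(\Omega)$ contradicts minimality, forcing $\Omega$ to be connected.
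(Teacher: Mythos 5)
There is a genuine gap, and it is a sign error that invalidates the whole strategy. For two disjoint sets $\Omega'$, $\Omega''$ with union $\Omega$, the combinatorics in \eqref{set_combinatorics}--\eqref{nonlocal_term_repulsive} give
\begin{align}
  \int_{\Omega}\int_{\stcomp{\Omega}} K_{\delta,l}
  = \int_{\Omega'}\int_{\stcomp{\Omega'}} K_{\delta,l}
  + \int_{\Omega''}\int_{\stcomp{\Omega''}} K_{\delta,l}
  - 2\int_{\Omega'}\int_{\Omega''} K_{\delta,l},
\end{align}
and since the nonlocal term enters $F_{\lambda,\delta,l}$ with a minus sign, this yields
$F_{\lambda,\delta,l}(\Omega) = F_{\lambda,\delta,l}(\Omega') + F_{\lambda,\delta,l}(\Omega'') + \frac{\lambda}{|\log\delta|} I$ with $I := \int_{\Omega'}\int_{\Omega''} K_{\delta,l}(|x-y|)\intd y\intd x$ — a \emph{plus} sign on the cross term, not the minus sign you wrote. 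Since $K_{\delta,l}\geq 0$ under the hypotheses (your verification of this is fine), increasing $I$ by translating the components closer together \emph{raises} the energy: the interaction is repulsive, so your competitor is worse than $\Omega$, and no contradiction with minimality is obtained. The correct move is the opposite one: use the competitors $\Omega' \cup (Re_1 + \Omega'')$ and let $R\to\infty$, so that the cross term tends to $0$; minimality then gives $F_{\lambda,\delta,l}(\Omega') + F_{\lambda,\delta,l}(\Omega'') \geq F_{\lambda,\delta,l}(\Omega)$, hence $I \leq 0$. For $l<\infty$ with $\delta<\sqrt2\,l$ the kernel is strictly positive everywhere, so $I\le 0$ forces one component to be empty. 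For $l=\infty$ the kernel vanishes on $(0,\delta]$, so $I=0$ only forces every pair $x\in\Omega'$, $y\in\Omega''$ to satisfy $|x-y|\leq\delta$; then each component lies in a ball of radius $\delta$ and $m\leq 2\pi\delta^2$, which the smallness hypothesis on $\delta$ rules out. This is also where the condition on $\delta$ versus $m$ actually enters — not, as you propose, to guarantee that the components can be slid within distance $\delta$ of each other, a step that your (sign-reversed) argument would need but the correct one does not.
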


\begin{proof}
  Let us assume that there exists disjoint, non-empty subsets
  $\Omega_{1}$ and $\Omega_{2}$ such that
  $\Omega =\Omega_{1} \cup \Omega_{2}$.  By using the competitors
  $\Omega_{1} \cup (Re_1+ \Omega_{2})$ for $R\to \infty$ we see that
	\begin{align}
          F_{\lambda,\delta,l} (\Omega_{1}) +
          F_{\lambda,\delta,l} (\Omega_{1}) \geq F_{1,\delta,l}
          \left(\Omega\right), 
	\end{align}
	which with the help of the expression in
        \eqref{nonlocal_term_repulsive} for the difference of the
        left- and right-hand side implies
	\begin{align}\label{minimizers_connected}
          0 \geq \frac{1}{|\log\delta|}\int_{\Omega_{1,n}} \int_{\Omega_{2,n}}
          K_{\delta,l}(|x-y|) \intd y \intd x.
	\end{align}
	
	Consider the case $l= \infty$ first.  Then
        \eqref{minimizers_connected} becomes
	\begin{align}
           \frac{1}{|\log\delta|}\int_{\Omega_{1,n}} \int_{\Omega_{2,n}}
          \frac{g_{\delta}(|x-y|)}{|x-y|^3} \intd y \intd x =0,
	\end{align}
	which implies $\Omega_{1,n} \subset \ball{x}{\delta}$ for all
        $x\in \Omega_{2,n}$ and vice versa.
        Therefore we have
	\begin{align}
          |\Omega| = |\Omega_{1}| + |\Omega_{2}| \leq
          2\pi \delta^2, 
	\end{align}
        which contradicts the assumptions of the lemma.
        
        Next, we deal with the case $l<\infty$. Then we have
	\begin{align}
          K_{\delta,l}(r) \geq
          \begin{cases}
            {2 l^2 - r^2 \over (r^2 + l^2)^{\frac52}} & r \leq \delta, \\
            {2 l^2 \over (r^2 + l^2)^{\frac52}} & r > \delta.
          \end{cases}
        \end{align}
        Consequently, for $\delta < \sqrt{2} l$ we get that
        $K_{\delta,l}(r) > 0$ for all $r > 0$ and, therefore,
        \eqref{minimizers_connected} yields $\Omega_{1} = \emptyset$
        or $\Omega_{2} = \emptyset$, a contradiction.
\end{proof}

\section{The subcritical case $\lambda<1$}\label{sec:subcritical} 
The main goal in this section is to prove that minimizers are disks
for $\lambda <1$ and all $\delta$ sufficiently small.  While in the
previous section rough estimates were acceptable, the main theme here
will be obtaining estimates that are as uniform as possible.  To this
end, it is crucial to realize that the nonlocal term localizes to
leading order in the sense that in the limit $\delta \to 0$ it
approaches $-\lambda P(\Omega)$.  Therefore, treating it as a volume
term as we did for example in Proposition
\ref{prop:ELG_generalized_minimizers} has no chance of being accurate.
Instead, one has to split the nonlocal contribution into the
leading-order local and a higher-order nonlocal part.

The first instance of this idea is contained in the following
lower bound for the energy in terms of the perimeter, adapted from
Kn\"upfer, Muratov and Nolte \cite{kmn:arma}.  Not only is it the
first step to obtain uniform regularity estimates and the lower bound
for the $\Gamma$-limit
$E_{\lambda,\delta} \stackrel{\Gamma}{\to} (1-\lambda)P$ , it will in
Section \ref{sec:critical} also be the starting point in proving the
$\Gamma$-$\liminf$ inequality of the $\Gamma$-convergence in the
critical case $\lambda=1$.

\begin{prop}\label{prop:a_priori_lower_bound}
  Let $\lambda > 0$, $0 < \delta < \frac12$, $m > 0$ and
  $\Omega \in \mathcal A_m$. If $P(\Omega) \leq \frac{\pi m}{\delta}$
  we have the bound
	\begin{align}
          E_{\lambda,\delta}(\Omega) \geq \left(1-\lambda  +
          \frac{\lambda}{|\log\delta|}\log\left(\frac{P(\Omega)}{e\,
          \pi m}  \right) \right) P(\Omega). 
	\end{align}
	If $P(\Omega) > \frac{\pi m}{\delta}$ we instead have
	\begin{align}
          E_{\lambda,\delta}(\Omega) \geq \left(1 -  \frac{
          \lambda}{|\log\delta|}\right)P(\Omega). 
	\end{align}
\end{prop}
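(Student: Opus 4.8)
The plan is to reduce the statement to a sharp upper bound on the nonlocal term in terms of $P(\Omega)$ and $m$. First I would rewrite the energy as $E_{\lambda,\delta}(\Omega) = P(\Omega) - \frac{\lambda}{2|\log\delta|} N(\Omega)$ with $N(\Omega) := \int_\Omega \int_{\stcomp\Omega} \frac{g_\delta(|x-y|)}{|x-y|^3}\intd y \intd x$, using the combinatorial identity \eqref{energy_as_nonlocal_perimeter} (with $l = \infty$, so that $K_{\delta,\infty}(r) = g_\delta(r)/r^3$). Substituting $z = y-x$ and using that the kernel is even, one gets $N(\Omega) = \tfrac12 \int_{\R^2} \frac{g_\delta(|z|)}{|z|^3}\, \| \chi_\Omega(\cdot + z) - \chi_\Omega\|_{L^1(\R^2)}\intd z$, because for each $z$ the quantity $|\{x \in \Omega : x+z \in \stcomp\Omega\}|$ and its mirror image under $z \mapsto -z$ sum to $\|\chi_\Omega(\cdot + z) - \chi_\Omega\|_{L^1}$.

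Next I would estimate $\|\chi_\Omega(\cdot+z)-\chi_\Omega\|_{L^1}$ for $z = re$, $r>0$, $e \in \Sph^1$, in two ways: trivially by $2|\Omega| = 2m$, and by the BV translation estimate $\|\chi_\Omega(\cdot+re)-\chi_\Omega\|_{L^1} \leq r\,|D_e \chi_\Omega|(\R^2) = r\,\mu_e$, where $\mu_e := \int_{\partial^* \Omega} |\nu(y)\cdot e|\intd \Hd^1(y) \leq P(\Omega)$. Passing to polar coordinates in $z$ (the integrand being supported in $\{|z|>\delta\}$) this yields $N(\Omega) \leq \tfrac12 \int_{\Sph^1} \Psi(\mu_e)\intd \Hd^1(e)$, where $\Psi(\mu) := \int_\delta^\infty r^{-2}\min(r\mu, 2m)\intd r$. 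A direct computation gives $\Psi(\mu) = \mu\log\big(\tfrac{2em}{\mu\delta}\big)$ for $\mu\delta \leq 2m$ and $\Psi(\mu) = 2m/\delta$ otherwise, and --- crucially --- $\Psi$ is non-decreasing and \emph{concave} on $[0,\infty)$: it is concave on $(0, 2m/\delta]$ with $\Psi'(2m/\delta) = 0$, and constant afterward.

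Then I would invoke the identity $\int_{\Sph^1} |\nu \cdot e| \intd \Hd^1(e) = 4$ for any unit vector $\nu$, which by Fubini gives $\int_{\Sph^1}\mu_e \intd \Hd^1(e) = 4 P(\Omega)$, i.e.\ the average of $\mu_e$ over $\Sph^1$ equals $\tfrac{2P(\Omega)}{\pi}$. Concavity of $\Psi$ and Jensen's inequality then yield
\begin{align*}
  N(\Omega) \leq \tfrac12 \int_{\Sph^1} \Psi(\mu_e)\intd \Hd^1(e) \leq \pi\, \Psi\!\Big(\tfrac{2P(\Omega)}{\pi}\Big).
\end{align*}
If $P(\Omega) \leq \pi m/\delta$, then $\tfrac{2P(\Omega)}{\pi} \leq 2m/\delta$, so $\Psi\big(\tfrac{2P(\Omega)}{\pi}\big) = \tfrac{2P(\Omega)}{\pi}\log\big(\tfrac{\pi e m}{P(\Omega)\delta}\big)$, hence $N(\Omega) \leq 2 P(\Omega)\log\big(\tfrac{\pi e m}{P(\Omega)\delta}\big)$; plugging this into the expression for $E_{\lambda,\delta}$ and using $-\log\delta = |\log\delta|$ to write $\log\big(\tfrac{\pi e m}{P(\Omega)\delta}\big) = |\log\delta| - \log\big(\tfrac{P(\Omega)}{e\pi m}\big)$ gives the first bound. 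If instead $P(\Omega) > \pi m/\delta$, I would bound crudely $N(\Omega) \leq \int_\Omega \big(2\pi\int_\delta^\infty r^{-2}\intd r\big)\intd x = 2\pi m/\delta$, so that $E_{\lambda,\delta}(\Omega) \geq P(\Omega) - \tfrac{\pi\lambda m}{\delta|\log\delta|} \geq \big(1 - \tfrac{\lambda}{|\log\delta|}\big)P(\Omega)$, the last step using $\pi m/\delta < P(\Omega)$.

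The only non-routine point is the concavity of $\Psi$ across the crossover scale $r = 2m/\mu$ where the $\min$ switches branches: this is exactly what legitimizes a single application of Jensen's inequality after averaging $\mu_e$ over directions, and --- together with $\int_{\Sph^1}\mu_e = 4P(\Omega)$ --- it is also the reason the threshold $P(\Omega) = \pi m/\delta$, equivalently $\tfrac{2P(\Omega)}{\pi} = 2m/\delta$, appears in the statement. Everything else (the explicit form of $\Psi$, the BV translation estimate, and the value $4$ of $\int_{\Sph^1}|\nu\cdot e|$) is standard.
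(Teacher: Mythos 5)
Your proof is correct and rests on the same two ingredients as the paper's: the BV translation estimate controlling $\|\chi_\Omega(\cdot+z)-\chi_\Omega\|_{L^1}$ by $|z|$ times the directional perimeter at short range, the trivial bound $2m$ at long range, and a balancing of the two at the crossover scale $|z|\sim m/P(\Omega)$. The only organizational difference is that the paper splits the $z$-integral at a single direction-independent radius $R=\pi m/P(\Omega)$ and optimizes afterwards, whereas you take the pointwise minimum of the two bounds (so the crossover radius $2m/\mu_e$ varies with the direction $e$) and recombine via concavity of $\Psi$ and Jensen's inequality; both routes produce the identical bound $N(\Omega)\leq 2P(\Omega)\log\left(e\pi m/(P(\Omega)\delta)\right)$ and hence the same constants in the statement.
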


\begin{proof}
  We derive an upper bound for the nonlocal term by splitting the
  inner integral in
	\begin{align}
          I :=\int_{\R^2} \int_{\R^2} |\chi_\Omega(x+z) -
          \chi_\Omega (x) 
          |^2 \frac{g_\delta(|z|)}{|z|^3} \intd z \intd x 
	\end{align}
	into two parts.
	
	\vspace{1mm}
        \textit{Step 1: Small scales.}
        \vspace{1mm}\\
	For some cut-off length $R>0$ to be determined later, we claim
        to have the estimate 
	\begin{align}
          \quad \int_{\R^2} \int_{\ball{0}{R}} |\chi_\Omega(x+z) -
          \chi_\Omega (x) |^2 \frac{g_\delta(|z|)}{|z|^3} \intd z
          \intd x \leq 4  P(\Omega) \int_0^{R}
          \frac{g_\delta(s)}{s} \intd s. 
	\end{align}	
	Note that the integral on the left hand side is continuous
        with respect to the $L^2$-topology due estimate
        \eqref{continuity_of_nonlocal_term}.  By approximating
        $\chi_\Omega$ with smooth functions
        $f_n \in C_c^\infty(\R^2;[0,1])$ such that
        $f_n \to \chi_\Omega$ in $L^2$, $\int f_n \intd x = m$ and
        $\int_{\R^2} | \nabla f_n| \intd x \to P(\Omega)$ it is
        sufficient to prove the analogous result for smooth functions.
        By \cite[estimate (3.8)]{kmn:arma} we get
	\begin{align}
	  \begin{split}
            \int_{\R^2} \int_{\ball{0}{R}} |f(x+z) - f (x) |
            \frac{g_\delta(|z|)}{|z|^3} \intd z \intd x
            & \leq \int_{\R^2} \int_{\ball{0}{R}} |\nabla f (x) \cdot
            z |\frac{g_\delta(|z|)}{|z|^3} \intd z \intd x \\ 
            & = \int_{\R^2} |\nabla f| \intd x
            \int_{\ball{0}{R}}\frac{|z_1|g_\delta(|z|)}{|z|^3} \intd
            z.
	  \end{split}
	\end{align}
	Using polar coordinates it is easy to see that
	\begin{align}
          \int_{\ball{0}{R}}\frac{|z_1|g_\delta(|z|)}{|z|^3} \intd z =
          4 \int_0^{R} \frac{g_\delta(s)}{s} \intd s, 
	\end{align}
	which concludes the proof of the claim after noticing
        $|\chi_\Omega(x+z) - \chi_\Omega(x)|^2 = |\chi_\Omega(x+z) -
        \chi_\Omega(x)|$.
	
	\vspace{1mm}
        \textit{Step 2: Large scales.}
        \vspace{1mm}\\
	In order to estimate the large scales, we make use of the
        representation \eqref{energy_as_nonlocal_perimeter} to see
	\begin{align}
          \int_{\R^2} \int_{B^{\mathsf{c}}_{R}(0)} |\chi_\Omega(x+z) -
          \chi_\Omega (x) |^2 \frac{g_\delta(|z|)}{|z|^3} \intd z
          \intd x \leq 4\pi m \int_{R}^\infty \frac{g_\delta(s)}{s^2}
          \intd s. 
	\end{align}
	
\textit{Step 3: Balancing the contributions.}\vspace{1mm}\\
	Formal minimization of the function
	\begin{align}
          F(R) := 4 P(\Omega) \int_0^{R} \frac{g_\delta(s)}{s} \intd s \,
          +  4\pi m \int_{R}^\infty \frac{g_\delta(s)}{s^2}
          \intd s
	\end{align}
	suggests to use 
	\begin{align}
		R := \frac{\pi m}{P(\Omega)}
	\end{align}
	as the cut-off radius.
	
	If $\frac{\pi m}{P(\Omega)}\geq \delta$, we obtain
	\begin{align}
          F\left(\frac{\pi m}{P(\Omega)}\right)
          & =  4\left(  |
            \log\delta|  +
            \log\left(
            \frac{\pi
            m}{P(\Omega)}
            \right)
            \right)
            P(\Omega) +  4
            P(\Omega). 
	\end{align}
	Combining this inequality with the local perimeter term in the
        energy gives the first desired estimate.
	
	If, on the other hand, $\frac{\pi m}{P(\Omega)} <
        \delta$, we instead choose
	\begin{align}
          F(\delta) & =  \frac{ 4\pi m}{\delta} \leq 4 P(\Omega),
	\end{align}		
	which gives the second desired inequality.
      \end{proof}

      The argument in the proof of Proposition
        \ref{prop:a_priori_lower_bound} also allows us to finally
      give an efficient proof of Lemma \ref{lem:rescaling2}.

\begin{proof}[Proof of Lemma \ref{lem:rescaling2}]
  Recalling the proof of Lemma \ref{lem:rescaling}, specifically the
  estimate \eqref{modulus_of_continuity_rescaling}, we have
	\begin{align}
	  \begin{split}
            D & := \alpha E_{\lambda,
              \delta}(\widetilde \Omega)
            -  E_{\lambda,\delta}(\Omega) \\
            & = \frac{\lambda}{4|\log \delta|} \int_{\R^2} \int_{\R^2}
            |\chi_\Omega(x+z) - \chi_\Omega (x) |^2
            \left(\frac{g_\delta(|z|)}{|z|^3} - \frac{g_{\alpha
                  \delta}(|z|)}{|z|^3} \right) \intd z \intd x,
	   \end{split}
	\end{align}
	where the kernel
        $\frac{g_\delta(|z|)}{|z|^3} - \frac{g_{\alpha
            \delta}(|z|)}{|z|^3} = \frac{\chi(\{\delta < |z|\leq
          \alpha \delta \})}{|z|^3}$ is positive due to $\alpha>1$.
        Choosing $R=\alpha \delta$ in Step 1 of the proof of
        Proposition \ref{prop:a_priori_lower_bound} gives
	\begin{align}
          D & \leq \frac{\lambda P(\Omega)}{|\log\delta|}
                   \int_\delta^{\alpha \delta} \frac{1}{s} \intd s
                   = \frac{\lambda \log
                   \alpha}{|\log\delta|}P(\Omega). \qedhere 
	\end{align}
\end{proof}

Throughout the rest of this section we assume that $0 < \lambda < 1$
and $0 < \delta < \frac12$.  Combining the result of Proposition
\ref{prop:a_priori_lower_bound}, the isoperimetric inequality and the
result of Lemma \ref{lem:existence_minimizers_small_mass} immediately
yields existence of minimizers of $E_{\lambda,\delta}$ over
$\mathcal A_\pi$ in the subcritical case for all $\delta$ sufficiently
small.
      \begin{cor}
        \label{cor:exist_subcr}
        There exists a universal $\sigma > 0$ such that if
          \begin{align}
            \label{eq:sigma1bis}
             \frac{\lambda}{ (1 - \lambda) |\log \delta|} \leq \sigma,   
          \end{align}
          then there exists a minimizer of $E_{\lambda,\delta}$ over
          $\mathcal A_\pi$.
      \end{cor}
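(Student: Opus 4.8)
The plan is to obtain the corollary as an immediate consequence of Lemma~\ref{lem:existence_minimizers_small_mass}, applied with $l = \infty$, in which case $F_{\lambda,\delta,\infty} = E_{\lambda,\delta}$ and $f_{\lambda,\delta,\infty}(\pi) = \inf_{\mathcal A_\pi} E_{\lambda,\delta}$. Thus it suffices to exhibit a universal $\sigma>0$ such that \eqref{eq:sigma1bis} implies $\inf_{\mathcal A_\pi} E_{\lambda,\delta} > 0$. I would fix an arbitrary $\Omega \in \mathcal A_\pi$, use the isoperimetric inequality to get $P(\Omega) \geq 2\sqrt{\pi\cdot\pi} = 2\pi$, and then distinguish the two regimes of Proposition~\ref{prop:a_priori_lower_bound} with $m = \pi$.

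In the regime $P(\Omega) > \pi^2/\delta$, the second bound of Proposition~\ref{prop:a_priori_lower_bound} gives $E_{\lambda,\delta}(\Omega) \geq (1 - \lambda/|\log\delta|)\,P(\Omega)$; since \eqref{eq:sigma1bis} forces $\lambda/|\log\delta| \leq \sigma(1-\lambda) \leq \sigma$, this is at least $2\pi(1-\sigma)$, which is positive for any $\sigma < 1$. In the regime $P(\Omega) \leq \pi^2/\delta$ the first bound reads
\begin{align*}
  E_{\lambda,\delta}(\Omega) \geq (1-\lambda)\,P(\Omega) + \frac{\lambda}{|\log\delta|}\,P(\Omega)\log\left(\frac{P(\Omega)}{e\pi^2}\right).
\end{align*}
The point to be careful about is that the logarithm is negative for perimeters below $e\pi^2$, so one cannot simply insert $P(\Omega) = 2\pi$. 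Instead I would minimize the scalar function $R \mapsto R\log(R/(e\pi^2))$ over $R>0$, which is attained at $R = \pi^2$ with value $-\pi^2$, so $R\log(R/(e\pi^2)) \geq -\pi^2$, and then use $P(\Omega) \geq 2\pi$ only on the positive $(1-\lambda)$-fraction of the perimeter:
\begin{align*}
  E_{\lambda,\delta}(\Omega) \geq 2\pi(1-\lambda) - \frac{\pi^2\lambda}{|\log\delta|} = (1-\lambda)\left(2\pi - \frac{\pi^2\lambda}{(1-\lambda)|\log\delta|}\right) \geq (1-\lambda)\left(2\pi - \pi^2\sigma\right).
\end{align*}

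Taking any universal $\sigma$ with $0 < \sigma < 2/\pi$ (for instance $\sigma = \tfrac12$), both regimes yield a strictly positive lower bound for $E_{\lambda,\delta}(\Omega)$ that is independent of $\Omega$, hence $\inf_{\mathcal A_\pi} E_{\lambda,\delta} > 0$ and Lemma~\ref{lem:existence_minimizers_small_mass} provides the minimizer. I do not expect any genuine obstacle: the only delicate step is absorbing the possibly negative renormalized logarithmic term of Proposition~\ref{prop:a_priori_lower_bound} against the perimeter via the elementary bound $R\log(R/(e\pi^2)) \geq -\pi^2$ combined with the isoperimetric lower bound on $P(\Omega)$, after which \eqref{eq:sigma1bis} is precisely what makes $(1-\lambda)(2\pi - \pi^2\sigma)$ positive.
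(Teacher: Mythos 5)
Your proposal is correct and is exactly the argument the paper intends: the paper's one-line justification for Corollary~\ref{cor:exist_subcr} is precisely "combine Proposition~\ref{prop:a_priori_lower_bound}, the isoperimetric inequality and Lemma~\ref{lem:existence_minimizers_small_mass}" (with $l=\infty$, so that $f_{\lambda,\delta,\infty}(\pi)=\inf_{\mathcal A_\pi}E_{\lambda,\delta}$), and your handling of the two perimeter regimes, including absorbing the negative logarithmic term via $R\log(R/(e\pi^2))\geq-\pi^2$, fills in the details faithfully.
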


As already mentioned, Proposition \ref{prop:a_priori_lower_bound}
allows us to compute the $\Gamma$-limit of $E_{\lambda,\delta}$ in the
$L^1$-topology.  Note that we do not need a compactness statement
since we will quantify the convergence of the minimizers to disks,see
Lemma \ref{lem:convergence_rates}, and we will in the end even see
that minimizers are disks for $\delta>0$.

\begin{prop}\label{prop:subcritical_gamma_convergence}
  Let $m > 0$. As $\delta \to 0$, the $L^1$-$\Gamma$-limit of the
  functionals $E_{\lambda,\delta}$ restricted to $\mathcal{A}_m$ is
  given by
	\begin{align}
          E_{\lambda,0}\left(\Omega\right) : = (1- \lambda) P(\Omega)
	\end{align}
	for $\Omega \in \mathcal{A}_m$.
\end{prop}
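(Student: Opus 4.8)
The plan is to verify the two defining conditions of $\Gamma$-convergence separately, letting Proposition~\ref{prop:a_priori_lower_bound} carry most of the weight; as in Theorem~\ref{thm:critical_convergence}, the upshot will be that the constant sequence is already a recovery sequence, so no separate compactness statement is needed.

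\emph{Lower bound.} Let $\Omega_{\delta_n}\in\mathcal{A}_m$ with $|\Omega_{\delta_n}\Delta\Omega|\to0$ and $\delta_n\to0$, and pass to a subsequence (not relabeled) along which $E_{\lambda,\delta_n}(\Omega_{\delta_n})$ converges to its $\liminf$, call it $L$; if $L=+\infty$ there is nothing to show, so assume $L<\infty$. Feeding the isoperimetric lower bound $P(\Omega_{\delta_n})\ge\sqrt{4\pi m}$ into the two alternatives of Proposition~\ref{prop:a_priori_lower_bound}, the respective coefficients $1-\lambda+\frac{\lambda}{|\log\delta_n|}\log\bigl(P(\Omega_{\delta_n})/(e\pi m)\bigr)$ and $1-\frac{\lambda}{|\log\delta_n|}$ both exceed $\frac{1-\lambda}{2}$ for $n$ large, so $E_{\lambda,\delta_n}(\Omega_{\delta_n})\ge\frac{1-\lambda}{2}P(\Omega_{\delta_n})$ and hence $\sup_nP(\Omega_{\delta_n})<\infty$. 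In particular $P(\Omega_{\delta_n})\le\pi m/\delta_n$ eventually, so the first estimate of Proposition~\ref{prop:a_priori_lower_bound} applies, and since $\log\bigl(P(\Omega_{\delta_n})/(e\pi m)\bigr)$ stays bounded while $|\log\delta_n|\to\infty$ it reads $E_{\lambda,\delta_n}(\Omega_{\delta_n})\ge(1-\lambda)P(\Omega_{\delta_n})+o(1)$. Lower semicontinuity of the perimeter under $L^1$-convergence then yields $L\ge(1-\lambda)P(\Omega)$.

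\emph{Recovery sequence.} It suffices to show $\lim_{\delta\to0}E_{\lambda,\delta}(\Omega)=(1-\lambda)P(\Omega)$ for every fixed $\Omega\in\mathcal{A}_m$; equivalently, with $I(\delta):=\int_{\R^2}\int_{\R^2}|\chi_\Omega(x+z)-\chi_\Omega(x)|^2\,g_\delta(|z|)|z|^{-3}\intd z\intd x$, that $I(\delta)/(4|\log\delta|)\to P(\Omega)$. The bound $\limsup_{\delta\to0}I(\delta)/(4|\log\delta|)\le P(\Omega)$ is obtained by simply rearranging the first estimate of Proposition~\ref{prop:a_priori_lower_bound} (valid once $\delta$ is small) and using $\log\bigl(P(\Omega)/(e\pi m)\bigr)=O(1)$. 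For the reverse inequality, pass to polar coordinates $z=r\omega$ and put $\rho(z):=\|\chi_\Omega(\,\cdot+z)-\chi_\Omega\|_{L^1}$, so that $I(\delta)=\int_0^\infty r^{-2}g_\delta(r)\,\Psi(r)\intd r$ with $\Psi(r):=\int_{\Sph^1}\rho(r\omega)\intd\Hd^1(\omega)$. The standard $BV$ difference-quotient estimates (see, e.g., \cite{ambrosio}) give $\rho(r\omega)\le r\,|D_\omega\chi_\Omega|(\R^2)\le r\,P(\Omega)$ together with $\rho(r\omega)/r\to|D_\omega\chi_\Omega|(\R^2)=\int_{\partial^*\Omega}|\nu\cdot\omega|\intd\Hd^1$ as $r\downarrow0$; hence by dominated convergence and the integral-geometric identity $\int_{\Sph^1}|\nu\cdot\omega|\intd\Hd^1(\omega)=4$,
\[
  \frac{\Psi(r)}{r}\ \longrightarrow\ \int_{\Sph^1}\Bigl(\int_{\partial^*\Omega}|\nu(x)\cdot\omega|\intd\Hd^1(x)\Bigr)\intd\Hd^1(\omega)=4P(\Omega)\qquad\text{as }r\downarrow0.
\]
Thus for any $\eta>0$ there is $t_0>0$ with $\Psi(r)\ge4(1-\eta)P(\Omega)\,r$ for all $0<r<t_0$, and for $\delta<t_0$
\[
  I(\delta)\ \ge\ \int_\delta^{t_0}\frac{\Psi(r)}{r^2}\intd r\ \ge\ 4(1-\eta)P(\Omega)\int_\delta^{t_0}\frac{\intd r}{r}\ =\ 4(1-\eta)P(\Omega)\bigl(|\log\delta|+\log t_0\bigr),
\]
so $\liminf_{\delta\to0}I(\delta)/(4|\log\delta|)\ge(1-\eta)P(\Omega)$, and letting $\eta\to0$ closes the gap.

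\emph{Main difficulty.} Everything is routine once one has the lower bound $I(\delta)\gtrsim4|\log\delta|\,P(\Omega)$ on the nonlocal term, so that is the crux. The delicate point there is that one must control $\Psi(r)/r$ for \emph{all} small $r$, not merely along a subsequence; I sidestep any quantitative rate by relying only on the $BV$ translation bound together with dominated convergence, and it is precisely the constant $4$ in $\int_{\Sph^1}|\nu\cdot\omega|\intd\Hd^1(\omega)=4$ that dovetails with the $|\log\delta|^{-1}$ normalization built into $E_{\lambda,\delta}$ to produce the coefficient $\lambda$ in front of $P(\Omega)$, i.e.\ the renormalized perimeter $(1-\lambda)P$.
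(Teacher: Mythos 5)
Your argument is correct. The lower-bound half is essentially the paper's: both feed Proposition \ref{prop:a_priori_lower_bound} into lower semicontinuity of the perimeter, with the $P\log P$ correction killed by the $|\log\delta|^{-1}$ prefactor (you do this slightly more carefully, first extracting a uniform perimeter bound; the paper simply notes that $x\mapsto x\log\bigl(x/(e\pi m)\bigr)$ is bounded below). The recovery-sequence half, however, takes a genuinely different route. The paper first reduces to smooth sets by density in energy (approximation with perimeter convergence plus a mass-fixing lemma from \cite{figalli11}), and then computes the pointwise limit for smooth boundaries via the boundary-integral representation of Lemma \ref{lem:energy_on_boundary} and a Taylor expansion of an arc-length parametrization, extracting the $2|\log\delta|$ from $\int\Phi_\delta(|t|)\,\intd t$. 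You instead work directly with an arbitrary $\Omega\in\mathcal A_m$: writing the nonlocal term radially as $\int_0^\infty r^{-2}g_\delta(r)\Psi(r)\intd r$, using the $BV$ difference-quotient bound $\rho(r\omega)\le rP(\Omega)$ and the limit $\rho(r\omega)/r\to\int_{\partial^*\Omega}|\nu\cdot\omega|\intd\Hd^1$, and closing with dominated convergence and $\int_{\Sph^1}|\nu\cdot\omega|\intd\Hd^1(\omega)=4$. This buys you something the paper's proof of this proposition does not state: the constant sequence is a recovery sequence for \emph{every} finite-perimeter set, i.e.\ the $\Gamma$-limit coincides with the pointwise limit on all of $\mathcal A_m$, exactly as in the critical case of Theorem \ref{thm:critical_convergence} (whose upper-bound part is likewise driven by the small-scale estimate of Proposition \ref{prop:a_priori_lower_bound}). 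The paper's route, by contrast, sets up the boundary-integral machinery that is reused later for quantitative estimates such as Lemma \ref{lem:nearly_iso}. Both arguments are sound; yours is the more self-contained and slightly more general one for the statement at hand.
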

	
\begin{proof}
  Using the lower bound of Proposition
  \ref{prop:a_priori_lower_bound}, the $\Gamma$-$\liminf$-statement
  follows from lower semi-continuity of the perimeter: Let
  $\Omega_\delta \subset \R^2$ be such that
  $|\Omega_\delta\Delta \Omega| \to 0$ as $\delta \to 0$.  Then we get
  \begin{align}
    \liminf_{\delta \to 0} E_{\lambda,\delta}(\Omega_\delta)
    \geq \liminf_{\delta \to 0} (1-\lambda )P(\Omega_\delta)
    \geq P(\Omega), 
  \end{align}
  noting that the term involving $\log P$ is bounded below by the
  isoperimetric inequality.
	
  Turning towards the $\Gamma$-$\limsup$-statement, it is well-known
  that any set $\Omega \subset \R^2$ of finite perimeter can be
  approximated with smooth sets $\Omega_n \subset \R^2$ such that
  $|\Omega_n \Delta \Omega | \to 0$ and $P(\Omega_n) \to P(\Omega)$ as
  $n\to \infty$, i.e., smooth sets not satisfying the mass constraint
  are dense in energy with respect to the perimeter $P$.  In order to
  enforce the constraint, one approximates $\Omega\cap \ball{0}{R}$
  for $R>0$ large by smooth sets and uses \cite[Lemma 4]{figalli11} to
  correct the mass of the approximation.  Therefore it is sufficient
  to prove that
	\begin{align}
		\lim_{\delta \to 0} E_{\lambda,\delta} (\Omega) = P(\Omega)
	\end{align}
	for smooth sets $\Omega \subset \R^2$.
	
	To this end we make
        use of the boundary representation of the energy
        \eqref{energy_on_boundary} of Lemma
        \ref{lem:energy_on_boundary}.  For $x\in \partial \Omega$ we
        focus on the inner integral
	\begin{align}
          I := \int_{\partial \Omega} \nu(x) \cdot \nu(y)
          \Phi_{\delta}(|x-y|) \intd \Hd^1(y).
	\end{align}
	We parametrize $\partial \Omega$ about $x$ via
        $\gamma: (-c,c) \to \R^2$ such that $\gamma(0)=x$ and
        $|\dot \gamma|=1$ for some $c>0$.  Expanding the various
        expressions, see identity \eqref{Phi_explicit}, about $t=0$
        gives
	\begin{alignat}{3}
          \nu(\gamma(t)) & = \nu(\gamma(0)) + O(t),&&\\
          |\gamma(0)-\gamma(t)| & = | t \dot \gamma(0) + O(t^2)| &&=
          |t| + O(t^2),\\ \frac{1}{|\gamma(0)-\gamma(t)| } & =
          \frac{1}{|t|}\frac{1}{1+O(|t|)} &&= \frac{1}{|t|} + O(1)
	\end{alignat}
	and
	\begin{align}\label{logarithm}
          \frac{1}{\delta}\left( 1- \log \frac{|\gamma(0)-\gamma(t)|
          }{\delta} \right)& = \frac{1}{\delta}\left( 1- \log
                             \frac{|t| }{\delta}  + O(|t|)\right). 
	\end{align}
	As a result, we get that the leading order contribution of the
        inner integral is
	\begin{align}
          \int_{-c}^c \Phi_\delta(|t|) \intd t +
          O\left(\frac{1}{|\log\delta|} \right) = 2 |\log\delta| +
          O\left(1\right). 
	\end{align}
	Inserting this expression into the representation
        \eqref{energy_on_boundary} gives the desired statement.
\end{proof}

Our next result shows that the minimizers of $E_{\lambda,\delta}$ have
small isoperimetric deficit whenever $\delta$ is sufficiently small
depending only on $\lambda$.

\begin{lemma}\label{lem:nearly_iso}
  Any minimizer $\Omega$ of $E_{\lambda,\delta}$ over
  $\mathcal{A}_\pi$ satisfies
  \begin{align}
    \label{eq:isodef}
    P(\Omega) - P(B_1(0)) \leq \frac{C\lambda}{(1-\lambda)
    |\log\delta|}.
  \end{align}
\end{lemma}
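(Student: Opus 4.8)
The plan is to exploit minimality of $\Omega$ against the unit disk $\ball{0}{1}\in\mathcal{A}_\pi$, giving $E_{\lambda,\delta}(\Omega)\le E_{\lambda,\delta}(\ball{0}{1})$, and to combine this with the a priori lower bound of Proposition \ref{prop:a_priori_lower_bound}. The two ingredients needed are: (a) a sharp upper bound $E_{\lambda,\delta}(\ball{0}{1})\le (1-\lambda)P(\ball{0}{1})$, expressing that the nonlocal term of the disk localizes to $-\lambda P(\ball{0}{1})$ to leading order with a favorably signed error; and (b) the matching lower bound $E_{\lambda,\delta}(\Omega)\ge (1-\lambda)P(\Omega)-C\lambda/|\log\delta|$. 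Granting these, the claim follows by rearranging $(1-\lambda)\bigl(P(\Omega)-P(\ball{0}{1})\bigr)\le C\lambda/|\log\delta|$ and dividing by $1-\lambda$.

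For ingredient (a) I would compute the nonlocal term of the disk directly. Since $E_{\lambda,\delta}(\ball{0}{1})=P(\ball{0}{1})-\tfrac{\lambda}{2|\log\delta|}\int_{\ball{0}{1}}\int_{\stcomp{\ball{0}{1}}}\tfrac{g_\delta(|x-y|)}{|x-y|^3}\intd y\intd x$, the change of variables $z=x-y$ gives
\[
  \int_{\ball{0}{1}}\int_{\stcomp{\ball{0}{1}}}\frac{g_\delta(|x-y|)}{|x-y|^3}\intd y\intd x = 2\pi\int_\delta^\infty\frac{\phi(r)}{r^2}\intd r,\qquad \phi(r):=\bigl|\ball{0}{1}\setminus(re_1+\ball{0}{1})\bigr|.
\]
An elementary computation gives $\phi(r)=\int_0^{\min(r,2)}\sqrt{4-t^2}\intd t$, hence $\phi(r)\ge 2r-\tfrac{r^3}{6}$ for $0<r\le 2$ and $\phi\equiv\pi$ on $[2,\infty)$; substituting and noting that the resulting constant terms are positive yields $\int_\delta^\infty\phi(r)/r^2\intd r\ge 2|\log\delta|$, so that $E_{\lambda,\delta}(\ball{0}{1})\le 2\pi(1-\lambda)=(1-\lambda)P(\ball{0}{1})$. (Alternatively, one may use the boundary-integral computation from the proof of Proposition \ref{prop:subcritical_gamma_convergence} applied to the fixed smooth set $\ball{0}{1}$, which yields the slightly weaker but still sufficient bound $(1-\lambda)P(\ball{0}{1})+C\lambda/|\log\delta|$.)

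For ingredient (b) I would distinguish two regimes of $\delta$. If $|\log\delta|\ge 1$, then Proposition \ref{prop:a_priori_lower_bound} with $m=\pi$ gives, in \emph{both} of its two alternatives, the bound $E_{\lambda,\delta}(\Omega)\ge (1-\lambda)P(\Omega)-\pi^2\lambda/|\log\delta|$: in the case $P(\Omega)\le\pi^2/\delta$ this uses the elementary inequality $t\log\tfrac{t}{e\pi^2}\ge -\pi^2$ valid for all $t>0$, while in the case $P(\Omega)>\pi^2/\delta$ it follows from $(1-\lambda/|\log\delta|)P(\Omega)\ge (1-\lambda)P(\Omega)$, which holds because $|\log\delta|\ge 1$. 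Combined with (a) and minimality this gives $(1-\lambda)(P(\Omega)-2\pi)\le\pi^2\lambda/|\log\delta|$, which is the assertion. If instead $\tfrac1e<\delta<\tfrac12$, so that $|\log\delta|$ lies between two universal positive constants, I would use the cruder bound $E_{\lambda,\delta}(\Omega)\ge P(\Omega)-\pi^2\lambda/(\delta|\log\delta|)$ of Lemma \ref{lem:very_rough_a_priori_bound} (with $l=\infty$ and $m=\pi$) together with $E_{\lambda,\delta}(\Omega)\le E_{\lambda,\delta}(\ball{0}{1})\le P(\ball{0}{1})=2\pi$ to obtain $P(\Omega)-2\pi\le\pi^2\lambda/(\delta|\log\delta|)$; since $\delta>\tfrac1e$ and $1-\lambda<1$ this is at most $\pi^2 e\,\lambda/\bigl((1-\lambda)|\log\delta|\bigr)$, so $C=\pi^2 e$ works in all cases.

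The delicate point is ingredient (a): one needs strictly more than the $\Gamma$-convergence $E_{\lambda,\delta}\to(1-\lambda)P$, namely a one-sided estimate at \emph{fixed} $\delta$ with the exact constant $1-\lambda$ and a controlled error term, and this is exactly what forces the explicit lower bound on $\phi(r)$ (or, in the boundary-integral route, a careful accounting of the sign changes of $\nu(x)\cdot\nu(y)$). The remaining steps are routine; the only other mild subtlety is that Proposition \ref{prop:a_priori_lower_bound} degrades as $|\log\delta|\to 0$, which is why the regime $\delta$ near $\tfrac12$ has to be handled separately with a crude perimeter bound.
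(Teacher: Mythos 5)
Your proposal is correct and follows essentially the same route as the paper: compare the minimizer against the unit disk, use the a priori lower bound of Proposition \ref{prop:a_priori_lower_bound} for $E_{\lambda,\delta}(\Omega)$, and an upper bound of the form $(1-\lambda)P(\ball{0}{1})+C\lambda/|\log\delta|$ for the disk (the paper obtains the latter from the boundary-integral expansion in Proposition \ref{prop:subcritical_gamma_convergence}, exactly your stated alternative, while your direct computation of $\phi(r)$ gives a slightly cleaner one-sided bound). Your explicit handling of the second alternative of Proposition \ref{prop:a_priori_lower_bound} and of the regime $\delta$ close to $\tfrac12$ is more careful than the paper's two-line argument, but adds nothing structurally new.
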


\begin{proof}
  The lower bound of Proposition \ref{prop:a_priori_lower_bound}
  implies
  \begin{align}
    (1-\lambda) P(\Omega) \leq C \frac{\lambda}{|\log\delta|} +
    E_{\lambda,\delta}(\Omega) 
  \end{align}
  for a universal constant $C>0$ as the map $x \mapsto x \log x$ is
  bounded from below.  On the other hand we have
  \begin{align}
    E_{\lambda,\delta}\left(\ball{0}{1}\right) \leq (1-\lambda)
    P\left(\ball{0}{1}\right) + C\frac{\lambda}{|\log\delta|} 
  \end{align}
   by the construction in Proposition
  \ref{prop:subcritical_gamma_convergence} for $\ball{0}{1}$.  As a
  result, we obtain
	\begin{align}\label{isoperimetric_deficit_controlled}
          (1-\lambda) \left(P(\Omega) - P\left(\ball{x}{1} \right)
          \right)  \leq E_{\lambda,\delta}(\Omega) -
          E_{\lambda,\delta}\left(\ball{0}{1}\right) + C
          \frac{\lambda}{|\log\delta|} \leq   C
          \frac{\lambda}{|\log\delta|} 
	\end{align}
	by minimality of $\Omega$.
\end{proof}

We now turn to the heart of the matter: obtaining uniform regularity
estimates for the minimizers.  As is usually the case for these
problems, the first step is to prove uniform density estimates.  To
this end, we present two non-optimality criteria in the spirit of
Kn\"upfer and Muratov \cite[Lemma 4.2]{km:cpam14}.  The idea is to cut
away small appendages and fill small holes, see Figure
\ref{fig:non-optimality}.

\begin{figure}
	\centering
	\subcaptionbox{\label{fig:non-optimality_cutting}}{
		\centering
		\begin{tikzpicture}
			\draw plot [smooth cycle] coordinates {(0,0)(2,2)(4,.3)(5,.8)(5.4,.2)(4.7,-.8)(4,-.2)(1.6,-1.7)};
			\draw[densely dashed] (4,.3)--(4,-.2);
			\node at (2,0) {$F_1$};
			\node at (4.7,0) {$F_2$};
			\node at (2,2.3) {$F$};
		\end{tikzpicture}
	}
	\subcaptionbox*{}{
		\centering
		\begin{tikzpicture}[scale=.2]
			\fill[white] (-3,3)--(3,3)--(3,-3) --(-3,-3) --cycle;
		\end{tikzpicture}
	}
	\subcaptionbox{\label{fig:non-optimality_filling}}{
		\centering
		\begin{tikzpicture}
			\draw plot [smooth cycle] coordinates {(0,0)(2,2)(4,.3)(3,.5)(2.5,.2)(3.2,-.3)(4,-.2)(1.6,-1.7)};
			\draw[densely dashed] (4,.3)--(4,-.2);
			\node at (1.5,0) {$F$};
			\node at (3.5,0) {$F_2$};
			\node at (2,2.3) {$F_1$};
		\end{tikzpicture}
	}
	\caption{Sketch of the sets $F, F_1, F_2$ discussed in Lemma
          \ref{lem:non-optimality}.  The length of the dashed line is
          $\frac{1}{2}\Sigma$. a) If the common boundary of $F_1$ and
          $F_2$ is short, it is beneficial to cut away $F_2$ and
          dilate $F_1$ to account for the lost mass. b) Similarly, if
          the hole $F_2$ has little extra boundary, one can lower the
          energy by filling the hole and dilating.  }
	\label{fig:non-optimality}
\end{figure}
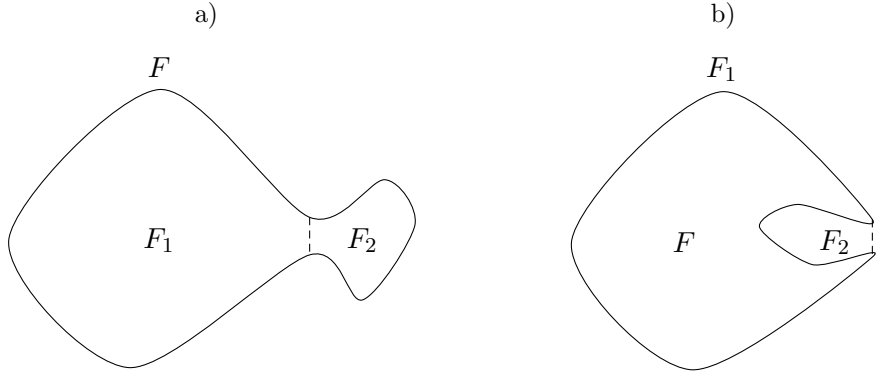

\begin{lemma}\label{lem:non-optimality}
  There exists a universal constant $\sigma_0>0$ with the following
  property: Let $F \subset \R^2$ be a set of finite perimeter with
  $P(F) \leq 3 \pi^\frac{1}{2} |F|^\frac{1}{2}$.  Let the two sets
  $F_1$ and $F_2$ be such that
  $|F_2| \leq \sigma_0 (1-\lambda)^2 \min (1, |F_1|)$ and, up to sets
  of measure zero, we have
	\begin{align}
		\text{ either } F = F_1 \dot \cup F_2 \text{ or }  F_1 = F \dot \cup F_2,
	\end{align}
	where the symbol ``\,$\dot \cup$'' stands for the disjoint union.
	If
	\begin{align}\label{non-optimality_assumption}
		\Sigma := P(F_1) + P(F_2) - P(F) \leq \frac{1}{2} E_{\lambda,\delta}(F_2)
	\end{align}
	then there exists a set $G$ of finite perimeter such that $|G|
        = |F|$ and $E_{\lambda,\delta}(G) < E_{\lambda,\delta}(F)$.
\end{lemma}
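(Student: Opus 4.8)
\emph{Proof strategy.} I would take $G:=\alpha F_1$ with $\alpha:=(|F|/|F_1|)^{1/2}$, the dilation that restores the mass to $|F|$. The idea is that passing from $F$ to $F_1$ discards the small piece $F_2$ at perimeter cost $\Sigma$, which by \eqref{non-optimality_assumption} is much smaller than the energy $E_{\lambda,\delta}(F_2)$ thereby disposed of, and that the subsequent dilation costs even less. Concretely I would establish and chain three facts: (i)~$E_{\lambda,\delta}(F)-E_{\lambda,\delta}(F_1)\ge\tfrac12 E_{\lambda,\delta}(F_2)$; (ii)~$E_{\lambda,\delta}(F_2)\ge 2\pi^{1/2}(1-\lambda)|F_2|^{1/2}$; (iii)~$E_{\lambda,\delta}(G)-E_{\lambda,\delta}(F_1)\le C\sigma_0^{1/2}(1-\lambda)|F_2|^{1/2}$ for a universal $C$. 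Together these give $E_{\lambda,\delta}(F)-E_{\lambda,\delta}(G)\ge(\pi^{1/2}-C\sigma_0^{1/2})(1-\lambda)|F_2|^{1/2}>0$ once $\sigma_0$ is a small enough universal constant.

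\emph{Step (i): energy book-keeping.} Writing the nonlocal term via \eqref{energy_as_nonlocal_perimeter} with $l=\infty$ and using the set identity \eqref{set_combinatorics} as in the derivation of \eqref{nonlocal_term_repulsive}, one has for disjoint sets $A,B$ of finite perimeter
\begin{align*}
  E_{\lambda,\delta}(A\,\dot\cup\,B)=E_{\lambda,\delta}(A)+E_{\lambda,\delta}(B)-\big(P(A)+P(B)-P(A\cup B)\big)+\frac{\lambda}{|\log\delta|}\int_A\int_B\frac{g_\delta(|x-y|)}{|x-y|^3}\intd y\intd x .
\end{align*}
I would apply this with $(A,B)=(F_1,F_2)$ in the first alternative — the perimeter defect is then exactly $\Sigma$ and the cross term only helps — and with $(A,B)=(F,F_2)$ in the second, solving for $E_{\lambda,\delta}(F)$ and bounding the cross term by $\tfrac{\lambda}{2|\log\delta|}\int_{\stcomp{F_2}}\int_{F_2}g_\delta(|x-y|)/|x-y|^3\,\intd y\intd x$ (which contains it since $F\subset\stcomp{F_2}$), after which the perimeter defect recombines so that again $E_{\lambda,\delta}(F)\ge E_{\lambda,\delta}(F_1)+E_{\lambda,\delta}(F_2)-\Sigma$. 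In both cases $E_{\lambda,\delta}(F)-E_{\lambda,\delta}(F_1)\ge E_{\lambda,\delta}(F_2)-\Sigma$; since $\Sigma\ge0$ (subadditivity of the perimeter under the relevant operations) and $\Sigma\le\tfrac12 E_{\lambda,\delta}(F_2)$ by \eqref{non-optimality_assumption}, this gives (i) and also $E_{\lambda,\delta}(F_2)\ge0$.

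\emph{Step (ii): the discarded energy is not too small.} From $\Sigma\le\tfrac12 E_{\lambda,\delta}(F_2)\le\tfrac12 P(F_2)$ and $\Sigma\ge0$ one gets $P(F_1)\le P(F)$ and $P(F_2)\le 2P(F)$, so by $P(F)\le 3\pi^{1/2}|F|^{1/2}$ both perimeters are $\lesssim|F|^{1/2}$; moreover $|F_2|\le\sigma_0$ in all cases. Combining Proposition \ref{prop:a_priori_lower_bound} with the isoperimetric inequality $P(F_2)\ge2\pi^{1/2}|F_2|^{1/2}$ and choosing $\sigma_0$ small enough that $P(F_2)/(e\pi|F_2|)\ge 2/(e(\pi\sigma_0)^{1/2})\ge1$, the logarithmic correction in Proposition \ref{prop:a_priori_lower_bound} is nonnegative, whence $E_{\lambda,\delta}(F_2)\ge(1-\lambda)P(F_2)\ge 2\pi^{1/2}(1-\lambda)|F_2|^{1/2}$ (the regime $P(F_2)>\pi|F_2|/\delta$ being handled identically from the second bound of that proposition).

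\emph{Step (iii) and conclusion.} A change of variables (which turns $g_\delta$ into $g_{\delta/\alpha}$ in the kernel) gives
\begin{align*}
  E_{\lambda,\delta}(\alpha F_1)=\alpha\Big(P(F_1)-\frac{\lambda}{2|\log\delta|}\int_{F_1}\int_{\stcomp{F_1}}\frac{g_{\delta/\alpha}(|x-y|)}{|x-y|^3}\intd y\intd x\Big).
\end{align*}
In the cutting case $\alpha>1$, so $g_{\delta/\alpha}\ge g_\delta$ yields $E_{\lambda,\delta}(G)\le\alpha E_{\lambda,\delta}(F_1)$, hence $E_{\lambda,\delta}(G)-E_{\lambda,\delta}(F_1)\le(\alpha-1)\max\{E_{\lambda,\delta}(F_1),0\}\le(\alpha-1)P(F_1)$ (the subcase $E_{\lambda,\delta}(F_1)\le0$ being immediate); with $\alpha-1\le|F_2|/(2|F_1|)$, $P(F_1)\le P(F)\le 3(2\pi)^{1/2}|F_1|^{1/2}$ (using $|F|\le2|F_1|$) and $|F_2|\le\sigma_0(1-\lambda)^2|F_1|$, the right-hand side is $\le C\sigma_0^{1/2}(1-\lambda)|F_2|^{1/2}$. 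In the filling case $\alpha<1$ and $g_{\delta/\alpha}=g_\delta-\chi_{(\delta,\delta/\alpha]}$, so $E_{\lambda,\delta}(G)=\alpha E_{\lambda,\delta}(F_1)+\tfrac{\lambda\alpha}{2|\log\delta|}\int_{F_1}\int_{\stcomp{F_1}}\chi_{(\delta,\delta/\alpha]}(|x-y|)/|x-y|^3\,\intd y\intd x$; bounding the last integral by $2P(F_1)\log(1/\alpha)$ through the small-scale estimate of Step~1 of the proof of Proposition \ref{prop:a_priori_lower_bound}, and using $\log(1/\alpha)\le|F_2|/(2|F|)$, $|F|\ge\tfrac12|F_1|$, together with the lower bound on $E_{\lambda,\delta}(F_1)$ of Proposition \ref{prop:a_priori_lower_bound} to control $(\alpha-1)E_{\lambda,\delta}(F_1)$ from above when it is negative, one again obtains $E_{\lambda,\delta}(G)-E_{\lambda,\delta}(F_1)\le C\sigma_0^{1/2}(1-\lambda)|F_2|^{1/2}$. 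Chaining (i)--(iii) and using $|F_2|>0$ then yields $E_{\lambda,\delta}(G)<E_{\lambda,\delta}(F)$. The crux — and the reason the hypothesis scales as $(1-\lambda)^2$ — is this last balance: the gain from discarding $F_2$ is of order $(1-\lambda)|F_2|^{1/2}$ (one factor $1-\lambda$ coming from Proposition \ref{prop:a_priori_lower_bound}), while the dilation cost, estimated crudely by $(\alpha-1)P(F_1)\sim|F_2|/|F_1|^{1/2}=|F_2|^{1/2}(|F_2|/|F_1|)^{1/2}$, carries no such factor, so the smallness $|F_2|\le\sigma_0(1-\lambda)^2\min(1,|F_1|)$ is exactly what makes $(|F_2|/|F_1|)^{1/2}\le\sigma_0^{1/2}(1-\lambda)$ absorb the discrepancy. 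I expect the genuine technical work to concentrate in the filling alternative: the sign-bookkeeping in Step (i) (where $F\subset\stcomp{F_2}$ is essential) and the dilation estimate in Step (iii) (where shrinking $F_1$ raises the nonlocal energy per unit mass, forcing one to invoke the small-scale estimate of Proposition \ref{prop:a_priori_lower_bound}).
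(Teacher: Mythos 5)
Your proposal is correct and follows essentially the same route as the paper's proof: the same competitor $G=\alpha F_1$ rescaled to mass $|F|$, the same combinatorial bookkeeping yielding $E_{\lambda,\delta}(F_1)\leq E_{\lambda,\delta}(F)-\tfrac12E_{\lambda,\delta}(F_2)$, the same lower bound $E_{\lambda,\delta}(F_2)\geq 2\pi^{1/2}(1-\lambda)|F_2|^{1/2}$ via Proposition \ref{prop:a_priori_lower_bound}, and the same scaling estimates (Lemmas \ref{lem:rescaling} and \ref{lem:rescaling2}) for the dilation cost, balanced against the gain exactly as in the paper via $|F_2|\leq\sigma_0(1-\lambda)^2\min(1,|F_1|)$. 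Your explicit treatment of the sign of $E_{\lambda,\delta}(F_1)$ in Step (iii) is, if anything, slightly more careful than the paper's chain of inequalities.
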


\begin{proof}
  Throughout the proof we drop the indices of the energy, as $\lambda$
  and $\delta$ are fixed.  Furthermore, we will only deal with the
  case $F = F_1 \dot \cup F_2$ in detail and highlight the changes for
  the other case at the end.
	
  Let $m_1 := |F_1|$, $m_2 := |F_2|$, $m := |F| = m_1 + m_2 $ and
  $\gamma := \frac{m_2}{m_1}.$ We will consider the competitor
  $G:= l F_1$, where we choose $l := \sqrt{1+ \gamma}$ in order to
  have $|G|=|F|$.
	
  In a first step we compare $E(G)$ and $E(F_1)$.  By scaling, see
  Lemma \ref{lem:rescaling}, and concavity of the square root we know
  that
	\begin{align}\label{non-optimality_caution_1}
		E(G) \leq l E(F_1) \leq E(F_1) + \frac{\gamma}{2} E(F_1).
	\end{align}
	Abbreviating
        $N(\Omega):=\frac{\lambda}{2|\log\delta|}
        \int_{\Omega}\int_{\stcomp{\Omega}}
        \frac{g_\delta(|x-y|)}{|x-y|^3} $ for measurable sets
        $\Omega\subset \R^2$, we see that
	\begin{align}
		\begin{split}
			E(F_1) + E(F_2) &  =  P(F_1) + P(F_2) - N(F_1) - N(F_2)  \\
			& \leq P(F) + \frac{1}{2} E(F_2) - N(F) \\
			&  =  E(F) + \frac{1}{2} E(F_2),
		\end{split}
	\end{align}
	where we made use of assumption
        \eqref{non-optimality_assumption} and 
	\begin{align}\label{non-optimality_caution_2}
		N(F) \leq N(F_1) + N(F_2)
	\end{align}
	obtained by straightforward combinatorics.
	Thus we get 
	\begin{align}\label{intermediate}
		E(F_1) \leq E(F) - \frac{1}{2}E(F_2)
	\end{align}
	and, together with estimate \eqref{non-optimality_caution_1},
	\begin{align}\label{intermediate_2}
          E(G) - E(F) \leq -\frac{1}{2}E(F_2) + \frac{\gamma}{2}E(F_1).
	\end{align}
	
	The isoperimetric inequality gives
	\begin{align}\label{logarithmic_contribution_vanishes_1}
          \frac{P(F_2)}{e\pi |F_2|} \geq
          \frac{2}{e\pi^\frac{1}{2}|F_2|^\frac{1}{2}} \geq 1 
	\end{align}
	due to the assumption
        $|F_2| \leq \sigma_0(1-\lambda)^2\min(1,|F_1|) \leq
        \frac{4}{e^2\pi}$, provided we choose
        $\sigma_0 <\frac{4}{e^2\pi}$.  Plugging this into the a priori
        bound from Proposition \ref{prop:a_priori_lower_bound} and
        again applying the isoperimetric inequality we get
	\begin{align}\label{logarithmic_contribution_vanishes_2}
          E(F_2) \geq (1-\lambda) P(F_2) \geq
          2\pi^\frac{1}{2}(1-\lambda) m_2^\frac{1}{2} > 0. 
	\end{align}	
	Using this and the assumption on $P(F)$ in inequality
        \eqref{intermediate} we obtain
	\begin{align}\label{other_case_similar}
		E(F_1) \leq E(F) \leq P(F) \leq 3 \pi^\frac{1}{2} m ^\frac{1}{2}.
	\end{align}
	The previous two inequalities combined with estimate
        \eqref{intermediate_2} then gives
	\begin{align}
          E(G) -  E(F) \leq  - \pi^{\frac{1}{2}}
          (1-\lambda)m_2^\frac{1}{2} + \frac{3 \pi^\frac{1}{2}
          }{2}\gamma\, m^\frac{1}{2}. 
	\end{align}
	Furthermore, we observe that
	\begin{align}\label{non-optimality_caution_3}
          \gamma m^\frac{1}{2} =
          \left(\frac{m_2}{m_1}\right)^\frac{1}{2}m_2^\frac{1}{2}\left(1+
          \frac{m_2}{m_1}\right)^\frac{1}{2} \leq
          2(1-\lambda)\sigma_0^\frac{1}{2} m_2^\frac{1}{2} 
	\end{align}
	due to $\frac{m_2}{m_1} \leq \sigma_0 (1-\lambda)^2 <1$ for
        $\sigma < 1$.  Therefore, we finally get
	\begin{align}
          E(G) -  E(F) \leq \left( - 1 + 3  \eps^\frac{1}{2}\right)
          \pi^\frac{1}{2} (1-\lambda) m_2^\frac{1}{2} < 0 
	\end{align}
	as long as we choose $\sigma_0 < \frac{1}{9}$.
		
	For the second case, we set $G:= l F_1$ with
        $l := \sqrt{1-\frac{m_2}{m_1}}$, as well as
        $\gamma:= \frac{m_2}{m_1}$ as before, but with
          $m = m_1 - m_2$ now.  There are only three estimates that
        are significantly different than in the first case, namely
        estimates \eqref{non-optimality_caution_1},
        \eqref{non-optimality_caution_2} and
        \eqref{non-optimality_caution_3}.  Dealing with the second
        estimate is a matter of adjusting the combinatorics.  In order
        to obtain an analogue of inequality
        \eqref{non-optimality_caution_1} we have to use Lemma
        \ref{lem:rescaling2} to get
	\begin{align}
          E(G) & \leq l^{-1} E(F_1) + l^{-1} \frac{\lambda |\log l
                 |}{|\log \delta|}P(F_1) \leq E(F_1) +
                 \gamma E(F_1) + C\frac{\lambda
                 \gamma}{|\log \delta|} P(F_1) 
	\end{align}
	for a universal constant $C>0$, as long as $\sigma_0>0$ is
        small enough to carry out the expansions.
	
	The additional term can be estimated via 
	\begin{align}
          C \frac{\lambda \gamma}{|\log \delta|} P(F_1) \leq C \gamma
          P(F_1) \leq C\gamma P(F), 
	\end{align}
	where we exploited that $\Sigma \leq \frac{1}{2} E(F_2)$
        straightforwardly implies
        $P(F_1) \leq P(F) - \frac{1}{2}P(F_2) \leq P(F)$.  Therefore,
        the additional term is estimated against the same quantity as
        the contribution $\gamma E(F_1)$, see estimate
        \eqref{other_case_similar}.
	
	The analogue of the third estimate
        \eqref{non-optimality_caution_3} is given by
	\begin{align}
          \gamma m^\frac{1}{2}  =
          \left(\frac{m_2}{m_1}\right)^\frac{1}{2}m_2^\frac{1}{2}\left(1-
          \frac{m_2}{m_1}\right)^\frac{1}{2} \leq
          (1-\lambda)\sigma_0^\frac{1}{2} m_2^\frac{1}{2}. 
	\end{align}
	This concludes the proof.
\end{proof}

With this tool at hand we are able to prove uniform density bounds for
minimizers.

\begin{lemma}\label{lem:density_bounds}
  There exist universal constants $\sigma,c,r_0>0$ such that under the
  condition
  $|\log\delta|^{-1} \leq \sigma \frac{1-\lambda}{\lambda}$ the
  following holds: Let $\Omega \subset \R^2$ be a minimizer of the
  energy $E_{\lambda,\delta}$ over
  $\mathcal{A}_\pi$.  Then for
  every $x\in \partial \Omega$ and $r\leq (1-\lambda) r_0$ we have
  \begin{align}\label{density_bound_mass}
    c(1-\lambda)^2 r ^2 \leq |\Omega \cap \ball{x}{r} | \leq (\pi -
    c(1-\lambda)^2) r^2 
  \end{align}
  and
  \begin{align}\label{density_bound_length}
    c(1-\lambda) r \leq \Hd^1( \partial \Omega \cap \ball{x}{r}) \leq
    \frac{r}{c(1-\lambda)}. 
  \end{align}
\end{lemma}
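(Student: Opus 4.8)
This is the standard density-estimate argument for almost minimizers, the novelty being that the nonlocal term must be treated through Lemma~\ref{lem:non-optimality} and Proposition~\ref{prop:a_priori_lower_bound} rather than as an $L^\infty$-bounded volume perturbation (whose norm degenerates like $\delta^{-1}$). Throughout one takes $l=\infty$ and works with the representative of $\Omega$ for which $\operatorname{spt}\mu_\Omega=\partial\Omega$, so that $0<|\Omega\cap\ball{x}{r}|<|\ball{x}{r}|$ for all $x\in\partial\Omega$, $r>0$. The smallness hypothesis $|\log\delta|^{-1}\le\sigma(1-\lambda)/\lambda$ together with Lemma~\ref{lem:nearly_iso} gives $P(\Omega)\le 2\pi+C\sigma\le 3\pi=3\pi^{1/2}|\Omega|^{1/2}$ for $\sigma$ small, which is exactly the hypothesis needed to feed $F=\Omega$ into Lemma~\ref{lem:non-optimality}. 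Fix $x\in\partial\Omega$, let $r_0:=(\sigma_0/\pi)^{1/2}$ (possibly shrunk below) with $\sigma_0$ the constant of Lemma~\ref{lem:non-optimality}, and put $u(r):=|\Omega\cap\ball{x}{r}|$, $w(r):=\pi r^2-u(r)=|\ball{x}{r}\setminus\Omega|$; by the coarea formula both are absolutely continuous and nondecreasing with $u(0)=w(0)=0$, and for $r\le(1-\lambda)r_0$ one has $u(r),w(r)\le\pi r^2\le\sigma_0(1-\lambda)^2$ — precisely the size constraint on the ``small piece'' in Lemma~\ref{lem:non-optimality} (with $|F_1|>1$ in both cases below).

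\noindent\textbf{Mass bounds.} For a.e.\ $r\in(0,(1-\lambda)r_0)$ I apply Lemma~\ref{lem:non-optimality} with $(F,F_1,F_2)=(\Omega,\Omega\setminus\ball{x}{r},\Omega\cap\ball{x}{r})$. A routine relative-perimeter computation gives $\Sigma=P(F_1)+P(F_2)-P(F)=2\Hd^1(\partial\ball{x}{r}\cap\Omega)=2u'(r)$ for a.e.\ $r$, while (exactly as in the proof of Lemma~\ref{lem:non-optimality}, via Proposition~\ref{prop:a_priori_lower_bound} and the isoperimetric inequality with $u(r)$ small) $E_{\lambda,\delta}(F_2)\ge(1-\lambda)P(F_2)\ge 2(1-\lambda)\pi^{1/2}u(r)^{1/2}$. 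Since $G$ in Lemma~\ref{lem:non-optimality} has $|G|=|F|=\pi$, minimality of $\Omega$ over $\mathcal A_\pi$ forces the hypothesis $\Sigma\le\tfrac12 E_{\lambda,\delta}(F_2)$ of that lemma to fail, i.e.\ $2u'(r)>(1-\lambda)\pi^{1/2}u(r)^{1/2}$ a.e.; equivalently $\tfrac{d}{dr}u^{1/2}\ge\tfrac14(1-\lambda)\pi^{1/2}$ a.e., and integrating from $0$ gives $u(r)\ge c(1-\lambda)^2r^2$. Running the same argument with the ``filling'' decomposition $(F,F_1,F_2)=(\Omega,\Omega\cup\ball{x}{r},\ball{x}{r}\setminus\Omega)$ (the case $F_1=F\dot\cup F_2$ of Lemma~\ref{lem:non-optimality}) yields $\Sigma=2w'(r)$ and hence $w(r)\ge c(1-\lambda)^2r^2$, i.e.\ $u(r)\le(\pi-c(1-\lambda)^2)r^2$; this is \eqref{density_bound_mass}.

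\noindent\textbf{Perimeter bounds.} For the lower bound in \eqref{density_bound_length} I combine the two mass estimates with the relative isoperimetric inequality in $\ball{x}{r}$: $\Hd^1(\partial^*\Omega\cap\ball{x}{r})=P(\Omega;\ball{x}{r})\ge c\min(u(r),w(r))^{1/2}\ge c(1-\lambda)r$, which again needs $r\le(1-\lambda)r_0$. For the upper bound I compare $\Omega$ with $G:=\ell(\Omega\setminus\ball{x}{r})$, $\ell:=(\pi/|\Omega\setminus\ball{x}{r}|)^{1/2}\ge1$, so $G\in\mathcal A_\pi$. By Lemma~\ref{lem:rescaling}, $E_{\lambda,\delta}(G)=\ell\,E_{\tilde\lambda,\tilde\delta}(\Omega\setminus\ball{x}{r})$ with $\tilde\delta=\delta/\ell\le\delta$ and $\tilde\lambda/|\log\tilde\delta|=\lambda/|\log\delta|$; since $g_{\tilde\delta}\ge g_\delta$ pointwise, $E_{\tilde\lambda,\tilde\delta}\le E_{\lambda,\delta}$ on every set, hence $E_{\lambda,\delta}(G)\le\ell\,E_{\lambda,\delta}(\Omega\setminus\ball{x}{r})$. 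Writing the nonlocal part as $N(\cdot)\ge0$, the subadditivity $N(\Omega)\le N(\Omega\setminus\ball{x}{r})+N(\Omega\cap\ball{x}{r})$ (cf.\ \eqref{non-optimality_caution_2}) together with the small-scale estimate from Step~1 of Proposition~\ref{prop:a_priori_lower_bound} (cutoff $R=2r\ge\operatorname{diam}(\Omega\cap\ball{x}{r})$, and $\int_0^{2r}g_\delta(s)/s\,\intd s\le|\log\delta|$ for $r\le\tfrac12$) gives $N(\Omega)-N(\Omega\setminus\ball{x}{r})\le N(\Omega\cap\ball{x}{r})\le\lambda P(\Omega\cap\ball{x}{r})\le\lambda(P(\Omega;\ball{x}{r})+u'(r))$. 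Using also $P(\Omega\setminus\ball{x}{r})-P(\Omega)=u'(r)-P(\Omega;\ball{x}{r})$ for a.e.\ $r$, $\ell-1\le Cu(r)\le Cr^2$, and $|E_{\lambda,\delta}(\Omega\setminus\ball{x}{r})|\le C$ (bounded above by $P(\Omega)+2\pi r$, below by Proposition~\ref{prop:a_priori_lower_bound} and boundedness of $t\mapsto t\log t$), minimality $0\le E_{\lambda,\delta}(G)-E_{\lambda,\delta}(\Omega)$ yields $(1-\lambda)P(\Omega;\ball{x}{r})\le(1+\lambda)u'(r)+Cr^2$ for a.e.\ $r$ below a universal threshold. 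Averaging over $\rho\in(r/2,r)$ and using $\int_{r/2}^r u'(\rho)\intd\rho=u(r)-u(r/2)\le\pi r^2$ gives $P(\Omega;\ball{x}{r/2})\le Cr/(1-\lambda)$, i.e.\ the right inequality of \eqref{density_bound_length} for the reduced boundary; finally, the density estimates force $\Hd^1\big((\partial\Omega\setminus\partial^*\Omega)\cap\ball{x}{r}\big)=0$ (Federer's theorem with the chosen representative), so all four bounds hold with the topological boundary as stated.

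\noindent\textbf{Main obstacle.} The delicate point is the nonlocal contribution in the perimeter upper bound: bounding $N(\Omega\cap\ball{x}{r})$ by the measure $|\Omega\cap\ball{x}{r}|$ would cost a factor $\delta^{-1}$ and be useless as $\delta\to0$, whereas the small-scale half of Proposition~\ref{prop:a_priori_lower_bound} shows this interaction is at most $|\log\delta|\,P(\Omega\cap\ball{x}{r})$, so that after the $|\log\delta|^{-1}$ renormalization it is a genuinely $\delta$-uniform, order-$\lambda$ perturbation of the perimeter. This is precisely why the estimates of this section are stated with $\lambda/((1-\lambda)|\log\delta|)$ small rather than with $\delta$-dependent constants, and why the factors $(1-\lambda)$ persist — they survive from $E_{\lambda,\delta}(F_2)\ge(1-\lambda)P(F_2)$ through the differential inequalities. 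The rest — verifying the relative-perimeter and coarea identities hold for a.e.\ $r$, and that $r\le(1-\lambda)r_0$ automatically places us under the ``small piece'' hypothesis of Lemma~\ref{lem:non-optimality} — is routine bookkeeping.
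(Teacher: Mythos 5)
Your argument for the mass bounds and the lower length bound is correct and coincides with the paper's: the paper likewise feeds the cutting/filling decompositions $(\Omega\setminus\ball{x}{r},\Omega\cap\ball{x}{r})$ and $(\Omega\cup\ball{x}{r},\ball{x}{r}\setminus\Omega)$ into Lemma~\ref{lem:non-optimality}, uses minimality to negate hypothesis \eqref{non-optimality_assumption}, obtains $\frac{\intd}{\intd r}u^{1/2}\geq c(1-\lambda)$ via $E_{\lambda,\delta}(F_2)\geq(1-\lambda)P(F_2)$ (the key point being that the logarithmic correction in Proposition~\ref{prop:a_priori_lower_bound} is nonnegative for $|F_2|$ small), and gets the lower length bound from the relative isoperimetric inequality. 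Where you diverge is the upper length bound. The paper extracts it in one line from the \emph{same} failed criterion: the negation of \eqref{non-optimality_assumption} reads $2\Hd^1(\Omega\cap\partial\ball{x}{r})=\Sigma>\tfrac12 E_{\lambda,\delta}(F_2)\geq\tfrac{1-\lambda}{2}\big(\Hd^1(\partial^*\Omega\cap\ball{x}{r})+\Hd^1(\Omega\cap\partial\ball{x}{r})\big)$, whence $\Hd^1(\partial^*\Omega\cap\ball{x}{r})\leq\frac{4}{1-\lambda}\cdot 2\pi r$ with no further work. You instead run a $(\Lambda,r_0)$-type comparison with the rescaled set $\ell(\Omega\setminus\ball{x}{r})$, which is legitimate and self-contained but considerably longer; it also contains one small imprecision: $N(\Omega\cap\ball{x}{r})$ is \emph{not} bounded by $\lambda P(\Omega\cap\ball{x}{r})$ alone, since the large-scale part of the kernel contributes an extra $C\lambda|\Omega\cap\ball{x}{r}|/(r|\log\delta|)\leq C\sigma(1-\lambda)r$ (for $|z|>2r$ the integrand $|\chi_{\Omega\cap\ball{x}{r}}(x+z)-\chi_{\Omega\cap\ball{x}{r}}(x)|^2$ does not vanish). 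This tail is of order $(1-\lambda)r$, not $r^2$, so it is not absorbed by your $Cr^2$ as written, but it is harmless after dividing by $(1-\lambda)$ and averaging, so the conclusion stands. The paper's route buys brevity and avoids the rescaling/Fubini bookkeeping entirely; yours has the advantage of not re-invoking the a priori lower bound on $E_{\lambda,\delta}(F_2)$ for the length estimate, at the cost of tracking the nonlocal tail.
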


\begin{proof}
  We start by noting that by Lemma \ref{lem:nearly_iso} and our
  assumption we have
	\begin{align}\label{bound_perimeter}
          P(\Omega) \leq 3\pi= 3\pi^\frac{1}{2 } |\Omega|^\frac{1}{2} 
	\end{align}
	as long as we have $\sigma>0$ small enough universal.
	
	To prove the lower $\Leb^2$-density bound we re-use large
        portions of the proof of \cite[Lemma 4.3]{km:cpam14},
        exploiting the first case in our non-optimality criterion.
        Let $x\in \partial \Omega$,
        $F_1^r:=\Omega \setminus \ball{x}{r}$ and
        $F_2^r:= \Omega \cap \ball{x}{r}$.  As for $r>0$ small enough
        an application of Lemma \ref{lem:non-optimality} requires
        $|F_2^r| \leq \sigma_0 (1-\lambda)^2$, we choose
        $r \leq (1-\lambda)r_0$ for a universal $r_0>0$.  For the
        analogue of estimate \cite[(4.10)]{km:cpam14}, we
        additionally apply Proposition
        \ref{prop:a_priori_lower_bound} to get
	\begin{align}
          P(F_1^r) + P(F_2^r) - P(\Omega)
          >\frac{1}{2}E_{\lambda,\delta} (F_2^r) \geq
          \frac{1-\lambda}{2}P(F_2^r), 
	\end{align}
	where we used the same arguments involving estimates
        \eqref{logarithmic_contribution_vanishes_1} and
        \eqref{logarithmic_contribution_vanishes_2} to omit the
        logarithmic contribution.
	
	The result is that the constant in estimate
        \cite[(4.12)]{km:cpam14} takes the form
        $c= \tilde c (1-\lambda)$ for a universal $\tilde c > 0$, and
        the differential inequality \cite[(4.13)]{km:cpam14} for the
        Lipschitz continuous function
        $U(r):= \min \left\{ |\Omega \cap \ball{x}{r}|,
            \frac{1}{2} |\ball{x}{r}| \right\}$ can be seen to take
        the form
	\begin{align}
          \frac{\intd U^\frac{1}{2}(r)}{\intd r} \geq \tilde
          c(1-\lambda) \quad \text{for a.e.} \quad r < (1 -
          \lambda) r_0.
	\end{align}
	Integrating this inequality yields the lower density
        bound $U(r) \geq \tilde c(1-\lambda)r$.
	
	The upper $\Leb^2$-density bound follows in the same way by
        using the second case of Lemma \ref{lem:non-optimality}.  The
        lower density bound for the length follows from those of the
        mass via the relative isoperimetric inequality, while the
        upper bound is a straightforward consequence of 
	\begin{align}
          2 \Hd^1\left(\Omega \cap \partial \ball{x}{r} \right) \geq
          \frac{1-\lambda}{2} \left(\Hd^1\left(\partial \Omega \cap
          \ball{x}{r} \right) + \Hd^1\left(\Omega \cap \partial
          \ball{x}{r} \right) \right), 
	\end{align}
	see \cite[equation (4.11)]{km:cpam14} for the proof. 
\end{proof}

As a first application of the uniform density estimates, we briefly
quantify the convergence of minimizers to disks in order to have
a quantitative dependence of the required smallness of $\delta$ on
$\lambda$ in the final statement.
	
\begin{lemma}\label{lem:convergence_rates}
  There exist universal constants $C,\sigma>0$ such that under the
  condition
  $|\log\delta|^{-1} \leq \sigma \frac{(1-\lambda)^2}{\lambda}$ the
  following holds: The regular representative of any minimizer
  $\Omega$ of $E_{\lambda,\delta}$ over $\mathcal{A}_\pi$ is simply
  connected and, after translation, satisfies
  \begin{align}\label{statement_after_clean-out}
    \operatorname{dist} \left(\partial \Omega, \Sph^1 \right)
    \leq 
    \frac{C \lambda^\frac{1}{2}}{(1-\lambda)^\frac{1}{2}
    |\log\delta|^\frac{1}{2}},
  \end{align}
 where $\operatorname{dist}$ denotes
  the Hausdorff-distance of sets.
\end{lemma}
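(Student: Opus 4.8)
The plan is to extract closeness to a disk from the isoperimetric–deficit bound of Lemma~\ref{lem:nearly_iso}, to promote it to a topological statement and a graph representation with the help of the uniform density estimates of Lemma~\ref{lem:density_bounds}, and to read off the claimed Hausdorff rate from the Wirtinger/Fuglede stability of the unit circle. By Lemma~\ref{lem:nearly_iso}, every minimizer $\Omega$ of $E_{\lambda,\delta}$ over $\mathcal A_\pi$ satisfies $P(\Omega)-2\pi \le C\lambda/((1-\lambda)|\log\delta|)$, and since then $2\pi\le P(\Omega)\le 3\pi$, the same bound holds (up to a universal factor) for the isoperimetric deficit $D(\Omega):=P(\Omega)^2/(4\pi^2)-1\asymp P(\Omega)-2\pi$.

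First I would pass to the regular representative of Proposition~\ref{prop:ELG_generalized_minimizers} and run a clean-out. Connectedness comes for free from Lemma~\ref{lem:connectedness}, since $\delta<\tfrac12<\sqrt{2\pi m}$ with $m=\pi$. To see that there are no holes, note that a bounded component $H$ of $\R^2\setminus\Omega$ would force $P(\Omega)=P(\Omega\cup H)+P(H)\ge 2\pi+2\sqrt{\pi|H|}$, hence $|H|\le (P(\Omega)-2\pi)^2/(4\pi)\le\sigma_0(1-\lambda)^2$ once $\lambda/((1-\lambda)^2|\log\delta|)$ is small enough --- this is precisely where the standing hypothesis enters --- so the hole-filling case of Lemma~\ref{lem:non-optimality}, applied with $\Sigma=P(\Omega\cup H)+P(H)-P(\Omega)=0$, strictly lowers the energy, a contradiction. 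Thus the regular representative is simply connected, with $\partial\Omega$ a single closed $C^{2,\alpha}$ Jordan curve.

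The core step is to show that, after a translation normalizing the optimal isoperimetric ball to $B_1(0)$, the curve $\partial\Omega$ is a radial graph $\{(1+v(\theta))(\cos\theta,\sin\theta):\theta\in\Sph^1\}$ with $\|v\|_{C^1(\Sph^1)}$ as small as we wish for $\delta$ small (depending on $\lambda$). I would obtain this by combining the quantitative isoperimetric inequality (see, e.g., \cite{maggi}), which gives $|\Omega\Delta B_1(0)|\le C\sqrt{D(\Omega)}$ after translation, with the density estimates of Lemma~\ref{lem:density_bounds} and the fact, recorded in the proof of Proposition~\ref{prop:ELG_generalized_minimizers}, that $\Omega$ is a $(\Lambda,r_0)$-minimizer of the perimeter, so that an $\eps$-regularity/improvement-of-flatness argument applies once the $L^1$-distance is small (with the dependence of the minimality constants on $\lambda$ tracked via Lemmas~\ref{lem:rescaling} and~\ref{lem:rescaling2}). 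With the graph in hand, I would fix the translation and invoke the mass constraint so that $v$ is $L^2(\Sph^1)$-orthogonal to $1,\cos\theta,\sin\theta$ up to a quadratic error, and then expand $P(\Omega)=\int_{\Sph^1}\sqrt{(1+v)^2+(v')^2}\,d\theta$ together with $\tfrac12\int_{\Sph^1}(1+v)^2\,d\theta=\pi$ to second order to obtain $P(\Omega)-2\pi\ge \tfrac12\int_{\Sph^1}(|v'|^2-v^2)\,d\theta-C\|v\|_{C^1}\int_{\Sph^1}v^2\,d\theta\ge c\int_{\Sph^1}|v'|^2\,d\theta$, the last inequality being the Wirtinger/Poincar\'e inequality $\int_{\Sph^1}|v'|^2\ge 4\int_{\Sph^1}v^2$, valid because the first Fourier modes of $v$ vanish. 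The one-dimensional Sobolev embedding then gives $\dist(\partial\Omega,\Sph^1)^2=\|v\|_{L^\infty}^2\le C\|v\|_{H^1(\Sph^1)}^2\le C\bigl(P(\Omega)-2\pi\bigr)\le C\lambda/((1-\lambda)|\log\delta|)$, which is exactly the claimed bound.

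I expect the main obstacle to be the graphing step with control uniform in $\lambda$: the density estimates by themselves turn the bound $|\Omega\Delta B_1(0)|\lesssim\sqrt{D}$ only into the crude Hausdorff estimate $\dist(\partial\Omega,\Sph^1)\lesssim D^{1/4}/(1-\lambda)$, which is of the wrong order and, for $\lambda$ close to $1$, not even small, so one must feed $\lambda$-tracked minimality constants into the regularity machinery --- or iteratively enlarge the scale on which $\partial\Omega$ is a Lipschitz graph --- before the Sobolev estimate above can be run. An alternative that bypasses the explicit graphing is to invoke a strong form of the quantitative isoperimetric inequality which, for sets obeying uniform density bounds, controls the oscillation of $\partial\Omega$ about a circle directly by $\sqrt{D(\Omega)}$.
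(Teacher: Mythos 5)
Your first half (connectedness via Lemma \ref{lem:connectedness}, and simple connectedness by filling a putative hole $H$, observing that the extra boundary $\Sigma$ vanishes and $|H|$ is controlled by the isoperimetric deficit so that the second case of Lemma \ref{lem:non-optimality} applies) is exactly the paper's argument. The second half is where the two routes diverge, and where your proposal has a genuine gap.

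The paper closes the argument in one line: once $\partial\Omega$ is a single Jordan curve, Bonnesen's inequality gives
$\operatorname{dist}(\partial\Omega,\Sph^1)\le C\sqrt{P^2(\Omega)-P^2(\ball{0}{1})}$
after translation, which combined with Lemma \ref{lem:nearly_iso} is precisely \eqref{statement_after_clean-out}. This is the ``strong form of the quantitative isoperimetric inequality'' you wish for in your last sentence --- in the plane it is classical, it controls the Hausdorff distance (not merely the symmetric difference) at the sharp rate $\sqrt{D}$, and it needs only simple connectedness, no density bounds and no regularity. Your main route, by contrast, requires representing $\partial\Omega$ as a radial graph with small $C^1$ norm before the Fuglede/Wirtinger expansion can be run, and this step does not go through as sketched. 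The $(\Lambda,r_0)$-minimality constants recorded in the proof of Proposition \ref{prop:ELG_generalized_minimizers} degenerate as $\delta\to0$ (the kernel $\|K_{\delta,\infty}\|_\infty\sim\delta^{-3}$ enters the Lipschitz bound \eqref{viLip}), so standard $\varepsilon$-regularity does not deliver a graph at a scale, let alone with a $C^1$ norm, uniform in $\delta$. The machinery that does achieve uniform graph representations in this paper is the iteration of Propositions \ref{prop:improvement_of_flatness} and \ref{prop:convexity} --- but Step~4 of Proposition \ref{prop:convexity} itself invokes Lemma \ref{lem:convergence_rates} (via \eqref{eq:gammaI}), so appealing to that machinery here would be circular. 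In short: replace the graphing-plus-Fuglede step by Bonnesen's inequality and your proof coincides with the paper's; as written, the quantitative graphing step is unsubstantiated and cannot be repaired with the tools available at this stage of the argument.
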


\begin{proof}
  Let $\Omega$ be the regular representative obtained in Proposition
  \ref{prop:ELG_generalized_minimizers}. Observe that by Lemma
  \ref{lem:connectedness} the set $\Omega$ is connected.  As the set
  $\Omega \subset \R^2$ is bounded with a smooth boundary, there
  exists a simply connected set $\widetilde \Omega$ such that
  $\Omega\subset \widetilde \Omega$,
  $\partial \widetilde \Omega \subset \partial \Omega$ is a Jordan
  curve, and such that
  $\partial \Omega \setminus \partial \widetilde \Omega$ is either
  empty or itself a union of disjoint Jordan curves $\gamma_i$ for
  $1\leq i \leq N$ and some $N\in \N$.  In order to prove that
  $\Omega$ is simply connected, we only have to rule out the latter
  case.  Towards a contradiction, we assume that
  $\partial \Omega \setminus \partial \widetilde \Omega$ is indeed a
  non-empty union of Jordan curves.
  
  As we have
  $\tilde r := \sqrt{\frac{|\widetilde \Omega|}{\pi}}\geq 1$, the
  isoperimetric inequality and Lemma \ref{lem:nearly_iso} imply
  \begin{align}\label{perimeter_of_holes}
    \Hd^{1}(\partial \Omega \setminus \partial \widetilde \Omega) \leq
    \Hd^{1}(\partial \Omega \setminus \partial \widetilde \Omega) +
    P(\widetilde \Omega) - P(\ball{0}{\tilde r}) \leq P(\Omega) - 
    P(\ball{0}{1}) \leq \frac{C\lambda}{(1-\lambda) |\log\delta|}.
  \end{align}
  Furthermore, for $F_2 := \widetilde \Omega \setminus \Omega$ we have
  $\partial F_2 = \partial \Omega \setminus \partial \widetilde
  \Omega$. As a result of estimate \eqref{perimeter_of_holes}, the
  isoperimetric inequality implies
  \begin{align}
    |F_2| \leq \frac{C\lambda^2}{(1-\lambda)^2 |\log\delta|^2}
    \leq \sigma_0 (1-\lambda)^2 \min\left\{1,\left|\widetilde
    \Omega\right|\right\} 
  \end{align}
  with $\sigma_0$ as in Lemma \ref{lem:non-optimality}, provided
  $\frac{\lambda}{(1-\lambda)^2|\log\delta|}\leq \sigma$ for
  $\sigma>0$ universally small enough and where we bounded
  $|\widetilde \Omega|$ by a universal constant due to
  $P(\widetilde \Omega) \leq P(\Omega)$ and Lemma
  \ref{lem:nearly_iso}.  Similarly to estimate
  \eqref{logarithmic_contribution_vanishes_2} we see that
  \begin{align}
    \frac{1}{2}E(F_2) > 0 = P(F_2) + P\left(\widetilde \Omega
    \right) -P(\Omega). 
  \end{align}
  Therefore, we can apply the second case of Lemma
  \ref{lem:non-optimality} with $F_1=\widetilde \Omega$ and
  $F= \Omega$ to conclude that $\Omega$ cannot have been a minimizer,
  which contradicts the assumption that $\Omega$ is not simply
  connected.
  
  As $\Omega$ is simply connected, we may apply Bonnesen's inequality
  \cite{bonnesen24,osserman79,fuglede91} and Lemma
  \ref{lem:nearly_iso} to $ \Omega$ to get that after a suitable
  translation we have
  \begin{align}
    \operatorname{dist} \left(\partial \Omega, \Sph^1 \right)
    & \leq  C \sqrt{P^2\left( \Omega\right) - P^2(\ball{0}{1})} \leq 
      \frac{C \lambda^\frac12}{(1-\lambda)^\frac12
      |\log\delta|^\frac12},
  \end{align}
  which concludes the proof.
\end{proof}

We now have all the necessary ingredients to make the qualitative
regularity of Proposition \ref{prop:ELG_generalized_minimizers}
quantitative.  The basic strategy is to improve the scale at which
$\partial \Omega$ is a graph.  We first give a precise meaning to this
statement:

\begin{Def}\label{def:C_alpha_at_scale_r}
  We say that a set $\Omega$ \emph{has a uniform $C^{2}$-boundary at
    scale $r>0$} if the boundary is an oriented $C^2$-manifold such
  that for every $x \in \partial \Omega$ there exists a rotation
  $S \in SO(2)$ and a function $h \in C^2([-r,r])$ with the following
  properties: 
  \begin{enumerate}[(i)]
        \item
          $S(\Omega-x) \cap (-r,r)^2 = \left\{(t,s) \in (-r,r)^2:
            -r< s < h(t) \right\}$,
        \item $h(0) = h'(0)=0$ and $|h''(t)| \leq r^{-1}$ for all
            $t \in [-r,r]$.
	\end{enumerate}
\end{Def}

In this terminology, Proposition \ref{prop:ELG_generalized_minimizers}
states that a minimizer $\Omega_\delta$ of $E_{\lambda,\delta}$ has a
uniform $C^{2}$-boundary at some small scale $r(\Omega_\delta) >0$.
In the next statement, Proposition \ref{prop:improvement_of_flatness},
we present the machinery which will allow us to iteratively increase
the scale of regularity.  The idea is to expand the potential
$v_{\delta}$, see definition \eqref{potential_definition}, into a
localizing part that approaches the curvature and into a larger scale
remainder.  As we have $\lambda <1$, the curvature is still the
leading order term in the Euler-Lagrange equation
\eqref{ELG_generalized_minimizers}.  Once we obtain convexity of the
minimizers, the proposition will also allow us to identify the
minimizers as disks in combination with a rigidity statement for the
disk.

\begin{prop}\label{prop:improvement_of_flatness}
  There exist universal constants $\sigma,C>0$ with the following
  property: Let
  $|\log\delta|^{-1} \leq \sigma \frac{1-\lambda}{\lambda}$ and assume
  that a minimizer $\Omega$ of $E_{\lambda,\delta}$ over
  $\mathcal{A}_\pi$ has a uniform $C^{2}$-boundary at scale $r > 0$.
  Then the curvature $\kappa$ of $\Omega$ satisfies
	\begin{align}\label{curvature_vs_remainder}
	  \begin{split}
            \osc \kappa & := \max_{x,y\in\partial \Omega} \left(
              \kappa(x) - \kappa(y) \right) \\
            & \leq \frac{C \lambda}{(1-\lambda) |\log\delta|}
            \inf_{a\in \R} \max_{x\in \partial \Omega}
            \left|\int_{\partial\Omega \setminus \ball{x}{r}} \nabla
              \Phi_\delta (|y-x|) \cdot \nu (y) \intd y - a
            \right|.
	  \end{split}
	\end{align}
\end{prop}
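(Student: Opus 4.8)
The plan is to feed the Euler--Lagrange equation of Proposition~\ref{prop:ELG_generalized_minimizers} into the boundary representation of the potential from Lemma~\ref{lem:energy_on_boundary}, and then to extract the curvature out of the \emph{short-range} part of the potential. First I would fix two points $x_1,x_2\in\partial\Omega$ and subtract \eqref{ELG_generalized_minimizers} (taken with $l=\infty$, so $v_{\delta,\infty}=v_\delta$) at these points, which kills the Lagrange multiplier and gives $\kappa(x_1)-\kappa(x_2)=-2\bigl(v_\delta(x_1)-v_\delta(x_2)\bigr)$. Since a minimizer with a $C^2$-boundary at scale $r$ satisfies the mild regularity \eqref{mild_regularity}, Lemma~\ref{lem:energy_on_boundary} lets me write $v_\delta(x)=\frac{\lambda}{2|\log\delta|}\int_{\partial^*\Omega}\nabla\Phi_\delta(|y-x|)\cdot\nu(y)\intd\Hd^1(y)+c_0$ with $c_0$ independent of $x$ (equivalently, for $x\in\partial\Omega$ one may start from $v_\delta(x)=\frac{\lambda}{2|\log\delta|}\int_{\Omega\setminus\ball{x}{\delta}}|x-y|^{-3}\intd y$). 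Splitting this integral at $\ball{x}{r}$ produces a short-range part $L(x):=\frac{\lambda}{2|\log\delta|}\int_{\partial^*\Omega\cap\ball{x}{r}}\nabla\Phi_\delta(|y-x|)\cdot\nu(y)\intd\Hd^1(y)$ and, up to the factor $\frac{\lambda}{2|\log\delta|}$, exactly the long-range quantity $R(x):=\int_{\partial^*\Omega\setminus\ball{x}{r}}\nabla\Phi_\delta(|y-x|)\cdot\nu(y)\intd\Hd^1(y)$ appearing in \eqref{curvature_vs_remainder}; note $\max_{x_1,x_2}|R(x_1)-R(x_2)|=2\inf_{a}\max_x|R(x)-a|$.

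Next I would expand $L(x)$ using the uniform $C^2$-boundary at scale $r$: parametrize $\partial\Omega\cap\ball{x}{r}$ by the graph $(t,h(t))$ with $h(0)=h'(0)=0$ and $|h''|\le r^{-1}$, so that $\nu\intd\Hd^1=(-h',1)\intd t$ and the integrand is $\Phi_\delta'(\rho)\frac{h(t)-t h'(t)}{\rho}$ with $\rho=|(t,h(t))|$. Writing $h(t)=-\tfrac12\kappa(x)t^2+E(t)$ (so $E(0)=E'(0)=E''(0)=0$ and $E''=h''-h''(0)$) and inserting the explicit $\Phi_\delta$ from \eqref{Phi_explicit} (recall $g_\delta=\chi_{(\delta,\infty)}$, so $\Phi_\delta'(\rho)=-\rho^{-2}$ for $\rho\ge\delta$ and $-\rho^{-1}\delta^{-1}$ for $\rho<\delta$), a direct computation — which can be cross-checked against the exact formula on a disk — gives
\begin{align*}
  L(x)=\frac{\lambda}{2|\log\delta|}\Bigl(-\kappa(x)\log\tfrac r\delta+\mathcal E(x)\Bigr),
\end{align*}
where the leading coefficient $\log(r/\delta)$ is independent of $x$ because the cut-off $r$ is fixed, and $\mathcal E(x)$ collects the contribution of the scales $|y-x|<\delta$, the lower-order corrections at $\rho\ge\delta$, and a genuine remainder of the shape $\int_\delta^r\rho^{-1}\bigl(\text{weighted mean of }h''(s)-h''(0)\text{ over }|s|\lesssim\rho\bigr)\intd\rho$; the first two contribute $O(\kappa(x))$ with only mild (i.e.\ $\osc\kappa$-controlled) dependence on $x$, while the last is governed by how much the curvature varies over short arcs of $\partial\Omega$.

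Inserting this into the subtracted Euler--Lagrange equation and moving the $\kappa$-proportional terms to the left, I would obtain $A\bigl(\kappa(x_1)-\kappa(x_2)\bigr)=-\tfrac{\lambda}{|\log\delta|}\bigl[(\mathcal E(x_1)-\mathcal E(x_2))+(R(x_1)-R(x_2))\bigr]$ with $A=1-\tfrac{\lambda\log(r/\delta)}{|\log\delta|}$ (up to an $O(\tfrac{\lambda}{|\log\delta|})$ correction) independent of $x$. For a minimizer $r\le\operatorname{diam}\Omega\le\tfrac12 P(\Omega)\le\tfrac{3\pi}{2}$ by Lemma~\ref{lem:nearly_iso}, so $\log(r/\delta)\le|\log\delta|+C$, and then the hypothesis $|\log\delta|^{-1}\le\sigma\tfrac{1-\lambda}{\lambda}$ forces $A\ge(1-\lambda)(1-C\sigma)\ge\tfrac12(1-\lambda)$ for $\sigma$ universally small — this is exactly where the hypothesis and the $(1-\lambda)$ are used. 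Taking the maximum over $x_1,x_2$, using $|\mathcal E(x_1)-\mathcal E(x_2)|\le C\osc\kappa$ so that $\tfrac{\lambda}{|\log\delta|}C\osc\kappa$ can be moved to the left-hand side (where $A-\tfrac{C\lambda}{|\log\delta|}\ge\tfrac14(1-\lambda)$), together with $\max_{x_1,x_2}|R(x_1)-R(x_2)|=2\inf_a\max_x|R(x)-a|$, yields \eqref{curvature_vs_remainder}.

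The hard part will be the estimate $|\mathcal E(x_1)-\mathcal E(x_2)|\le C\osc\kappa$. The crude bound $|h''|\le r^{-1}$ by itself only gives $|\mathcal E(x)|\lesssim r^{-1}\log(r/\delta)$, which is far too weak (and, being additive, would be fatal); instead one must exploit that $\mathcal E$ vanishes identically on disks and control the \emph{difference}: via \eqref{ELG_generalized_minimizers} the increments $h''(s)-h''(0)$ are tied to the oscillation of $\kappa$ over short arcs, which must in turn be controlled using the Lipschitz bound \eqref{viLip} on $v_\delta$ and the uniform density estimates of Lemma~\ref{lem:density_bounds} — morally, a Campanato-type decay of $\kappa$ along $\partial\Omega$. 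The delicate point is that the error one needs to absorb is measured by $\osc\kappa$, the very quantity being estimated, so that the smallness of $\delta$ and the factor $(1-\lambda)^{-1}$ are precisely what allow the absorption to close; making this self-improvement rigorous is the main technical obstacle.
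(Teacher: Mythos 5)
Your overall skeleton matches the paper's: difference the Euler--Lagrange equation at two boundary points, use the boundary representation \eqref{potential_boundary_formulation} of $v_\delta$, split into a local part $L(x)$ over $\partial\Omega\cap\ball{x}{r}$ and the remainder $R(x)$, expand $L(x)$ in graph coordinates, and absorb the curvature-proportional part using $\lambda<1$ and the hypothesis on $\delta$. But there is a genuine gap exactly where you flag it, and it is not a removable technicality of your route --- it is the consequence of a wrong normalization. You write $L(x)=\frac{\lambda}{2|\log\delta|}\bigl(-\kappa(x)\log\frac r\delta+\mathcal E(x)\bigr)$, i.e.\ you center the expansion at the pointwise value $\kappa(x)$, and then you need $|\mathcal E(x_1)-\mathcal E(x_2)|\leq C\osc\kappa$. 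What the crude bound actually gives is $|\mathcal E(x)|\lesssim \log\frac r\delta\cdot\sup_{|s|\lesssim r}|h''(s)-h''(0)|\lesssim \log\frac{r}{\delta}\,\osc\kappa$, so after multiplying by $\frac{\lambda}{|\log\delta|}$ the error contributes $\approx 2C\lambda\,\osc\kappa$ with $C$ the (unknown, $\geq 1$) constant from the expansion; this cannot be absorbed into the left-hand side $(1-\lambda)\osc\kappa$ when $\lambda$ is close to $1$ unless $C$ is sharp, i.e.\ $\leq 1+O(|\log\delta|^{-1})$. Stripping the $\log\frac r\delta$ factor off entirely, as your claimed bound $|\mathcal E(x_1)-\mathcal E(x_2)|\leq C\osc\kappa$ requires, would need a Dini/Campanato-type decay of $\kappa$ along $\partial\Omega$ that you do not establish and that the statement does not presuppose; your proposal openly leaves this open, so as written the proof does not close.

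The paper avoids the issue by never comparing the local weighted average of $\kappa$ to $\kappa(x)$ at all. In Steps 2--3 of its proof it observes that $L(x)$ is (up to multiplicative $1+O(|\log\delta|^{-1})$ corrections coming from the graph expansion) a nonnegatively weighted average of $\kappa$ over the patch $\partial\Omega\cap\overline{\ball{x}{r}}$, and therefore sandwiches it: there exist $y_1,y_2$ in the patch (the arg-max and arg-min of $\kappa$ there) with $-\tfrac{\kappa(y_1)}{2}(|\log\delta|+O(1))\leq L(x)\leq -\tfrac{\kappa(y_2)}{2}(|\log\delta|+O(1))$, treating $r\leq\delta$ and $r>\delta$ separately. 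Differencing the Euler--Lagrange equation at the extremizers of $\kappa$ then yields $\osc\kappa\leq\lambda\bigl(1+O(|\log\delta|^{-1})\bigr)\bigl(\kappa(\tilde y_1)-\kappa(\tilde y_2)\bigr)+\frac{2\lambda}{|\log\delta|}\inf_a\max_x|R(x)-a|$, and since trivially $\kappa(\tilde y_1)-\kappa(\tilde y_2)\leq\osc\kappa$ with constant exactly $1$, the absorption closes with the prefactor $\bigl(1-\lambda-O(\lambda/|\log\delta|)\bigr)^{-1}\lesssim(1-\lambda)^{-1}$ under \eqref{eq:sigma1}. In short: replace your exact extraction of $\kappa(x)$ by a two-sided bound through the extremal curvature values on the local patch, and the ``main technical obstacle'' disappears; no regularity of $\kappa$ beyond its global oscillation is needed.
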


\begin{proof}
  As already mentioned, we will argue by expanding the potential $v$
  on the boundary of $\Omega$.  To this end, we make use of the
  representation \eqref{potential_boundary_formulation} of the
  potential.  As we will later argue by taking differences of the
  Euler-Lagrange equation in two different points, the constant term
  drops out, so that we may focus on the integral.  For
  $x \in \partial \Omega$ we split it up into the local
  contribution
\begin{align}\label{local_part}
  L(x) := \int_{\partial \Omega \cap  \ball{x}{ r}} \nabla
  \Phi_\delta (|y-x|)
  \cdot \nu (y) \intd \Hd^1(y) 
\end{align}
from $\ball{x}{r}$ and
into a remainder we abbreviate with
\begin{align}
  \label{eq:R}
  R(x) := \int_{\partial\Omega \setminus \ball{x}{r}} \nabla
  \Phi_\delta (|y-x|) \cdot \nu (y)  \intd y. 
\end{align}

\textit{Step 1: Expand the integrand in
  \eqref{local_part}.}\vspace{1mm}\\ 
Without loss of generality we may assume $x=0$.  To control the
localized part we transform the integral in
\eqref{potential_boundary_formulation} into the graph coordinates
$y = (t, h(t))$.  Recall that we asked $h(t)$ to satisfy
$h(0) = h'(0) =0$ in Definition \ref{def:C_alpha_at_scale_r}.
	
Let $T_1,T_2>0$ be such that
$y(-T_1), y(T_2) \in \partial \Omega \cap \partial \ball{0}{r}$.
As the Jacobian in the transformation formula
of boundary integrals is the length of the tangent, we see that
$\nu(y) \intd \Hd^1(y)$ transforms into $(-h'(t),1) \intd t$, so that
we aim to compute
	\begin{align}
          L(x) =  \int_{-T_1}^{T_2} \frac{\Phi'_\delta \left(
          |y(t)| \right)}{ |y(t)| } \,
          (t,h(t)) \cdot (-h'(t),1) \intd 
          t.
	\end{align}
	  Note that
        the scalar product above may be represented as
	\begin{align}
		(t,h(t)) \cdot (-h'(t),1)= h(t) - th'(t) =
          - \int_0^t s h''(s) \intd s.
	\end{align}
      As we defined the curvature of convex bodies to be positive, the
      curvature can be computed as
	\begin{align}\label{curvature_in_graph_coordinates}
          \kappa \left(y(s)\right) = -
          \frac{h''\left(s\right)}{\left(1+ 
          \left(h' \left(s\right)\right)^2\right)^{\frac{3}{2}}}.  
	\end{align}
	  Therefore,
          we get
	\begin{align}
          h(t) - th'(t) =\int_0^ts\kappa(y(s) )
          \left(1+ \left(h' \left(s\right)\right)^2\right)^{\frac{3}{2}} \intd s. 
	\end{align}
	
	The bound $|h''(t)| \leq r^{-1}$ and the equalities
        $h(0) = h'(0) =0$, all resulting from Definition
        \ref{def:C_alpha_at_scale_r}, imply
	\begin{align}
          \left(1+ \left(h' \left(s\right)\right)^2\right)^{\frac{3}{2}} 
          = 1+ O\left( r^{-2}s^2\right), 
	\end{align}
	as well as $|h(t)| \leq \frac{1}{2}r^{-1}t^2$ and
	\begin{align}\label{Peano}
          |y(t)| = \left( t^2 + h^2(t) \right)^{\frac{1}{2}}
          =  |t| \left( 1 + O \left(r^{-2}
          t^2 \right) \right).
	\end{align}
	Here, we use the $O$-notation to denote estimates up to
        universal constants.  Also note that we always have
        $|y(t)| \geq |t|$, so that as a consequence we get
	\begin{align}
          \frac{1}{|y(t)|^2} = \frac{1}{|t|^2}\left( 1 + O \left(r^{-2}
          t^2 \right) \right)
	\end{align}
	and
	\begin{align}\label{expansion_minus_3}
          \frac{1}{|y(t)|^3} = \frac{1}{|t|^3}\left( 1 + O \left(r^{-2}
          t^2 \right) \right).
	\end{align}

        \textit{Step 2: If $r \leq \delta$ we have the following:
          There exist
          $y_1, y_2 \in \partial \Omega \cap
            \overline{\ball{0}{r}}$ such that}
	\begin{align}\label{step_2_expansion}
          O\left( \lambda \kappa(y_1)
          |\log\delta|^{-1}\right) + R(x)< v(x) <  O\left( \lambda
          \kappa(y_2)|\log\delta|^{-1}\right) + R(x).
	\end{align} \\
        Using the explicit form of $\Phi_\delta$ in
          \eqref{Phi_explicit}, we see 
	\begin{align}\label{Phi_nabla}
          \Phi'_\delta \left( |y| \right)=
		\begin{cases}
			- \frac{1}{|y|^2} & \text{ if } |y|\geq \delta\\
			-\frac{1}{\delta |y|} & \text{ if } |y|<\delta
		\end{cases},
	\end{align}
      so that from the estimates in Step 1 we get
	\begin{align}
	   \begin{split}
             L(x) & = - \int_{0}^{T_2} \frac{1}{\delta t^2}\left( 1 +
               O(r^{-2}t^{2}) \right) \left(
               \int_0^ts\kappa\left(y(s)\right)\left( 1+
                 O\left(r^{-2} s^{2}\right) \right)\intd s \right)
             \intd t\\
             & \quad - \int_{-T_1}^{0} \frac{1}{\delta t^2}\left( 1 +
               O(r^{-2}t^{2}) \right) \left(
               \int_t^0|s|\kappa\left(y(s)\right)\left( 1+
                 O\left(r^{-2} s^{2}\right) \right)\intd s \right)
             \intd t.
	  \end{split}
	\end{align}
	
	In order to get a lower bound, we choose
        $t_1 \in \operatorname{arg max}_{s \in [-T_1, T_2]
          }\kappa(y(s))$ and estimate
	\begin{align}
          L(x)
          & \geq  - \kappa(y(t_1))\int_{-T_1}^{T_2}
            \frac{1}{\delta}\left( 1+ O\left(r^{-2}
            t^{2}\right)  \right) \intd t = O(\kappa(y(t_1))),
	\end{align}
	as a result of
        $\frac{T_1+T_2}{\delta} \leq \frac{|y(T_1)|+|y(T_2)|}{\delta}
        \leq \frac{2r}{\delta}\leq 2$.  This is the desired lower
        bound of estimate \eqref{step_2_expansion}.  The upper bound
        follows similarly by choosing
        $t_2 \in \operatorname{arg min}_{s \in
          [-T_1,T_2]}\kappa(y(s))$.
	
	\vspace{1mm}
        \textit{Step 3: If instead we have $r > \delta$ there exist
          $y_1, y_2 \in \partial \Omega \cap
            \overline{\ball{0}{r}}$ such that we have the two bounds
	\begin{align}
          \frac{\lambda}{2} \kappa(y_1) \left(1 +
          O\left(|\log\delta|^{-1}\right) \right) + R(x)< v(x) <
          \frac{\lambda}{2} \kappa(y_2) \left(1 +
          O\left(|\log\delta|^{-1}\right) \right) + R(x). 
	\end{align}}\\	
      In the case $r>\delta$, we additionally define $\widetilde
      T_1,\widetilde T_2 >0$ such that $y(-\widetilde T_1),
      y(\widetilde T_2) \in \partial \Omega \cap \partial
      \ball{0}{\delta}$ and write $L(x) = L_\delta^-(x) +
        L_\delta^0(x) + L_\delta^+(x)$, where the three terms are the
        contributions from $[-T_1, -\widetilde T_1]$, $[-\widetilde
        T_1, \widetilde T_2]$ and $[\widetilde T_2, T_2]$, respectively.
      Again choosing $t_1\in
        \operatorname{arg max}_{s\in [-T_1,T_2] }\kappa(y(s))$ we argue as in
      the previous step that $L_\delta^0 =  O(\kappa(y(t_1)))$.
      For $L_\delta^+(x)$, we get from equations
      \eqref{Phi_nabla} and \eqref{expansion_minus_3} that 
	\begin{align}
	  \begin{split}
            & \quad L_\delta^+(x)= - \int_{\widetilde T_2}^{T_2}
            \frac{1}{t^3}\left( 1 + O(r^{-2}t^{2}) \right) \left(
              \int_0^ts\kappa\left(y(s) \right)\left( 1+
                O\left(r^{-2}s^{2}\right) \right)\intd s \right)
            \intd t\\
            & \geq - \kappa(y(t_1)) \int_{\widetilde T_2}^{T_2}
            \frac{1}{2t} \left( 1 + O(r^{-2}t^{2}) \right) \intd t\\
            & = - \frac{\kappa(y(t_1))}{2} \left( \log T_2 - \log
              \widetilde T_2   + O(r^{-2} T_2^{2})\right)\\
            & = - \frac{\kappa(y(t_1))}{2}(|\log\delta| + O(1)),
	  \end{split}
	\end{align}
	where we used $T_2 \leq |y(T_2)| = r$ and
        $\log \widetilde T_2 = \log \delta + \log \frac{\widetilde
          T_2}{\delta} = \log \delta +O(1)$ due to
        $\widetilde T_2 \leq \delta$.  Treating the term
          $L_\delta^-(x)$ analogously, we obtain
	\begin{align}
          L(x) \geq -\kappa(y(t_1))(|\log\delta| + O(1)). 
	\end{align}
	Again the argument for the upper bound is similar.
	
\vspace{1mm}
        \textit{Step 4: Conclusion.}\vspace{1mm}\\
	Let $y_1$ and $y_2$ realize the maximum in
        $\osc \kappa = \max_{y_1,y_2 \in \partial \Omega} (\kappa(y_1)
        - \kappa(y_2))$.  We first deal
        with the more complicated case $r > \delta$.  Taking the
        difference of the Euler-Lagrange equation
        \eqref{ELG_generalized_minimizers} in the two points we see
        from Step 3 that there exist
        $\tilde y_1,\tilde y_2 \in \partial \Omega$ such that we have
	\begin{align}
	  \begin{split}
            \osc \kappa
            & = \kappa(y_1) - \kappa(y_2) = -2( v(y_1)  -  v(y_2)) \\
            & \leq \lambda \left(1+ O\left(|\log \delta |^{-1}
                \right) \right) (\kappa( \tilde y_1) - \kappa(\tilde
              y_2) ) -
              \frac{\lambda}{|\log\delta|} (R(y_1) - R(y_2)) \\
            & \leq \lambda \left(1+ O\left(|\log \delta |^{-1} \right)
            \right) (\kappa( \tilde y_1) - \kappa(\tilde y_2) ) +
            \frac{2 \lambda}{|\log\delta|} \inf_{a\in \R}
            \max_{x\in
              \partial \Omega} | R(x) -a |\\
            & \leq \lambda \left(1+ O\left(|\log \delta |^{-1} \right)
            \right) \osc \kappa + \frac{2 \lambda}{|\log\delta|}
            \inf_{a\in \R} \max_{x\in \partial \Omega} | R(x) -a |.
	  \end{split}
	\end{align}
	This gives 
	\begin{align}
	  \begin{split}
            \osc\kappa \leq \frac{C \lambda}{(1-\lambda) |\log
              \delta|} \inf_{a\in \R} \max_{x\in \partial \Omega} |
            R(x) -a |,
		\end{split}
	\end{align}
	if we have $|\log\delta|^{-1} \leq \sigma
        \frac{1-\lambda}{\lambda}$ for a universal $\sigma>0$ small
        enough.  A similar argument in the case
        $r<\delta$ concludes the proof.
\end{proof}

We are now in a position to prove convexity of the minimizers.  The
strategy is to show that the $|\log\delta|^{-1}$ factor in front of
the remainder in estimate \eqref{curvature_vs_remainder} can be used
to iteratively improve the scale $r>0$ of regularity all the way up to
some uniform radius independent of $\delta$.  The resulting improved
bound for the curvature can be combined with the non-optimality
criterion in Lemma \ref{lem:non-optimality} to also improve the
scale at which $\partial \Omega$ is a graph.

\begin{prop}\label{prop:convexity}
  There exist universal constants $\sigma,C>0$ with the following
  properties: If
  $|\log\delta|^{-1} \leq \sigma \frac{(1-\lambda)^{5}}{\lambda}$ then
  a minimizer $\Omega$ of $E_{\lambda,\delta}$ over $\mathcal{A}_\pi$
  satisfies
	\begin{align}\label{estimate_oscillation}
		\osc \kappa \leq \frac{C\lambda}{(1-\lambda)^2
          |\log\delta|}. 
	\end{align}
        Additionally, the set $\Omega$ is convex and has a uniform
        $C^2$-boundary at scale $\sigma (1-\lambda)$.
\end{prop}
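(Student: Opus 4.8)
The plan is a bootstrap on the scale $r>0$ at which $\partial\Omega$ is a uniform $C^2$-graph. The preliminary facts I would record first: by Proposition \ref{prop:ELG_generalized_minimizers} the regular representative of the minimizer $\Omega$ has a uniform $C^2$-boundary at \emph{some} scale $r(\Omega)>0$; by Lemma \ref{lem:convergence_rates} it is simply connected and, after a translation, Hausdorff-close to $\Sph^1$; by Lemma \ref{lem:nearly_iso} one has $P(\Omega)\le 2\pi+\frac{C\lambda}{(1-\lambda)|\log\delta|}$; and since $\partial\Omega$ is then a single $C^2$ Jordan curve, $\int_{\partial\Omega}\kappa\,\intd\Hd^1=2\pi$. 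The strategy is to improve the scale of $C^2$-regularity --- ultimately all the way up to $\sigma(1-\lambda)$ --- by feeding the curvature bound that Proposition \ref{prop:improvement_of_flatness} provides at the current scale into Lemma \ref{lem:non-optimality}, and along the way to read off \eqref{estimate_oscillation} and convexity.

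The engine of a step is Proposition \ref{prop:improvement_of_flatness}, which at scale $r$ yields $\osc\kappa\le\frac{C\lambda}{(1-\lambda)|\log\delta|}\,\inf_{a\in\R}\max_{x\in\partial\Omega}|R(x)-a|$ with $R$ as in \eqref{eq:R}. The crucial estimate is that the remainder oscillation $\inf_a\max_x|R(x)-a|$ is at most $\frac{C}{1-\lambda}$ and, importantly, does not deteriorate as $r\downarrow r(\Omega)$. For this I would rewrite $R(x)$ by the divergence theorem exactly as in the proof of Lemma \ref{lem:energy_on_boundary}, obtaining for $r\ge\delta$ the identity $R(x)=\int_{\Omega\setminus\ball{x}{r}}\frac{g_\delta(|y-x|)}{|y-x|^3}\,\intd y-\frac{1}{r^2}\Hd^1\!\big(\Omega\cap\partial\ball{x}{r}\big)$ (with the obvious modification for $r<\delta$): the genuinely singular $O(1/r)$ parts of the two terms are, to leading order, $x$-independent and cancel, so what survives is governed by the deviation of $\partial\Omega$ from a circle on scales between $r$ and $O(1)$. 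That deviation I would estimate using the uniform density bounds of Lemma \ref{lem:density_bounds} (valid at all scales $\le(1-\lambda)r_0$) on intermediate scales, the crude bound $\Hd^1(\partial\Omega\cap(\ball{x}{2^{k+1}}\setminus\ball{x}{2^k}))\le P(\Omega)\lesssim 1$ on the far scales, and the $C^2$-at-scale-$r$ hypothesis together with Hausdorff-closeness to $\Sph^1$ to quantify the cancellation near $x$. Inserting $\inf_a\max_x|R(x)-a|\le\frac{C}{1-\lambda}$ into Proposition \ref{prop:improvement_of_flatness} produces \eqref{estimate_oscillation}.

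Once \eqref{estimate_oscillation} holds and $\frac{\lambda}{(1-\lambda)^5|\log\delta|}$ is universally small I would conclude as follows. From $\int_{\partial\Omega}\kappa=2\pi$ and $2\pi\le P(\Omega)\le2\pi+\frac{C\lambda}{(1-\lambda)|\log\delta|}$ one gets $\min_{\partial\Omega}\kappa\ge\frac{2\pi}{P(\Omega)}-\osc\kappa\ge1-\frac{C\lambda}{(1-\lambda)^2|\log\delta|}>0$ and $\max_{\partial\Omega}\kappa\le\frac{2\pi}{P(\Omega)}+\osc\kappa\le2$, so $\Omega$ is convex with curvature between $\tfrac12$ and $2$. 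A convex set with curvature $\le2$ is, at each boundary point, the graph of a function $h$ with $h(0)=h'(0)=0$ and $|h''|\le6\le r^{-1}$ over an interval of length $2r=2\sigma(1-\lambda)$ on which $|h'|\le1$ (choosing $\sigma$ small), while convexity together with $\operatorname{diam}\Omega\approx2$ guarantees that inside the corresponding square $\Omega$ lies exactly below this graph; invoking Lemma \ref{lem:non-optimality} as in the proofs of Lemmas \ref{lem:density_bounds} and \ref{lem:convergence_rates} to rule out extra appendages or holes at scales up to $\sigma(1-\lambda)$ then shows that $\Omega$ has a uniform $C^2$-boundary at scale $\sigma(1-\lambda)$.

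The main obstacle is the uniform-in-$r$ control of $\inf_a\max_x|R(x)-a|$: a naive estimate only yields $|R(x)|\lesssim\frac{1}{(1-\lambda)r}$, which blows up as the initial regularity scale $r(\Omega)$ shrinks and would make the bootstrap circular, so the whole argument hinges on extracting the cancellation of the singular part of $R$ through the divergence-theorem representation and the near-disk geometry. A secondary, purely bookkeeping, difficulty is to track the powers of $(1-\lambda)$ through the density estimates and the iteration so as to land precisely on the exponents stated in \eqref{estimate_oscillation} and in the smallness condition.
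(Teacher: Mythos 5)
Your overall architecture (bootstrap on the graph scale $r$, Proposition \ref{prop:improvement_of_flatness} as the engine, Gauss--Bonnet plus the oscillation bound to get convexity at the end) matches the paper, but the step you yourself identify as ``the main obstacle'' is a genuine gap, and it stems from a misreading of how the induction closes. You claim that the naive bound $\|R\|_\infty\lesssim \frac{1}{(1-\lambda)r}$ would make the bootstrap circular and that one must instead prove $\inf_a\max_x|R(x)-a|\leq \frac{C}{1-\lambda}$ \emph{uniformly down to the initial scale} $r(\Omega)$ by exploiting cancellation in the divergence-theorem representation of $R$. Neither half of this is right. First, the uniform bound is not provable from the ingredients you list: the cancellation of the two $O(1/r)$ terms in your identity $R(x)=\int_{\Omega\setminus\ball{x}{r}}\frac{g_\delta(|y-x|)}{|y-x|^3}\intd y-\frac{1}{r^2}\Hd^1(\Omega\cap\partial\ball{x}{r})$ to the required accuracy needs $\Hd^1(\Omega\cap\partial\ball{x}{s})=\pi s(1+o(1))$ (equivalently, $C^1$-flatness) at all scales $s$ just above $r$ --- which is exactly the statement the inductive step is supposed to produce. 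The density bounds of Lemma \ref{lem:density_bounds} only confine $|\Omega\cap\ball{x}{s}|$ to $[c(1-\lambda)^2s^2,(\pi-c(1-\lambda)^2)s^2]$, giving no cancellation at order $1/r$, and the Hausdorff closeness to $\Sph^1$ from Lemma \ref{lem:convergence_rates} is a $C^0$ statement at the fixed scale $\eps(\delta)$, which bears no relation to $r(\Omega)$ and says nothing about $\nu(y)$ in the integrand. Second, the uniform bound is not needed: the paper accepts $\|R\|_\infty\leq\frac{C}{(1-\lambda)r}$, feeds it into Proposition \ref{prop:improvement_of_flatness} together with the smallness hypothesis to get $\|\kappa\|_\infty\leq\frac{1-\lambda}{Kr}$, and this $1/r$-scaling curvature bound is precisely what is required to show $\partial\Omega$ is a graph with $|h''|\leq(2r)^{-1}$ at the \emph{doubled} scale $2r$. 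The scale thus doubles at every step while the bounds improve proportionally, and the clean estimate \eqref{estimate_oscillation} is read off only at the terminal universal scale $(1-\lambda)r_0$, where $\|R\|_\infty\leq\frac{C}{(1-\lambda)^2 r_0}$.

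A second, smaller omission: going from ``$\partial\Omega$ is locally a graph with $|h''|\leq(2r)^{-1}$ over an interval of length $8r$'' to ``$\Omega$ has a uniform $C^2$-boundary at scale $2r$'' in the sense of Definition \ref{def:C_alpha_at_scale_r} requires ruling out that a \emph{different} arc of $\partial\Omega$ re-enters the box $Z_{2r}(x)$. You invoke convexity and $\operatorname{diam}\Omega\approx2$ for this, but convexity is only available after the iteration has terminated; during the iteration the paper needs a separate contradiction argument (minimizing the distance $d(t)$ to the complementary arc, constructing the region $F_2$ bounded by a chord and the short arc, and excluding it via Lemma \ref{lem:non-optimality} together with the Bonnesen-type closeness to $\Sph^1$ --- this is where the exponent $5$ in the hypothesis enters). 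Without this step the induction does not advance even granting your curvature bounds.
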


\begin{proof}
  The proof consists of five steps. In the first four steps, we assume
  that $\Omega$ has a uniform $C^2$-boundary at scale
  $0<r < \frac{1}{2}(1-\lambda)r_0$, where $r_0>0$ is the universal
  constant given in Lemma \ref{lem:density_bounds}. Recall that such a
  value of $r$ exists by Proposition
  \ref{prop:ELG_generalized_minimizers}, but depends on $\Omega$ and
  the values of $\delta$ and $\lambda$.

\vspace{1mm}
  \textit{Step 1: The remainder term defined in \eqref{eq:R} satisfies
    $\|R\|_{L^\infty(\partial\Omega)} \leq
    \frac{C}{1-\lambda}r^{-1}$ for a universal constant
    $C>0$.}\vspace{1mm}\\ 
  Without loss of generality we again work with $x=0$ belonging to
  $\partial \Omega$.  For all $k \in \mathbb{N}$ such that
  $ 2^k r \leq (1-\lambda) r_0$ we get
  $\Hd^{1} (\partial \Omega \cap \ball{0}{2^k r} ) \leq \frac{C}{
    1-\lambda}2^k r$ by Lemma \ref{lem:density_bounds}.  Using
  \eqref{Phi_nabla}, we see that
	\begin{align}
          |R(x)| & \leq \int_{\partial \Omega \setminus
                   \ball{0}{r} } |\Phi_\delta'(|y|)| \intd \Hd^1(y) \leq C
                   \int_{\partial \Omega \setminus \ball{0}{r} }
                   \frac{1}{|y|^2} \intd \Hd^1(y) 
	\end{align}
	even in the case $r <\delta$.
	Dyadically decomposing the domain of integration we then
          see that
	\begin{align}\label{remainder}
	  \begin{split}
            | R(x)| & \leq \frac{C}{1-\lambda}
            \sum_{k=1}^{\left\lfloor \log_2 \frac{r_0(1 - \lambda)}
                   { r} \right\rfloor} \left(2^{k+1}r\right)\,
            \left( 2^{-2k} r^{-2}\right) + C \int_{\partial \Omega
              \setminus \ball{0}{\frac{1}{2}(1-\lambda) r_0} }
            \frac{1}{|y|^2}
            \intd \Hd^1(y)  \\
            & \leq \frac{C}{1-\lambda}r^{-1} +
            \frac{C P(\Omega)}{r^2_0 (1-\lambda)^2}\\
            & \leq \frac{C}{1-\lambda}r^{-1}
	  \end{split}
	\end{align}
	as a result of $r< (1-\lambda)r_0$ and estimate
        \eqref{bound_perimeter}. 
	
	\vspace{1mm}
        \textit{Step 2: For any $K > 0$ and all $\sigma$ sufficiently
          small depending only on $K$, the curvature satisfies
          $\|\kappa\|_\infty \leq \frac{1 - \lambda}{Kr}$.}\vspace{1mm} \\
        We apply the Gauss-Bonnet theorem to the Jordan curve
        $\partial \Omega$, see Lemma \ref{lem:convergence_rates}, to
        obtain
	\begin{align}
          \int_{\partial \Omega} \kappa \intd \Hd^1 = 2\pi.
	\end{align}
	The mean value theorem then implies that there exists
        $x\in \partial \Omega$ with $\frac{2}{3}\leq \kappa(x) \leq 1$
        because by estimate \eqref{bound_perimeter} and by the
        isoperimetric inequality we have
	\begin{align}
          3\pi \geq P(\Omega) \geq  2\pi^\frac{1}{2}
          |\Omega|^\frac{1}{2} = 2\pi. 
	\end{align}
        In particular, for any $K>0$ to be chosen later we have
	 \begin{align}\label{bound_curvature}
           \|\kappa\|_{\infty}  \leq \osc \kappa + 1 \leq
           \frac{C\lambda}{(1-\lambda)^2 |\log\delta|} 
           r^{-1} + 1 \leq  \frac{1-\lambda}{2Kr} + 1 \leq
           \frac{1-\lambda}{Kr}, 
	 \end{align}
	 where the second inequality holds due Proposition
         \ref{prop:improvement_of_flatness} and Step 1, the third is a
         result of
         $ |\log\delta|^{-1} \leq \sigma
         \frac{(1-\lambda)^{3}}{\lambda}$ and the fourth follows from
         $r \leq \sigma( 1-\lambda)$, provided $\sigma >0$ is small
         enough depending only on $K$.
	 
	 \vspace{1mm} \textit{Step 3: For every $x\in \partial \Omega$
           there exists a rotation $S\in SO(2)$ and a box
           $Z_{4r}(x):= x + S^{-1}(x)\left(-4r, 4r \right)^2$ such
           that the segment $G(x)$ of $\partial \Omega \cap Z_{4r}(x)$
           containing $x$ is an appropriately rotated and translated
           graph of a function $h \in C^2([-4r,4r])$ satisfying
           $h(0) = h'(0) = 0$ and $|h''(t)| \leq (2r)^{-1}$ for $t \in
           (-2r,2r)$.} \vspace{1mm}\\
         We parametrize $\partial \Omega$ by arc length via a closed
         curve
         $\gamma : [-\frac12 P(\Omega),\frac12 P(\Omega)] \to \R^2$ of
         class $C^2$ such that we have
         $\nu(\gamma(t)) = \dot \gamma ^\perp (t)$ for the outer unit
         normal $\nu$ to $\Omega$.  For convenience, we then extend
         $\gamma$ periodically to the whole of $\R$. The curvature
         bound \eqref{bound_curvature} immediately gives
	\begin{align}\label{to_be_expanded}
          |\dot \gamma (t) - \dot \gamma (s)| \leq (K r)
          ^{-1}|t-s| \qquad \forall t,s \in \R.
	\end{align}
        
	Without loss of generality, for
        $t \in [-\frac12 P(\Omega),\frac12 P(\Omega)]$ we may assume
        that $\dot \gamma(t)= e_1$, see Figure \ref{fig:Z} for an
        illustration. Note that the latter implies that
        $S(\gamma(t)) = \operatorname{Id}$.  In that situation, we can
        square and expand estimate \eqref{to_be_expanded} for
        $|s - t| \leq \sqrt{2 - \sqrt{2}}K r $ to obtain
	\begin{align}\label{graph_estimate}
          e_1 \cdot \dot \gamma(s) \geq 1 - \frac{1}{2}(K r)
          ^{-2}|s - t|^2 \geq \frac{1}{\sqrt{2}}. 
	\end{align} 
	Therefore, if we choose $K$ large enough to have
        $\sqrt{2 - \sqrt{2}}K \geq 4$ we can find
        $h \in C^2([-4r, 4r])$ such that the segment $G(\gamma(t))$ is
        the graph of $h$ shifted by $\gamma(t)$. In particular, we
        have $\dot \gamma = \frac{1}{\sqrt{1+| h'|^2 }} (1, h')$.
        Estimate \eqref{graph_estimate} also implies
        $\sqrt{1+| h'(t) |^2} \leq \sqrt{2}$ for all $t \in (-4r,4r)$,
        and thus $h$ is $1$-Lipschitz.  Using the representation of
        the curvature in graph coordinates
        \eqref{curvature_in_graph_coordinates} we obtain
	\begin{align}
          \| h''\|_\infty = \left\| \kappa (1+ |h'|^2)
          ^\frac{3}{2} \right\|_\infty \leq \left(
          2^{-\frac{3}{2}} K r \right)^{-1} \leq (2r)^{-1} 
	\end{align}
	for $K$ big enough universal.

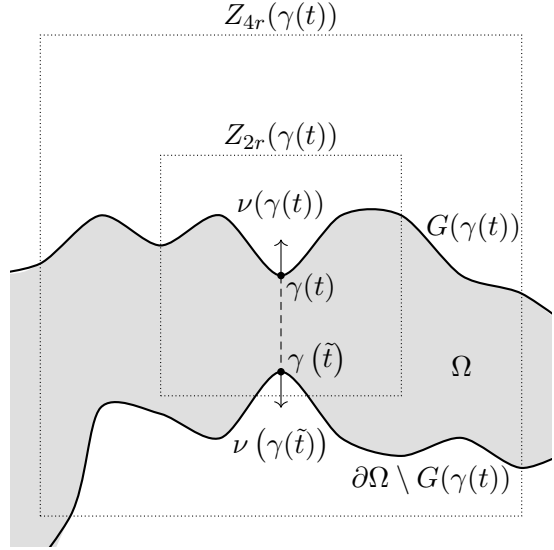
\begin{figure}
	\centering
	\begin{tikzpicture}[scale=.8]
		\begin{scope}
			\clip (-4.5,-4.5) -- (-4.5,5) -- (4.5,5) -- (4.5,-4.5) -- cycle;
			\clip plot [smooth cycle] coordinates {(0,-10)(-10,0)(-5,0)(-4,.2)(-3,1)(-2,.5)(-1,1)(0,0) (1,1)(2,1)(3,0)(4,-.3)(5,-1)(6,-1)(15,-1)};
			\clip plot [smooth cycle] coordinates {(0,10)(-10,-6)(-5,-6)(-3.5,-4)(-3,-2.2)(-2,-2.3)(-1,-2.7)(0,-1.6) (1,-2.7)(2,-3)(3,-2.7)(4,-3.2)(5,-2.8)(6,-2.8)(10,-2.8)(20,-1)};
			\fill[opacity=.5,color=lightgray] (-4.5,-4.5) -- (-4.5,5) -- (4.5,5) -- (4.5,-4.5) -- cycle;
		\end{scope}
		\clip (-4.5,-4.5) -- (-4.5,5) -- (4.5,5) -- (4.5,-4.5) -- cycle;
		\draw[densely dotted] (-4,-4) -- (4,-4) -- (4,4) -- (-4,4) -- cycle;
		\draw[densely dotted] (-2,-2) -- (2,-2) -- (2,2) -- (-2,2) -- cycle;
		\draw[thick] plot [smooth] coordinates {(-5,0)(-4,.2)(-3,1)(-2,.5)(-1,1)(0,0) (1,1)(2,1)(3,0)(4,-.3)(5,-1)};
		\draw[thick] plot [smooth] coordinates {(-5,-6)(-3.5,-4)(-3,-2.2)(-2,-2.3)(-1,-2.7)(0,-1.6) (1,-2.7)(2,-3)(3,-2.7)(4,-3.2)(5,-2.8)};
		\node[circle,fill=black, inner sep=1pt] at (0,0) {};
		\node at (.5,-.2) {$\gamma(t)$};
		\node[circle,fill=black, inner sep=1pt] at (0,-1.6) {};
		\node at (.6,-1.4) {$\gamma\left(\tilde t\right)$};
		\draw[densely dashed] (0,0) -- (0,-1.6);
		\draw[->] (0,0) -- (0,.6) node[label=above:{$\nu(\gamma(t))$}]{};
		\draw[->] (0,-1.6) -- (0,-2.2)
                node[label=below:{$\nu\left(\gamma(\tilde t)
                  \right)$}]{}; 
		\node at (3,-1.5) {$\Omega$};
		\node at (0,2.3) {$Z_{2r}(\gamma(t))$};
		\node at (0,4.3) {$Z_{4r}(\gamma(t))$};
		\node at (3.2,.8) {$G(\gamma(t))$};
		\node at (2.5,-3.4) {$\partial \Omega \setminus G(\gamma(t))$};
	\end{tikzpicture}
	\caption{\label{fig:Z} Sketch of the geometry in Steps 3 and 4
          in the case
          $[\gamma(t), \gamma(\tilde t)] \subset \overline\Omega$,
          where $[\gamma(t), \gamma(\tilde t)]$ denotes the line
          segment connecting $\gamma(t)$ and $\gamma(\tilde
          t)$, shown as the dashed line in the figure. Note that also the case
          $[\gamma(t), \gamma(\tilde t)] \subset
          \overline{\stcomp{\Omega}}$ is possible.}
\end{figure}
	
\vspace{1mm} \textit{Step 4: The set $\Omega$ has a uniform
  $C^2$-boundary at
  scale $2r$.}\vspace{1mm}\\
To this end, it only remains to prove
$\partial \Omega \cap Z_{2r}(\gamma(t)) = G(\gamma(t)) \cap
Z_{2r}(\gamma(t))$ for every
$t\in [-\frac12 P(\Omega),\frac12 P(\Omega)] $.  The fact that
$S(x)(\Omega -x)\cap (-2r,2r)^2$ is the subgraph of the function $h$,
in the setting of Step 3, then follows from $\nu(\gamma(t)) = e_2$.
 
For $t\in [-\frac12 P(\Omega),\frac12 P(\Omega)]$ we abbreviate the
Hausdorff distance
\begin{align}
  d(t) := \operatorname{dist}\big(\{\gamma(t)\}, \partial \Omega
  \setminus G(\gamma(t))   \big). 
\end{align}
In terms of this function, the statement of the claim reduces to
proving $\inf_t d(t) \geq 2\sqrt{2} \, r$ since
$2\sqrt{2} \, r = \sup_{y \in Z_{2r}(\gamma(\tilde t))}
|y-\gamma(\tilde t)|$ for all
$\tilde t \in [-\frac12 P(\Omega),\frac12 P(\Omega)]$.
	
Towards a contradiction we assume the opposite.  It is easy to see
that $d$ is continuous and thus the set
\begin{align}
  A:=\left\{t\in
  \left[-\tfrac12 P(\Omega),\tfrac12 P(\Omega)\right] : d(t)
  < 2\sqrt{2} \, r\right\}
\end{align}
is open.  As a result, $d$ achieves its minimum on $A$, so that we can
choose $ t, \tilde t \in [-\frac12 P(\Omega),\frac12 P(\Omega)] $ such
that $|\gamma(t) - \gamma(\tilde t)| = d(t) = \min d$ and
$\gamma(\tilde t) \in \partial \Omega \setminus G(\gamma(t)) $, see
Figure \ref{fig:Z}.
	
By
$\operatorname{dist}(\{\gamma(t)\}, \overline{G(\gamma(t))} \setminus
G(\gamma(t)) \geq 4r$, we also get
$\gamma(\tilde t) \in \partial \Omega \setminus
\overline{G(\gamma(t))}$.  Therefore, the minimization over $t$
implies that the vector $\gamma(t) - \gamma(\tilde t)$ must be
parallel to the outer normal $\nu(\gamma(t))$, a statement which we
abbreviate by $\nu(\gamma(t)) \parallel \gamma(t) - \gamma(\tilde t)$.
Similarly, as
$\ball{\gamma(t)}{2\sqrt{2} \, r} \subset\subset Z_{4r}(\gamma(t))$ we
also have that
$\nu(\gamma(\tilde t)) \parallel \gamma(t) - \gamma(\tilde t)$.  Thus
we either have $\nu(\gamma(t)) = -\nu(\gamma(\tilde t))$ or
$\nu(\gamma(t)) =\nu(\gamma(\tilde t))$.  However, the latter can be
excluded, since then the line segment
$\left[\gamma(t), \gamma(\tilde t) \right]$ would cross
$\partial \Omega \setminus G(\gamma(t))$ at least once in between the
two endpoints, contradicting minimality of $d(t)$.  As a result, we
get $|\nu(\gamma(t)) - \nu(\gamma(\tilde t)) | =2$.

              Let $I$ be the shorter of the two intervals
              $[\min(t,\tilde t), \max(t,\tilde t)]$ and
              $[\max(t,\tilde t), \min(t,\tilde t) + P(\Omega)]$.
              Using
              $2= |\nu(\gamma(t)) - \nu(\gamma(\tilde t)) | \leq
              \|\kappa\|_\infty | I | $ together with the bound for
              the curvature \eqref{bound_curvature} we get
	\begin{align}\label{length_of_cut}
		\frac{P(\Omega)}{2} \geq  | I |   \geq \frac{2 Kr}{1-\lambda}.
	\end{align}
        Next, let $F_2$ be the interior of the Jordan curve
        $\tilde \gamma$ defined as the concatenation of $\gamma|_I$
        and the segment $\left[\gamma(t), \gamma(\tilde t) \right]$,
        see Figure \ref{fig:F_2}.  Under the assumption
        $|\log \delta|^{-1} \leq \sigma
        \frac{(1-\lambda)^2}{\lambda}$, Lemma
        \ref{lem:convergence_rates} implies that after a suitable
        translation we have
        \begin{align}
          \label{eq:gammaI}
          \gamma(I) \subset \Sph^1 + \ball{0}{\eps(\delta)}, 
        \end{align}
        where
        \begin{align}
          \eps(\delta)
          :=\frac{C\lambda^\frac{1}{2}}{(1-\lambda)^\frac{1}{2}
          |\log\delta|^\frac{1}{2}}. 
        \end{align}

        	\begin{figure}
		\centering
		\begin{tikzpicture}[scale=2]
			\begin{scope}
				\clip (3.5,1)--(6,1) -- (6,-1) -- (3.5,-1);
				\draw plot [smooth] coordinates {(2.5,3)(4,.3)(5,.8)(5.4,.2)(4.7,-.8)(4,-.2)(2.5,-3)};
				\draw[densely dashed] (4.1,.245)--(4.1,-.14);
			\end{scope}
				\node[circle,fill=black, inner sep=1pt,label=above:{$\gamma(t)$}] at (4.1,.245) {};
				\node[circle,fill=black, inner sep=1pt,label=below:{$\gamma(\tilde t)$}] at (4.1,-.14) {};
				\node at (3.6,0.05) {$\left[\gamma(t), \gamma(\tilde t) \right]$};
				\node at (4.7,0) {$F_2$};
				\node at (3.5,1.15) {$\partial \Omega$};
		\end{tikzpicture}
		\caption{\label{fig:F_2} Sketch of $F_2$, the interior of the Jordan curve obtained by concatenating $\left[\gamma(t), \gamma(\tilde t) \right]$ with the shorter part of $\partial \Omega$ between $\gamma(t)$ and $\gamma(\tilde t)$.}
              \end{figure}
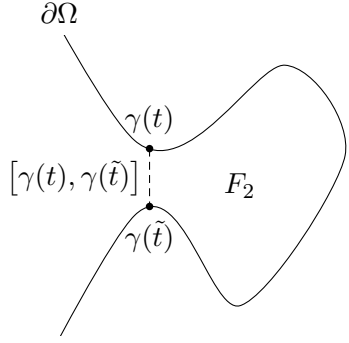

        By \eqref{eq:gammaI} and convexity of
        $\ball{0}{1-\eps(\delta)}$, we either have
        $ \ball{0}{1-\eps(\delta)} \cap F_2 = \emptyset $ or
        $\ball{0}{1-\eps(\delta)} \subset F_2$.  The latter would
        imply $P(F_2) \geq 2\pi (1-\eps(\delta)) > \frac{7}{4}\pi$ for
        $\eps(\delta) < \frac{1}{8}$, which can be taken to hold under
        our assumptions on $\delta$.  However, this would contradict
	\begin{align}
          P(F_2) = I + |\gamma(t) -\gamma(\tilde t)| \leq
          \frac{P(\Omega)}{2} + 2  \sqrt{2} \, r \leq \frac{3}{2}\pi +
          \sqrt{2}(1-\lambda)r_0 < \frac{7}{4}\pi, 
	\end{align}
	obtained with help of estimate \eqref{length_of_cut}, the
        assumption $d(t) <2\sqrt{2} \, r$ and estimate
        \eqref{bound_perimeter} provided
        $r_0 < \frac{\pi}{4\sqrt{2}}$. As a result, we get
      $ F_2 \subset \Sph^1 + \ball{0}{\eps(\delta)}$ and, in
      particular,
	\begin{align}
          |F_2| \leq
          \frac{C\lambda^\frac{1}{2}}{(1-\lambda)^\frac{1}{2}
          |\log\delta|^\frac{1}{2}}\leq \sigma_0 (1-\lambda)^2,
	\end{align}
	provided
        $|\log \delta|^{-1} \leq \sigma
        \frac{(1-\lambda)^{5}}{\lambda}$ for $\sigma>0$ small enough
        and where $\sigma_0>0$ is the constant of the non-optimality
        Lemma \ref{lem:non-optimality}.
        
        Proposition
        \ref{prop:a_priori_lower_bound} (as usual we choose $|F_2|$
        small enough such that the logarithm is positive), estimate
        \eqref{length_of_cut} and the assumption
        $|\gamma(\tilde t) - \gamma(t)| = d(t) \leq 2 \sqrt{2} \, r$
        furthermore imply
	\begin{align}
          \begin{split}
            \frac{1}{2} E(F_2) \geq \frac{1}{2}(1-\lambda) P(F_2) &=
            \frac12 (1-\lambda)(d(t) + |I|) \\
            &\geq \frac{1}{2}(1-\lambda) | I | \geq K r \geq 2 |
            \gamma(\tilde t) - \gamma(t) |
          \end{split}
        \end{align}
	under the condition that $K \geq 4\sqrt{2}$.  If
        $(\gamma( t), \gamma(\tilde t)) := \gamma(t) +
        (0,1)(\gamma(\tilde t) -\gamma(t)) \subset \Omega$, as in
        Figure \ref{fig:Z}, we can therefore use Lemma
        \ref{lem:non-optimality} with $F_1:= \Omega\setminus F_2$ and
        $\Sigma = 2 | \gamma(\tilde t) - \gamma(t) |$ to deduce that
        $\Omega$ cannot have been the minimizer.  If we had
        $(\gamma(\tilde t), \gamma(t)) \subset \stcomp{\Omega}$ we
        instead use $F_1:= \Omega \cup F_2$.  This exhausts all
        possible cases, since we have already seen above that
        $(\gamma( t), \gamma(\tilde t)) \cap \partial \Omega =
        \emptyset$.  This concludes the proof of the claim.

	\vspace{1mm}
        \textit{Step 5: Conclusion.}\vspace{1mm}\\
	We can now iterate Proposition
        \ref{prop:improvement_of_flatness} and the result of Step 4
        until we know that the minimizers have uniform
        $C^2$-boundaries at scale $r=(1-\lambda)r_0$ for a universal
        $r_0>0$.  Then the estimate of Proposition
        \ref{prop:improvement_of_flatness} and the estimate
        \eqref{remainder} combine to
	\begin{align}
          \osc \kappa \leq \frac{C \lambda}{(1-\lambda)^2|\log\delta|}
	\end{align}
	for a universal $C>0$.  As there exists $x \in \partial
        \Omega$ such that $\kappa(x) \geq \frac{2}{3}$, see Step 2, we
        have $\kappa >0$ on $\partial \Omega$ under the assumption
        $|\log\delta|^{-1} \leq \sigma \frac{(1-\lambda)^{5}}{\lambda}$.
        Therefore $\Omega$ is convex.
\end{proof}

The final ingredient of the proof of Theorem
\ref{thm:characterization_minimizers} is a quantitative rigidity
result in the $C^1$-topology for the circle in terms of the
oscillation of the curvature.  We note that in general space
dimensions this is the question of stability for the famous
Aleksandrov's soap bubble theorem \cite{aleksandrov58}, stating that
the only compact embedded hypersurfaces of constant mean curvature are
spheres. As such the problem has attracted a significant amount of
attention in recent years.  We only single out a few contributions:
Estimates controlling the distance to a circle in a $C^1$-sense are
\cite[Theorem 1.1]{ciraolo17} and \cite[Corollary 1.2]{ciraolo18}.
However, these statements require control of $(\osc \kappa)^\alpha$
for some (explicitly given) exponent $\alpha<1$, which turn out to be
suboptimal at least in two space dimensions.  On the other hand,
\cite[Theorem 1.1]{ciraolo18} and \cite[Theorem 4.3]{magnanini17}
provide a $C^0$-estimate in terms of $\osc \kappa$, which, however, is
still not enough for our purposes.  We therefore give our own version,
the proof of which turns out to be an elementary calculation.

\begin{lemma}\label{lemma:convex_sets}
  Let $P > 0$ and let
  $\gamma :[-\frac{1}{2}P, \frac{1}{2}P] \to \R^2 $ be a $C^2$-regular
  Jordan curve, parametrized by arc length and oriented such that
  $\ddot\gamma (t) = \kappa(t) \dot\gamma^\perp(t)$.  Let
  $R= \frac{P}{2\pi}$ be the radius of the circle whose length equals
  the length of $\gamma$.  Then there exists $t_0 \in [0,P)$ and
  $x_0 \in \R^2$ such that for all
  $t \in [-\frac{1}{2}P, \frac{1}{2}P]$ we have
	\begin{align}
          \left| \gamma(t) - x_0 - R\begin{pmatrix}
              \cos(R^{-1}(t-t_0)) \\ \sin(R^{-1}(t-t_0))
		\end{pmatrix}\right| &  \leq \frac{P^2}{4}\osc
                                       \kappa, \label{parametrization_close}\\ 
          \left| \dot\gamma(t) - \begin{pmatrix}\label{tangents_close}
              -\sin(R^{-1}(t-t_0)) \\ \cos(R^{-1}(t-t_0))\end{pmatrix}
          \right| &  \leq \frac{P}{2} \osc \kappa. 
	\end{align}
\end{lemma}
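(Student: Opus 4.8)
The plan is to pass from $\gamma$ to its tangent angle and reduce everything to a one–dimensional comparison of that angle with a linear function. First I would fix a continuous lift $\theta \in C^1([-\tfrac12 P,\tfrac12 P])$ of the tangent direction, i.e.\ $\dot\gamma = (\cos\theta,\sin\theta)$; then $\dot\gamma^\perp = (-\sin\theta,\cos\theta)$ and the hypothesis $\ddot\gamma = \kappa\dot\gamma^\perp$ forces $\dot\theta = \kappa$. Since $\gamma$ is a positively oriented Jordan curve, the theorem of turning tangents gives $\int_{-P/2}^{P/2}\kappa\,\intd t = 2\pi$, so the average of $\kappa$ equals $\tfrac{2\pi}{P} = R^{-1}$; in particular $R^{-1}$ lies between $\min\kappa$ and $\max\kappa$, hence $|\kappa(t)-R^{-1}| \le \osc\kappa$ for all $t$. (This normalization is the only place where positive orientation enters, and it holds in the intended application to convex $\Omega$.)

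Next I would construct the model circle and prove \eqref{tangents_close}. Put $u(t) := \theta(t) - R^{-1}t$, so $\dot u = \kappa - R^{-1}$ has vanishing mean on $[-\tfrac12 P,\tfrac12 P]$; consequently $u(\tfrac12 P) = u(-\tfrac12 P)$ and $u$ extends to a continuous $P$-periodic function. Letting $\bar u$ be its average, set $t_0 := R(\tfrac{\pi}{2}-\bar u)$ and $\theta_c(t) := R^{-1}(t-t_0)+\tfrac{\pi}{2}$, so that $(\cos\theta_c,\sin\theta_c)(t) = (-\sin(R^{-1}(t-t_0)),\cos(R^{-1}(t-t_0)))$ is exactly the tangent of the target circle, and $v := \theta - \theta_c = u - \bar u$ is $P$-periodic with zero mean and $|\dot v| = |\kappa-R^{-1}| \le \osc\kappa$. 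Since $v$ has zero mean it vanishes at some $s^\ast$, and using periodicity $|v(t)| = |v(t)-v(s^\ast)| \le \osc\kappa\cdot\min(|t-s^\ast|,\,P-|t-s^\ast|) \le \tfrac{P}{2}\osc\kappa$. The elementary identity $|(\cos a,\sin a)-(\cos b,\sin b)| = 2|\sin\tfrac{a-b}{2}| \le |a-b|$ then yields $|\dot\gamma(t) - (-\sin(R^{-1}(t-t_0)),\cos(R^{-1}(t-t_0)))| \le \tfrac{P}{2}\osc\kappa$, which is \eqref{tangents_close}.

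For the position bound I would integrate. Let $c(t) := x_0 + R(\cos(R^{-1}(t-t_0)),\sin(R^{-1}(t-t_0)))$, whose derivative is the model tangent just analyzed, and choose $x_0$ so that $\int_{-P/2}^{P/2}(\gamma-c)\,\intd t = 0$ (possible since varying $x_0$ shifts this integral by $Px_0$). Then $w := \gamma - c$ is continuous and $P$-periodic — both $\gamma$ and $c$ are closed curves of period $P$ — has zero mean, and $|\dot w| = |\dot\gamma-\dot c| \le \tfrac{P}{2}\osc\kappa$ by \eqref{tangents_close}; as before $|w(t)-w(s)| \le \tfrac{P}{2}\osc\kappa\cdot d(s,t)$ where $d$ is the distance on $\R/P\Z$. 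The zero–mean property gives $w(t) = \tfrac1P\int_{-P/2}^{P/2}(w(t)-w(s))\,\intd s$, so $|w(t)| \le \tfrac1P\cdot\tfrac{P}{2}\osc\kappa\int_{-P/2}^{P/2} d(s,t)\,\intd s = \tfrac1P\cdot\tfrac{P}{2}\osc\kappa\cdot\tfrac{P^2}{4} = \tfrac{P^2}{8}\osc\kappa \le \tfrac{P^2}{4}\osc\kappa$, which is \eqref{parametrization_close}.

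The argument is entirely elementary, as the paper indicates; the only steps that need care are the bookkeeping that fixes $t_0$ and $x_0$ so that $v$ and $w$ both have vanishing period-average, and the systematic use of periodicity to replace the naive Lipschitz estimate $|t-s|$ by the circle distance $\min(|t-s|,P-|t-s|)$ — this is what produces the constants $\tfrac{P}{2}$ and $\tfrac{P^2}{4}$ in the statement (and in fact a slightly better constant in \eqref{parametrization_close}).
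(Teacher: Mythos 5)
Your proof is correct, but it takes a genuinely different route from the paper's. The paper fixes the phase of the comparison circle by matching tangents at $t=0$ (assuming $\dot\gamma(0)=(0,1)$), sets $f(t):=\gamma(t)-R(\cos(R^{-1}t),\sin(R^{-1}t))$, and derives the identity $\ddot f=R^{-1}\dot f^\perp+(\kappa-R^{-1})\dot\gamma^\perp$; since the first term is orthogonal to $\dot f$ it does not change $|\dot f|$, which yields $\bigl|\tfrac{\intd}{\intd t}|\dot f|\bigr|\le\osc\kappa$ a.e., and two integrations from the anchor $\dot f(0)=0$ give the constants $\tfrac{P}{2}$ and $\tfrac{P^2}{4}$. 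You instead linearize at the level of the tangent angle $\theta$ with $\dot\theta=\kappa$, normalize both the phase $t_0$ and the center $x_0$ by zero-mean conditions, and exploit $P$-periodicity to replace the naive Lipschitz estimate by the circle distance. Both arguments rest on the same input from the theorem of turning tangents (Gauss--Bonnet in the paper), namely that $R^{-1}$ is the mean of $\kappa$ and hence $\|\kappa-R^{-1}\|_\infty\le\osc\kappa$. What your route buys: a cleaner scalar comparison (no need to discuss a.e.\ differentiability of the Lipschitz function $|\dot f|$) and the sharper constant $\tfrac{P^2}{8}$ in \eqref{parametrization_close}; the only bookkeeping you should add is that your $t_0=R(\tfrac{\pi}{2}-\bar u)$ may be reduced modulo $P$ to land in $[0,P)$ without changing the circle parametrization, since shifting $t_0$ by $P$ shifts the argument by $2\pi$.
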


\begin{proof}
  By the Gauss-Bonnet theorem, we have
  $\int_{-\frac{P}{2}}^{\frac{P}{2}} \kappa(\gamma(t)) \intd t =2
  \pi$.  Therefore the mean value theorem implies that there exists
  $t_1 \in [-\frac{1}{2}P, \frac{1}{2}P]$ such that
  $\kappa(t_1) = \frac{2\pi}{P} = R^{-1}$.  In particular, we obtain
  $\|\kappa - R^{-1}\|_\infty \leq \osc \kappa$.

  Without loss of generality we may assume
  $ \dot\gamma (0) = \begin{pmatrix} 0 \\ 1
\end{pmatrix}$, which will turn out to fix the choice $t_0=0$.
	We abbreviate
		\begin{align}
			f(t) := \gamma (t) -
				R\begin{pmatrix}
					 \cos(R^{-1}t) \\
					\sin(R^{-1}t)
			\end{pmatrix}
		\end{align}
	and calculate
	\begin{align}
          \dot f(t) = \dot \gamma (t) -
				\begin{pmatrix}
                                  - \sin(R^{-1}t) \\
                                  \cos(R^{-1}t)
			\end{pmatrix}
		\end{align}
	and
	\begin{align}
          \ddot f(t) = \kappa(t) \dot \gamma^\perp(t) +
          R^{-1}\begin{pmatrix} 
            \cos(R^{-1}t) \\
            \sin(R^{-1}t)
          \end{pmatrix} = R^{-1} \dot f^\perp(t) + (\kappa(t)
          -R^{-1}) \dot \gamma^\perp(t).
	\end{align}
	
	Therefore we see
	\begin{align}\label{unit_curvature}
          \left| \frac{\intd}{\intd t} \left(\dot f^2(t) \right)
          \right| = \left| 2 (\kappa(t) 
          -R^{-1}) \dot \gamma^\perp(t) \cdot \dot f(t)
          \right| \leq 2  |\dot f(t)| 
          \osc\kappa, 
	\end{align}
	which together with the fact that the Lipschitz-function
        $t\mapsto |\dot f(t)|$ is differentiable almost
        everywhere implies
	\begin{align}
          \left\| \frac{\intd}{\intd t}|\dot f| \right\|_\infty
          \leq    \osc\kappa. 
	\end{align}
	The choice $\dot f (0)=0$ thus implies
	\begin{align}
          \|\dot f\|_\infty \leq \frac{P}{2} \osc\kappa.
	\end{align}
	Integrating once more, we see that
	 there exists a constant $x_0 \in \R^2$ such that
	\begin{align}
		\left| \gamma(t) -x_0 - R\begin{pmatrix}
		\cos(R^{-1}t) \\ \sin(R^{-1}t)
              \end{pmatrix} \right| \leq \frac{P^2}{4} \osc\kappa
	\end{align}
	for all $t \in \left[-\frac{1}{2}P,\frac{1}{2}P\right]$.
\end{proof}

We are now in a position to prove that minimizers are disks.

\begin{proof}[Proof of Theorem \ref{thm:characterization_minimizers}]
  The existence statement of the theorem is Corollary
  \ref{cor:exist_subcr}. The strategy to prove uniqueness is to argue
  that the remainder term in Proposition
  \ref{prop:improvement_of_flatness} at scale $r=(1-\lambda)r_0$ for a
  universal $r_0>0$ is itself controlled by $\osc \kappa$.  Let
  $\gamma : \left[-\frac{1}{2}P(\Omega),\frac{1}{2}P(\Omega)\right]
  \to \R^2$ be the arc length parametrization of $\partial \Omega$
  oriented as in Lemma \ref{lemma:convex_sets}, and recall that by
  Lemma \ref{lem:nearly_iso} the perimeter $P$ is bounded by a
  universal constant for $\sigma$ sufficiently small.  Furthermore,
  let $R=\frac{P}{2\pi}$ and
  \begin{align}
    \label{eq:gamma0}
    \gamma_0 (t) := x_0 + R \begin{pmatrix} \cos(R^{-1}(t - t_0)) \\
      \sin(R^{-1}(t - t_0))
  \end{pmatrix}
  \end{align}
  be the circle constructed in Lemma \ref{lemma:convex_sets}.  After a
  suitable translation, we may suppose $x_0=0$.
	
  For $t\in \left[-\frac{1}{2}P(\Omega),\frac{1}{2}P(\Omega)\right]$
  fixed and
  $A:=\left\{s \in
    \left[-\frac{1}{2}P(\Omega),\frac{1}{2}P(\Omega)\right]: \gamma(s)
    \not \in \ball{\gamma(t)}{r}\right\}$ we then have
	\begin{align}
          \int_{\partial\Omega \setminus \ball{\gamma(t)}{r}} \nabla
          \Phi_\delta (|y-\gamma(t)|)  \cdot \nu (y)  \intd \Hd^{1} (y)
          = \int_{A}  \nabla \Phi_\delta (|\gamma(s)-\gamma(t)|) \cdot
          \dot \gamma^\perp (s) \intd s.    
	\end{align}
	As a result of Lemma \ref{lemma:convex_sets}, and the
        estimates $|\nabla \Phi_{\delta}(|z|)| \leq r^{-2}$ and
        $| D^2 \Phi_\delta(|z|)| \leq C r^{-3}$ for all
        $|z| > \frac{r}{2}$ and $C>1$ universal, we have
	\begin{align}
          & \quad \Bigg| \int_{A}  \nabla \Phi_\delta (|\gamma(s) -
            \gamma(t)|) \cdot \dot \gamma^\perp (s) \intd s -
            \int_{A} \nabla \Phi_\delta (|\gamma_0(s)-\gamma_0(t)|)
            \cdot \dot \gamma_0^\perp (s) \intd s \Bigg| \notag\\
          & \leq \int_A \left| \nabla \Phi_\delta(|\gamma (s) -
            \gamma(t)|) \cdot (\dot \gamma^\perp(s) - \dot
            \gamma_0^\perp(s)) \right| \intd s \notag \\
          & \quad + \int_A \left| \big(\nabla \Phi_\delta(|\gamma
            (s) - \gamma(t)|) - \nabla \Phi_\delta(|\gamma_0 (s)
            - \gamma_0(t)|) \big) \cdot \dot
            \gamma_0^\perp(s) \right| \intd s \\
          & \leq C r^{-2} \osc \kappa\notag\\
          & \quad  + \int_A \int_0^1 \left|D^2
            \Phi_\delta\big(\theta(\gamma(s) - \gamma(t)) +
            (1-\theta)(\gamma_0 (s) 
            - \gamma_0(t)\big) \right|(|
            \gamma(s) - \gamma_0 (s) | + |\gamma(t) - \gamma_0(t) |)
            \intd \theta  \intd s \notag\\  
          & \leq C (r^{-2} + C r^{-3}) \osc \kappa
            \leq 2 C^2 r_0^{-3} (1-\lambda)^{-3} \osc \kappa.\notag
	\end{align}
	
	Next, let
	\begin{align}
          A_0& := \left\{s \in
               \left[-\tfrac{1}{2}P(\Omega),\tfrac{1}{2}P(\Omega)\right]:
               \gamma_0(s) \not \in \ball{\gamma_0(t)}{r}\right\},\\  
          A_1& :=\{s\in
                    \left[-\tfrac{1}{2}P(\Omega),\tfrac{1}{2}P(\Omega)\right]
                    : |\gamma(s)-\gamma(t)| < r, |\gamma_0(s)- \gamma_0(t)|
                    \geq r \}, \\ 
          A_2&:=\{s\in
                    \left[-\tfrac{1}{2}P(\Omega),\tfrac{1}{2}P(\Omega)\right]
                    : |\gamma(s)-\gamma(t)| \geq r, |\gamma_0(s)-
                    \gamma_0(t)| < r \}, 
	\end{align}
	so that we have $A \Delta A_0 = A_1 \cup A_2$.  For
        $s\in A_1$ we obtain by the estimate
        \eqref{parametrization_close}, the bound
        \eqref{estimate_oscillation} and our assumption on $\delta$
        that
	\begin{align}
		r\leq |\gamma_0(s)-\gamma_0(t)| \leq
          |\gamma(s)-\gamma(t)|  + |\gamma_0(s)-\gamma(s)|  +
          |\gamma_0(t)-\gamma(t)| \leq r + c\osc \kappa \leq 2 r.
	\end{align}
        Therefore, we have
        $\gamma_0(A_1) \subset \ball{\gamma_0(t)}{r+c\osc\kappa}
        \setminus \ball{\gamma_0(t)}{r}$ and, hence,
        $|A_1| \leq C \osc \kappa$ if $r_0$ is chosen small
        enough universal. Similarly, for $s \in A_2$ we have for
          $C > 0$ universal
          \begin{align}
            \tfrac12 r \leq r - c \osc \kappa \leq |\gamma(s) -
            \gamma(t)| - |\gamma(s) - \gamma_0(s)| - |\gamma(t) -
            \gamma_0(t)| \leq |\gamma_0(s) - \gamma_0(t)| \leq r,             
          \end{align}
          so that
          $\gamma_0(A_2) \subset B_r(\gamma_0(t)) \setminus B_{r
            - c \osc \kappa}(\gamma_0(t_0))$ and, hence,
          $|A_2| \leq C \osc \kappa$ as well. Combining this
        observation with the bound
        $|\nabla \Phi_{\delta}(|z|)| \leq |z|^{-2}$ gives
	\begin{align}
          \label{eq:A1}
          \left| \int_{A_1 \cup A_2}  \nabla \Phi_\delta
          (|\gamma_0(s)-\gamma_0(t)|)  \cdot \dot \gamma_0^\perp (s)
          \intd s \right| \leq C r^{-2} \osc \kappa  =
          C r_0^{-2} (1-\lambda)^{-2} \osc \kappa.  
	\end{align}	
        Thus we have
	\begin{align}
             \Bigg| \int_{A}  \nabla \Phi_\delta
             (|\gamma_0(s)-\gamma_0(t)|) \cdot  \dot \gamma_0^\perp
             (s)\intd s   - \int_{A_0}  \nabla \Phi_\delta
             (|\gamma_0(s)-\gamma_0(t)|) \cdot \dot \gamma_0^\perp (s)
             \intd s \Bigg| 
             \leq C(1-\lambda)^{-2} \osc \kappa.
	\end{align}

	Recalling that $P(\Omega) = 2\pi R$, we may write the integral
        below in a more geometric way:
	\begin{align}
          \int_{ A_0}  \nabla \Phi_\delta (|\gamma_0(s)-\gamma_0(t)|)
          \cdot \dot \gamma_0^\perp (s) \intd s  =
          \int_{R\,\Sph^1\setminus\ball{\gamma_0(t)}{r}} \nabla
          \Phi_\delta (|y-\gamma_0(t)|)  \cdot \nu_0 (y)  \intd \Hd^{1}
          (y), 
	\end{align}
	where $\nu_0(y)$ is the outer unit normal of $R\, \Sph^1$ at $y$.
	Therefore, the above estimates imply
	\begin{align}
	   \begin{split}
             \Bigg| \int_{\partial\Omega \setminus
               \ball{\gamma(t)}{r}} \nabla \Phi_\delta (|y-\gamma(t)|)
             \cdot \nu (y)  \intd \Hd^{1} (y)   -
             \int_{R\,\Sph^1\setminus\ball{\gamma_0(t)}{r}} \nabla
             \Phi_\delta (|y-&\gamma_0(t)|)  \cdot \nu_0 (y)  \intd
             \Hd^{1} (y) \Bigg| \\ 
             & \leq C(1-\lambda)^{-3} \osc \kappa.
	  \end{split}
	\end{align}
	
	Finally, note that the integral over
        $R\,\Sph^1 \setminus B_r(\gamma_0(t))$ is independent of
        $t$ by symmetry of the circle, so that it can play the role of
        the constant $a$ in the estimate of Proposition
        \ref{prop:improvement_of_flatness}.  As a result, we get
	\begin{align}
          \osc \kappa  \leq \frac{C\lambda}{(1-\lambda)^4|\log\delta|}
          \osc\kappa, 
	\end{align}
	which implies $\osc \kappa = 0$ under the conditions on
        $\delta$.  Another application of Lemma
        \ref{lemma:convex_sets} shows that $\Omega$ must be a disk and
        the volume constraint $|\Omega|=\pi$ then gives that $\Omega$
        has radius one.
\end{proof}

\section{The critical case $\lambda =1$}\label{sec:critical}
We finally turn to analyzing the critical case.  The first step is to
prove that the pointwise limit of $|\log\delta| E_{1,\delta}$ exists
and coincides with the $L^1$-$\Gamma$-limit.  To this end, we observe
that the inequality used in Step 1 in the proof of Proposition
\ref{prop:a_priori_lower_bound} allows us to decouple the dependence
of the energy $|\log\delta| E_{1,\delta}$ and the sets $\Omega_\delta$
on $\delta$.  In order to identify the limit, we re-use Step 1 of the
proof of Lemma \ref{lem:energy_on_boundary}.

\begin{proof}[Proof of Theorem \ref{thm:critical_convergence}]
  \textit{Step 1: Compactness for sequences of uniformly bounded sets
    with finite energy.}\vspace{1mm}\\ 
  Let $\Omega_{\delta_n} \subset \ball{0}{R}$ for some $R>0$ be a
  sequence such that
	\begin{align}
          \limsup_{n\to \infty} |\log\delta_n|
          E_{\lambda,\delta_n}(\Omega_{\delta_n}) \leq M < \infty 
	\end{align}
	for $M>0$ and with $\delta_n \to \infty$ as $n\to \infty$.  If
        we had $ P(\Omega_{\delta_n}) > \frac{\pi m}{\delta_n}$ along
        some subsequence (not relabeled), then Proposition
        \ref{prop:a_priori_lower_bound} would imply along a further
        subsequence that
	\begin{align}
          \lim_{n\to \infty} |\log\delta_n| P(\Omega_{\delta_n}) \leq
          2 \limsup_{n\to \infty}  |\log\delta_n|
          E_{\lambda,\delta_n}(\Omega_{\delta_n}) \leq 2 M. 
	\end{align}
	However, this gives the contradiction $P(\Omega_{\delta_n})
        \to 0$ as $n\to \infty$, as
          $\Omega_{\delta_n} \in \mathcal A_m$.  Therefore, we have
        $ P(\Omega_{\delta_n}) \leq \frac{\pi m}{\delta_n}$ for all
        $n \in \N$ large enough.  Proposition
        \ref{prop:a_priori_lower_bound} then gives
	\begin{align}
          \limsup_{n\to \infty}\, \log\left(
          \frac{P(\Omega_{\delta_n})}{e\pi m} \right)
          P(\Omega_{\delta_n}) \leq \limsup_{n\to \infty}
          |\log\delta_n| E_{\lambda,\delta_n}(\Omega_{\delta_n}) \leq
          M. 
	\end{align}
	As a result of $x \log\left(\frac{x}{e\pi m} \right) \to
        \infty$ as $x \to \infty$, we get a bound
        $\limsup_{n\to \infty} P(\Omega_{\delta_n}) \leq \widetilde M$
        for some $\widetilde M<\infty$.  As we have the assumption
        $\Omega_{\delta_n} \subset \ball{0}{R}$ we may apply the
        compact embedding theorem for $BV$-functions \cite[Theorem
        12.26]{maggi} to obtain a set $\Omega \subset \mathcal A_m$
        such that $|\Omega_{\delta_n} \Delta \Omega| \to 0$ as
        $n\to \infty$ along some subsequence.

\vspace{1mm}
  \textit{Step 2: Monotonicity for fixed sets.}\vspace{1mm}\\
  In the first step of the proof of Proposition
  \ref{prop:a_priori_lower_bound} we proved
	\begin{align}
          \int_{\R^2} \int_{\ball{0}{\tilde \delta}} |
          \chi_\Omega(x+z) - \chi_\Omega(x)|^2
          \frac{g_\delta(|z|)}{|z|^3} \intd z \intd x \leq 4 \left(
          |\log\delta| - \left|\log\tilde \delta\right| \right)
          P(\Omega) 
	\end{align}
	for $\delta < \tilde \delta<\frac12$, which is equivalent
        to
	\begin{align}
          \left|\log\tilde \delta\right| E_{1, \tilde \delta}(\Omega)
          \leq |\log\delta| E_{1,\delta}(\Omega).  
	\end{align}
	In particular, the pointwise limit
        $\widetilde E_{1,0}(\Omega) := \lim_{\delta \to 0}|\log\delta|
        E_{1,\delta}(\Omega)$ exists in $\R \cup \{+\infty\}$ for
        every $\Omega \in \mathcal A_m$.
	
	\vspace{1mm} \textit{Step 3: The $\Gamma$-limit is given by
          $\widetilde
          E_{1,0}$.}\vspace{1mm}\\
	We start with proving the lower bound.  Let
        $\Omega_{\delta_n} \in \mathcal{A}_m$ for $\delta_n \to 0$ as
        $n\to \infty$ be such that there exists
        $\Omega \in \mathcal{A}_m$ with
        $\lim_{n\to \infty} | \Omega_{\delta_n} \Delta \Omega| = 0$.
        For every $\tilde \delta >0$ we then have
	\begin{align}
          \liminf_{n \to \infty} |\log\delta_n|
          E_{1,\delta_n}(\Omega_{\delta_n}) \geq \liminf_{n \to \infty}
          |\log \tilde \delta| E_{1,\tilde \delta}(\Omega_{\delta_n})
          \geq  |\log \tilde \delta| E_{1,\tilde \delta}(\Omega) 
	\end{align}
	by lower semi-continuity of the perimeter and continuity of
        the nonlocal term, see estimate
        \eqref{continuity_of_nonlocal_term}.  We obtain the desired
        statement in the limit $\tilde \delta \to 0$, i.e., we have
	\begin{align}
          \liminf_{n\to \infty} |\log\delta_n|
          E_{1,\delta_n}(\Omega_{\delta_n})  \geq \lim_{\tilde \delta \to 0}
          \left|\log\tilde \delta\right| E_{1,\tilde \delta}(\Omega) =
          \widetilde E_{1,0}(\Omega). 
	\end{align}
	The $\Gamma$-$\limsup$ inequality is an immediate consequence
        of
        $\widetilde E_{1,0}$ being a pointwise limit.
        
	\vspace{1mm}
        \textit{Step 4: Prove $\widetilde E_{1,0}(\Omega) =
          E_{1,0}(\Omega)$ for all $\Omega \subset
          \mathcal{A}_m$}.\vspace{1mm}\\ 
	We start with equation \eqref{fubini?}, which gives
	\begin{align}\label{representation_parts}
	  \begin{split}
            \int_{\Omega}
            \int_{\stcomp{\Omega}}\frac{g_\delta(|x-y|)}{|x-y|^3}
            \intd y \intd x & = \int_{\partial^* \Omega} \int_{\Omega}
            \nu(y) \cdot
            \nabla_x \Phi_{\delta}(|y-x|)\intd x \intd \Hd^1(y)\\
            & = - \frac{1}{\delta } \int_{\partial^* \Omega}
            \int_{\Omega\cap\ball{y}{\delta}} \nu(y) \cdot
            \frac{x-y}{|x-y|^2} \intd x \intd \Hd^1(y)\\
            & \quad - \int_{\partial^* \Omega}
            \int_{\Omega\setminus\ball{y}{\delta}} \nu(y) \cdot
            \frac{x-y}{|x-y|^3} \intd x \intd \Hd^1(y)
	  \end{split}
	\end{align}
	by the representation \eqref{Phi_explicit}.  In order to treat
        the second term, for $y\in \partial^\ast\Omega$ we compute
	\begin{align}\label{half_plane}
          - \int_{H_-(y) \cap(\ball{y}{1}\setminus\ball{y}{\delta})}
          \nu(y) \cdot \frac{x-y}{|x-y|^3} \intd x
          & = \int_{\{x_1>0\}
            \cap
            (\ball{0}{1}\setminus
            \ball{0}{\delta})}
            \frac{x_1}{|x|^3}
            \intd x =  2
            |\log\delta|. 
	\end{align}
	Combining equations \eqref{representation_parts} and
        \eqref{half_plane} we see (cf. Figure
          \ref{fig:domains_of_integration}) that
	\begin{align}
	   \begin{split}
             2 |\log\delta| E_{1,\delta}(\Omega) & =-
             \int_{\partial^\ast \Omega}\int_{H_-(y) \cap
               (\ball{y}{1}\setminus\ball{y}{\delta})} \nu(y) \cdot
             \frac{x-y}{|x-y|^3} \intd x \intd \Hd^1(y)\\
             & \quad + \frac{1}{\delta } \int_{\partial^* \Omega}
             \int_{\Omega\cap\ball{y}{\delta}} \nu(y) \cdot
             \frac{x-y}{|x-y|^2} \intd x \intd \Hd^1(y)\\
             & \quad + \int_{\partial^* \Omega}
             \int_{\Omega\setminus\ball{y}{\delta}} \nu(y) \cdot
             \frac{x-y}{|x-y|^3} \intd x \intd \Hd^1(y).
	  \end{split}
	 \end{align}
	 Splitting up the first integral into an integration over
         $\Omega$ and its complement, splitting the third integral
         into a contribution over $\ball{y}{1}$ and its complement,
         and combining the terms we see
	 \begin{align}\label{computation}
	   \begin{split}
             2 |\log\delta| E_{1,\delta}(\Omega) & =
             \frac{1}{\delta } \int_{\partial^* \Omega}
             \int_{\Omega\cap\ball{y}{\delta}} \nu(y) \cdot
             \frac{x-y}{|x-y|^2} \intd x \intd \Hd^1(y)\\
             & \quad + \int_{\partial^* \Omega} \int_{\big(\Omega
               \setminus H_-(y) \big)\cap
               \big(\ball{y}{1}\setminus\ball{y}{\delta}\big)} \nu(y)
             \cdot \frac{x-y}{|x-y|^3} \intd x \intd \Hd^1(y)\\
             & \quad - \int_{\partial^* \Omega}
             \int_{\big((H_-(y)\setminus \Omega\big) \cap
               \big(\ball{y}{1}\setminus\ball{y}{\delta}\big)}\nu(y)
             \cdot \frac{x-y}{|y-x|^3} \intd x \intd \Hd^1(y)\\
             & \quad + \int_{\partial^* \Omega} \int_{\Omega \setminus
               \ball{y}{1} }\nu(y) \cdot \frac{y-x}{|y-x|^3}
             \intd x \intd \Hd^1(y)\\
             & = \frac{1}{\delta } \int_{\partial^* \Omega}
             \int_{\Omega\cap\ball{y}{\delta}} \nu(y) \cdot
             \frac{x-y}{|x-y|^2} \intd x \intd \Hd^1(y)\\
             & \quad + \int_{\partial^* \Omega}
             \int_{\big(\Omega\Delta
               H_-(y)\big)\cap\big(\ball{y}{1}\cap
               \ball{y}{\delta}\big) }\left|\nu(y) \cdot
               \frac{x-y}{|x-y|^3}\right| \intd x \intd \Hd^1(y)\\
             & \quad + \int_{\partial^* \Omega} \int_{\Omega \setminus
               \ball{y}{1} }\nu(y) \cdot \frac{y-x}{|y-x|^3} \intd x
             \intd \Hd^1(y).
	  \end{split}
	\end{align}
	
	If we assume $y \in \partial^* \Omega$ and $\nu(y)=e_1$,
        then we can rewrite
	\begin{align}
          \frac{1}{\delta } \int_{\Omega\cap\ball{y}{\delta}} \nu(y)
          \cdot \frac{x-y}{|x-y|^2} \intd x  =
          \int_{\delta^{-1}(\Omega-y) \cap \ball{0}{1}
          }\frac{x_1}{|x|^2} \intd x. 
	\end{align}
	By the blow-up properties for sets of finite perimeter
        \cite[Theorem 15.5]{maggi}, see also \cite[Chapter
        12.1]{maggi}, we get that
        $\chi(\delta^{-1}(\Omega-y)) \to \chi(\nu(y)\cdot x<0)$
        pointwise almost everywhere after passage to a subsequence
        $\delta_n$ that depends on $y$.  As the map
        $x \mapsto \frac{1}{|x|}$ is integrable on $\ball{0}{1}$ and
        provides a uniform majorant, Lebesgue's dominated convergence
        theorem implies
	\begin{align}
          \lim_{n \to \infty} \int_{\delta_n^{-1}(\Omega-y) \cap
          \ball{0}{1} }\frac{x_1}{|x|^2} \intd x =  \int_{\{x_1<0\}
          \cap \ball{0}{1} }\frac{x_1}{|x|^2} \intd x = - 2.
	\end{align}
	Furthermore, since the limit is independent of the
        subsequence we obtain
	\begin{align}\label{local_convergence}
          \lim_{\delta \to 0} \frac{1}{\delta } \int_{\partial^*
          \Omega} \int_{\Omega\cap\ball{y}{\delta}} \nu(y) \cdot
          \frac{x-y}{|x-y|^2} \intd x \intd \Hd^1(y)  = -2P(\Omega). 
	\end{align}
	
	The second term on the right hand side of \eqref{computation}
        converges by the monotone convergence theorem and we get
	\begin{align}
          \lim_{\delta \to 0} |\log\delta| E_{1,\delta}(\Omega)    =
          -P(\Omega)
          & + \frac12 \int_{\partial^* \Omega}
            \int_{\left(\Omega\Delta
            H_-(y)\right)\cap\ball{y}{1} }\left|\nu(y)
            \cdot \frac{x-y}{|x-y|^3}\right| \intd x \intd
            \Hd^1(y)\\ 
          & \quad +  \frac12 \int_{\partial^* \Omega}
            \int_{\Omega \setminus 
            \ball{y}{1} }\nu(y) \cdot \frac{y-x}{|y-x|^3} \intd x
            \intd \Hd^1(y). 
            \qedhere
	\end{align}
\end{proof}

As a next step we re-derive the representation of the $\Gamma$-limit
used by Bernoff and Kent-Dobias \cite{kent-dobias15}.

\begin{proof}[Proof of Proposition \ref{prop:representations_limit}]
	Here, we use the representation
	\begin{align}
          |\log\delta| E_{1,\delta}(\Omega) = |\log\delta| P(\Omega) -
          \frac{1}{2} \int_{\partial \Omega} \int_{\partial \Omega}
          \nu(x) \cdot \nu(y) \Phi_{\delta}(|x-y|) \intd \Hd^1(y)
          \intd \Hd^1(x) 
	\end{align}
	of Lemma \ref{lem:energy_on_boundary}, which is valid for sets
        satisfying the mild regularity assumption
        \eqref{mild_regularity}, which is surely applicable for
          sets with $C^2$-regular boundary.  For every
        $x \in \partial \Omega$ we have by \eqref{Phi_explicit} that
	\begin{align}
	  \begin{split}
            & \int_{\partial \Omega} \int_{\partial \Omega \cap
              \ball{x}{\delta}} \nu(x) \cdot \nu(y)
            \Phi_{\delta}(|x-y|) \intd \Hd^1(y) \intd \Hd^1(x) \\
            & \qquad = \frac{1}{\delta} \int_{\partial \Omega}
            \int_{\partial \Omega \cap \ball{x}{\delta}} \nu(x) \cdot
            \nu(y) \left(1-\log\left(\frac{|x-y|}{\delta} \right)
            \right) \intd \Hd^1(y) \intd \Hd^1(x).
	   \end{split}
	\end{align}
	Assuming $x=0$ without loss of generality, the inner
        integral becomes
	\begin{align}
	   \begin{split}
             \int_{\partial( \delta^{-1} \Omega )\cap \ball{0}{1}}
             \nu(x) \cdot \nu(y) \left(1-\log\left(|y| \right) \right)
             \intd \Hd^1(y) & = \int_{-1}^1 \left(1-\log\left(|t|
               \right) \right) \intd t + o(1) = 4 + o(1)
	  \end{split}
	\end{align}
	with an error term that is uniform in $\delta$, where we used
        the calculation in \eqref{logarithm}.  As a result we get
	\begin{align}\label{representation_hand}
          \begin{split}
            E_{1,0}(\Omega) = & - 2 P(\Omega) \\
            & + \lim_{\delta \to 0} \left( |\log \delta| P(\Omega) -
              \frac{1}{2}\int_{\partial \Omega} \int_{\partial \Omega}
              \frac{\nu(x) \cdot \nu(y)}{|x-y|}
              \chi(|x-y|>\delta)\intd \Hd^1(y) \intd \Hd^1(x) \right).
          \end{split}
	\end{align}
	
	As $\partial \Omega$ is $C^2$-regular we can decompose it into
        closed, positively oriented Jordan curves
        $\gamma_i : [0,P_i] \to \R^2$ parametrized by arclength with
        $1\leq i \leq N$ and $i,N \in \N$.  Computing
	\begin{align}
          \frac{1}{2} \int_{-\frac{P_i}{2}}^{\frac{P_i}{2}}
          \frac{\chi(|s| >\delta)}{|s|} \intd s =
          \log\left(\frac{P_i}{2} \right)  +  |\log\delta| 
	\end{align}
	and observing that $P(\Omega) = \sum_{i=1}^N P_i$ we have
	\begin{align}
	  \begin{split}
            (|\log \delta| -2 )P(\Omega) = - \sum_{i=1}^N P_i
            \left[\log\left(\frac{P_i}{2} \right)+2 \right] +
            \frac{1}{2} \sum_{i=1}^N \int_{0}^{P_i}
            \int_{-\frac{P_i}{2}}^{\frac{P_i}{2}} \frac{\chi(|s|
              >\delta)}{|s|} \intd s \intd t.
	  \end{split}
	\end{align}
	Consequently, we obtain
	\begin{align}\label{jordan_decomposition}
          E_{1,0}(\Omega) = \lim_{\delta \to 0} \Bigg\{
          &- \sum_{i=1}^N P_i \left[ \log\left(\frac{P_i}{2} \right)+2
            \right] \notag \\ 
          & + \frac{1}{2} \sum_{i=1}^N \int_{0}^{P_i}
            \int_{-\frac{P_i}{2}}^{\frac{P_i}{2}} \left( \frac{\chi(|s|
            >\delta)}{|s|} - \frac{\dot\gamma^\perp_i(t+s) \cdot
            \dot\gamma^\perp_i(t)}{|\gamma_i(t+s)-\gamma_i(t)|} 
            \chi(|\gamma_i(t+s)-\gamma_i(t)|>\delta) \right) \intd 
            s \intd t\notag\\ 
          & - \sum_{i=1}^{N-1} {\sum_{j=i+1}^N}
            \int_{0}^{P_i}\int_{0}^{P_j} 	\frac{\dot\gamma^\perp_j(s) \cdot
            \dot\gamma^\perp_i(t)}{|\gamma_j(s)-\gamma_i(t)|}
            \chi(|\gamma_j(s)-\gamma_i(t)|>\delta)\intd s \intd t
            \Bigg\}.
	\end{align}
	
	Expanding $\gamma_i$ and $\dot\gamma^\perp_i$ in $s$, see
        Kent-Dobias \cite{kent14thesis} for the details, one can
        obtain that the first two lines satisfy
	\begin{align}
            & \lim_{\delta \to 0} \Bigg\{ - \sum_{i=1}^N P_i \left[
              \log\left(\frac{P_i}{2} \right)+2
            \right] \notag \\
            & + \frac{1}{2} \sum_{i=1}^N \int_{0}^{P_i}
            \int_{-\frac{P_i}{2}}^{\frac{P_i}{2}} \left(
              \frac{\chi(|s| >\delta)}{|s|} -
              \frac{\dot\gamma^\perp_i(t+s) \cdot
                \dot\gamma^\perp_i(t)}{|\gamma_i(t+s)-\gamma_i(t)|}
              \chi(|\gamma_i(t+s)-\gamma_i(t)|>\delta) \right)
            \intd
            s \intd t \Bigg\} \notag \\
            & = - \sum_{i=1}^N P_i \left[ \log\left(\frac{P_i}{2}
              \right)+2 \right] + \frac{1}{2} \sum_{i=1}^N
            \int_{0}^{P_i} \int_{-\frac{P_i}{2}}^{\frac{P_i}{2}}
            \left( \frac{1}{|s|} - \frac{\dot\gamma^\perp_i(t+s) \cdot
                \dot\gamma^\perp_i(t)}{|\gamma_i(t+s)-\gamma_i(t)|}
            \right) \intd s \intd t.
	\end{align}
	As the Jordan curves are at a positive distance from each
        other, the limit in the third line of equation
        \eqref{jordan_decomposition} can easily carried out and gives
	\begin{align}
	  \begin{split}
            & \quad \lim_{\delta \to 0} \sum_{i=1}^{N-1}
            {\sum_{j=i+1}^N} \int_{0}^{P_i}\int_{0}^{P_j}
            \frac{\dot\gamma^\perp_j(s) \cdot
              \dot\gamma^\perp_i(t)}{|\gamma_j(s)-\gamma_i(t)|}
            \chi(|\gamma_j(s)-\gamma_i(t)|>\delta)\intd s \intd
            t\\
            & = \sum_{i=1}^{N-1} {\sum_{j=i+1}^N}
            \int_{0}^{P_i}\int_{0}^{P_j} \frac{\dot\gamma^\perp_j(s)
              \cdot \dot\gamma^\perp_i(t)}{|\gamma_j(s)-\gamma_i(t)|}
            \intd s \intd t,
	  \end{split}
	\end{align}
	which concludes the proof.
\end{proof}

For the compactness part, we exploit the control over the number of
pieces of a generalized minimizer given in Proposition
\ref{prop:existence_of_minimizers_intermediate_masses}.

\begin{proof}[Proof of Proposition \ref{prop:asymptotic_compactness}]
  \textit{Step 1: A generalized minimizer of $E_{1,\delta}$ can have
    at most $N(m)$ components}.\vspace{1mm}\\
  According to Proposition
  \ref{prop:existence_of_minimizers_intermediate_masses}, in order to
  prove that generalized minimizers of $E_{1,\delta}$ enjoy a uniform
  control over the number of their parts, we only have to see that
  $m_0(1,\delta,\infty) = \sup\{m:
  \inf_{\mathcal{A}_m}E_{\lambda,\delta} >0\}$ is strictly positive
  uniformly in $\delta$.  To this end, we use the lower bound of
  Proposition \ref{prop:a_priori_lower_bound} to get a universal
  constant $c>0$ such that for all sets of finite perimeter
  $\Omega \subset \R^2$ we have
  \begin{align}
    \label{eq:PlogP}
    |\log\delta| E_{1,\delta}(\Omega) \geq P(\Omega) \log\left(
    \frac{c P(\Omega)}{m}\right). 
  \end{align}
  Applying the isoperimetric inequality, we see that
  \begin{align}
    |\log\delta| E_{1,\delta}(\Omega) \geq c m^\frac{1}{2}
    \log\left(c m^{-\frac{1}{2}}\right) > 0 
  \end{align}
  for all $0<m<\bar m_0$, where $\bar m_0 >0$ is
  small enough and independent of $\delta$.
  
  \vspace{1mm} \textit{Step 2: We have
    $\limsup_{\delta \to 0} \sum_{i=1}^{N_\delta} P(\Omega_{i,\delta})
    \leq C(m) $ for some $0<C(m)<\infty$, where
    $(\Omega_{i,\delta})_{i\in \N}$ is a generalized minimizer of
    $E_{1,\delta}$ over $\mathcal{A}_m$ and $N_\delta \leq N(m)$ is
    the number of
    its components.}\vspace{1mm}\\
  The same computation as for Proposition
  \ref{prop:subcritical_gamma_convergence} implies that
  $ |\log\delta| E_{1,\delta}(\ball{0}{r})< \widetilde C(r)$ for some
  $0<\widetilde C(r)<\infty$.  By minimality, we therefore have
  $ \sum_{i=1}^{N_\delta} |\log\delta| E_{1,\delta}(\Omega_{i,\delta})
  < \widetilde C(m)$. Letting
  \begin{align}
  	\widetilde \Omega_\delta := \bigcup_{i=1}^{N_\delta} \left(
    \Omega_{i,\delta} + K e_1 \right)
  \end{align}
  for $K \in \N$, we obtain for $K$ large enough that
  $P(\widetilde \Omega_\delta) =
  \sum_{i=1}^{N_\delta}P(\Omega_{i,\delta})$ and
  $|\log\delta| E_{1,\delta}(\widetilde \Omega) < \widetilde C(m)$.  The
  same argument as for Step 1 in the proof of Theorem
  \ref{thm:critical_convergence} implies that
  \begin{align}
    \limsup_{\delta \to 0} \sum_{i=1}^{N_\delta} P(\Omega_{i,\delta})
    =	\limsup_{\delta \to 0}  P(\widetilde \Omega_{\delta})\leq
    C(m). 
  \end{align} 

  \vspace{1mm} \textit{Step 3: For each $i\in \N$ with
    $1\leq i\leq N(m)$ the sequence $\Omega_{i,\delta_n}$ has a
    convergent subsequence in $L^1$ after a suitable translation,
    where $(\delta_n)_{n\in \N}$ is any sequence satisfying
    $\delta_n \to 0$ as $n \to \infty$.}\vspace{1mm}\\
  After extracting a subsequence, we may suppose that
  $N_{\delta_n} \equiv N$ is stationary. In order to apply Lemma
  \ref{lem:connectedness} to obtain connectedness of
  $\Omega_{i,\delta_n}$ for all $1\leq i\leq N$, we require a lower
  bound for $|\Omega_{i,\delta_n}|$ independent of $\delta_n$.
        
        If $N>1$,
        then we proved in Step 5 of Proposition
        \ref{prop:existence_of_minimizers_intermediate_masses} that
        $|\Omega_{i,\delta_n}| \geq m_0(1,\delta,\infty) \geq \bar
          m_0$.  If instead we have $N=1$ then we simply have
        $|\Omega_{1,\delta_n}| = m$.  Therefore, by passing to another
        subsequence, we may assume that
	\begin{align}\label{choice_delta_n}
		\delta_n < \sqrt{\frac{|\Omega_{i,\delta_n}|}{2\pi}}
	\end{align}
	 for all $1\leq i\leq N$, so that Lemma
         \ref{lem:connectedness} applies. 
	
         Consequently, we have that $\Omega_{i,\delta_n}$ is connected
         for all $1\leq i\leq N$, and thus enjoys the uniform diameter
         bound
         $\operatorname{diam} \Omega_{i,\delta_n} \leq \frac{1}{2}
         P(\Omega_{i,\delta_n}) \leq C(m)$.  After translation, it is
         therefore uniformly bounded and there exists a subsequence
         (not relabeled) and $(\Omega_i)_{i\in\N}$ such that
         $|\Omega_{i,\delta} \Delta \Omega_i| \to 0$ as $\delta\to 0$
         due to the compact embedding theorem for $BV$-functions on
         compact domains \cite[Theorem 12.26]{maggi}.
        
         \vspace{1mm} \textit{Step 4: The collection of sets
           $(\Omega_i)_{i\in \N}$ is a generalized minimizer of
           $E_{1,0}$ over
           $\mathcal{A}_m$.}\vspace{1mm}\\
         First, observe that
         $\sum_{i=1}^\infty |\Omega_i| = \sum_{i=1}^N |\Omega_i| = m$.
         Let $(\widetilde \Omega_j)_{j\in \N}$ with
         $\widetilde \Omega_j \subset \R^2$ for all $j\in \N$ be
         any sets such that
         $\sum_{j=1}^\infty |\widetilde \Omega_j | = \sum_{j=1}^M
         |\widetilde \Omega_j| = m$ for some $M \in \N$.  By the
         $\Gamma$-liminf part of Theorem
         \ref{thm:critical_convergence}, minimality of
         $(\Omega_{i,\delta})_{i\in \N}$ and the fact that $E_{1,0}$
         is the pointwise limit of $|\log\delta| E_{1,\delta}$, we
         immediately obtain
      \begin{align}
      	\sum_{i=1}^N E_{1,0}(\Omega_i)
        & \leq \liminf_{\delta \to 0}
          \sum_{i=1}^N |\log \delta|
          E_{1,\delta}(\Omega_{i,\delta})
          \leq \limsup_{\delta \to 0 }
          \sum_{j=1}^M |\log\delta|
          E_{1,\delta} \left(\widetilde
          \Omega_j\right) =
          \sum_{j=1}^M  E_{1,0}
          (\widetilde
          \Omega_j),
      \end{align}
      yielding the claim.
\end{proof}

\begin{proof}[Proof of Lemma \ref{lem:modified_compactness}]
  The representation is a straightforward consequence of the
  modification being an $L^1$-continuous perturbation of
  $|\log\delta|E_{1,\delta}$.  Regarding the compactness part, the
  previous proof carries over to the $\Gamma$-limit of
  $F_{1,\delta,l}$. In order to bound the number of components of any
  generalized minimizer, we again only need to ensure that
  $\inf_{\mathcal{A}_m} F_{1,\delta,l} - \frac{6\pi}{5|\log\delta| l
  }m >0$ for $m>0$ small enough.  For all $\Omega \in \mathcal{A}_m$, 
  noting that
  $\frac{1}{2} \int_{\R^2} \frac{|z|^2+2l^2}{(|z|^2+l^2)^{5/2}} \intd
  z = \frac{4 \pi}{3 l}$ we indeed have that
	\begin{align}
	  \begin{split}
            |\log\delta|F_{1,\delta,l} (\Omega) - \frac{6\pi}{5l}m &
            \geq |\log\delta| E_{1,\delta} (\Omega) - \frac{6\pi
              m}{5l} - \frac12 \int_\Omega \int_{\stcomp{\Omega}}
            \frac{|x - y|^2 + 2 l^2}{\left( |x - y|^2 + l^2
              \right)^{5/2}} \intd y
            \intd x \\
            & \geq c m^\frac{1}{2} \log\left( cm^{-\frac{1}{2}}\right)
            - \frac{3\pi m}{l} >0
	  \end{split}
	\end{align}
	for all $0<m < \bar m_0(l)$, independently of
        $\delta$.
	
	The argument for the perimeter of a generalized minimizer can
        easily be adjusted using the bound
        $|\log\delta|F_{1,\delta,l} (\widetilde \Omega) \leq
        |\log\delta| E_{1,\delta,} (\widetilde \Omega) + \frac{4\pi
          m}{3l}$ for all $\widetilde \Omega \in \mathcal{A}_m$.
	The rest of the argument carries over with the only difference
        that the application of Lemma \ref{lem:connectedness} is
        immediate in the case $l<\infty$.
\end{proof}

Finally, we come to proving existence of non-radial minimizers.
The proof relies heavily on some tedious calculations, of which we
delegate most to {\sc Mathematica 11.1.1.0} software.

\begin{proof}[Proof of Theorem \ref{thm:non-spherical_minimizers}] 
  \textit{Step 1: Compute the energy of disks.}\vspace{1mm}\\
  The type of computation involved in evaluating
  $E_{1,0}(\ball{0}{r})$ was carried out before by many authors
  \cite{mcconnell88,langer92,tsebers80}. In our case, we can write
	\begin{align}
	  \begin{split}
            |\log\delta| F_{1,\delta,l} (\ball{0}{1}) & = 2\pi
            |\log\delta| - \frac{1}{2} \int_{\Sph^1}\int_{\Sph^1}
            \nu(x)\cdot \nu(y)\Phi_{\delta,l}(|x-y|) \intd \Hd^1(y)
            \intd
            \Hd^1(x)\\
            & = 2\pi |\log\delta| - \pi \int_{-\pi}^{\pi} \cos(t)
            \Phi_{\delta,l}(\sqrt{2(1-\cos(t))}) \intd t.
	  \end{split}
	\end{align}
	The integral can be computed to leading order to satisfy
	\begin{align}
          F_{1,0,l}(\ball{0}{1}) = -2\pi \log 4 + \frac{2 \pi}{
          \sqrt{4 + l^2}} \left((2+l^2)K\left( \frac{4}{4+l^2}\right)
          - (4+l^2)E\left( \frac{4}{4+l^2}\right)  \right) , 
	\end{align}
	where
        $K(k) := \int_0^\frac{\pi}{2} (1 - k
        \sin^2(\theta))^{-\frac{1}{2}}\intd \theta$ and
        $E(k) := \int_0^\frac{\pi}{2} (1 - k
        \sin^2(\theta))^\frac{1}{2}\intd \theta$ are the complete
        elliptic integrals of the first and second kind
        \cite{abramowitz}.
	
	The rescalings of Lemma \ref{lem:rescaling} imply that for all
        $r>0$ we have
	\begin{align}
		\begin{split}
		  \quad F_{1,0,l}(\ball{0}{r}) & = -2\pi r \log 4 -
                  2\pi r\log r \\
                  & \quad + \frac{2 \pi r}{ \sqrt{4 +
                      \frac{l^2}{r^2}}}
                  \left(\left(2+\frac{l^2}{r^2}\right)K\left(
                      \frac{4}{4+\frac{l^2}{r^2}}\right) -
                    \left(4+\frac{l^2}{r^2}\right)E\left(
                      \frac{4}{4+\frac{l^2}{r^2}}\right) \right) ,
		\end{split}
	\end{align}
	which in turn gives
	\begin{align}
          f_{disk}(a)
          &
            :=\frac{F_{1,0,l}\left(\ball{0}{\frac{1}{a}}\right)}{\left|\ball{0}{\frac{1}{a}}  
            \right|} \notag\\ 
          & = 2a\Bigg(   -\log 4 + \log a  + \frac{1 }{
            \sqrt{4 + a^2l^2}}\Bigg(
            \left(2+a^2l^2\right)K\left(
            \frac{4}{4+a^2l^2}\right)\\ 
          & \phantom{2a\Bigg(   -\log(4) + \log(a)  +
            \frac{1 }{ \sqrt{4 +
            a^2l^2}}\Bigg(\big(2+a^2l^2))) } -
            \left(4+a^2l^2\right)E\left(
            \frac{4}{4+a^2l^2}\right)  \Bigg)
            \Bigg).\notag 
	\end{align}
	
        \textit{Step 2: The function $f_{disk}(a)$ is strictly convex
          for all $0<a<\infty$.}\vspace{1mm}\\ 
	Note that
        $f_{disk}(a) = \frac{1}{l}\left( g(a l) - 2al \log(l)\right)$
        with
	\begin{align}
	  \begin{split}
            g(\alpha) :=  2\alpha\Bigg(   -\log 4 + \log \alpha  +
            \frac{1 }{ \sqrt{4 + \alpha^2}} \Bigg(&
            \left(2+\alpha^2\right)K\left(
              \frac{4}{4+\alpha^2}\right)\\ 
            & \quad - \left(4+\alpha^2\right)E\left(
              \frac{4}{4+\alpha^2}\right) \Bigg) \Bigg),
	  \end{split}
	\end{align}
	so that it is sufficient to prove convexity of $g$.
	We get
	\begin{align}
          g''(\alpha) = \frac{2\left((4+\alpha^2)^\frac{3}{2} -
          2(4+7\alpha^2+\alpha^4) E\left( \frac{4}{4+\alpha^2}\right) 
          + 2\alpha^2 (5 + \alpha^2 ) K\left(
          \frac{4}{4+\alpha^2}\right)
          \right)}{\alpha (4+\alpha^2)^\frac{3}{2}}, 
	\end{align}
	of which we want to see that the numerator is non-negative.
        Solving the equation $t = \frac{4}{4+\alpha^2}$ with
        $t \in [0,1]$ for $\alpha>0$ gives
	\begin{align}
		\alpha(t) := 2\sqrt{\frac{1-t}{t}}
	\end{align}
	and rewriting the numerator in terms of $t$ gives
        $\frac{\alpha(t) (4+\alpha^2(t))^\frac{3}{2}g''(\alpha(t))}{2}
        = \frac{8 h_1(t)}{t^2}$ with
	\begin{align}
          h_1(t):= t^\frac{1}{2} + (-4 +t +2 t^2) E(t) - (-4 +3t +
          t^2) K(t). 
	\end{align}
	Since $0\leq t\leq 1$, we have
        $h_1(t) \geq h_2(t)$ with
	\begin{align}
		h_2(t) :=  t - (4 -t -2 t^2) E(t) +(4 -3t -t^2) K(t).
	\end{align}
	Therefore, it is enough to prove that $h_2(t) \geq 0$.
	
	We first prove that $h_2(t) > 0$ for $0.85 \leq t <1$.
	To this end we compute
	\begin{align}
          h_2'(t) = 1 + (2 + 5t)E(t) - \frac{1}{2}(4+5t)K(t) \leq 1 +
          7E(t) - \frac{1}{2}(4+5t)K(t) 
	\end{align}
	and note that the right-hand side is decreasing in $t$ since
        $E$ is decreasing and $K$ is increasing.  Therefore, to prove
        $h_2'(t) < 0$ for $0.85\leq t <1$ we only have to see that
	\begin{align}
          1 + 7E(0.85) - \frac{1}{2}(4+5\times 0.85 )K(0.85)  \approx
          -0.850922 <0, 
	\end{align}
	where the computation can be carried out to arbitrary precision.
	Since for $t$ close to $1$ we have $h_2(t) =
        O((1-t)\log(1-t))$ we get $h_2(0) = 0$, which proves the
        claim. 
	
	In a second step we prove $h_2(t) > 0$ for $0<t < 0.85$.  Let
        $E_1$ and $K_1$ be the first order Taylor polynomials of $E$
        and $K$ at the origin.  It is easily seen from the power
        series representation of the elliptic integrals
        \cite{abramowitz} that we have
	\begin{align}
		E(t) \leq E_1(t) \text{ and } K(t) \geq K_1(t)
	\end{align}
	for all $0\leq t <1$.
	We therefore have
	\begin{align}
          h_2(t) \geq  t - (4 -t -2 t^2) E_1(t) +(4 -3t -t^2) K_1(t) =
          t - \frac{3\pi}{8}t^3 > 0 
	\end{align}
	for all $0 < t < \sqrt{\frac{8}{3\pi}} \approx 0.92$.
	
	\vspace{1mm}
        \textit{Step 3: Compute the energy per mass of a long stripe.}\vspace{1mm}\\
        For $a,m>0$ let
        $S_{a,m}:= \left(-\frac{am}{2},\frac{am}{2}\right) \times
        \left(-\frac{1}{2a},\frac{1}{2a} \right)$.  Evidently we
        have $|S_{a,m}|=m$ for all $a>0$.  We write
	 	\begin{align}
	 	  \begin{split}
                    E_{1,\delta}(S_{a,m}) = |\log\delta| \left(2a m +
                      \frac{2}{a}\right) & -
                    \int_{-\frac{am}{2}}^{\frac{am}{2}}
                    \int_{-\frac{am}{2}}^{\frac{am}{2}} \left[
                      \Phi_\delta(|s-t|) -
                      \Phi_\delta\left(\sqrt{(s-t)^2 + \frac{1}{a^2}}
                      \right) \right] \intd s \intd t\\
                    & - \int_{-\frac{1}{2a}}^{\frac{1}{2a}}
                    \int_{-\frac{1}{2a}}^{\frac{1}{2a}} \left[
                      \Phi_\delta(|s-t|) -
                      \Phi_\delta\left(\sqrt{(s-t)^2 + a^2m^2} \right)
                    \right] \intd s \intd t\\
                    \phantom{E_{1,\delta}(S_{a,m})} = |\log\delta|
                    \left(2a m + \frac{2}{a}\right) & + I_1 + I_2 +
                    I_3+I_4,
	 	\end{split}
	\end{align}
	where we abbreviated the terms coming from the integrals
        with $I_1,\dots, I_4$ in order of their appearance.

        Next, we compute
	\begin{align}
          \label{eq:I1I3}
	  \begin{split}
            I_1& = - 2a m |\log\delta| - 2am \log(am) -2am +
            o(\delta),\\
            I_3 & = - \frac{2}{a} |\log\delta| - \frac{2}{a}
            \log\left(\frac{1}{a}\right) -\frac{2}{a} + o(\delta).
	  \end{split}
	\end{align}
	In the other two terms we can go to the limit $\delta \to
        0$ directly under the integral sign and obtain 
        \begin{align}
	  \begin{split}
            \lim_{\delta \to 0} I_2& =
            \int_{-\frac{am}{2}}^{\frac{am}{2}}
            \int_{-\frac{am}{2}}^{\frac{am}{2}} \frac{1}{\sqrt{(s-t)^2
                + \frac{1}{a^2}}} \intd s \intd t = \frac{2 - 2
              \sqrt{1+ a^4m^2}}{a} + 2 a m \arcsinh\left(a^2 m\right)
            ,\\
            \lim_{\delta \to 0} I_4 & =
            \int_{-\frac{1}{2a}}^{\frac{1}{2a}}
            \int_{-\frac{1}{2a}}^{\frac{1}{2a}}\frac{1}{\sqrt{(s-t)^2
                + a^2m^2} } \intd s \intd t = \frac{2 \left[a^2 m
                -\sqrt{1 + a^4 m^2}+\coth ^{-1}\left(\sqrt{1 + a^4
                    m^2}\right)\right]}{a}.
	  \end{split}
	\end{align}
        Combining this with \eqref{eq:I1I3} yields
	\begin{align}
	  \begin{split}
            E_{1,0}(S_{a,m}) & = - 2am \log(am) - \frac{2}{a}
            \log\left(\frac{1}{a}\right) - \frac{2}{a} \\
            & \quad -4am + 2 a m \arcsinh (a^2 m) + O(m^{-1})
	  \end{split}
	\end{align}
        for large values of $m$. As a consequence we have
	\begin{align}
          \lim_{m\to \infty} \frac{E_{1,0}(S_{a,m})}{m} = 2a
          \left( \log(2a) - 2 \right). 
	\end{align}
	
	The contribution of the modification for $l<\infty$ to
        $\frac{|\log\delta|F_{1,\delta,l}(S_{a,m})}{m}$ is given by
	\begin{align}
	 	  \begin{split}
                    \frac{|\log\delta|F_{1,\delta,l}(S_{a,m})}{m}
                    & = \frac{|\log\delta|E_{1,\delta}(S_{a,m})}{m} \\
                    & \quad +\frac{1}{m}
                    \int_{-\frac{am}{2}}^{\frac{am}{2}}
                    \int_{-\frac{am}{2}}^{\frac{am}{2}} \left(
                      \frac{1}{\sqrt{|s-t|^2+l^2}} -
                      \frac{1}{\sqrt{|s-t|^2 + \frac{1}{a^2} +l^2}}
                    \right) \intd s \intd t\\
                    & \quad
                    +\frac{1}{m}\int_{-\frac{1}{2a}}^{\frac{1}{2a}}
                    \int_{-\frac{1}{2a}}^{\frac{1}{2a}} \left(
                      \frac{1}{\sqrt{|s-t|^2+l^2}} -
                      \frac{1}{\sqrt{|s-t|^2 + a^2m^2 +l^2}} \right)
                    \intd s \intd t.
	 	\end{split}
	\end{align}
	The second integral clearly vanishes in the limit $m \to
        \infty$, while for the first we can write
	\begin{align}
          & \quad \lim_{m\to \infty} \frac{1}{m}
            \int_{-\frac{am}{2}}^{\frac{am}{2}}
            \int_{-\frac{am}{2}}^{\frac{am}{2}} \left( 
            \frac{1}{\sqrt{|s-t|^2+l^2}} - \frac{1}{\sqrt{|s-t|^2 +
            \frac{1}{a^2} +l^2}} \right) \intd s \intd t \\ 
          & = a \int_{-\infty}^\infty \left( \frac{1}{\sqrt{s^2+l^2}} -
            \frac{1}{\sqrt{s^2 + \frac{1}{a^2} +l^2}} \right) \intd s \\ 
          & = -2 a \log l + a \log\left(\frac{1}{a^2} + l^2 \right),
	\end{align}
	so that we obtain
	\begin{align}
          f_{stripe}(a):=\lim_{m\to \infty}
          \frac{F_{1,0,l}(S_{a,m})}{m} = 2 a \log \left( {2 a \over l}
          \right) - 4 a + a \log\left(\frac{1}{a^2} + 
          l^2 \right). 
	\end{align}
	
        \textit{Step 4: Conclusion.}\vspace{1mm}\\
	We can compute $f_{disk}(a) = 2 a \log(a) + O(a)$ for $a \to
        \infty$.  Combining this with the strict convexity established
        in Step 2 we get that it attains its unique minimum at
        $a(l) \in [0,\infty)$.  Expanding the functions $f_{disk}(a)$
        and $f'_{disk}(a)$ at $a=0$ gives
	\begin{align}
          f_{disk}(a) & = 2 a\left[
                        \log\left(\frac{2}{l}\right) -2 \right]  +
                        \frac{3 
                        l^2}{8} a^3 \log a^{-1}+ O(a^3) 
	\end{align}
	and
	\begin{align}
          f'_{disk}(a) = 2\left[ 
          \log\left(\frac{2}{l}\right) -2 \right] + \frac{9 l^2}{8} a^2
          \log a^{-1} + O(a^2),
	\end{align}
	where the term in the square bracket is negative if and
          only if $l > {2 \over e^2}$. By uniqueness of the
        minimizer, this implies that $a(l) \to 0$ as
        $l \to \frac{2}{e^2}$ from above.  In turn, expanding
        $f_{stripe}(a)$ gives
	\begin{align}
          f_{stripe}(a) = 2 a\left[
          \log\left(\frac{2}{l}\right) -2 \right]   +
          \frac{l^2}{3}a^3 + O(a^4), 
	\end{align}
	which implies $f_{stripe}(a(l)) < f_{disk}(a(l)) = \min
        f_{disk}$ for all $0<l- \frac{2}{e^2}<c$ with $c >0$
        small enough universal.
	
	Consequently, there exists a mass $M(l)>0$ such that for all
        $m > M(l)$ we have
	\begin{align}
          \left|\frac{F_{1,0,l}(S_{a(l),m})}{m} - f_{stripe}(a(l))
          \right| < \frac{1}{2} \big( \min f_{disk} - f_{stripe}(a(l))
          \big) 
	\end{align}
	and thus
	\begin{align}
		F_{1,0,l}(S_{a(l),m}) < m \min f_{disk} \leq
          \sum_{i=1}^N F_{1,0,l}(\ball{0}{r_i}) 
	\end{align}
	for any finite collection of disks $\ball{0}{r_i}$ satisfying
        $\sum_{i=1}^N |\ball{0}{r_i}| = m$.  Therefore, the
        generalized minimizer cannot consist exclusively of disks,
        which gives the desired statement.
      \end{proof}

        \section{The supercritical case $\lambda > 1$}
\label{sec:supercr-case-lambda}

We finally briefly prove Proposition \ref{prop:supercritical} in the
supercritical case $\lambda >1$.

\begin{proof}[Proof of Proposition \ref{prop:supercritical}]
  Recall that by assumption of the proposition we have, up to a
  negligible set, $\ball{0}{r} \subset \Omega \subset \R^2$ for a set
  of finite perimeter $\Omega$ and some $r>0$, which we will later
  choose to satisfy some relation to the small number $\delta>0$. Let
  $\tilde r:=\frac{1}{2}r$. We first deal with the case
  $0<\tilde r<1$.

\vspace{1mm}
  \textit{Step 1: Compute the energy of $\widetilde \Omega := \Omega
    \setminus \ball{0}{\tilde r}$.}\vspace{1mm}\\
  Elementary combinatorics imply that
  \begin{align}
    \widetilde \Omega \times \stcomp{\widetilde \Omega } = \Big(\Omega
    \times \stcomp{\Omega} \setminus  \ball{0}{\tilde r} \times
    \stcomp{\Omega}\Big) \cup \Big(B^{\mathsf{c}}_{\tilde r}(0)\times
    \ball{0}{\tilde r} \setminus \stcomp{\Omega} \times
    \ball{0}{\tilde r}\Big).  
  \end{align}
  In terms of the nonlocal contribution this reads
	\begin{align}
	  \begin{split}
            \int_{\widetilde \Omega}\int_{\stcomp{\widetilde \Omega }}
            \frac{g_\delta(|x-y|)}{|x-y|^3} \intd y \intd x &=
            \int_{\Omega}\int_{\stcomp{\Omega }}
            \frac{g_\delta(|x-y|)}{|x-y|^3} \intd y \intd x +
            \int_{\ball{0}{\tilde r}}\int_{B^{\mathsf{c}}_{\tilde
                r}(0)}
            \frac{g_\delta(|x-y|)}{|x-y|^3} \intd y \intd x\\
            & \quad - 2 \int_{\ball{0}{\tilde r}}\int_{\stcomp{ \Omega
              }} \frac{g_\delta(|x-y|)}{|x-y|^3} \intd y \intd x.
	  \end{split}
	\end{align}
	Therefore, we get
	\begin{align}
          E_{\lambda,\delta}\left(\widetilde \Omega\right) =
          E_{\lambda,\delta}(\Omega) + E_{\lambda,\delta}(\ball{0}{\tilde r})
          + \frac{\lambda}{|\log\delta|}
          \int_{\ball{0}{\tilde r}}\int_{\stcomp{ \Omega }}
          \frac{g_\delta(|x-y|)}{|x-y|^3} \intd y \intd x. 
	\end{align}

        \textit{Step 2: To satisfy the mass constraint,
          include the
          ball $\ball{0}{\tilde r}$ at infinity and compare the energies.}\vspace{1mm}\\
	With the above computation, we obtain
	\begin{align}
          E_{\lambda,\delta}\left(\widetilde \Omega\right) +
          E_{\lambda,\delta}(\ball{0}{\tilde r}) = E_{\lambda,\delta}(\Omega)
          + 2 \left( E_{\lambda,\delta}(\ball{0}{\tilde r}) +
          \frac{\lambda}{2|\log\delta|}
          \int_{\ball{0}{\tilde r}}\int_{\stcomp{ \Omega }}
          \frac{g_\delta(|x-y|)}{|x-y|^3} \intd y \intd x\right) 
	\end{align}
	and therefore we only have to check that the bracket is negative.
	
	Using Lemma \ref{lem:rescaling2} and the assumption $\tilde
        r<1$, the energy of the ball can be estimated as
	\begin{align}
          E_{\lambda,\delta}(\ball{0}{\tilde r})  \leq \tilde r \left(
          E_{\lambda,\delta}(\ball{0}{1}) - \frac{2\pi\lambda \log \tilde r
          }{|\log\delta|} \right). 
	\end{align}
	The same calculation as in the proof of 
        Proposition \ref{prop:subcritical_gamma_convergence}
        furthermore gives
	\begin{align}
          E_{\lambda,\delta}(\ball{0}{1}) \leq 2\pi (1-\lambda) +
          \frac{C\lambda}{|\log\delta|}. 
	\end{align}
	By observing that
        $|\stcomp{\Omega}\cap (\ball{x}{2\tilde r}\setminus
        \ball{x}{\tilde r})|=0$ 
        we get
	\begin{align}
          \frac{\lambda}{2|\log\delta|}
          \int_{\ball{0}{\tilde r}}\int_{\stcomp{ \Omega }}
          \frac{g_\delta(|x-y|)}{|x-y|^3} \intd y \intd x
          & \leq
            \frac{\lambda}{2|\log\delta|}
            \int_{\ball{0}{\tilde r}}\int_{\stcomp{B}_{2\tilde r}(0)}
            \frac{g_\delta(|x-y|)}{|x-y|^3}
            \intd y
            \intd x
            \notag\\ 
          & \leq \frac{ \lambda \pi \tilde r^2}{2|\log\delta|}
            \int_{B^{\mathsf{c}}_{\tilde r}(0)} \frac{g_\delta(|z|)}{|z|^3}
            \intd z\\ 
          & = \frac{ \lambda \pi^2 \tilde r}{|\log\delta|}, \notag
	\end{align}
	where in the last step we required $\tilde r \geq \delta$.
	
	Combining all of the above, we see that 
	\begin{align}
		E_{\lambda,\delta}(\ball{0}{\tilde r}) +
          \frac{\lambda}{2|\log\delta|}
          \int_{\ball{0}{\tilde r}}\int_{\stcomp{ \Omega }}
          \frac{g_\delta(|x-y|)}{|x-y|^3} \intd y \intd x \leq - 2\pi
          \lambda \tilde r \left( \frac{\lambda-1}{\lambda} + \frac{
          \log \tilde r - C}{|\log\delta|}\right)
	\end{align}
	for some universal $C > 0$, and the right-hand side is
        negative if and only if
        $\tilde r > e^C \delta^\frac{\lambda-1}{\lambda}$.  Therefore,
        the statement holds for
        $\tilde r > \max \{e^C \delta^\frac{\lambda-1}{\lambda},
        \delta \} = e^C \delta^\frac{\lambda-1}{\lambda}$ if
        $\delta>0$ is sufficiently small due to
        $\frac{\lambda-1}{\lambda} <1$.
	
	Finally, the case $\tilde r \geq 1$ can be treated by using the
        above argument for some $\hat r>0$ such that
        $e^C \delta^\frac{\lambda-1}{\lambda}< \hat r<1$ for
        $\delta$ sufficiently small.
\end{proof}

\paragraph{Acknowledgments.}

      This work was supported by NSF via grant DMS-1614948. The
      authors wish to acknowledge valuable discussions with A. Bernoff,
      V. Julin and M. Novaga.

\bibliographystyle{plain}
\bibliography{../../nonlin,../../mura,../../stat,../../bio}

\end{document}